\tikzset{%
element/.style={draw, shape=circle, fill=white, inner sep=1.4pt}
}
\DeclareSymbolFont{bbold}{U}{bbold}{m}{n}
\DeclareSymbolFontAlphabet{\mathbbold}{bbold}
\theoremstyle{plain}
\newtheorem{thm}{Theorem}[section]
\newtheorem{lem}[thm]{Lemma}
\newtheorem{cor}[thm]{Corollary}
\newtheorem{pro}[thm]{Proposition}
\theoremstyle{definition}
\newtheorem{remark}[thm]{Remark}
\newcommand{\bp}{\mathbf{p}}
\newcommand{\bq}{\mathbf{q}}
\newcommand{\bu}{\mathbf{u}}
\newcommand{\bw}{\mathbf{w}}
\begin{document}

\title[The finite basis problem for ai-semirings of order four]
{The finite basis problem for additively idempotent semirings of order four, II}

\author{Mengya Yue}
\address{School of Mathematics, Northwest University, Xi'an, 710127, Shaanxi, P.R. China}
\email{myayue@yeah.net}

\author{Miaomiao Ren}
\address{School of Mathematics, Northwest University, Xi'an, 710127, Shaanxi, P.R. China}
\email{miaomiaoren@yeah.net}

\author{Lingli Zeng}
\address{School of Mathematics, Northwest University, Xi'an, 710127, Shaanxi, P.R. China}
\email{zengll929@nwu.edu.cn}

\author{Yong Shao}
\address{School of Mathematics, Northwest University, Xi'an, 710127, Shaanxi, P.R. China}
\email{yongshaomath@126.com}

\subjclass[2010]{16Y60, 03C05, 08B05}
\keywords{semiring, variety, identity, finitely based, nonfinitely based.}
\thanks{Miaomiao Ren, corresponding author, is supported by National Natural Science Foundation of China (12371024).
}

\begin{abstract}
We study the finite basis problem for $4$-element additively idempotent semirings whose
additive reducts are quasi-antichains. Up to isomorphism, there are $93$ such algebras.
We show that with the exception of the semiring $S_{(4, 435)}$, all of them are finitely based.
\end{abstract}

\maketitle

\section{Introduction and preliminaries}
The present paper is a continuation of \cite{rlzc}.
We study the finite basis problem for $4$-element additively idempotent semirings whose
additive reducts are quasi-antichains.
The readers are referred to \cite{rlzc} for necessary background, motivation, references, techniques and notations.

Up to isomorphism, there are 6 ai-semirings of order two,
which are denoted by $L_2$, $R_2$, $M_2$, $D_2$, $N_2$ and $T_2$.
The solution of the equational problem for these algebras can be found in \cite[Lemma 1.1]{sr}
and will be repeatedly used in the sequel.
Up to isomorphism, there are 61 ai-semirings of order three,
which are denoted by $S_i$, $1 \leq i \leq 61$.
One can find detailed information on these algebras in \cite{zrc}.

Up to isomorphism,
there are $866$ ai-semirings of order four.
The additive reducts of $58$ of them are semilattices of height $1$.
Ren et al. \cite{rlzc} has answered the finite basis problem for these algebras.
The additive reducts of $93$ of them are quasi-antichains. These algebras are denoted by $S_{(4, k)}$, $388\leq k \leq 480$.
We assume that the carrier set of these semirings is $\{1, 2, 3, 4\}$.
Their Cayley tables for multiplication are listed in Table \ref{tb1},
and their Cayley tables for addition are determined by Figure \ref{figure01}.

\setlength{\unitlength}{1cm}
\begin{figure}[ht]\label{figure01}
\begin{picture}(35, 2.25)
\put(6.5,2.2){\line(1,-1){1}}
\put(6.5,2.2){\line(-1,-1){1}}
\put(6.5,0.2){\line(1,1){1}}
\put(6.5,0.2){\line(-1,1){1}}

\multiput(6.5,2.2)(1,-1){2}{\circle*{0.1}}
\multiput(6.5,2.2)(-1,-1){2}{\circle*{0.1}}
\multiput(6.5,0.2)(1,1){2}{\circle*{0.1}}

\put(6.5,2.4){\makebox(0,0){$1$}}
\put(6.5,0){\makebox(0,0){$2$}}
\put(5.3,1.2){\makebox(0,0){$3$}}
\put(7.7,1.2){\makebox(0,0){$4$}}
\end{picture}
\caption{The additive order of $S_{(4, k)}$, $388\leq k \leq 480$}
\end{figure}
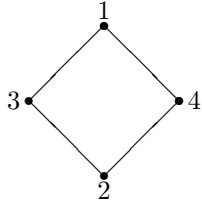
The following theorem is the main result of this paper.
Its proof will be completed in the following sections.

\begin{table}[ht]
\caption{The multiplicative tables of $4$-element ai-semirings whose additive reducts are quasi-antichains} \label{tb1}
\begin{tabular}{cccccc}
\hline
Semiring & $\cdot$ & Semiring & $\cdot$ & Semiring & $\cdot$\\
\hline
$S_{(4, 388)}$
&
\begin{tabular}{cccc}
1 & 1 & 1 & 1\\
1 & 1 & 1 & 1 \\
1 & 1 & 1 & 1 \\
1 & 1 & 1 & 1 \\
\end{tabular}
&
$S_{(4, 389)}$
&
\begin{tabular}{cccc}
1 & 1 & 1 & 1\\
1 & 2 & 1 & 1 \\
1 & 1 & 1 & 1 \\
1 & 1 & 1 & 1 \\
\end{tabular}
&
$S_{(4, 390)}$
&
\begin{tabular}{cccc}
1 & 1 & 1 & 1\\
1 & 2 & 1 & 4 \\
1 & 1 & 1 & 1 \\
1 & 1 & 1 & 1 \\
\end{tabular}\\
\hline

$S_{(4, 391)}$
&
\begin{tabular}{cccc}
1 & 1 & 1 & 1\\
1 & 2 & 3 & 4 \\
1 & 1 & 1 & 1 \\
1 & 1 & 1 & 1 \\
\end{tabular}
&
$S_{(4, 392)}$
&
\begin{tabular}{cccc}
1 & 1 & 1 & 1\\
1 & 2 & 1 & 2 \\
1 & 1 & 1 & 1 \\
1 & 2 & 1 & 2 \\
\end{tabular}
&
$S_{(4, 393)}$
&
\begin{tabular}{cccc}
1 & 1 & 1 & 1\\
1 & 2 & 1 & 2 \\
1 & 1 & 1 & 1 \\
1 & 2 & 1 & 4 \\
\end{tabular}\\
\hline

$S_{(4, 394)}$
&
\begin{tabular}{cccc}
1 & 1 & 1 & 1\\
1 & 2 & 1 & 4 \\
1 & 1 & 1 & 1 \\
1 & 2 & 1 & 4 \\
\end{tabular}
&
$S_{(4, 395)}$
&
\begin{tabular}{cccc}
1 & 1 & 1 & 1\\
1 & 2 & 3 & 4 \\
1 & 1 & 1 & 1 \\
1 & 2 & 3 & 4 \\
\end{tabular}
&
$S_{(4, 396)}$
&
\begin{tabular}{cccc}
1 & 1 & 1 & 1\\
1 & 2 & 1 & 1 \\
1 & 1 & 1 & 1 \\
1 & 4 & 1 & 1 \\
\end{tabular}\\
\hline

$S_{(4, 397)}$
&
\begin{tabular}{cccc}
1 & 1 & 1 & 1\\
1 & 2 & 1 & 2 \\
1 & 1 & 1 & 1 \\
1 & 4 & 1 & 4 \\
\end{tabular}
&
$S_{(4, 398)}$
&
\begin{tabular}{cccc}
1 & 1 & 1 & 1\\
1 & 2 & 1 & 4 \\
1 & 1 & 1 & 1 \\
1 & 4 & 1 & 1 \\
\end{tabular}
&
$S_{(4, 399)}$
&
\begin{tabular}{cccc}
1 & 1 & 1 & 1\\
1 & 2 & 1 & 4 \\
1 & 1 & 1 & 1 \\
1 & 4 & 1 & 4 \\
\end{tabular}\\
\hline
$S_{(4, 400)}$
&
\begin{tabular}{cccc}
1 & 1 & 1 & 1\\
1 & 2 & 3 & 1 \\
1 & 1 & 1 & 1 \\
1 & 4 & 1 & 1 \\
\end{tabular}
&
$S_{(4, 401)}$
&
\begin{tabular}{cccc}
1 & 1 & 1 & 1\\
1 & 2 & 3 & 4 \\
1 & 1 & 1 & 1 \\
1 & 4 & 1 & 1 \\
\end{tabular}
&
$S_{(4, 402)}$
&
\begin{tabular}{cccc}
1 & 1 & 1 & 1\\
1 & 2 & 3 & 4 \\
1 & 1 & 1 & 1 \\
1 & 4 & 1 & 4 \\
\end{tabular}\\
\hline

$S_{(4, 403)}$
&
\begin{tabular}{cccc}
1 & 1 & 1 & 1\\
1 & 2 & 1 & 1 \\
1 & 3 & 1 & 1 \\
1 & 4 & 1 & 1 \\
\end{tabular}
&
$S_{(4, 404)}$
&
\begin{tabular}{cccc}
1 & 1 & 1 & 1\\
1 & 2 & 1 & 2 \\
1 & 3 & 1 & 3 \\
1 & 4 & 1 & 4 \\
\end{tabular}
&
$S_{(4, 405)}$
&
\begin{tabular}{cccc}
1 & 1 & 1 & 1\\
1 & 2 & 1 & 4 \\
1 & 3 & 1 & 1 \\
1 & 4 & 1 & 1 \\
\end{tabular}\\
\hline

$S_{(4, 406)}$
&
\begin{tabular}{cccc}
1 & 1 & 1 & 1\\
1 & 2 & 1 & 4 \\
1 & 3 & 1 & 1 \\
1 & 4 & 1 & 4 \\
\end{tabular}
&
$S_{(4, 407)}$
&
\begin{tabular}{cccc}
1 & 1 & 1 & 1\\
1 & 2 & 3 & 4 \\
1 & 3 & 1 & 1 \\
1 & 4 & 1 & 1 \\
\end{tabular}
&
$S_{(4, 408)}$
&
\begin{tabular}{cccc}
1 & 1 & 1 & 1\\
1 & 2 & 3 & 4 \\
1 & 3 & 1 & 1 \\
1 & 4 & 1 & 4 \\
\end{tabular}\\
\hline

$S_{(4, 409)}$
&
\begin{tabular}{cccc}
1 & 1 & 1 & 1\\
1 & 2 & 3 & 4 \\
1 & 3 & 3 & 1 \\
1 & 4 & 1 & 4 \\
\end{tabular}
&
$S_{(4, 410)}$
&
\begin{tabular}{cccc}
1 & 1 & 1 & 1\\
1 & 3 & 1 & 1 \\
1 & 1 & 1 & 1 \\
1 & 1 & 1 & 1 \\
\end{tabular}
&
$S_{(4, 411)}$
&
\begin{tabular}{cccc}
1 & 1 & 1 & 1\\
1 & 3 & 1 & 3 \\
1 & 1 & 1 & 1 \\
1 & 1 & 1 & 1 \\
\end{tabular}\\
\hline

$S_{(4, 412)}$
&
\begin{tabular}{cccc}
1 & 1 & 1 & 1\\
1 & 3 & 1 & 1 \\
1 & 1 & 1 & 1 \\
1 & 3 & 1 & 1 \\
\end{tabular}
&
$S_{(4, 413)}$
&
\begin{tabular}{cccc}
1 & 1 & 1 & 1\\
1 & 3 & 1 & 3 \\
1 & 1 & 1 & 1 \\
1 & 3 & 1 & 1 \\
\end{tabular}
&
$S_{(4, 414)}$
&
\begin{tabular}{cccc}
1 & 1 & 1 & 1\\
1 & 3 & 1 & 3 \\
1 & 1 & 1 & 1 \\
1 & 3 & 1 & 3 \\
\end{tabular}\\
\hline

$S_{(4, 415)}$
&
\begin{tabular}{cccc}
1 & 1 & 1 & 1\\
1 & 3 & 3 & 1 \\
1 & 3 & 3 & 1 \\
1 & 1 & 1 & 1 \\
\end{tabular}
&
$S_{(4, 416)}$
&
\begin{tabular}{cccc}
1 & 2 & 1 & 1\\
1 & 2 & 1 & 1 \\
1 & 2 & 1 & 1 \\
1 & 2 & 1 & 1 \\
\end{tabular}
&
$S_{(4, 417)}$
&
\begin{tabular}{cccc}
1 & 2 & 1 & 1\\
1 & 2 & 1 & 2 \\
1 & 2 & 1 & 1 \\
1 & 2 & 1 & 4 \\
\end{tabular}\\
\hline

$S_{(4, 418)}$
&
\begin{tabular}{cccc}
1 & 2 & 1 & 1\\
1 & 2 & 1 & 2 \\
1 & 2 & 1 & 3 \\
1 & 2 & 1 & 4 \\
\end{tabular}
&
$S_{(4, 419)}$
&
\begin{tabular}{cccc}
1 & 2 & 1 & 2\\
1 & 2 & 1 & 2 \\
1 & 2 & 1 & 2 \\
1 & 2 & 1 & 2 \\
\end{tabular}
&
$S_{(4, 420)}$
&
\begin{tabular}{cccc}
1 & 2 & 1 & 4 \\
1 & 2 & 1 & 4 \\
1 & 2 & 1 & 4 \\
1 & 2 & 1 & 4 \\
\end{tabular}\\
\hline
\end{tabular}
\end{table}

\newpage

\begin{table}[ht]
\begin{tabular}{cccccc}
\hline
$S_{(4, 421)}$
&
\begin{tabular}{cccc}
1 & 2 & 3 & 4\\
1 & 2 & 3 & 4 \\
1 & 2 & 3 & 4 \\
1 & 2 & 3 & 4 \\
\end{tabular}
&
$S_{(4, 422)}$
&
\begin{tabular}{cccc}
1 & 3 & 3 & 1 \\
1 & 2 & 3 & 4 \\
1 & 3 & 3 & 1 \\
1 & 2 & 3 & 4 \\
\end{tabular}
&
$S_{(4, 423)}$
&
\begin{tabular}{cccc}
1 & 3 & 3 & 1 \\
1 & 2 & 3 & 1 \\
1 & 3 & 3 & 1 \\
1 & 3 & 3 & 1 \\
\end{tabular}\\
\hline
$S_{(4, 424)}$
&
\begin{tabular}{cccc}
1 & 3 & 3 & 1\\
1 & 2 & 3 & 4 \\
1 & 3 & 3 & 1 \\
1 & 3 & 3 & 1 \\
\end{tabular}
&
$S_{(4, 425)}$
&
\begin{tabular}{cccc}
1 & 3 & 3 & 1 \\
1 & 3 & 3 & 1 \\
1 & 3 & 3 & 1 \\
1 & 3 & 3 & 1 \\
\end{tabular}
&
$S_{(4, 426)}$
&
\begin{tabular}{cccc}
1 & 1 & 1 & 1 \\
2 & 2 & 2 & 2 \\
1 & 1 & 1 & 1 \\
1 & 1 & 1 & 1 \\
\end{tabular}\\
\hline

$S_{(4, 427)}$
&
\begin{tabular}{cccc}
1 & 1 & 1 & 1\\
2 & 2 & 2 & 2 \\
1 & 1 & 1 & 1 \\
1 & 2 & 1 & 4 \\
\end{tabular}
&
$S_{(4, 428)}$
&
\begin{tabular}{cccc}
1 & 1 & 1 & 1 \\
2 & 2 & 2 & 2 \\
1 & 1 & 1 & 1 \\
1 & 2 & 3 & 4 \\
\end{tabular}
&
$S_{(4, 429)}$
&
\begin{tabular}{cccc}
1 & 2 & 1 & 1 \\
2 & 2 & 2 & 2 \\
1 & 2 & 1 & 1 \\
1 & 2 & 1 & 1 \\
\end{tabular}\\
\hline

$S_{(4, 430)}$
&
\begin{tabular}{cccc}
1 & 2 & 1 & 1\\
2 & 2 & 2 & 2 \\
1 & 2 & 1 & 1 \\
1 & 2 & 1 & 3 \\
\end{tabular}
&
$S_{(4, 431)}$
&
\begin{tabular}{cccc}
1 & 2 & 1 & 1 \\
2 & 2 & 2 & 2 \\
1 & 2 & 1 & 1 \\
1 & 2 & 1 & 4 \\
\end{tabular}
&
$S_{(4, 432)}$
&
\begin{tabular}{cccc}
1 & 2 & 1 & 1 \\
2 & 2 & 2 & 2 \\
1 & 2 & 1 & 3 \\
1 & 2 & 1 & 4 \\
\end{tabular}\\
\hline

$S_{(4, 433)}$
&
\begin{tabular}{cccc}
1 & 2 & 1 & 4\\
2 & 2 & 2 & 2 \\
1 & 2 & 1 & 4 \\
1 & 2 & 1 & 4 \\
\end{tabular}
&
$S_{(4, 434)}$
&
\begin{tabular}{cccc}
1 & 2 & 1 & 1 \\
2 & 2 & 2 & 2 \\
1 & 2 & 1 & 1 \\
1 & 2 & 3 & 4 \\
\end{tabular}
&
$S_{(4, 435)}$
&
\begin{tabular}{cccc}
1 & 2 & 1 & 1 \\
2 & 2 & 2 & 2 \\
1 & 2 & 1 & 3 \\
1 & 2 & 3 & 4 \\
\end{tabular}\\
\hline

$S_{(4, 436)}$
&
\begin{tabular}{cccc}
1 & 2 & 1 & 1\\
2 & 2 & 2 & 2 \\
1 & 2 & 3 & 1 \\
1 & 2 & 1 & 4 \\
\end{tabular}
&
$S_{(4, 437)}$
&
\begin{tabular}{cccc}
1 & 2 & 1 & 4 \\
2 & 2 & 2 & 2 \\
1 & 2 & 3 & 4 \\
1 & 2 & 1 & 4 \\
\end{tabular}
&
$S_{(4, 438)}$
&
\begin{tabular}{cccc}
1 & 2 & 1 & 1 \\
2 & 2 & 2 & 2 \\
1 & 2 & 3 & 4 \\
1 & 2 & 4 & 3 \\
\end{tabular}\\
\hline

$S_{(4, 439)}$
&
\begin{tabular}{cccc}
1 & 2 & 3 & 4\\
2 & 2 & 2 & 2 \\
1 & 2 & 3 & 4 \\
1 & 2 & 3 & 4 \\
\end{tabular}
&
$S_{(4, 440)}$
&
\begin{tabular}{cccc}
1 & 1 & 1 & 1 \\
2 & 2 & 2 & 2 \\
1 & 1 & 1 & 1 \\
2 & 2 & 2 & 2 \\
\end{tabular}
&
$S_{(4, 441)}$
&
\begin{tabular}{cccc}
1 & 2 & 1 & 2 \\
2 & 2 & 2 & 2 \\
1 & 2 & 1 & 2 \\
2 & 2 & 2 & 2 \\
\end{tabular}\\
\hline

$S_{(4, 442)}$
&
\begin{tabular}{cccc}
1 & 2 & 1 & 4\\
2 & 2 & 2 & 2 \\
1 & 2 & 1 & 4 \\
2 & 2 & 2 & 2 \\
\end{tabular}
&
$S_{(4, 443)}$
&
\begin{tabular}{cccc}
1 & 2 & 3 & 4 \\
2 & 2 & 2 & 2 \\
1 & 2 & 3 & 4 \\
2 & 2 & 2 & 2 \\
\end{tabular}
&
$S_{(4, 444)}$
&
\begin{tabular}{cccc}
1 & 1 & 1 & 1 \\
2 & 2 & 2 & 2 \\
1 & 1 & 1 & 1 \\
4 & 4 & 4 & 4 \\
\end{tabular}\\
\hline

$S_{(4, 445)}$
&
\begin{tabular}{cccc}
1 & 2 & 1 & 1\\
2 & 2 & 2 & 2 \\
1 & 2 & 1 & 1 \\
4 & 2 & 4 & 4 \\
\end{tabular}
&
$S_{(4, 446)}$
&
\begin{tabular}{cccc}
1 & 2 & 1 & 2 \\
2 & 2 & 2 & 2 \\
1 & 2 & 1 & 2 \\
4 & 2 & 4 & 2 \\
\end{tabular}
&
$S_{(4, 447)}$
&
\begin{tabular}{cccc}
1 & 2 & 1 & 4 \\
2 & 2 & 2 & 2 \\
1 & 2 & 1 & 4 \\
4 & 2 & 4 & 2 \\
\end{tabular}\\
\hline

$S_{(4, 448)}$
&
\begin{tabular}{cccc}
1 & 2 & 1 & 4 \\
2 & 2 & 2 & 2 \\
1 & 2 & 1 & 4 \\
4 & 2 & 4 & 4 \\
\end{tabular}
&
$S_{(4, 449)}$
&
\begin{tabular}{cccc}
1 & 2 & 1 & 1 \\
2 & 2 & 2 & 2 \\
1 & 2 & 3 & 1 \\
4 & 2 & 4 & 4 \\
\end{tabular}
&
$S_{(4, 450)}$
&
\begin{tabular}{cccc}
1 & 2 & 1 & 4 \\
2 & 2 & 2 & 2 \\
1 & 2 & 3 & 4 \\
4 & 2 & 4 & 2 \\
\end{tabular}\\
\hline

$S_{(4, 451)}$
&
\begin{tabular}{cccc}
1 & 2 & 1 & 4 \\
2 & 2 & 2 & 2 \\
1 & 2 & 3 & 4 \\
4 & 2 & 4 & 4 \\
\end{tabular}
&
$S_{(4, 452)}$
&
\begin{tabular}{cccc}
1 & 4 & 1 & 4 \\
2 & 2 & 2 & 2 \\
1 & 2 & 3 & 4 \\
4 & 4 & 4 & 4 \\
\end{tabular}
&
$S_{(4, 453)}$
&
\begin{tabular}{cccc}
1 & 4 & 1 & 4 \\
2 & 2 & 2 & 2 \\
1 & 4 & 1 & 4 \\
4 & 4 & 4 & 4 \\
\end{tabular}\\
\hline

$S_{(4, 454)}$
&
\begin{tabular}{cccc}
1 & 1 & 1 & 1\\
2 & 2 & 2 & 2 \\
3 & 3 & 3 & 3 \\
4 & 4 & 4 & 4 \\
\end{tabular}
&
$S_{(4, 455)}$
&
\begin{tabular}{cccc}
1 & 2 & 1 & 1\\
2 & 2 & 2 & 2 \\
3 & 2 & 3 & 3 \\
4 & 2 & 4 & 4 \\
\end{tabular}
&
$S_{(4, 456)}$
&
\begin{tabular}{cccc}
1 & 2 & 1 & 2\\
2 & 2 & 2 & 2 \\
3 & 2 & 3 & 2 \\
4 & 2 & 4 & 2 \\
\end{tabular}\\
\hline

\end{tabular}

\end{table}

\newpage

\begin{table}[ht]
\begin{tabular}{cccccc}
\hline
$S_{(4, 457)}$
&
\begin{tabular}{cccc}
1 & 2 & 3 & 4\\
2 & 2 & 2 & 2 \\
3 & 2 & 3 & 2 \\
4 & 2 & 2 & 4 \\
\end{tabular}
&
$S_{(4, 458)}$
&
\begin{tabular}{cccc}
1 & 3 & 3 & 1\\
2 & 2 & 2 & 2 \\
3 & 3 & 3 & 3 \\
4 & 2 & 2 & 4 \\
\end{tabular}
&
$S_{(4, 459)}$
&
\begin{tabular}{cccc}
1 & 1 & 1 & 1 \\
3 & 2 & 3 & 3 \\
3 & 3 & 3 & 3 \\
1 & 1 & 1 & 1 \\
\end{tabular}\\
\hline

$S_{(4, 460)}$
&
\begin{tabular}{cccc}
1 & 1 & 1 & 1 \\
3 & 2 & 3 & 2 \\
3 & 3 & 3 & 3 \\
1 & 4 & 1 & 4 \\
\end{tabular}
&
$S_{(4, 461)}$
&
\begin{tabular}{cccc}
1 & 1 & 1 & 1 \\
3 & 2 & 3 & 3 \\
3 & 3 & 3 & 3 \\
1 & 4 & 1 & 1 \\
\end{tabular}
&
$S_{(4, 462)}$
&
\begin{tabular}{cccc}
1 & 1 & 1 & 1 \\
3 & 3 & 3 & 3 \\
3 & 3 & 3 & 3 \\
1 & 1 & 1 & 1 \\
\end{tabular}\\
\hline
$S_{(4, 463)}$
&
\begin{tabular}{cccc}
1 & 2 & 3 & 1 \\
3 & 2 & 3 & 2 \\
3 & 2 & 3 & 3 \\
1 & 2 & 3 & 4 \\
\end{tabular}
&
$S_{(4, 464)}$
&
\begin{tabular}{cccc}
1 & 2 & 3 & 1 \\
3 & 2 & 3 & 3 \\
3 & 2 & 3 & 3 \\
1 & 2 & 3 & 1 \\
\end{tabular}
&
$S_{(4, 465)}$
&
\begin{tabular}{cccc}
1 & 2 & 3 & 4 \\
3 & 2 & 3 & 2 \\
3 & 2 & 3 & 2 \\
1 & 2 & 3 & 4 \\
\end{tabular}\\
\hline

$S_{(4, 466)}$
&
\begin{tabular}{cccc}
1 & 3 & 3 & 1\\
3 & 2 & 3 & 2 \\
3 & 3 & 3 & 3 \\
1 & 2 & 3 & 4 \\
\end{tabular}
&
$S_{(4, 467)}$
&
\begin{tabular}{cccc}
1 & 3 & 3 & 1\\
3 & 2 & 3 & 3 \\
3 & 3 & 3 & 3 \\
1 & 3 & 3 & 1 \\
\end{tabular}
&
$S_{(4, 468)}$
&
\begin{tabular}{cccc}
1 & 3 & 3 & 1\\
3 & 3 & 3 & 3 \\
3 & 3 & 3 & 3 \\
1 & 3 & 3 & 1 \\
\end{tabular}\\
\hline

$S_{(4, 469)}$
&
\begin{tabular}{cccc}
1 & 4 & 1 & 4\\
3 & 2 & 3 & 2 \\
3 & 2 & 3 & 2 \\
1 & 4 & 1 & 4 \\
\end{tabular}
&
$S_{(4, 470)}$
&
\begin{tabular}{cccc}
2 & 2 & 2 & 2 \\
2 & 2 & 2 & 2 \\
2 & 2 & 2 & 2 \\
2 & 2 & 2 & 2 \\
\end{tabular}
&
$S_{(4, 471)}$
&
\begin{tabular}{cccc}
3 & 2 & 2 & 3 \\
2 & 2 & 2 & 2 \\
2 & 2 & 2 & 2 \\
3 & 2 & 2 & 3 \\
\end{tabular}\\
\hline

$S_{(4, 472)}$
&
\begin{tabular}{cccc}
3 & 2 & 3 & 2 \\
2 & 2 & 2 & 2 \\
3 & 2 & 3 & 2 \\
2 & 2 & 2 & 2 \\
\end{tabular}
&
$S_{(4, 473)}$
&
\begin{tabular}{cccc}
3 & 3 & 3 & 3 \\
2 & 2 & 2 & 2 \\
3 & 3 & 3 & 3 \\
2 & 2 & 2 & 2 \\
\end{tabular}
&
$S_{(4, 474)}$
&
\begin{tabular}{cccc}
3 & 2 & 3 & 3 \\
2 & 2 & 2 & 2 \\
3 & 2 & 3 & 3 \\
3 & 2 & 3 & 3 \\
\end{tabular}\\
\hline

$S_{(4, 475)}$
&
\begin{tabular}{cccc}
3 & 3 & 3 & 3 \\
2 & 2 & 2 & 2 \\
3 & 3 & 3 & 3 \\
3 & 3 & 3 & 3 \\
\end{tabular}
&
$S_{(4, 476)}$
&
\begin{tabular}{cccc}
3 & 2 & 3 & 2 \\
3 & 2 & 3 & 2 \\
3 & 2 & 3 & 2 \\
3 & 2 & 3 & 2 \\
\end{tabular}
&
$S_{(4, 477)}$
&
\begin{tabular}{cccc}
3 & 2 & 3 & 3 \\
3 & 2 & 3 & 3 \\
3 & 2 & 3 & 3 \\
3 & 2 & 3 & 3 \\
\end{tabular}\\
\hline

$S_{(4, 478)}$
&
\begin{tabular}{cccc}
3 & 3 & 3 & 3 \\
3 & 2 & 3 & 2 \\
3 & 3 & 3 & 3 \\
3 & 2 & 3 & 2 \\
\end{tabular}
&
$S_{(4, 479)}$
&
\begin{tabular}{cccc}
3 & 3 & 3 & 3 \\
3 & 2 & 3 & 3 \\
3 & 3 & 3 & 3 \\
3 & 3 & 3 & 3 \\
\end{tabular}
&
$S_{(4, 480)}$
&
\begin{tabular}{cccc}
3 & 3 & 3 & 3 \\
3 & 3 & 3 & 3 \\
3 & 3 & 3 & 3 \\
3 & 3 & 3 & 3 \\
\end{tabular}\\
\hline
\end{tabular}
\end{table}

\begin{thm}\label{main}
$S_{(4, 435)}$ is the only nonfinitely based algebra in $S_{(4, k)}$, $388\leq k \leq 480$.
\end{thm}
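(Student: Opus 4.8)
The plan is to split the statement into two tasks: for each $k\ne 435$, exhibiting a finite equational basis for $\Var(S_{(4,k)})$, and for $k=435$, proving that no finite basis exists. The common starting point for both halves is the normal form for ai-semiring terms: modulo the ai-semiring axioms every term is equivalent to a finite join of \emph{monomials}, so an identity may be read as a pair of finite sets of monomials, and for those tables whose multiplicative reduct is commutative—in particular $S_{(4,435)}$—a monomial is a commutative power product determined by its support and exponent vector. For $S_{(4,435)}$ I would first record the induced value of a monomial $m$ under an arbitrary substitution $\sigma$: since $2$ is absorbing and the additive zero, $1$ is the additive top and absorbs $\{1,3,4\}$ multiplicatively, $4$ is a multiplicative identity on $\{1,3,4\}$, and $3^2=1$, a monomial evaluates to $2$ if some variable of $\supp m$ hits $2$; otherwise to $1$ if some hits $1$; otherwise to $4$, $3$, or $1$ according as the total exponent mass landing on $3$ is $0$, $1$, or $\ge 2$. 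Joining then yields the value of any term, so $P\approx Q$ holds in $S_{(4,435)}$ exactly when both sides induce the same value for every $\sigma$; this gives a decidable, purely combinatorial description of $\mathrm{Id}(S_{(4,435)})$ that drives the nonfinite-basis argument.

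\textbf{The finitely based algebras.} I would group the $92$ remaining semirings into a few families according to the shape of the multiplicative table—presence of an absorbing element, whether the multiplicative reduct is a band, a chain, or commutative—and for each family write a candidate basis $\Sigma_k$ consisting of the ai-semiring axioms together with a short list of identities forced by the table. The inclusion $S_{(4,k)}\models\Sigma_k$ is a direct check, and for completeness I would show that the relatively free algebra $F_{\Sigma_k}(X)$ on a small generating set admits a monomial normal form and embeds into a finite power of $S_{(4,k)}$, giving $\Var(\Sigma_k)=\Var(S_{(4,k)})$. Several of these families should already fall inside finitely based varieties isolated in \cite{rlzc}, so that for them it suffices to locate $S_{(4,k)}$ within such a variety, trimming the case analysis considerably.

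\textbf{Nonfinite basis for $S_{(4,435)}$.} Using the value description above I would produce an explicit infinite sequence of identities $u_n\approx v_n$ valid in $S_{(4,435)}$, arranged so that each genuinely involves $n$ variables and so that its validity depends simultaneously on the join relation $3\vee 4=1$, the multiplicative identity $4$, and the threshold $3^2=1$. Nonfinite basis would then follow from the critical-algebra method: for every $n$ I would construct an ai-semiring $B_n$ satisfying \emph{every} identity of $S_{(4,435)}$ in at most $n$ variables while failing some $u_m\approx v_m$. The natural candidate replaces the single multiplicative collapse $3^2=1$ by a longer chain, adjoining elements $3_1,\dots,3_n$ with $3_i3_j=3_{i+j}$ truncated to the top at level $n$, together with a join on the new elements chosen so that short substitutions cannot separate $B_n$ from $S_{(4,435)}$. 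Since any finite basis uses only boundedly many variables, the existence of all the $B_n$ contradicts finite basability, which establishes Theorem \ref{main}.

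The routine part is the identity bookkeeping and the free-algebra completeness arguments for the finitely based families. The hard part will be the nonfinite-basis half, and specifically the design and verification of the critical algebras $B_n$: one must define the extended addition and multiplication so that $B_n$ is genuinely an ai-semiring, then verify that it satisfies all at-most-$n$-variable identities of $S_{(4,435)}$ rather than a sampled few, and finally pin down a single identity $u_m\approx v_m$ that it violates. Calibrating the truncation level against the additive top so that the stretched chain reproduces exactly the short-identity behaviour of $S_{(4,435)}$, while breaking a long one, is where I expect the essential work to lie.
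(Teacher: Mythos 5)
Your combinatorial evaluation rule for monomials over $S_{(4,435)}$ is correct ($2$ is multiplicatively absorbing and the additive zero, $1$ absorbs $\{1,3,4\}$, $4$ is an identity on $\{1,3,4\}$, and $3^2=1$), but the nonfinite-basis half of your argument is a genuine gap, not omitted bookkeeping. Everything that would constitute the proof is deferred: you never write down the identities $u_n\approx v_n$, never define the algebras $B_n$ beyond the heuristic of a truncated chain $3_1,\dots,3_n$, and never verify the three things on which the method stands — that $B_n$ is actually an ai-semiring (you must specify the join on the new elements and check distributivity against the truncated multiplication, which is precisely where such candidates tend to fail), that $B_n$ satisfies \emph{all} identities of $S_{(4,435)}$ in at most $n$ variables rather than a sampled few, and that some explicit $u_m\approx v_m$ fails in it. You yourself flag this as "where the essential work will lie"; a plan that points at the hard step does not discharge it. The paper needs none of this machinery: it observes that $S_{(4,435)}$ is isomorphic to $S_7^0$ — the unique nonfinitely based ai-semiring of order three, $S_7$, with a zero adjoined — and quotes the published theorem \cite[Corollary 2.4]{wrz} that $S_7^0$ is nonfinitely based. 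Missing that identification, you set out to re-prove a known hard theorem and left its core unexecuted.

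The finitely based half has the same character. The paper disposes of the $92$ remaining algebras by a mix of citation and explicit syntactic work: reduction to the two-element bases of \cite{sr}, the multiplicatively idempotent results of \cite{gpz,pas05} and the $x^3\approx x$ result of \cite{rzw}, equational coincidence with three-element semirings established through subdirect decompositions, duality of multiplications, and roughly two dozen bespoke finite bases whose completeness is proved by deriving every valid identity $\bu\approx\bu+\bq$ from solved word problems for $S_{41}$, $S_{44}$, $S_{46}$, $S_{53}$, and $S_{57}$--$S_{60}$ (Lemmas \ref{lem5701}, \ref{lem5301}, \ref{lem5801}, \ref{lem5901}, \ref{lem6001}, \ref{lem4401}, \ref{lem4601}, \ref{lem4101}, together with Lemma \ref{lem001} for the $S^0$-type algebras). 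Your proposed completeness route — monomial normal forms in the relatively free algebra over $\Sigma_k$ plus an embedding into a finite power of $S_{(4,k)}$ — is viable in principle, but it is not carried out for even one value of $k$, and it is not lighter than the paper's method: proving the embedding amounts to the same case-by-case derivation analysis, and for the harder cases (compare the ten- and eleven-identity bases of Propositions \ref{pro42701} and \ref{pro42801} with their inductive derivations) the bookkeeping is substantial. In sum, the architecture you describe is reasonable, but both halves have unexecuted cores, and for $k=435$ the missing construction and verification of the $B_n$ is a mathematical gap that the paper's one-line identification with $S_7^0$ shows was avoidable.
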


In the following we shall introduce some notations that have not been used in \cite{rlzc}.
Let $\bu$ be an ai-semiring term such that $\bu=\bu_1+\bu_2+\cdots+\bu_n$,
where $\bu_i \in X^+$, $1 \leq i \leq n$.
Let $\bq$ be a nonempty word, and let $k$ be a positive integer. Then
\begin{itemize}
\item $L_{\geq k}(\bu)$ denotes the set $\{\bu_i \in \bu \mid \ell(\bu_i)\geq k\}$;

\item $L_{\leq k}(\bu)$ denotes the set $\{\bu_i \in \bu \mid \ell(\bu_i)\leq k\}$;

\item $L_k(\bu)$ denotes the set $\{\bu_i \in \bu \mid \ell(\bu_i)= k\}$;

\item $H_{\bq}(\bu)$ denotes the set $\{\bu_i \in \bu \mid h(\bu_i)=h(\bq)\}$;

\item $D_{\bq}(\bu)$ denotes the set $\{\bu_i \in \bu \mid c(\bu_i)\subseteq c(\bq)\}$;

\item $M_{1}(\bq)$ denotes $\{x \in c(\bq) \mid m(x, \bq)=1\}$.
\end{itemize}

Let $S$ be an ai-semiring. If we adjoin an extra element $0$ to the set $S$ and define
\[
(\forall a\in S\cup \{0\}) \quad a+0=a,~ a0=0a=0,
\]
then $S\cup \{0\}$ becomes an ai-semiring and is denoted by $S^0$.
The following result, which is due to \cite[Proposition 1.5]{wrz},
explores the relationship between the equational theories of $S^0$ and $S$.

\begin{lem}\label{lem001}
Let $\bu\approx \bu+\bq$ be an ai-semiring identity such that
$\bu=\bu_1+\bu_2+\cdots+\bu_n$, where $\bu_i, \bq \in X^+$, $1\leq i \leq n$.
Then $\bu\approx \bu+\bq$ is satisfied by ${S}^0$ if and only if
$D_\bq(\bu)\not=\emptyset$ and $D_\bq(\bu) \approx D_\bq(\bu)+\bq$ is satisfied by $S$.
\end{lem}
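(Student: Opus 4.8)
The plan is to argue semantically, exploiting that in $S^0$ the adjoined element $0$ is simultaneously the least element of the additive semilattice and a multiplicative zero. Write $a\le b$ for the induced semilattice order ($a\le b$ iff $a+b=b$), so that an identity $\bv\approx\bv+\bw$ holds under a substitution $\varphi$ exactly when $\varphi(\bw)\le\varphi(\bv)$. Two elementary facts drive the whole argument. First, $S$ is a subsemiring of $S^0$, since the new rules only involve $0$; hence every substitution into $S$ is in particular a substitution into $S^0$, and products of elements of $S$ stay in $S$. Second, for any word $\bw$ and any $\varphi\colon X\to S^0$ we have $\varphi(\bw)=0$ if and only if $\varphi(x)=0$ for some $x\in c(\bw)$.

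The central device is the substitution that \emph{zeroes out} the variables absent from $\bq$. Given $\psi\colon X\to S$, define $\varphi\colon X\to S^0$ by $\varphi(x)=\psi(x)$ for $x\in c(\bq)$ and $\varphi(x)=0$ otherwise. Then $\varphi(\bq)=\psi(\bq)\in S$, while for each summand we get $\varphi(\bu_i)=\psi(\bu_i)$ when $c(\bu_i)\subseteq c(\bq)$ and $\varphi(\bu_i)=0$ when $c(\bu_i)\not\subseteq c(\bq)$, by the second fact above. Consequently $\varphi(\bu)=\psi(D_\bq(\bu))$, where an empty sum is read as $0$. This single identity is the bridge between evaluations of $\bu$ in $S^0$ and evaluations of the subsum $D_\bq(\bu)$ in $S$, and both directions of the lemma are obtained by reading it the appropriate way.

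For the forward direction, assume $S^0$ satisfies $\bu\approx\bu+\bq$. Taking $\psi$ to map $c(\bq)$ into $S$ in any fashion gives $\varphi(\bq)=\psi(\bq)\in S$, so $\varphi(\bq)\ne 0$; were $D_\bq(\bu)$ empty we would have $\varphi(\bu)=0$, and the identity would force $0=0+\varphi(\bq)=\varphi(\bq)$, a contradiction. Hence $D_\bq(\bu)\ne\emptyset$. Running the same $\varphi$ for an arbitrary $\psi\colon X\to S$ turns the $S^0$-identity into $\psi(D_\bq(\bu))=\psi(D_\bq(\bu))+\psi(\bq)$, an equation living entirely in $S$; since $\psi$ is arbitrary, $S$ satisfies $D_\bq(\bu)\approx D_\bq(\bu)+\bq$. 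Conversely, assume $D_\bq(\bu)\ne\emptyset$ and $S$ satisfies $D_\bq(\bu)\approx D_\bq(\bu)+\bq$, and take any $\varphi\colon X\to S^0$. If $\varphi(x)=0$ for some $x\in c(\bq)$ then $\varphi(\bq)=0\le\varphi(\bu)$; otherwise $\varphi$ maps $c(\bq)$ into $S$, and since $D_\bq(\bu)$ and $\bq$ involve only variables of $c(\bq)$, the $S$-identity yields $\varphi(\bq)\le\varphi(D_\bq(\bu))\le\varphi(\bu)$, the last inequality because $D_\bq(\bu)$ is a subsum of $\bu$. In both cases $\varphi(\bq)\le\varphi(\bu)$, so $S^0$ satisfies $\bu\approx\bu+\bq$.

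The only genuinely delicate point is the bookkeeping in the degenerate case $D_\bq(\bu)=\emptyset$: one must consistently interpret the empty image as $0$ and verify that it is exactly the nonemptiness of $D_\bq(\bu)$ that rules out the spurious collapse $\varphi(\bq)=0$. Everything else is a routine two-way translation between evaluations in $S^0$ and in $S$, resting only on the multiplicative-zero behaviour of $0$ and on a summand surviving the zeroing-out substitution precisely when its content lies inside $c(\bq)$; I therefore expect no serious obstacle beyond this case analysis.
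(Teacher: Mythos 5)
Your proof is correct and complete. Note that the paper itself gives no proof of this lemma---it is quoted from \cite[Proposition 1.5]{wrz}---so there is nothing internal to compare against; your argument via the zeroing-out substitution (sending the variables outside $c(\bq)$ to $0$, so that $\varphi(\bu)$ collapses to $\psi(D_\bq(\bu))$ and the degenerate case $D_\bq(\bu)=\emptyset$ is excluded by $\varphi(\bq)\neq 0$) is exactly the standard argument one would expect for this statement, and all the delicate points, including the empty-sum convention and the fact that $D_\bq(\bu)$ and $\bq$ involve only variables of $c(\bq)$, are handled correctly.
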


\section{The finite basis problem for specific 4-element ai-semirings}
In this section we first answer the finite basis problem for some $4$-element ai-semirings by some known results,
and then provide an equational basis for $S_{(4, 471)}$.

\begin{pro}
The ai-semiring $S_{(4, 435)}$ is nonfinitely based.
\end{pro}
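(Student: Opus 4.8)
The plan is to exploit the fact that $S_{(4,435)}$ is a zero-adjunction. A direct reading of the tables shows that the element $2$ is simultaneously the additive identity (it is the least element of the quasi-antichain, so $a+2=a$) and a multiplicative zero ($2a=a2=2$), so $S_{(4,435)}\cong T^0$, where $T$ is the subsemiring carried by $\{1,3,4\}$. Inside $T$ the element $4$ is the multiplicative identity of $S_{(4,435)}$, the generator $3$ is ``nilpotent'' in the sense that $3\cdot 3=1$, and $1$ is at once the multiplicative zero of $T$ and the additive top of $S_{(4,435)}$ (so $1+a=1$ and $3+4=1$). The resulting evaluation rules are the arithmetic engine I would exploit: a product equals $2$ as soon as one letter is sent to $2$; among values in $\{1,3,4\}$ the product equals the absorbing top $1$ as soon as $3$ is repeated or some letter is sent to $1$, equals $3$ if exactly one $3$ survives, and equals $4$ otherwise. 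By Lemma~\ref{lem001}, an identity $\bu\approx\bu+\bq$ lies in $\Th(S_{(4,435)})$ exactly when $D_\bq(\bu)\neq\emptyset$ and $D_\bq(\bu)\approx D_\bq(\bu)+\bq$ holds in $T$, so the whole theory is controlled by the combinatorial invariants $c$, $h$, $\ell$ and the operators $D_\bq$, $H_\bq$, $L_{\ge k}$, $M_1$ introduced above.

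Next I would produce an explicit infinite family $\sigma_n\colon \bu_n\approx \bu_n+\bq_n$ ($n$ large), of strictly increasing length, all holding in $S_{(4,435)}$. The idea is to take $\bq_n$ to be a long word and $\bu_n$ a sum engineered so that $D_{\bq_n}(\bu_n)\neq\emptyset$ and $D_{\bq_n}(\bu_n)\approx D_{\bq_n}(\bu_n)+\bq_n$ holds in $T$, but in a way whose witnessing summand and reason for absorption are spread across the full length of $\bq_n$ (for instance by arranging that a single $3$ can survive in $\bq_n$ only through a long-range pattern of substitutions). Verifying $S_{(4,435)}\models\sigma_n$ then reduces, for each fixed $n$, to checking the additive-order inequality $\phi(\bq_n)\le\phi(\bu_n)$ under every substitution $\phi$; with the evaluation rules above this is a finite case analysis: the inequality is automatic when $\phi(\bq_n)\in\{1,2\}$ is forced, and in the remaining cases $\bu_n$ is designed to carry a matching summand of the same value. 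This step should be routine.

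The decisive step, and the one I expect to be the main obstacle, is separation: for each $N$ I must build a finite ai-semiring $A_N$ satisfying every identity of $\Th(S_{(4,435)})$ of length at most $N$, yet failing $\sigma_N$ (whose length exceeds $N$). The natural candidates are syntactic algebras of bounded-length words over a finite alphabet, with products truncated to an absorbing value once a length or repetition bound depending on $N$ is exceeded, so that $A_N$ cannot detect the long-range duplication of $3$ that makes $\sigma_N$ valid and hence refutes it, while remaining coarse enough to validate every shorter law. Proving that $A_N$ does validate the shorter identities is where a normal-form analysis of derivations is needed, and this is precisely why the operators $D_\bq$, $H_\bq$, $L_{\ge k}$ and $M_1$ are in place: one shows that evaluation in $A_N$ respects the content, head and length data through which Lemma~\ref{lem001} governs $\Th(S_{(4,435)})$. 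Granting such a family, $S_{(4,435)}$ cannot be finitely based, by the standard Birkhoff argument: a finite basis $\Sigma\subseteq\Th(S_{(4,435)})$ would consist of identities of some bounded length $M$, and for $N\ge M$ we would have $A_N\models\Sigma$ but $A_N\not\models\sigma_N$, contradicting $\Sigma\vdash\sigma_N$. Since every $\sigma_n$ belongs to $\Th(S_{(4,435)})$, this proves that $S_{(4,435)}$ is nonfinitely based.
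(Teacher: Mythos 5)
Your opening identification is correct, and it is in fact the paper's first (and only substantive) step: in $S_{(4,435)}$ the element $2$ is simultaneously additively neutral and multiplicatively absorbing, so $S_{(4,435)}\cong T^0$ with $T$ the subsemiring on $\{1,3,4\}$, and this $T$ is precisely the three-element ai-semiring $S_7$; your evaluation rules for $T$ check out against the Cayley table. But from that point on your argument and the paper's diverge completely in substance. The paper finishes in one line by invoking \cite[Corollary 2.4]{wrz}, the published theorem that $S_7^0$ is nonfinitely based, whereas you set out to reprove that theorem from scratch --- and your attempt stops at the statement of a plan. You never exhibit the identities $\sigma_n$: the words $\bu_n,\bq_n$ are only described as ``engineered so that'' a single surviving $3$ depends on a long-range pattern, with no explicit construction and no verification that $S_{(4,435)}\models\sigma_n$. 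Likewise the separating algebras $A_N$ are only ``natural candidates'' (truncated word algebras), with no definition of the truncation, no proof that $A_N$ satisfies every identity of $\Th(S_{(4,435)})$ of length at most $N$, and no proof that $A_N\not\models\sigma_N$. Since the concluding Birkhoff/compactness step is routine, these two missing constructions are the entire mathematical content of nonfinite basedness; the phrase ``granting such a family'' concedes exactly what has to be proved. Note also that Lemma \ref{lem001} only transfers the \emph{validity} of single identities between $S$ and $S^0$; it gives no control over derivability (which identities are consequences of which), so it cannot substitute for the normal-form analysis you defer to the unspecified algebras $A_N$.

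As it stands, then, your proposal is a correct reduction of the problem to the known result about $S_7^0$, followed by an unexecuted sketch of that result's proof --- the content of the separate paper \cite{wrz} --- so it does not constitute a proof. The efficient repair is to close the argument the way the paper does: having observed $S_{(4,435)}\cong S_7^0$, cite \cite[Corollary 2.4]{wrz}. If instead you insist on a self-contained argument, you must actually carry out both halves of your plan (explicit $\sigma_n$ valid in $S_7^0$, and explicit finite models validating all short identities of $\Th(S_7^0)$ while refuting $\sigma_N$), and you should expect this to be a substantial undertaking rather than the ``routine'' and ``finite case analysis'' steps your sketch suggests.
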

\begin{proof}
It is easy to see that $S_{(4, 435)}$ is isomorphism to the semiring $S_7^0$.
By \cite[Corollary 2.4]{wrz} we immediately deduce that $S_{(4, 435)}$ is nonfinitely based.
\end{proof}

\begin{pro}\label{pro38801}
The following ai-semirings are finitely based: $S_{(4, 388)}$, $S_{(4, 415)}$, $S_{(4, 425)}$, $S_{(4, 462)}$,
$S_{(4, 468)}$, $S_{(4, 470)}$, $S_{(4, 472)}$, $S_{(4, 473)}$,
$S_{(4, 476)}$, $S_{(4, 478)}$ and $S_{(4, 480)}$.
\end{pro}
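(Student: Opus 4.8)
My plan is to determine the variety $\Var(S_{(4,k)})$ generated by each of the eleven algebras and to recognize it as a variety already known to be finitely based. The uniform feature that makes this feasible is that in every one of these tables the product $xy$ always lands in a two-element subsemilattice of the additive reduct --- one of the chains $\{1,3\}$ or $\{2,3\}$, or a singleton. Because the additive reduct is the fixed quasi-antichain with top $1$ and bottom $2$, this strongly constrains the multiplication (each table is governed by a retraction onto the relevant chain), and it suggests that every such algebra is a subdirect product of two-element ai-semirings.

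Concretely, for each $S_{(4,k)}$ I would use the surjective homomorphisms of the additive quasi-antichain onto the two-element semilattice. These correspond to its three nontrivial prime filters, giving the partitions $\{1,3\}\mid\{2,4\}$, $\{1,4\}\mid\{2,3\}$ and $\{1,3,4\}\mid\{2\}$. Each is an additive congruence by construction; using the explicit multiplication table one checks directly that it is also compatible with multiplication, hence a semiring congruence. Since the first two partitions already intersect to the identity relation, a suitable pair (occasionally needing the third) separates points, so $S_{(4,k)}$ embeds subdirectly into the product of the associated two-element quotients. Each quotient is, up to isomorphism, one of $L_2, R_2, M_2, D_2, N_2, T_2$.

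Granting such a subdirect decomposition $S_{(4,k)} \hookrightarrow T_1 \times \cdots \times T_r$ in which every factor $T_j$ is at the same time a homomorphic image of $S_{(4,k)}$, we get $\Var(S_{(4,k)}) = \Var(\{T_1,\dots,T_r\})$: the embedding gives $S_{(4,k)} \in \ISP(\{T_j\})$ and hence $\Var(S_{(4,k)}) \subseteq \Var(\{T_j\})$, while each $T_j$ being a quotient gives the reverse inclusion. It then remains to invoke known results: the equational bases of the individual two-element ai-semirings are listed in \cite[Lemma 1.1]{sr}, and the finitely based subvarieties of the variety generated by the two-element ai-semirings are recorded in the background developed in \cite{rlzc}.

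I expect the genuine obstacle to be this last step rather than the bookkeeping. Finding the retraction behind each table, checking compatibility of the three partitions, and picking a separating pair is routine case analysis. The real point is that a join of finitely based varieties may fail to be finitely based, so one must verify that each variety $\Var(\{T_1,\dots,T_r\})$ that actually arises --- generated by the ``constant'' factors with $xy$ equal to the additive top or bottom, a projection factor, and the semilattice factor satisfying $xy \approx x+y$ --- is among the explicitly finitely based small ai-semiring varieties. Pinning down exactly which such joins occur, and citing their finite bases, is where the argument must be made precise.
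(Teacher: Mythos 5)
Your proposal is correct, and it reaches the paper's conclusion by a genuinely different (semantic) route. The paper's own proof is a two-line syntactic check: each of the eleven algebras satisfies the explicit finite equational basis, given in \cite[Theorem 2.1]{sr}, of the variety generated by all six ai-semirings of order two, hence lies in that variety; the main result of \cite{sr} (that variety is hereditarily finitely based, its finitely many subvarieties all having explicit finite bases) then finishes the job at once. You instead establish membership in the same variety by exhibiting, for each table, a subdirect embedding into two-element quotients via the additive congruences $\{1,3\}\mid\{2,4\}$ and $\{1,4\}\mid\{2,3\}$ --- and this does work: these two partitions separate points, and one can check from the tables that both are multiplicative congruences in all eleven cases (e.g.\ $S_{(4,415)}$ is subdirect in $T_2\times M_2$, $S_{(4,425)}$ in $T_2\times R_2$, $S_{(4,462)}$ in $T_2\times L_2$, $S_{(4,468)}$ in $T_2\times D_2$, $S_{(4,472)}$ in $D_2\times N_2$, $S_{(4,473)}$ in $L_2\times N_2$, $S_{(4,476)}$ in $R_2\times N_2$, $S_{(4,478)}$ in $M_2\times N_2$, $S_{(4,480)}$ in $T_2\times N_2$, while $S_{(4,388)}$ and $S_{(4,470)}$ give $\mathsf{V}(T_2)$ and $\mathsf{V}(N_2)$); your third partition $\{1,3,4\}\mid\{2\}$ is never actually needed. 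What your route buys is more information: it identifies $\mathsf{V}(S_{(4,k)})$ exactly as a join of two-element varieties, in the spirit of the Remarks the paper proves after several of its later propositions, whereas the paper's check is faster and uniform across all eleven algebras. Your closing worry is well placed and is resolved by precisely the ingredient the paper invokes: one cannot appeal to a general principle about joins of finitely based varieties (there is none), but the main theorem of \cite{sr} shows every subvariety of $\mathsf{V}(L_2,R_2,M_2,D_2,N_2,T_2)$ is finitely based, so each join that arises is covered. Two citation corrections would make your write-up precise: this last fact is the main result of \cite{sr}, not part of the background of \cite{rlzc}, and \cite[Lemma 1.1]{sr} solves the equational problem for the individual two-element algebras, while the bases themselves come from \cite[Theorem 2.1]{sr} and the subvariety analysis there.
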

\begin{proof}
It is easy to check that every semiring in Proposition $\ref{pro38801}$
satisfies an equational basis (see \cite[Theorem 2.1]{sr})
of the variety generated by all ai-semirings of order two.
By the main result of \cite{sr} we have that these algebras are all finitely based.
\end{proof}

\begin{pro}\label{pro40901}
The following ai-semirings are finitely based: $S_{(4, 409)}$, $S_{(4, 421)}$, $S_{(4, 422)}$,
$S_{(4, 436)}$, $S_{(4, 437)}$, $S_{(4, 438)}$, $S_{(4, 439)}$,
$S_{(4, 449)}$, $S_{(4, 451)}$, $S_{(4, 452)}$, $S_{(4, 454)}$, $S_{(4, 455)}$,
$S_{(4, 457)}$, $S_{(4, 458)}$, $S_{(4, 460)}$,
$S_{(4, 463)}$, $S_{(4, 465)}$,
$S_{(4, 466)}$ and $S_{(4, 469)}$.
\end{pro}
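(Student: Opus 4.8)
The plan is to follow the template already used for Proposition \ref{pro38801}: instead of analysing the nineteen multiplication tables one at a time, I would exhibit a single \emph{known} variety $\mathbf{V}$ of ai-semirings that contains all of them and that is \emph{hereditarily} finitely based, i.e.\ such that every subvariety of $\mathbf{V}$ is finitely based. Given that these algebras are genuinely four-element — unlike those of Proposition \ref{pro38801}, which collapse into the order-two world — the natural candidate is the variety generated by a suitable finite family of ai-semirings of order three, presented by an explicit finite equational basis $\Sigma$, whose hereditary finite basis property is already recorded in the companion study of three-element ai-semirings \cite{zrc}. Granting such a pair $(\mathbf{V},\Sigma)$, the conclusion becomes uniform: if each listed $S_{(4,k)}$ satisfies $\Sigma$, then $\Var(S_{(4,k)}) \subseteq \mathbf{V}$ is a subvariety of a hereditarily finitely based variety and is therefore finitely based, which is exactly the assertion that $S_{(4,k)}$ is finitely based.

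Concretely I would proceed in three steps. First, fix $\mathbf{V}$ and recall an explicit finite basis $\Sigma$ for it from the cited source. Second, for each of the nineteen semirings, verify membership in $\mathbf{V}$ by checking that every identity of $\Sigma$ holds; since the additive reduct is the fixed diamond of Figure \ref{figure01} and the Cayley tables of Table \ref{tb1} are completely explicit, each such check is a finite evaluation. To keep this manageable I would group the tables by their salient multiplicative features — those possessing a multiplicative zero (several, such as $S_{(4, 436)}$, have a constant row $2\,2\,2\,2$ together with a constant column), those whose multiplicative reduct is a right-zero or left-zero band (as in $S_{(4, 421)}$ and $S_{(4, 454)}$, respectively), and the remaining mixed types — so that the verification proceeds a block at a time rather than algebra by algebra. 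Third, invoke the hereditary finite basis property of $\mathbf{V}$ to pass from $\Var(S_{(4,k)}) \subseteq \mathbf{V}$ to the finite basability of each $S_{(4,k)}$.

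The hard part will be the first step rather than the bookkeeping of the second: I must locate a variety $\mathbf{V}$ that simultaneously captures all nineteen semirings despite their visibly different multiplication tables, and I must confirm that $\mathbf{V}$ is hereditarily finitely based and not merely finitely based. This distinction is essential, since a finitely based variety can perfectly well contain nonfinitely based members — indeed $S_{(4, 435)} \cong S_7^0$ lies inside the finitely based variety of all ai-semirings yet is nonfinitely based by Proposition above — so any argument that only places the $S_{(4,k)}$ inside a finitely based variety would be insufficient. A secondary point is to ensure that the nineteen semirings of this proposition are exactly those amenable to this variety-membership argument, with the more delicate syntactic treatment (using $D_{\bq}$, $H_{\bq}$, and Lemma \ref{lem001}) reserved for the remaining tables, so that together with Proposition \ref{pro38801} and the later case analysis the entire list $388 \le k \le 480$ is accounted for.
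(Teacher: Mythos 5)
Your overall template --- embed each listed semiring in a hereditarily finitely based variety presented by an explicit finite basis $\Sigma$, then quote the hereditary property --- is exactly the architecture of the paper's proof, but your proposal never actually produces the pair $(\mathbf{V},\Sigma)$, and the place you propose to find it is wrong. The hereditary finite basis property you need is not recorded in \cite{zrc}: that paper solves the finite basis problem for the individual ai-semirings of order three, which is strictly weaker, since a variety generated by a finite family of finite, finitely based algebras can perfectly well contain nonfinitely based subvarieties --- the phenomenon you yourself cite with $S_{(4,435)}\cong S_7^0$. So ``granting such a pair $(\mathbf{V},\Sigma)$'' begs precisely the question the proposition answers; you concede as much by calling step one ``the hard part,'' and as it stands the argument is incomplete.

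What closes the gap is much more concrete than a search through \cite{zrc}. Eighteen of the nineteen semirings satisfy the single one-variable identity $x^2\approx x$, i.e., their multiplicative reducts are bands, and the main results of \cite{gpz,pas05} show that there are only finitely many ai-semiring varieties satisfying $x^2\approx x$ and that every one of them is finitely based --- this is exactly the hereditarily finitely based $\mathbf{V}$ you were looking for, with $\Sigma=\{x^2\approx x\}$. The lone exception is $S_{(4,438)}$, where $4\cdot 4=3$, so $x^2\approx x$ fails; but $4^3=3\cdot 4=4$, so it satisfies $x^3\approx x$, and the main result of \cite{rzw} plays the same hereditary role for the variety defined by $x^3\approx x$. (Since $x^2\approx x$ implies $x^3\approx x$, you could even realize your hoped-for ``single variety'' by taking $\Sigma=\{x^3\approx x\}$ and citing \cite{rzw} alone.) Note also that once $\Sigma$ is a one-variable identity, your membership check collapses to evaluating it on four elements per Cayley table, so the proposed grouping by multiplicative zeros and band types is unnecessary bookkeeping.
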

\begin{proof}
It is easy to see that every semiring in Proposition $\ref{pro40901}$ except $S_{(4, 438)}$ satisfies $x^2 \approx x$. From
the main result of \cite{gpz, pas05} we deduce that these semirings are all finitely based.
Since $S_{(4, 438)}$ satisfies $x^3 \approx x$,
it follows from the main result of \cite{rzw} that $S_{(4, 438)}$ is finitely based.
\end{proof}
\begin{pro}
The following ai-semirings are finitely based: $S_{(4, 389)}$, $S_{(4, 391)}$, $S_{(4, 399)}$,
$S_{(4, 402)}$, $S_{(4, 403)}$, $S_{(4, 406)}$, $S_{(4, 407)}$, $S_{(4, 408)}$, $S_{(4, 410)}$,
$S_{(4, 416)}$, $S_{(4, 420)}$, $S_{(4, 426)}$, $S_{(4, 429)}$, $S_{(4, 444)}$ and $S_{(4, 448)}$.
\end{pro}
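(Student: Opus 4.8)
The plan is to follow the template set by the two preceding propositions: realise each of the fifteen algebras as a member of a variety already known to be finitely based, and then quote the corresponding structural theorem. First I would compute multiplicative powers element by element. This reveals that every semiring on the list except $S_{(4, 410)}$ satisfies $x^3 \approx x^2$, whereas $S_{(4, 410)}$ satisfies $x^4 \approx x^3$ but not $x^3 \approx x^2$ (there $2^2 = 3$ while $2^3 = 1$). So at the level of one-variable identities each algebra sits in the variety defined by $x^{n+1} \approx x^n$, with $n = 2$ in fourteen cases and $n = 3$ in the last.

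The delicate point, and the main obstacle, is that this power identity alone does not deliver finite basis: the nonfinitely based algebra $S_{(4, 435)} \cong S_7^0$ also satisfies $x^3 \approx x^2$. Consequently one cannot simply invoke a blanket ``$x^{n+1} \approx x^n \Rightarrow$ finitely based'' principle; the theorem to be cited must concern a strictly smaller, genuinely finitely based variety, and the substance of the argument is to certify that each of the fifteen semirings satisfies its full defining basis while $S_7^0$ does not. Concretely I would isolate the additional identities that hold in these algebras but fail in $S_7^0$, and check them against the basis of the appropriate finitely based variety from the cited literature, treating $S_{(4, 410)}$ separately under $x^4 \approx x^3$.

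For two of the fifteen there is a cleaner route that sidesteps this bookkeeping. In $S_{(4, 429)}$ and $S_{(4, 448)}$ the element $2$ is simultaneously the additive identity and a two-sided multiplicative zero, so each has the form $T^0$ for an order-three ai-semiring $T$ carried by $\{1,3,4\}$. Since the relevant $T$ is finitely based and is not the pathological generator $S_7$, Lemma \ref{lem001} together with the transfer results of \cite{wrz} yields that $T^0$ is finitely based. For the other thirteen algebras the element $2$ is not a two-sided multiplicative zero, so they are not of the form $S^0$ and the variety-membership argument cannot be bypassed.

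In short, the power-identity computation is routine; the real work is the correct choice of finitely based variety — one fine enough to separate these fifteen algebras from $S_{(4, 435)}$, which satisfies the same power identity — together with the uniform verification of its defining identities across all the listed semirings.
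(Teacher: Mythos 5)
There is a genuine gap: your proposal correctly diagnoses the obstacle (the power identity $x^3\approx x^2$ cannot separate these algebras from $S_{(4,435)}\cong S_7^0$) but never supplies the mechanism that actually proves finite basedness, and the paper's mechanism is quite different from what you sketch. The paper shows, for each of the fifteen semirings, a concrete subdirect decomposition into three-element factors yielding an equality of varieties with a single finitely based three-element ai-semiring: e.g.\ $S_{(4,389)}$ is a subdirect product of two copies of $S_{60}$, so $\mathsf{V}(S_{(4,389)})=\mathsf{V}(S_{60})$; and $S_{(4,399)}$ is a subdirect product of $S_{30}$ and $S_{60}$ where $S_{30}$ is checked to satisfy a finite basis of $\mathsf{V}(S_{60})$, so again $\mathsf{V}(S_{(4,399)})=\mathsf{V}(S_{60})$ --- and similarly each remaining algebra is matched with one of $S_{53},\dots,S_{60}$, all finitely based by \cite[Corollary 5.1]{jrz}. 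Your plan --- ``certify that each semiring satisfies the full defining basis of an appropriate finitely based variety'' --- is logically insufficient even if carried out: satisfying a finite basis of a variety $V$ only shows membership in $V$, and a member of a finitely based variety need not itself be finitely based unless it \emph{generates} $V$ (which is exactly what the subdirect decompositions establish) or $V$ is hereditarily finitely based. The hereditary results available in the literature cover $x^2\approx x$ \cite{gpz,pas05} and $x^3\approx x$ \cite{rzw} (used in Proposition \ref{pro40901}) and the order-two variety \cite{sr} (used in Proposition \ref{pro38801}); none of these applies to the fifteen algebras here, which are not multiplicatively idempotent, and no hereditary theorem exists for $x^3\approx x^2$ --- as your own $S_7^0$ example shows it would be false.

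Your shortcut for $S_{(4,429)}$ and $S_{(4,448)}$ has the same defect in sharper form. The observation that $2$ is both the additive neutral and a two-sided multiplicative zero, so that these algebras have the form $T^0$ for a three-element $T$, is correct; but the inference ``$T$ finitely based and $T\neq S_7$, hence $T^0$ finitely based by Lemma \ref{lem001} and \cite{wrz}'' invokes a transfer theorem that does not exist. Lemma \ref{lem001} only \emph{characterizes} the identities of $S^0$ in terms of those of $S$; \cite{wrz} is a negative result (that $S_7^0$ is nonfinitely based), and the present paper itself treats $S^0$-type semirings such as $S_2^0\cong S_{(4,430)}$ and $S_4^0\cong S_{(4,434)}$ by constructing explicit finite bases at length, precisely because finite basedness does not automatically pass from $S$ to $S^0$. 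The paper instead disposes of $S_{(4,429)}$ and $S_{(4,448)}$ by the same subdirect-product route as the rest, showing both generate $\mathsf{V}(S_{55})$. To repair your argument you would need either the decompositions themselves or explicit bases, and at that point you are reproducing the paper's proof rather than shortcutting it.
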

\begin{proof}
From \cite[Corollary 5.1]{jrz} we know that every ai-semiring of order three is finitely based except $S_7$.
It is a routine matter that $S_{(4,389)}$ is isomorphic to a subdirect product of two copies of $S_{60}$.
This implies that $\mathsf{V}(S_{(4, 389)}) =\mathsf{V}(S_{60})$
and so $S_{(4,389)}$ is finitely based. One can use the same approach to prove that
\[
\mathsf{V}(S_{(4, 391)}) = \mathsf{V}(S_{57}),
\mathsf{V} (S_{(4, 403)}) = \mathsf{V}(S_{54}),
\mathsf{V}(S_{(4, 407)}) = \mathsf{V}(S_{53}),
\]
\[
\mathsf{V}(S_{(4, 416)}) = \mathsf{V}(S_{56}),
\mathsf{V}(S_{(4, 426)}) = \mathsf{V}(S_{58}),
\mathsf{V}(S_{(4, 429)}) = \mathsf{V}(S_{55}).
\]
So $S_{(4, 391)}$, $S_{(4, 403)}$, $S_{(4, 407)}$, $S_{(4, 416)}$, $S_{(4, 426)}$ and $S_{(4, 429)}$
are all finitely based.

On the other hand,
it is easy to verify that $S_{(4, 399)}$ is isomorphic to a subdirect product of $S_{30}$ and $S_{60}$,
and that $S_{30}$ satisfies a finite equational basis (see \cite[Proposition 12]{zrc}) of $S_{60}$.
It follows that $\mathsf{V}(S_{(4, 399)}) = \mathsf{V}(S_{60})$
and so $S_{(4,399)}$ is finitely based. One can use the same approach to prove that
\[
\mathsf{V}(S_{(4, 402)}) = \mathsf{V}(S_{57}),
\mathsf{V}(S_{(4, 406)}) = \mathsf{V}(S_{54}),
\mathsf{V}(S_{(4, 408)}) = \mathsf{V}(S_{53}),
\]
\[
\mathsf{V}(S_{(4, 410)}) = \mathsf{V}(S_{59}),
\mathsf{V}(S_{(4, 420)}) = \mathsf{V}(S_{56}),
\mathsf{V}(S_{(4, 444)}) = \mathsf{V}(S_{58}),
\mathsf{V}(S_{(4, 448)}) = \mathsf{V}(S_{55}).
\]
Thus $S_{(4, 402)}$, $S_{(4, 406)}$, $S_{(4, 408)}$, $S_{(4, 410)}$, $S_{(4, 420)}$,
$S_{(4, 444)}$ and $S_{(4, 448)}$ are all finitely based.
\end{proof}

\begin{pro}\label{pro47101}
$\mathsf{V}(S_{(4, 471)})$ is the ai-semiring variety defined by the identities
\begin{align}
xy &\approx yx; \label{47101}\\
x^2 &\approx x^2+xy; \label{47102}\\
x_1 & \approx x_1+x_2x_3x_4. \label{47104}
\end{align}
\end{pro}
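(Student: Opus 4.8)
\emph{Setup and soundness.} Let $\mathcal{W}$ be the ai-semiring variety defined by the identities \eqref{47101}--\eqref{47104}. The inclusion $\mathsf{V}(S_{(4,471)})\subseteq\mathcal{W}$ follows once one checks that $S_{(4,471)}$ satisfies these three identities. Commutativity \eqref{47101} is read off from the symmetry of the multiplication table. For the other two, the key observations are that in $S_{(4,471)}$ the element $2$ is simultaneously the multiplicative zero and the least element of the additive semilattice (with $1$ the greatest and $3,4$ incomparable in between), that any product of three or more elements equals $2$, and that a product of two elements equals $3$ when both factors lie in $\{1,4\}$ and equals $2$ otherwise. Hence under any assignment the word $x_2x_3x_4$ evaluates to $2$, so $x_1+x_2x_3x_4$ evaluates to $x_1$, giving \eqref{47104}; and $xy\le x^2$ holds in the additive order for every assignment (if $x\mapsto 1$ or $x\mapsto 4$ then $x^2\mapsto 3\ge xy$, otherwise $x^2\mapsto 2$ forces $x\mapsto 2,3$ and then $xy\mapsto 2$), giving \eqref{47102}.

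\emph{Reduction to a normal form.} For the reverse inclusion it suffices to prove that \eqref{47101}--\eqref{47104} form a basis, i.e.\ that every identity holding in $S_{(4,471)}$ is a consequence of them, and I would do this via a normal form. Working modulo the ai-semiring axioms, distributivity writes any term as a sum of words, and \eqref{47101} turns each word into a multiset of variables. Using \eqref{47104} in the form $\bw+\bt\approx\bw$ whenever $\ell(\bt)\ge 3$ (obtained by grouping $\bt$ into three factors and substituting into \eqref{47104}), every summand of length $\ge 3$ may be deleted, provided at least one shorter summand survives. A short separate argument handles the degenerate case in which \emph{all} summands have length $\ge 3$: two such nilpotent terms $\bs$ and $\bt$ are provably equal, since \eqref{47104} yields $\bs\approx\bs+\bt\approx\bt+\bs\approx\bt$. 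Finally \eqref{47102} deletes any word $xy$ from a sum that already contains $x^2$. The outcome is a normal form recorded by three data: the set $\mathrm{L}$ of variables occurring as a length-one summand, the set $\mathrm{Q}$ of variables $x$ with $x^2$ a summand, and the set $\mathrm{P}$ of two-variable words $xy$ ($x\ne y$) that are summands, where \eqref{47102} guarantees that no variable of $\mathrm{Q}$ occurs in any word of $\mathrm{P}$.

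\emph{Separation.} The heart of the argument is to show that two normal forms with different data $(\mathrm{L},\mathrm{Q},\mathrm{P})$ are separated by $S_{(4,471)}$; together with soundness this forces equal normal forms, hence derivability. Each datum is detected by evaluations sending a few distinguished variables to $1$ or $4$ and all others to $2$. Sending a single variable $x\mapsto 1$ (rest $\mapsto 2$) gives value $1$ exactly when $x\in\mathrm{L}$, and value $3$ versus $2$ otherwise according to whether $x\in\mathrm{Q}$; this recovers $\mathrm{L}$ and recovers $\mathrm{Q}$-membership for $x\notin\mathrm{L}$. For $x\in\mathrm{L}$ the assignment $x\mapsto 4$ (rest $\mapsto 2$) distinguishes $x\in\mathrm{Q}$ (value $4\vee 3=1$) from $x\notin\mathrm{Q}$ (value $4$). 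Finally a pair $xy\in\mathrm{P}$ is detected by $x,y\mapsto 4$ (rest $\mapsto 2$): the word $xy$ contributes $3$, and since $x,y\notin\mathrm{Q}$ and all other variables map to $2$, inserting or removing $xy$ toggles the total value (from $3$ to $2$ when neither of $x,y$ lies in $\mathrm{L}$, and from $1$ to $4$ when at least one does). Keeping track of the linear contributions $4$ while isolating the pair contribution $3$ is the delicate step, and it is exactly where the condition ``$\mathrm{P}$ avoids $\mathrm{Q}$'' is needed.

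\emph{Conclusion and main obstacle.} Combining the two inclusions yields $\mathsf{V}(S_{(4,471)})=\mathcal{W}$. The hard part is the completeness half: verifying that rewriting by \eqref{47101}--\eqref{47104} genuinely reaches the claimed normal form (in particular that no further collapse among length-$\le 2$ words is possible, so that distinct invariants $(\mathrm{L},\mathrm{Q},\mathrm{P})$ really do arise from non-derivably-equal terms) and that these invariants are faithfully separated by the four-element algebra through the case analysis above.
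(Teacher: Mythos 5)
Your proposal is correct, but it reaches the result by a genuinely different route than the paper. The paper follows the absorption-identity scheme it uses throughout: every ai-semiring identity reduces to ones of the form $\bu\approx\bu+\bq$ with $\bu=\bu_1+\cdots+\bu_n$; the subalgebra $N_2\cong\{2,3\}$ forces $\ell(\bq)\geq 2$; identity \eqref{47104} disposes of the case $\ell(\bq)\geq 3$ in one line; and for $\ell(\bq)=2$ a single evaluation (all variables of $c(\bq)\mapsto 4$, all others $\mapsto 2$, giving $\varphi(\bu)\in\{2,4\}$ against $\varphi(\bq)=3$) produces a summand $\bu_i$ with $\ell(\bu_i)=2$ and $c(\bu_i)\subseteq c(\bq)$, after which \eqref{47101} and \eqref{47102} finish the derivation. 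You instead solve the word problem for the variety: you reduce every term to a canonical form recorded by $(\mathrm{L},\mathrm{Q},\mathrm{P})$, together with the degenerate class of sums of words all of length $\geq 3$ (all of which are derivably equal via $\bs\approx\bs+\bt\approx\bt+\bs\approx\bt$, exactly as \eqref{47104} permits), and then separate distinct canonical forms by evaluations into the four-element algebra. I checked your separating assignments against the Cayley table and they work as claimed: $x\mapsto 1$ detects $\mathrm{L}$ and then $\mathrm{Q}$-membership ($1$ versus $3$ versus $2$), $x\mapsto 4$ settles $\mathrm{Q}$-membership for $x\in\mathrm{L}$ ($1$ versus $4$), and the pair test toggles $3\leftrightarrow 2$ or $1\leftrightarrow 4$, with the constraint that $\mathrm{P}$ avoids $\mathrm{Q}$ entering exactly where you say it does. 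The only case you leave implicit is separating a degenerate term (which evaluates to $2$ under every assignment) from a nondegenerate one; this is immediate, since nonempty $\mathrm{L}$, $\mathrm{Q}$ or $\mathrm{P}$ yields a value in $\{1,3,4\}$ under the assignments you already use. What your route buys is strictly more than the statement: a normal form, hence a description of the free objects and an explicit decision procedure for the equational theory of $\mathsf{V}(S_{(4,471)})$. What the paper's route buys is brevity and uniformity---one homomorphism and three short displayed derivations, with no confluence or uniqueness analysis---which is why the same $\bu\approx\bu+\bq$ template recurs in all the other propositions of the paper.
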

\begin{proof}
It is easy to check that $S_{(4, 471)}$ satisfies the identities (\ref{47101})--(\ref{47104}).
In the remainder we need only show that every ai-semiring identity of $S_{(4, 471)}$
can be derived by (\ref{47101})--(\ref{47104}) and the identities defining $\mathbf{AI}$.
Let $\bu \approx \bu+\bq$ be such a nontrivial identity,
where $\bu=\bu_1+\bu_2+\cdots+\bu_n$ and $\bu_i, \bq\in X^+$, $1 \leq i \leq n$.
Since $N_2$ is isomorphic to $\{2, 3\}$,
it follows that $N_2$ satisfies $\bu \approx \bu+\bq$ and so $\ell(\bq)\geq 2$.
If $\ell(\bq)\geq 3$, then by (\ref{47104}) we immediately deduce
$\bu\approx \bu+\bq$.

Now let $\ell(\bq)=2$.
We shall show that $\ell(\bu_i)=2$ for some $\bu_i \in D_\bq(\bu)$.
Suppose by way of contradiction that $\ell(\bu_i)\not=2$ for all $\bu_i \in D_\bq(\bu)$.
Consider the substitution $\varphi: P_f(X^+) \to S_{(4, 471)}$ defined by
$\varphi(x)=4$ for every $x\in c(\bq)$ and $\varphi(x)=2$ otherwise.
Then $\varphi(\bu)=4$ or $2$, $\varphi(\bq)=3$ and so $\varphi(\bu)\neq \varphi(\bu+\bq)$, a contradiction.
So $\ell(\bu_i)=2$ for some $\bu_i \in D_\bq(\bu)$.
We may write that $\bq=xy$. Then $\bu_i=xy$, $yx$, $x^2$ or $y^2$.
Since the identity (\ref{47101}) holds in $S_{(4, 471)}$
and $\bu \approx \bu+\bq$ is nontrivial, we only consider the case that
$\bu_i=x^2$.
Indeed, we have
\[
\bu \approx \bu+\bu_i\approx \bu+x^2\stackrel{(\ref{47102})}\approx \bu+x^2+xy\approx \bu+x^2+\bq.
\]
This proves the identity $\bu \approx \bu+\bq$.
\end{proof}

\section{Equational basis of some $4$-element ai-semirings that relate to $S_{57}$}
In this section we present equational basis for some $4$-element ai-semirings that relate to $S_{57}$.
As a first step, we give some information about the identities of $S_{57}$.

\begin{lem}\label{lem5701}
Let $\bu\approx \bu+\bq$ be a nontrivial ai-semiring identity such that
$\bu=\bu_1+\bu_2+\cdots+\bu_n$ and $\bu_i, \bq \in X^+$, $1\leq i \leq n$.
If $\bu\approx \bu+\bq$ holds in $S_{57}$,
then $\ell(\bu_k)\geq 2$ for some $\bu_k \in \bu$,
$c(p(\bq))\subseteq c(p(\bu))$ and $t(\bq)\in c(\bu)$.
\end{lem}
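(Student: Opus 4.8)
The plan is to treat the three assertions as \emph{necessary} conditions for $\bu\approx\bu+\bq$ to hold and to verify each by contraposition: assuming a condition fails, I will exhibit an explicit substitution $\varphi\colon X\to S_{57}$ with $\varphi(\bu)\ne\varphi(\bu+\bq)$. The engine behind all three arguments is one description of how a word evaluates in $S_{57}$. Recall that $S_{57}$ may be taken on $\{1,2,3\}$, where $2$ is a multiplicative left identity, $a\cdot x=1$ for every $a\ne 2$ and every $x$, and the additive semilattice is the chain $2<3<1$ (so $1$ is additively absorbing and $2$ is the additive identity). Writing $p(\bw)$ for $\bw$ with its last letter deleted and $t(\bw)$ for that last letter, reading the product from the left (leading $2$'s peel off, and once a non-$2$ value is multiplied by anything further it collapses to $1$) gives the evaluation rule
\[
\varphi(\bw)=
\begin{cases}
\varphi(t(\bw)) & \text{if } \varphi(x)=2 \text{ for all } x\in c(p(\bw)),\\
1 & \text{otherwise.}
\end{cases}
\]
Since addition is the join for $2<3<1$, the identity holds in $S_{57}$ precisely when $\varphi(\bq)\le\varphi(\bu)$ for every $\varphi$; so to refute it I need only produce a $\varphi$ pushing $\varphi(\bq)$ strictly above $\varphi(\bu)$.

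With the rule in hand, each condition falls to the same ``$3$ versus $2$'' valuation. For $t(\bq)\in c(\bu)$: if $t(\bq)\notin c(\bu)$, set $\varphi(t(\bq))=3$ and $\varphi(x)=2$ otherwise; then every letter of $\bu$ is sent to $2$, so $\varphi(\bu)=2$, while $\varphi(\bq)\in\{3,1\}$ according as $t(\bq)$ does or does not avoid $p(\bq)$, and both values exceed $2$. For $c(p(\bq))\subseteq c(p(\bu))$: choose $z\in c(p(\bq))\setminus c(p(\bu))$, set $\varphi(z)=3$ and $\varphi(x)=2$ otherwise; since $z$ occurs in $p(\bq)$ we get $\varphi(\bq)=1$, whereas every $p(\bu_i)$ avoids $z$, so $\varphi(\bu_i)=\varphi(t(\bu_i))\in\{2,3\}$ and hence $\varphi(\bu)\le 3<1=\varphi(\bq)$. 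For the existence of a summand of length $\ge 2$: suppose instead every $\bu_i$ is a single letter. If $\ell(\bq)\ge 2$, send the first letter of $\bq$ to $3$ and all else to $2$, giving $\varphi(\bq)=1$ but $\varphi(\bu)\le 3$; if $\ell(\bq)=1$, then nontriviality forces $\bq=t(\bq)\notin c(\bu)$ and the first valuation applies. In every case $\varphi(\bq)>\varphi(\bu)$, contradicting the hypothesis.

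The only delicate points, and the ``obstacle'' such as it is, are the bookkeeping coincidences produced by repeated variables: the tail $t(\bq)$ may recur inside $p(\bq)$, and a variable excluded from $c(p(\bu))$ may still appear as some tail $t(\bu_i)$. In both situations the value that slips through is $1$ or $3$ rather than $2$, and the linear order $2<3<1$ absorbs both, so the strict inequality $\varphi(\bq)>\varphi(\bu)$ survives. I therefore expect no genuine difficulty beyond establishing the evaluation rule carefully and tracking these coincidences; the nontriviality assumption is consumed solely to dispose of the degenerate length-one case of the third assertion.
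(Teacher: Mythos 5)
Your proof is correct, and at bottom it runs on the same engine as the paper's, but it is organized quite differently: you inline everything, whereas the paper outsources two of the three conclusions. The paper gets $\ell(\bu_k)\geq 2$ for some $\bu_k\in\bu$ by observing that $T_2$ sits inside $S_{57}$ as the subalgebra $\{1,3\}$, and gets the stronger fact $c(\bq)\subseteq c(\bu)$ (hence $t(\bq)\in c(\bu)$) from the subalgebra $M_2\cong\{2,3\}$, in both cases citing the solved equational problem for the order-two ai-semirings in \cite{sr}; only $c(p(\bq))\subseteq c(p(\bu))$ is proved there by a direct substitution, which is exactly your ``distinguished variable to a non-identity element, all else to the left identity'' valuation. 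You instead prove all three conditions directly from a single evaluation rule for words in $S_{57}$ (leading copies of the left identity peel off; any non-identity value hit before the last letter collapses the product to the absorbing element), reducing every case to a comparison in the additive chain. What the paper's route buys is brevity and reuse of known results; what yours buys is self-containedness — your rule is in effect the proof of the cited $T_2$ and $M_2$ facts made explicit — plus a precise accounting of where the nontriviality hypothesis is consumed (only in the $\ell(\bq)=1$ subcase of the first conclusion, which the paper absorbs into the cited result for $T_2$). Two harmless remarks: your labeling of $S_{57}$ swaps $1$ and $3$ relative to the paper's convention (in the paper the additive chain is $2<1<3$, with $3$ both the additive top and the collapsed product value, as one sees from its computation $\varphi(\bu)=1$, $\varphi(\bq)=3$), which is immaterial since the statement is label-independent; and you establish only $t(\bq)\in c(\bu)$ rather than the full containment $c(\bq)\subseteq c(\bu)$ that the paper gets for free from $M_2$, but the lemma claims no more than what you prove.
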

\begin{proof}
Assume that $S_{57}$ satisfies $\bu\approx \bu+\bq$.
Since $T_2$ is isomorphic to $\{1, 3\}$,
we have that $T_2$ satisfies $\bu \approx \bu+\bq$
and so $\ell(\bu_k)\geq 2$ for some $\bu_k \in \bu$.
Since $M_2$ is isomorphic to $\{2, 3\}$,
it follows that $M_2$ satisfies $\bu \approx \bu+\bq$ and so $c(\bq)\subseteq c(\bu)$.
Suppose by way of contradiction that $c(p(\bq))\nsubseteq c(p(\bu))$.
Then there exists $x\in c(p(\bq))$ such that $x\notin c(p(\bu))$.
Let $\varphi: P_f(X^+)\to S_{57}$ be a homomorphism defined by
$\varphi(x)=1$ and $\varphi(y)=2$ if $y\neq x$.
Then $\varphi(\bu)=1$ and $\varphi(\bq)=3$, and so $\varphi(\bu)\neq \varphi(\bu+\bq)$, a contradiction.
Thus $c(p(\bq))\subseteq c(p(\bu))$ as required.
\end{proof}

\begin{pro}\label{pro42401}
$\mathsf{V}(S_{(4, 424)})$ is the ai-semiring variety defined by the identities
\begin{align}
x^2y&\approx xy; \label{42401}\\
xyz &\approx yxz; \label{42402}\\
xy &\approx xy+y; \label{42404}\\
x+yz & \approx yx+yz.\label{42403}
\end{align}
\end{pro}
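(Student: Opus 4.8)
The plan is to prove the two inclusions between $\mathsf{V}(S_{(4,424)})$ and the variety $\mathcal{W}$ axiomatised by (\ref{42401})--(\ref{42403}). For the easy inclusion I would verify from the multiplication table that $S_{(4,424)}$ satisfies (\ref{42401})--(\ref{42403}). The structural fact used throughout is that left multiplication by each of $1,3,4$ is one and the same idempotent map $\alpha$, given by $\alpha(1)=\alpha(4)=1$ and $\alpha(2)=\alpha(3)=3$, whereas left multiplication by $2$ is the identity; moreover $a\le\alpha(a)$ for every $a$ in the additive order (the diamond with top $1$ and bottom $2$). Then (\ref{42401}), (\ref{42402}) and (\ref{42404}) are immediate, and (\ref{42403}) follows from a short case check on $\alpha$. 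Because $\alpha$ is idempotent, the value of a word is easy to compute: for any word $\bw$ and substitution $\varphi$ into $S_{(4,424)}$ one has $\varphi(\bw)=\varphi(t(\bw))$ if every variable of $p(\bw)$ is sent to $2$, and $\varphi(\bw)=\alpha(\varphi(t(\bw)))$ otherwise; hence $\varphi(\bw)$ depends only on the pair $(c(p(\bw)),t(\bw))$.

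For the converse I would take a nontrivial identity $\bu\approx\bu+\bq$ of $S_{(4,424)}$ with $\bu=\bu_1+\cdots+\bu_n$ and extract two combinatorial conditions. The quotient of $S_{(4,424)}$ obtained by identifying $1$ and $4$ is isomorphic to $S_{57}$ (its copies of $T_2$ and $M_2$ being the subalgebras $\{1,3\}$ and $\{2,3\}$ of Lemma \ref{lem5701}), so $\bu\approx\bu+\bq$ holds in $S_{57}$; Lemma \ref{lem5701} then gives $c(p(\bq))\subseteq c(p(\bu))$ and $t(\bq)\in c(\bu)$. Next I would strengthen the tail condition to $t(\bq)=t(\bu_i)$ for some $i$. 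Assuming the contrary, the substitution $\varphi(t(\bq))=4$ and $\varphi(x)=3$ for $x\ne t(\bq)$ gives $\varphi(\bq)\in\{1,4\}$ by the value formula, while $\varphi(\bu_i)\in\{2,3\}$ for every $i$ (each tail $t(\bu_i)$ is sent to $3$); hence $\varphi(\bu)\in\{2,3\}$ and $\varphi(\bq)\not\le\varphi(\bu)$, a contradiction.

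It remains to derive $\bu\approx\bu+\bq$ from these two conditions. By (\ref{42402}) the prefix variables of any word may be permuted arbitrarily, and by (\ref{42401}) a prefix variable occurring more than once may be deleted; consequently (\ref{42401}) and (\ref{42402}) prove $\bw\approx\bw'$ whenever $(c(p(\bw)),t(\bw))=(c(p(\bw')),t(\bw'))$. Since $t(\bq)=t(\bu_i)$ for some $i$, we get $\bu\approx\bu+t(\bq)$ (directly when $\ell(\bu_i)=1$, otherwise by substituting the prefix of $\bu_i$ for the first variable of (\ref{42404})). The heart of the argument is a prepend step: if $\bu\approx\bu+\bw$ has been derived and $x\in c(p(\bu_j))$ for some $j$, then (\ref{42402}) lets me bring $x$ to the front of $\bu_j$, say $\bu_j\approx x\bu_j'$, and applying (\ref{42403}) to the summands $\bw$ and $x\bu_j'$ (substituting $\bw,x,\bu_j'$ for its three variables) yields $\bu\approx\bu+x\bw$, using that $x\bu_j'=\bu_j$ is already a summand of $\bu$. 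Since $c(p(\bq))\subseteq c(p(\bu))$, every variable of $p(\bq)$ lies in some $c(p(\bu_j))$, so repeated prepending produces a word $\bw^{*}$ with $(c(p(\bw^{*})),t(\bw^{*}))=(c(p(\bq)),t(\bq))$ and $\bu\approx\bu+\bw^{*}$; by the normal-form fact $\bw^{*}\approx\bq$, whence $\bu\approx\bu+\bq$. I expect the prepend step to be the main obstacle: one must set up the substitution into (\ref{42403}) correctly, check that the auxiliary summand $x\bu_j'$ is genuinely present in $\bu$, and then verify that the word built up by successive prepends really normalises to $\bq$.
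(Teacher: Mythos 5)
Your overall strategy is the paper's: extract the two combinatorial conditions $t(\bq)=t(\bu_i)$ for some $\bu_i\in\bu$ and $c(p(\bq))\subseteq c(p(\bu))$, then derive $\bu\approx\bu+\bq$ by de-duplicating and permuting prefix letters with (\ref{42401}) and (\ref{42402}), prepending prefix variables across summands with (\ref{42403}), and splitting off $\bq$ with (\ref{42404}). Your derivation machinery is sound: the prepend step is a correct application of (\ref{42403}) (substituting $\bw, x, \bu_j'$ and re-absorbing the summand $x\bu_j'\approx\bu_j$), and it is just a summand-local version of what the paper does globally when it rewrites all of $\bu$ into the normal form $x_1^2x_2^2\cdots x_m^2(y_1+\cdots+y_n)$ before adjoining $x_1^2x_2^2\cdots x_m^2t(\bq)$. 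Your substitution proving the tail condition also works: with $\varphi(t(\bq))=4$ and $\varphi(x)=3$ otherwise one gets $\varphi(\bu)=3$ (not merely in $\{2,3\}$, since nothing is sent to $2$) while $\varphi(\bq)\in\{1,4\}$, and $3+1=3+4=1\neq 3$; this replaces the paper's slicker observation that $\{1,3\}$ is a copy of $R_2$, so $t(\bq)\in t(\bu)$.

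There is, however, one genuine error: the congruence you use to reach $S_{57}$. Identifying $1$ and $4$ is indeed a congruence of $S_{(4,424)}$, but the quotient is \emph{not} $S_{57}$. On $\{\{1,4\},2,3\}$ one computes $\{1,4\}\cdot 2=3$, so the top class is not multiplicatively absorbing; the subalgebra on $\{\{1,4\},3\}$ is a copy of $R_2$, not of $T_2$ as you assert; and this quotient satisfies $x+y\approx x+y+xy$, which fails in $S_{57}$ by Lemma \ref{lem5701} (take $\bu=x+y$, $\bq=xy$: then $c(p(\bq))=\{x\}\nsubseteq c(p(\bu))=\emptyset$). Your quotient is the dual situation (an $S_{54}$-type algebra) and would only yield tail-type information, never the prefix condition $c(p(\bq))\subseteq c(p(\bu))$ on which your whole prepending argument rests. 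The paper instead uses $\rho=\{(1,3),(3,1)\}\cup\vartriangle$: rows $1$ and $3$ of the multiplication table coincide, and columns $1$ and $3$ are constant with values $1$ and $3$ respectively, so merging $1$ with $3$ is a congruence; the resulting quotient on $\{\{1,3\},2,4\}$ has absorbing top, contains copies of $T_2$ and $M_2$, and is isomorphic to $S_{57}$. With this one-line repair the rest of your proof goes through.
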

\begin{proof}
It is easily verified that $S_{(4, 424)}$ satisfies the identities
(\ref{42401})--(\ref{42403}).
In the remainder we need only show that every ai-semiring identity of $S_{(4, 424)}$
can be derived by (\ref{42401})--(\ref{42403}) and the identities defining $\mathbf{AI}$.
Let $\bu \approx \bu+\bq$ be such a nontrivial identity,
where $\bu=\bu_1+\bu_2+\cdots+\bu_n$ and $\bu_i, \bq\in X^+$, $1 \leq i \leq n$.
It is easy to see that $R_2$ is isomorphic to $\{1, 3\}$ and so $R_2$ satisfies  $\bu \approx \bu+\bq$.
This implies that $t(\bq)\in t(\bu)$.
On the other hand, we have that there exists a congruence $\rho=\{(1,3),(3,1)\}\cup \vartriangle$
 such that $S_{(4, 424)}/\rho$ is isomorphic to $S_{57}$ and so $S_{57}$ satisfies $\bu \approx \bu+\bq$.
By Lemma \ref{lem5701} it follows that
$\ell(\bu_i)\geq 2$ for some $\bu_i \in \bu$,
$c(p(\bq))\subseteq c(p(\bu))$ and $t(\bq)\in c(\bu)$.

Suppose that
$
c(p(\bu))=\{x_1, x_2, \ldots, x_m\}
$
and that
$
t(\bu)=\{y_1, y_2, \ldots, y_n\}.
$
Then
\[
c(p(\bq)) \subseteq \{x_1, x_2, \ldots, x_m\}, t(\bq)\in \{y_1, y_2, \ldots, y_n\}.
\]
Now we have
\begin{align*}
\bu
&\approx x_1^2x_2^2\cdots x_m^2(y_1+y_2+\cdots+y_n)&&(\text{by}~(\ref{42401}), (\ref{42402}), (\ref{42403}))\\
&\approx x_1^2x_2^2\cdots x_m^2(y_1+y_2+\cdots+y_n)+x_1^2x_2^2\cdots x_m^2t(\bq).
\end{align*}
This derives the identity
\begin{equation}\label{id04081701}
\bu\approx \bu+x_1^2x_2^2\cdots x_m^2t(\bq).
\end{equation}
Furthermore, we have
\begin{align*}
\bu
&\approx \bu+x_1^2 x_2^2\cdots x_m^2t(\bq)&&(\text{by}~(\ref{id04081701}))\\
&\approx \bu+\bp p(\bq)t(\bq) &&(\text{by}~(\ref{42401}), (\ref{42402})) \\
&\approx \bu+\bp \bq \\
&\approx  \bu+\bp\bq+\bq. &&(\text{by}~(\ref{42404}))
\end{align*}
This implies the identity $\bu \approx \bu+\bq$.
\end{proof}

\begin{remark}
It is a routine matter to verify that $S_{(4, 424)}$ is isomorphic to a subdirect product of $S_{57}$ and $S_{29}$.
So $\mathsf{V}(S_{(4, 424)}) = \mathsf{V}(S_{57}, S_{29})$.
On the other hand, it is easy to check that both $M_2$ and $R_2$ can be embedded into $S_{29}$.
Also, $S_{29}$ satisfies an equational basis of $\mathsf V(R_2, M_2)$ that can be found in \cite{sr}.
It follows that $\mathsf V(S_{29})=\mathsf V(R_2, M_2)$, and so $\mathsf V(S_{(4, 424)}) = \mathsf V(S_{57}, R_2, M_2)$.
Since $M_2$ can be embedded into $S_{57}$, we now have
\[
\mathsf V(S_{(4, 424)}) = \mathsf V(S_{57}, R_2).
\]
\end{remark}

Notice that $S_{(4, 461)}$ and $S_{(4, 424)}$  have dual multiplications. By Proposition \ref{pro42401} we immediately deduce
\begin{cor}
The ai-semiring $S_{(4, 461)}$ is finitely based.
\end{cor}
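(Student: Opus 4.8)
The plan is to exploit the duality already flagged in the statement: $S_{(4,461)}$ is the opposite semiring of $S_{(4,424)}$. First I would confirm this directly from Table \ref{tb1}. The multiplicative table of $S_{(4,461)}$ is exactly the transpose of that of $S_{(4,424)}$ --- for instance $2\cdot 1 = 3$ in $S_{(4,461)}$ matches $1\cdot 2 = 3$ in $S_{(4,424)}$, and $4\cdot 2 = 4$ matches $2\cdot 4 = 4$, and likewise for every entry --- while both semirings carry the same additive reduct prescribed by Figure \ref{figure01}. Writing $T^{\mathrm{op}}$ for the ai-semiring with the same addition as $T$ but multiplication $a\ast b := ba$, this says $S_{(4,461)}\cong S_{(4,424)}^{\mathrm{op}}$.

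Next I would invoke the standard word-reversal duality for ai-semiring identities. For a word $\bw\in X^+$ let $\bw^{\ast}$ denote its reversal, and for an ai-semiring term $\bu=\bu_1+\bu_2+\cdots+\bu_n$ (a sum of words) set $\bu^{\ast}:=\bu_1^{\ast}+\bu_2^{\ast}+\cdots+\bu_n^{\ast}$; since addition is commutative and idempotent, it is untouched by the operation. Because evaluating $\bw^{\ast}$ in $T^{\mathrm{op}}$ returns the same value as evaluating $\bw$ in $T$, an identity $\bu\approx\bv$ holds in $T$ if and only if $\bu^{\ast}\approx\bv^{\ast}$ holds in $T^{\mathrm{op}}$. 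Thus $\bu\mapsto\bu^{\ast}$ is a bijection of terms carrying the equational theory of $T$ onto that of $T^{\mathrm{op}}$.

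The crucial point is that this reversal is compatible with equational deduction: it sends the defining identities of $\mathbf{AI}$ to $\mathbf{AI}$ axioms (associativity of $\cdot$ reverses to itself, the $+$-axioms are preserved, and left and right distributivity are interchanged), and it respects substitution and replacement. Hence if a set $\Sigma$ derives $\bu\approx\bv$ modulo $\mathbf{AI}$, then $\Sigma^{\ast}$ derives $\bu^{\ast}\approx\bv^{\ast}$. Applying this to the finite basis $\{(\ref{42401}),(\ref{42402}),(\ref{42404}),(\ref{42403})\}$ of $\mathsf V(S_{(4,424)})$ supplied by Proposition \ref{pro42401}, the reversed identities form a finite basis for $\mathsf V(S_{(4,461)})$; explicitly one obtains $yx^2\approx yx$, $zyx\approx zxy$, $yx\approx yx+y$ and $x+zy\approx xy+zy$, together with the axioms of $\mathbf{AI}$.

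There is essentially no hard step here: the whole argument is the formal reversal duality, which is precisely why Proposition \ref{pro42401} yields the conclusion \emph{immediately}. The only genuine verification is the transpose-of-tables check establishing $S_{(4,461)}\cong S_{(4,424)}^{\mathrm{op}}$; the compatibility of reversal with deduction is routine and uses nothing beyond the syntactic shape of the $\mathbf{AI}$ axioms and the fact that the common additive reduct is fixed by the operation.
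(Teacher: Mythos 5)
Your proposal is correct and is precisely the paper's argument: the paper disposes of $S_{(4,461)}$ in one line by noting that it and $S_{(4,424)}$ have dual multiplications and citing Proposition \ref{pro42401}. Your table-transposition check and the explicit word-reversal duality (including the reversed basis $yx^2\approx yx$, $zyx\approx zxy$, $yx\approx yx+y$, $x+zy\approx xy+zy$) merely spell out the routine details the paper leaves implicit.
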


\begin{cor}
The ai-semirings $S_{(4, 395)}$ and $S_{(4, 404)}$ are both finitely based.
\end{cor}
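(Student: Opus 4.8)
The plan is to identify the variety generated by $S_{(4, 395)}$ with that of $S_{(4, 424)}$, which is finitely based by Proposition~\ref{pro42401}, and then to pass to $S_{(4, 404)}$ by duality. First I would check directly that $S_{(4, 395)}$ satisfies the identities (\ref{42401})--(\ref{42403}). The only nonroutine verification is the left-commutativity identity (\ref{42402}), since $S_{(4, 395)}$ is not commutative. The key observation is that in $S_{(4, 395)}$ a product $x_1\cdots x_n$ equals the additive top $1$ as soon as one of the non-final factors is non-idempotent, and otherwise equals the final factor; hence its value does not depend on the order of the non-final factors, and (\ref{42402}) holds. This gives $\mathsf{V}(S_{(4, 395)})\subseteq \mathsf{V}(S_{(4, 424)})$.

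For the reverse inclusion I would invoke the remark following Proposition~\ref{pro42401}, namely $\mathsf{V}(S_{(4, 424)}) = \mathsf{V}(S_{57}, R_2)$, and exhibit both generators as subalgebras of $S_{(4, 395)}$. The subset $\{1,2,3\}$ is closed under both operations and carries the structure in which $2$ is a multiplicative identity, $1$ is absorbing, and $3$ satisfies $3y\approx 1$, so it is isomorphic to $S_{57}$; the subset $\{2,4\}$ is closed, its multiplication is right projection ($xy\approx y$) over a two-element additive semilattice, so it is isomorphic to $R_2$. Hence $\mathsf{V}(S_{57}, R_2)\subseteq \mathsf{V}(S_{(4, 395)})$, and combining the two inclusions yields $\mathsf{V}(S_{(4, 395)}) = \mathsf{V}(S_{(4, 424)})$. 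Since the latter is finitely based, so is $S_{(4, 395)}$.

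Finally, $S_{(4, 404)}$ is precisely the opposite semiring $S_{(4, 395)}^{\mathrm{op}}$ (its multiplication table is the transpose of that of $S_{(4, 395)}$), so dualizing any finite basis for $\mathsf{V}(S_{(4, 395)})$ produces a finite basis for $\mathsf{V}(S_{(4, 404)})$; equivalently one may mirror the second paragraph, embedding $S_{57}^{\mathrm{op}}$ on $\{1,2,3\}$ and $L_2$ on $\{2,4\}$ to obtain $\mathsf{V}(S_{(4, 404)}) = \mathsf{V}(S_{(4, 461)})$ and cite the preceding corollary. I expect the main obstacle to be the honest verification of (\ref{42401})--(\ref{42403}) in $S_{(4, 395)}$, especially (\ref{42402}), where a careless associativity computation can easily mislead, together with confirming that the two distinguished subalgebras are genuinely isomorphic to $S_{57}$ and $R_2$ and not to some other ai-semiring of order two or three.
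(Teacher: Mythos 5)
Your proposal is correct, and its skeleton coincides with the paper's: verify that $S_{(4,395)}$ satisfies (\ref{42401})--(\ref{42403}), conclude $\mathsf{V}(S_{(4,395)})=\mathsf{V}(S_{(4,424)})$, invoke Proposition~\ref{pro42401}, and pass to $S_{(4,404)}$ by duality (the tables are indeed transposes of each other). Where you genuinely diverge is the reverse inclusion: the paper establishes $\mathsf{V}(S_{(4,424)})\subseteq\mathsf{V}(S_{(4,395)})$ by embedding $S_{(4,424)}$ into the direct product of two copies of $S_{(4,395)}$, whereas you invoke the remark $\mathsf{V}(S_{(4,424)})=\mathsf{V}(S_{57},R_2)$ and exhibit the two generators inside $S_{(4,395)}$. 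Both of your embeddings check out: $\{2,4\}$ is closed with $xy=y$ (rows $2$ and $4$ of the table agree), hence is $R_2$ in the paper's convention; and $\{1,2,3\}$ is closed and isomorphic to the quotient $S_{(4,424)}/\rho$, $\rho=\{(1,3),(3,1)\}\cup\vartriangle$, which the paper identifies with $S_{57}$ in the proof of Proposition~\ref{pro42401}. Your route buys transparency (the generators of $\mathsf{V}(S_{(4,424)})$ are visibly sitting inside $S_{(4,395)}$) at the cost of depending on the remark, whose own justification routes through the subdirect decomposition of $S_{(4,424)}$ via $S_{29}$; the paper's single embedding into $S_{(4,395)}^2$ is more self-contained.

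Two local slips, neither fatal. First, your description of products in $S_{(4,395)}$ is inaccurate: a product $x_1\cdots x_n$ equals $1$ precisely when some non-final factor lies in $\{1,3\}$, and equals $x_n$ otherwise; your criterion ``some non-final factor is non-idempotent'' misses the case of a non-final factor equal to $1$, which is idempotent yet forces the product to be $1$ (e.g.\ $1\cdot 2=1\neq 2$). Since membership of the non-final factors in $\{1,3\}$ is manifestly order-independent, your conclusion that (\ref{42402}) holds survives, but the stated rule should be corrected. Second, on the subalgebra $\{1,2,3\}$ the element $2$ is only a \emph{left} identity: $3\cdot 2=1\neq 3$, so read literally your description (``$2$ is a multiplicative identity'' together with $3y\approx 1$) is internally inconsistent; with ``left identity'' substituted, the isomorphism with $S_{57}$ is correct as verified above.
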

\begin{proof}
Firstly, one can observe that $S_{(4, 395)}$ and $S_{(4, 404)}$  have dual multiplications.
Next, it is easy to verify that $S_{(4, 395)}$ satisfies the identities (\ref{42401})--(\ref{42403}).
On the other hand, $S_{(4, 424)}$ can be embedded into the direct product of two copies of $S_{(4, 395)}$.
So $\mathsf{V}(S_{(4, 424)})=\mathsf{V}(S_{(4, 395)})$.
By Proposition \ref{pro42401} we immediately deduce that $S_{(4, 395)}$ and $S_{(4, 404)}$ are both finitely based.
\end{proof}

\begin{pro}\label{pro40001}
$\mathsf{V}(S_{(4, 400)})$ is the ai-semiring variety defined by the identities
\begin{align}
xyz& \approx xyz+y;\label{40004}\\
xyz& \approx xy+yz+xz;\label{40005}\\
x_1x_2+x_3x_4 &\approx x_1x_2+x_3x_4+x_1x_4, \label{40006}
\end{align}
where $x$ and $z$ may be empty in $(\ref{40004})$.
\end{pro}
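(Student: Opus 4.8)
The plan is to follow the two-step pattern of Propositions~\ref{pro47101} and~\ref{pro42401}: first check soundness, then show that every identity of $S_{(4,400)}$ follows from $(\ref{40004})$--$(\ref{40006})$ together with the axioms of $\mathbf{AI}$. For soundness I would record the value function of $S_{(4,400)}$: under any assignment $\varphi\colon X\to\{1,2,3,4\}$ a word $w$ satisfies $\varphi(w)=2$ iff every letter of $w$ takes the value $2$; $\varphi(w)=3$ iff the last letter takes the value $3$ and all others the value $2$; $\varphi(w)=4$ iff the first letter takes the value $4$ and all others the value $2$; and $\varphi(w)=1$ otherwise. Reading the additive order ($2$ at the bottom, $3,4$ incomparable in the middle with $3+4=1$, and $1$ at the top) off Figure~\ref{figure01}, each of $(\ref{40004})$--$(\ref{40006})$ is verified by a short case check; for instance $(\ref{40005})$ amounts to $\varphi(xyz)=\varphi(xy)+\varphi(yz)+\varphi(xz)$ for every $\varphi$.

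For completeness, let $\bu\approx\bu+\bq$ be a nontrivial identity of $S_{(4,400)}$ with $\bu=\bu_1+\cdots+\bu_n$. First I would extract the necessary combinatorial conditions by feeding the identity through the small subsemirings and quotients of $S_{(4,400)}$. The subsemiring $\{1,2\}$ forces $c(\bq)\subseteq c(\bu)$; the subsemiring $\{1,3\}$ (constant multiplication) forces some $\bu_i$ to have length $\ge 2$ whenever $\ell(\bq)\ge 2$; and, as in Proposition~\ref{pro42401}, I expect the congruence collapsing $\{1,3\}$ (or $\{1,4\}$) to yield a quotient isomorphic to $S_{57}$, so that Lemma~\ref{lem5701} supplies $c(p(\bq))\subseteq c(p(\bu))$ and $t(\bq)\in c(\bu)$. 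Here the value function shows that these conditions are not yet sufficient: evaluating a tail letter of $\bq$ to $4$ can keep $\bu$ alive while collapsing $\bq$ to $1$. I would therefore isolate the extra ``survival'' conditions, that is, the couplings between the occurrences of a variable in $\bu$ and in $\bq$ (conveniently phrased with $c(p(\cdot))$, $t(\cdot)$ and the multiplicity set $M_1$), by choosing, for each way of making $\bu$ evaluate to $3$ or to $4$, the substitution that threatens to send $\bq$ to $1$.

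Once the exact conditions are in hand I would derive the identity, organizing by $\ell(\bq)$. If $\ell(\bq)=1$, then $\bq\in c(\bu)$, and either $\bq$ is already a summand of $\bu$ or it sits inside a longer summand and is added back by $(\ref{40004})$. If $\ell(\bq)\ge 3$, I would flatten $\bq$ by $(\ref{40005})$ into the sum $\sum_{i<j}y_iy_j$ of its length-$2$ subwords and reduce to the length-$2$ case applied to each piece. The decisive case is $\ell(\bq)=2$, say $\bq=y_1y_2$: using $c(p(\bq))\subseteq c(p(\bu))$ I would locate a summand of $\bu$ of length $\ge 2$ whose head is $y_1$, and using $t(\bq)\in c(\bu)$ together with the survival conditions a length-$2$ subword of $\bu$ whose tail is $y_2$; flattening these by $(\ref{40005})$, adjusting by $(\ref{40004})$, and finally forming the cross product with $(\ref{40006})$ produces $y_1y_2$ as a new summand, giving $\bu\approx\bu+y_1y_2$.

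I expect the main obstacle to be exactly this length-$2$ case: I must guarantee that $\bu$ really contains length-$2$ summands with the correct head and tail to feed $(\ref{40006})$, while the survival conditions rule out precisely the configurations (such as $\bq=y_1y_2$ with $\bu=y_1z+y_2$) in which $(\ref{40006})$ would otherwise manufacture a word that $S_{(4,400)}$ does not in fact absorb. Matching the necessary conditions produced by the substitution arguments with the words derivable from $(\ref{40004})$--$(\ref{40006})$, so that the two descriptions of ``$\bq$ is absorbed by $\bu$'' coincide, will be the technical heart of the argument, and the absence of multiplicative commutativity means I must track heads and tails separately throughout.
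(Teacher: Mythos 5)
There is a genuine gap: your plan stalls exactly where the proof has to be won, and you say so yourself (``isolating the extra survival conditions \ldots will be the technical heart''). The missing idea is the one the paper uses to make those conditions fall out for free: $S_{(4,400)}$ is isomorphic to a subdirect product of $S_{54}$ and $S_{57}$, and $S_{54}$ is the \emph{dual} of $S_{57}$. Hence any identity $\bu\approx\bu+\bq$ of $S_{(4,400)}$ is satisfied by both, and Lemma \ref{lem5701} \emph{together with its dual} yields the two conditions $x\in c(p(\bu))$ and $y\in c(s(\bu))$ for $\bq=xy$ (after the flattening by (\ref{40005}), which you do have). These two conditions are exactly sufficient: write $\bu_i=\bp_1x\bp_2$ with $\bp_2$ nonempty and $\bu_j=\bp_3y\bp_4$ with $\bp_3$ nonempty, apply (\ref{40006}) with $x_1=\bp_1x$, $x_2=\bp_2$, $x_3=\bp_3$, $x_4=y\bp_4$ to adjoin $\bp_1xy\bp_4$, and then (\ref{40004}) to extract $xy$. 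Your one-sided route through the $S_{57}$ quotient produces only half of this, and the ad hoc substitution analysis you propose in its place is never carried out; your own worry about $\bu=y_1z+y_2$, $\bq=y_1y_2$ is precisely the configuration that the dual lemma (i.e.\ $S_{54}$) excludes, since there $y_2\notin c(s(\bu))$.

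Two further corrections to the sketch. First, the condition you aim for in the length-$2$ case is too strong: you do not need (and will not always have) a summand of $\bu$ whose \emph{head} is $y_1$; for example $zxy\approx zxy+xy$ holds in $S_{(4,400)}$ with no summand headed by $x$. What is true and needed is only that $y_1$ occurs in a non-final position of some summand, which is what $c(p(\bq))\subseteq c(p(\bu))$ says. Second, your Case $\ell(\bq)=1$ should run through Lemma \ref{lem5701} as the paper does ($c(\bq)\subseteq c(\bu)$ gives $\bu_j=\bp_1\bq\bp_2$, then (\ref{40004}) with possibly empty $\bp_1,\bp_2$ adds $\bq$); this part of your plan is fine. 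With the subdirect decomposition and duality supplied, the rest of your outline (soundness via the value function, flattening by (\ref{40005}), cross term by (\ref{40006}), extraction by (\ref{40004})) matches the paper's proof, but as written the proposal does not constitute a proof of the decisive case.
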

\begin{proof}
It is easy to verify that $S_{(4, 400)}$ satisfies the identities (\ref{40004})--(\ref{40006}).
In the remainder it is enough to show that every ai-semiring identity of $S_{(4, 400)}$
is derivable from (\ref{40004})--(\ref{40006}) and the identities defining $\mathbf{AI}$.
Let $\bu \approx \bu+\bq$ be such a nontrivial identity, $\bu=\bu_1+\bu_2+\cdots+\bu_n$, where $\bu_i, \bq\in X^+$, $1 \leq i \leq n$.
It is a routine matter to verify that $S_{(4, 400)}$ is isomorphic to
a subdirect product of $S_{54}$ and $S_{57}$.
So both $S_{54}$ and $S_{57}$ satisfy $\bu \approx \bu+\bq$.

\textbf{Case 1.} $\ell(\bq)=1$.
By Lemma \ref{lem5701} we obtain that $c(\bq)\subseteq c(\bu)$ and so $c(\bq)\subseteq c(\bu_j)$ for some $\bu_j \in \bu$.
Thus $\bu_j=\bp_1\bq\bp_2$ for some $\bp_1, \bp_2\in X^*$.
Now we have
\[
\bu \approx \bu+\bu_j \approx \bu+\bp_1\bq\bp_2\stackrel{(\ref{40004})}\approx \bu+\bp_1\bq\bp_2+\bq.
\]
This implies the identity $\bu\approx\bu+\bq$.

\textbf{Case 2.} $\ell(\bq)\geq 2$. Let $\bq=x_1x_2\cdots x_n$, where $n\geq 2$. Then
by the identity (\ref{40005}) we deduce the identity
\[
\bq \approx\sum\limits_{1\leq i\textless j\leq n}x_ix_j.
\]
So it is enough to consider the case that $\bq=xy$.
Since $S_{54}$ and $S_{57}$ have dual multiplications,
it follows from Lemma \ref{lem5701} and its dual that
$x\in c(p(\bu))$ and $y\in c(s(\bu))$.
So there exist $\bp_1, \bp_2, \bp_3, \bp_4$ such that $\bu_i=\bp_1x\bp_2$ and $\bu_j=\bp_3y\bp_4$,
where $\bp_2$ and $\bp_3$ are both nonempty.
We now deduce
\begin{align*}
\bu
&\approx \bu+\bu_i+\bu_j\\
&\approx \bu+\bp_1x\bp_2+\bp_3y\bp_4\\
&\approx \bu+\bp_1x\bp_2+\bp_3y\bp_4+\bp_1xy\bp_4&&(\text{by}~(\ref{40006}))\\
&\approx \bu+\bp_1x\bp_2+\bp_3y\bp_4+\bp_1xy\bp_4+xy &&(\text{by}~(\ref{40004}))\\
&\approx \bu+\bp_1x\bp_2+\bp_3y\bp_4+\bp_1xy\bp_4+\bq.
\end{align*}
This derives the identity $\bu\approx\bu+\bq$.
\end{proof}

\begin{lem}\label{lem5301}
Let $\bu\approx \bu+\bq$ be a nontrivial ai-semiring identity such that
$\bu=\bu_1+\bu_2+\cdots+\bu_n$ and $\bu_i, \bq \in X^+$, $1\leq i \leq n$.
Suppose that $\bu\approx \bu+\bq$ holds in $S_{53}$. Then $\bu$ and $\bq$ satisfy the following conditions:
\begin{itemize}
\item[$(1)$] $\ell(\bu_i)\geq 2$ for some $\bu_i \in \bu$;

\item[$(2)$] $c(\bq)\subseteq c(\bu)$;

\item[$(3)$] Let $\bq=xy$.
If $x=y$, then $m(x, \bu_k)\geq 2$ for some $\bu_k\in \bu$;
if $x\neq y$, then there exists $\bu_k \in \bu$ such that either
$x, y\in c(\bu_k)$ or $m(x, \bu_k)\geq 2$ or $m(y, \bu_k)\geq 2$.
\end{itemize}
\end{lem}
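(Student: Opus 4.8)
The plan is to first pin down the structure of $S_{53}$ and then treat the three conclusions separately, extracting $(1)$ and $(2)$ from two-element subalgebras exactly as in Lemma~\ref{lem5701} and reserving the real work for $(3)$. Recall that $S_{53}$ may be realized as the quotient of $S_{(4,407)}$ by the congruence $\{(1,3),(3,1)\}\cup\Delta$; writing its universe as $\{e,f,z\}$, it is the commutative ai-semiring whose additive reduct is the chain $e<f<z$ and whose multiplication has $e$ as identity, $z$ as absorbing element (so $z$ is at once the multiplicative zero and the additive top), and satisfies $f^2=z$. The single relation $f^2=z$ is what powers the whole argument.

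For $(1)$ and $(2)$ I would use the same two-element subsemirings that served for $S_{57}$. On $\{f,z\}$ every product equals $z$, so $\{f,z\}\cong T_2$; since $S_{53}$ satisfies $\bu\approx\bu+\bq$ so does $T_2$, and because the identity is nontrivial this forces $\ell(\bu_i)\ge 2$ for some $\bu_i$, which is $(1)$. On $\{e,z\}$ the product coincides with the sum, so $\{e,z\}\cong M_2$; hence $M_2$ satisfies the identity and therefore $c(\bq)\subseteq c(\bu)$, which is $(2)$. Both steps are verbatim the corresponding arguments in Lemma~\ref{lem5701}.

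The substance is $(3)$, which I would establish contrapositively by exhibiting a separating homomorphism into $S_{53}$. The governing observation is that for any homomorphism $\varphi\colon P_f(X^+)\to S_{53}$ and any word $w$ one has $\varphi(w)=z$ precisely when some letter of $w$ is sent to $z$ or at least two letter-occurrences of $w$ (counted with multiplicity) are sent to $f$, and otherwise $\varphi(w)\in\{e,f\}$; moreover, since the additive reduct is the chain $e<f<z$, addition is the maximum, so $\varphi(\bu)=z$ if and only if $\varphi(\bu_i)=z$ for some $i$. Now put $\bq=xy$ and define $\varphi(x)=\varphi(y)=f$ and $\varphi(v)=e$ for every other variable $v$, so that $\varphi(\bq)=f^2=z$. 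If $x=y$ and $m(x,\bu_i)\le 1$ for all $i$, then each $\bu_i$ carries at most one occurrence sent to $f$, whence $\varphi(\bu_i)\ne z$ for all $i$ and $\varphi(\bu)\ne z=\varphi(\bu+\bq)$, a contradiction; this gives the first clause of $(3)$. If $x\ne y$ and no $\bu_i$ satisfies any of the three conditions $\{x,y\}\subseteq c(\bu_i)$, $m(x,\bu_i)\ge 2$, or $m(y,\bu_i)\ge 2$, then in each $\bu_i$ at most one of $x,y$ appears and it appears at most once, so $m(x,\bu_i)+m(y,\bu_i)\le 1$, again $\varphi(\bu_i)\ne z$ for all $i$, and the same contradiction results.

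The main obstacle is isolating the exact combinatorial condition in $(3)$: the three alternatives ``$x$ and $y$ lie together in some $c(\bu_i)$'', ``$x$ is repeated'', and ``$y$ is repeated'' are precisely the ways a single term can accumulate two $f$-occurrences, hence precisely what is needed to force $\varphi(\bu_i)=z$ for some $i$. This sharpening over the coarser information extracted from $S_{57}$ in Lemma~\ref{lem5701} is exactly the payoff of the relation $f^2=z$, and the only delicate point is checking that, under the negated hypothesis, the displayed substitution keeps every $\varphi(\bu_i)$ strictly below $z$.
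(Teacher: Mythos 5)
Your proof is correct and takes essentially the same route as the paper: conditions $(1)$ and $(2)$ come from the two-element subalgebras $T_2$ and $M_2$ exactly as in the paper, and condition $(3)$ uses the very same separating substitution, sending $x$ and $y$ to the middle element $f$ (the paper's $1$) and all other variables to the multiplicative identity $e$ (the paper's $2$), so that $\varphi(\bq)=f^2=z$ while every $\varphi(\bu_i)$ stays below $z$. The only differences are cosmetic: you spell out the structure of $S_{53}$ explicitly (chain $e<f<z$, $e$ an identity, $z$ absorbing, $f^2=z$, which is indeed consistent with the quotient of $S_{(4,407)}$ and with all computations in the paper), and you conclude $\varphi(\bu)\neq z$ rather than computing $\varphi(\bu)=1$ exactly as the paper does, which suffices equally well.
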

\begin{proof}
Suppose that $S_{53}$ satisfies $\bu\approx \bu+\bq$.
Since $T_2$ is isomorphic to $\{1, 3\}$, it follows that $T_2$ satisfies $\bu \approx \bu+\bq$
and so $\ell(\bu_i)\geq 2$ for some $\bu_i \in \bu$.
It is easy to see that $M_2$ is isomorphic to $\{2, 3\}$ and so $M_2$ satisfies  $\bu \approx \bu+\bq$.
This implies that $c(\bq)\subseteq c(\bu)$.
Let $\bq=xy$. Consider the following two cases.

\textbf{Case 1.} $x=y$. We shall show that $m(x, \bu_k)\geq 2$ for some $\bu_k\in \bu$.
Suppose that this is not true.
Consider the semiring homomorphism $\varphi: P_f(X^+) \to S_{53}$ defined by
$\varphi(z)=1$ if $z=x$ and $\varphi(z)=2$ otherwise.
It is easy to see that $\varphi(\bu)=1$ and $\varphi(\bq)=3$.
So $\varphi(\bu)\neq \varphi(\bu+\bq)$, a contradiction.
Thus $m(x, \bu_k)\geq 2$ for some $\bu_k\in \bu$.

\textbf{Case 2.} $x\neq y$.
Suppose that for any $\bu_k \in \bu$,
$x \notin c(\bu_k)$ or $y \notin c(\bu_k)$, $m(x, \bu_k)\leq 1$ and $m(y, \bu_k)\leq 1$.
Let $\varphi: P_f(X^+) \to S_{53}$ be a semiring homomorphism such that
$\varphi(z)=1$ if $z=x$ or $y$, $\varphi(z)=2$ otherwise.
Then $\varphi(\bu)=1$ and $\varphi(\bq)=3$.
This implies that $\varphi(\bu)\neq \varphi(\bu+\bq)$,
a contradiction.
So there exists $\bu_k \in \bu$ such that
either $x, y\in c(\bu_k)$ or $m(x, \bu_k)\geq 2$ or $m(y, \bu_k)\geq 2$.
\end{proof}

\begin{pro}\label{pro40101}
$\mathsf{V}(S_{(4, 401)})$ is the ai-semiring variety defined by the identities
\begin{align}
xyz &\approx yxz; \label{40101}\\
xyz &\approx xyz+y; \label{40103}\\
xyz& \approx xy+yz+xz;\label{40105}\\
x^2+y & \approx x^2y;\label{40104}\\
x_1x_2x_3+x_3x_4&\approx x_1x_2x_3+x_3x_4+x_3x_2, \label{40106}
\end{align}
where $x$ and $z$ may be empty in $(\ref{40103})$, $x_1$ may be empty in $(\ref{40106})$.
\end{pro}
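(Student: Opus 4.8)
The plan is to mirror the proof of Proposition~\ref{pro40001}. First I would check by direct substitution that $S_{(4,401)}$ satisfies (\ref{40101})--(\ref{40106}); the only mildly delicate point is (\ref{40104}), which holds because the idempotent $2$ is a multiplicative identity (so $x^2y=y=x^2+y$ when $x=2$), while for every other value of $x$ one has $x^2=1$, the additive top, which absorbs. For the converse, let $\bu\approx\bu+\bq$ be a nontrivial identity of $S_{(4,401)}$ with $\bu=\bu_1+\cdots+\bu_n$. I would first verify the routine fact that $S_{(4,401)}$ is isomorphic to a subdirect product of $S_{53}$ and $S_{57}$: collapsing $\{1,3\}$ gives a commutative quotient isomorphic to $S_{53}$, collapsing $\{1,4\}$ gives a (non-commutative) quotient isomorphic to $S_{57}$, and these two congruences meet in the diagonal. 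Hence both $S_{53}$ and $S_{57}$ satisfy $\bu\approx\bu+\bq$, so Lemmas~\ref{lem5301} and~\ref{lem5701} become available. From them I record: $\ell(\bu_k)\ge 2$ for some $k$; $c(\bq)\subseteq c(\bu)$; $c(p(\bq))\subseteq c(p(\bu))$ and $t(\bq)\in c(\bu)$; and the occurrence condition of Lemma~\ref{lem5301}(3).

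The argument then splits on $\ell(\bq)$. If $\ell(\bq)=1$, then $\bq=x$ with $x\in c(\bu)$, so some $\bu_j=\bp_1x\bp_2$, and (\ref{40103}) (with $x,z$ empty) gives $\bu_j\approx\bu_j+x$, whence $\bu\approx\bu+\bq$. If $\ell(\bq)\ge 2$, I would use (\ref{40105}) to rewrite any word of length $\ge 3$ as the sum of all of its length-two subproducts; since $+$ is idempotent and $S_{(4,401)}$ already satisfies the identity, each such length-two word is itself $\le\bu$ in $S_{(4,401)}$, so it suffices to treat $\bq=xy$. When $x=y$, Lemma~\ref{lem5301}(3) gives $m(x,\bu_k)\ge 2$; then the two $x$-positions of $\bu_k$ supply the subproduct $x\cdot x$ under (\ref{40105}), so $x^2\le\bu$ and we are done.

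The remaining case $\bq=xy$ with $x\ne y$ is where the real work lies, and I expect it to be the main obstacle. Here Lemma~\ref{lem5301}(3) offers three possibilities, while Lemma~\ref{lem5701} supplies the crucial order datum $x\in c(p(\bu))$, i.e. some $\bu_l=\bp_1x\bp_2$ with $\bp_2$ nonempty; reducing $\bu_l$ by (\ref{40105}) then yields $xw\le\bu$ for the letter $w$ following that occurrence of $x$. If $m(x,\bu_k)\ge 2$, then $x^2\le\bu$, and since $y\le\bu$ by (\ref{40103}), identity (\ref{40104}) gives $x^2y\le\bu$, from which (\ref{40105}) extracts $xy$. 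If instead $m(y,\bu_k)\ge 2$, the same two steps produce $yx\le\bu$; then, reading $yx$ and $xw$ as the two inputs $x_2x_3+x_3x_4$ of (\ref{40106}) with $x_2=y$, $x_3=x$, $x_4=w$ and $x_1$ empty, the output $x_3x_2=xy$ appears, giving $\bu\approx\bu+\bq$. Finally, if $x,y\in c(\bu_k)$ jointly, reducing $\bu_k$ by (\ref{40105}) yields $xy$ directly when $x$ precedes $y$, and otherwise yields $yx$, which is converted to $xy$ by the same (\ref{40106}) step. The subtle point throughout is that $S_{(4,401)}$ is not commutative, since (\ref{40101}) only permutes the non-final letters of a word of length $\ge 3$; consequently the correctly oriented product $xy$ must be manufactured, and the role of (\ref{40106}) together with the prefix datum $x\in c(p(\bu))$ is precisely to reverse an available $yx$ into the required $xy$.
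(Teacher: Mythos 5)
Your proposal is correct and follows essentially the same route as the paper's proof: the same verification that $S_{(4,401)}$ satisfies the basis, the same subdirect decomposition into $S_{53}$ and $S_{57}$ (with the congruences you describe), the same appeal to Lemmas \ref{lem5301} and \ref{lem5701}, the same reduction of $\bq$ to a length-two word via (\ref{40105}), and the same case split on $\bq=x^2$ versus $\bq=xy$. The only differences are tactical and harmless: where the paper permutes letters with (\ref{40101}) and extracts subwords with (\ref{40103}), you extract length-two subproducts with (\ref{40105}), and in the case $m(y,\bu_k)\geq 2$ you first manufacture $yx$ and reverse it with (\ref{40106}) against the prefix-derived summand $xw$, whereas the paper inserts $x$ into $y^2\bu_j$ via (\ref{40104}) and (\ref{40101}) --- equivalent derivations within the identical framework.
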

\begin{proof}
It is easy to check that $S_{(4, 401)}$ satisfies the identities (\ref{40101})--(\ref{40106}).
In the remainder it is enough to show that every ai-semiring identity of $S_{(4, 401)}$
is derivable from (\ref{40101})--(\ref{40106}) and the identities defining $\mathbf{AI}$.
Let $\bu \approx \bu+\bq$ be such a nontrivial identity, where
$\bu=\bu_1+\bu_2+\cdots+\bu_n$ and $\bu_i, \bq \in X^+$, $1 \leq i \leq n$.
It is a routine matter to verify that $S_{(4, 401)}$ is isomorphic to
a subdirect product of $S_{53}$ and $S_{57}$.
So both $S_{53}$ and $S_{57}$ satisfy $\bu \approx \bu+\bq$.

\textbf{Case 1.} $\ell(\bq)=1$.
By Lemma \ref{lem5701} we can obtain that $c(\bq)\subseteq c(\bu)$ and so $c(\bq)\subseteq c(\bu_j)$ for some $\bu_j \in \bu$.
Since $\bu \approx \bu+\bq$ is nontrivial, it follows that $\ell(\bu_j)\geq 2$.
By the identity (\ref{40101}) we deduce $\bu_j\approx \bq \bp$ or $\bu_j\approx \bp\bq$  for some nonempty word $\bp$.
Now we have
\[
\bu \approx \bu+\bu_j \stackrel{(\ref{40103})}\approx \bu+\bu_j+\bq.
\]

\textbf{Case 2.} $\ell(\bq)\geq 2$.
Let $\bq=x_1x_2\cdots x_n$, where $n\geq 2$. Then
by the identity (\ref{40105}) we deduce the identity
\[
\bq \approx\sum\limits_{1\leq i\textless j\leq n}x_ix_j.
\]
So we only need to consider the case that $\bq=xy$.

\textbf{Subcase 2.1.}  $x=y$. By Lemma \ref{lem5301}
there exists $\bu_k\in \bu$ such that $m(x, \bu_k)\geq 2$ and so by (\ref{40101}) we deduce
$\bu_k\approx\bp_1x^2\bp_2$ for some $\bp_1, \bp_2\in X^*$.
Furthermore, we have
\[
\bu \approx \bu+\bu_k \approx \bu+\bp_1x^2\bp_2\stackrel{(\ref{40103})}\approx \bu+\bp_1x^2\bp_2+x^2\approx \bu+\bp_1x^2\bp_2+\bq.
\]
This implies the identity $\bu \approx \bu+\bq$.

\textbf{Subcase 2.2.} $x\neq y$. By Lemma \ref{lem5301}
there exists $\bu_k \in \bu$ such that
either $x, y\in c(\bu_k)$ or $m(x, \bu_k)\geq 2$ or $m(y, \bu_k)\geq 2$.
Firstly, suppose that $x, y\in c(\bu_k)$ for some $\bu_k\in \bu$.
Then by the identity (\ref{40101}) we deduce
$\bu_k\approx \bp_1 xy\bp_2$ or $\bu_k\approx \bp_3yx$ for some $\bp_1, \bp_2, \bp_3 \in X^*$.
If $\bu_k\approx \bp_1 xy\bp_2$ is obtained, then
\[
\bu
\approx \bu+\bu_k \approx \bu+\bp_1 xy\bp_2
\stackrel{(\ref{40103})}\approx  \bu+\bp_1 xy\bp_2+xy
\approx \bu+\bp_1 xy\bp_2+\bq.
\]
If $\bu_k\approx \bp_3yx$ is derived,
then by Lemma \ref{lem5701} there exists $\bu_j\in \bu$ such that $x\in c(p(\bu_j))$
and so the identity (\ref{40101}) implies $\bu_j\approx x\bp$ for some nonempty word $\bp$.
We now deduce
\begin{align*}
\bu
&\approx \bu+\bu_k+\bu_j\\
&\approx \bu+\bp_3yx+x\bp \\
&\approx \bu+\bp_3yx+x\bp+xy.&&(\text{by}~(\ref{40106}))
\end{align*}

Next, assume that there exists $\bu_k \in \bu$ such that $m(x, \bu_k)\geq 2$.
By a similar argument in Subcase 2.1 one can obtain the identity
\[
\bu\approx \bu+x^2.
\]
Since $c(\bq)\subseteq c(\bu)$, we have that $y\in c(\bu_j)$ for some $\bu_j \in \bu$.
Furthermore, we can deduce
\begin{align*}
\bu
&\approx \bu+x^2+\bu_j\\
&\approx \bu+x^2\bu_j&&(\text{by}~(\ref{40104}))\\
&\approx \bu+\bq_1xy\bq_2&&(\text{by}~(\ref{40101}))\\
&\approx \bu+\bq_1xy\bq_2+xy. &&(\text{by}~(\ref{40103}))
\end{align*}

Finally, assume that there exists $\bu_k \in \bu$ such that $m(y, \bu_k)\geq 2$.
By a similar argument in Subcase 2.1 one can deduce the identity
\[
\bu\approx \bu+y^2.
\]
Since $x\in c(p(\bq))$, it follows from Lemma \ref{lem5701} that
$x\in c(p(\bu_j))$ for some $\bu_j \in \bu$ and so $\bu_j=\bp_1x\bp_2$
for some $\bp_1\in X^*$ and $\bp_2\in X^+$. So we have
\begin{align*}
\bu
&\approx \bu+y^2+\bu_j\\
&\approx \bu+y^2\bu_j&&(\text{by}~(\ref{40104}))\\
&\approx \bu+y^2\bp_1x\bp_2\\
&\approx \bu+y\bp_1xy\bp_2&&(\text{by}~(\ref{40101}))\\
&\approx \bu+y\bp_1xy\bp_2+xy. &&(\text{by}~(\ref{40103}))
\end{align*}
This proves the identity $\bu \approx \bu+\bq$.
\end{proof}

\begin{cor}
The ai-semiring $S_{(4, 405)}$ is finitely based.
\begin{proof}
It is easy to see that $S_{(4, 405)}$ and $S_{(4, 401)}$  have dual multiplications.
By Proposition \ref{pro40101} we immediately deduce that $S_{(4, 405)}$ is finitely based.
\end{proof}
\end{cor}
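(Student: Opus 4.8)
The plan is to reuse the duality device that underlies the neighbouring corollaries (for $S_{(4, 461)}$, $S_{(4, 395)}$, and $S_{(4, 404)}$): if two ai-semirings share the same additive reduct and have mutually opposite multiplications, then their equational theories are interchanged by word reversal, so one is finitely based exactly when the other is. First I would record the purely combinatorial observation that $S_{(4, 405)}$ is the opposite semiring of $S_{(4, 401)}$. Transposing the multiplication table of $S_{(4, 401)}$ (reading its columns as rows) returns exactly the table of $S_{(4, 405)}$, while both semirings carry the common quasi-antichain addition of Figure \ref{figure01}. Hence $S_{(4, 405)}$ and $S_{(4, 401)}$ have dual multiplications.

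Next I would make the transfer principle precise. For a term $\bu=\bu_1+\cdots+\bu_n$ with each $\bu_i\in X^+$, write $\bu^{d}$ for the term obtained by reversing every word $\bu_i$ while leaving the $+$-structure untouched. Because addition is commutative and idempotent and multiplication is associative, the map $\bu\mapsto\bu^{d}$ is an involution of the ai-semiring term algebra that carries $\mathbf{AI}$-derivations to $\mathbf{AI}$-derivations; moreover an identity $\bu\approx\bv$ holds in a semiring $S$ if and only if the reversed identity $\bu^{d}\approx\bv^{d}$ holds in the opposite semiring. Consequently, whenever $\Sigma$ axiomatizes $\mathsf{V}(S)$ over $\mathbf{AI}$, the reversed set $\{\bu^{d}\approx\bv^{d} : (\bu\approx\bv)\in\Sigma\}$ axiomatizes the variety generated by the opposite of $S$. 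Applying this to the finite basis (\ref{40101})--(\ref{40106}) for $\mathsf{V}(S_{(4, 401)})$ furnished by Proposition \ref{pro40101}, the six reversed identities constitute a finite equational basis for $\mathsf{V}(S_{(4, 405)})$, so $S_{(4, 405)}$ is finitely based.

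The only genuinely delicate point is the soundness and completeness of the reversal map: one must verify that reversing words is compatible with associativity and with both distributive laws---which it is, since $+$ is left untouched and products are merely read right-to-left---and that the correspondence between the identities of $S$ and those of its opposite is exact in both directions. This is precisely the dualization lemma tacitly invoked for the neighbouring corollaries on $S_{(4, 461)}$, $S_{(4, 395)}$ and $S_{(4, 404)}$; once it is granted, the claim for $S_{(4, 405)}$ is immediate from Proposition \ref{pro40101}.
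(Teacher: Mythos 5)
Your proposal is correct and follows exactly the paper's route: the paper's proof likewise observes that $S_{(4, 405)}$ and $S_{(4, 401)}$ have dual multiplications and then invokes Proposition \ref{pro40101}. You merely make explicit the standard word-reversal transfer lemma (that reversing a finite basis for a semiring yields a finite basis for its opposite) which the paper leaves tacit, and your verification that the table of $S_{(4, 405)}$ is the transpose of that of $S_{(4, 401)}$ checks out.
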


\section{Equational basis of some $4$-element ai-semirings that relate to $S_{58}$}
In this section we provide equational basis for some $4$-element ai-semirings that relate to $S_{58}$.
We first give some information about the identities of $S_{58}$.
\begin{lem}\label{lem5801}
Let $\bu\approx \bu+\bq$ be a nontrivial ai-semiring identity such that $\bu=\bu_1+\bu_2+\cdots+\bu_n$
and $\bu_i, \bq \in X^+$, $1\leq i \leq n$.
Suppose that $\bu\approx \bu+\bq$ is satisfied by $S_{58}$. Then $\bu$ and $\bq$ satisfy the following conditions:
\begin{itemize}
\item[$(1)$] $L_{\geq 2}(\bu) \neq \emptyset$;

\item[$(2)$] $H_{\bq}(\bu) \neq \emptyset$;

\item[$(3)$] If $\ell(\bq)\geq 2$, then
$L_{\geq 2}(\bu)\cap H_{\bq}(\bu) \neq \emptyset$.
\end{itemize}
\end{lem}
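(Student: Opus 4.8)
The plan is to use the concrete structure of $S_{58}$. It may be taken on the carrier $\{1,2,3\}$ with additive chain $2<3<1$ and a multiplication depending only on the left-hand factor, namely $1y=1$, $2y=2$, $3y=1$ (this is the order-three algebra of which $S_{(4,426)}$ is a subdirect product of two copies). The features I would exploit are: under a substitution $\varphi$, a word of length $1$ evaluates to the image of its single letter; a word $\bw$ with $\ell(\bw)\ge 2$ evaluates to $1$ when $\varphi(h(\bw))\in\{1,3\}$ and to $2$ when $\varphi(h(\bw))=2$; and $\varphi(\bu)$ is the largest, in the chain $2<3<1$, of the values $\varphi(\bu_i)$. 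All three conditions will be read off from these rules, two of them through two-element subalgebras and the last through one tailored substitution.

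For $(1)$ and $(2)$ I would invoke \cite[Lemma 1.1]{sr} on subalgebras. The set $\{1,3\}$ is a subalgebra isomorphic to $T_2$, since its multiplication is constantly $1$ (the additive top); so $T_2\models\bu\approx\bu+\bq$, and exactly as in Lemma \ref{lem5701} this forces $\ell(\bu_k)\ge 2$ for some $\bu_k$, i.e. $L_{\ge 2}(\bu)\neq\emptyset$ (were every $\bu_i$ a single letter, sending all letters to the bottom $3$ would give $\varphi(\bu)=3$ while $\varphi(\bq)=1$, using $\ell(\bq)\ge 2$, or nontriviality when $\ell(\bq)=1$). The set $\{1,2\}$ is a subalgebra isomorphic to the left-zero semiring $L_2$, in which every word evaluates to the image of its head; hence $L_2\models\bu\approx\bu+\bq$ forces $h(\bq)\in\{h(\bu_i)\mid 1\le i\le n\}$, which is precisely $H_{\bq}(\bu)\neq\emptyset$. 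Both could equally be done by direct substitution, but the subalgebra route matches the surrounding lemmas.

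The substantive point, and the part I expect to be \textbf{the main obstacle}, is $(3)$: neither two-element reduct can simultaneously detect length and head, so one must use all three elements of $S_{58}$ at once. I would assume $\ell(\bq)\ge 2$ and, for contradiction, $L_{\ge 2}(\bu)\cap H_{\bq}(\bu)=\emptyset$; write $x=h(\bq)$ and define $\varphi$ by $\varphi(x)=3$ and $\varphi(z)=2$ for every other letter $z$. Then $\varphi(\bq)=1$, because $\bq$ has length $\ge 2$ and head $x\mapsto 3$. For a summand $\bu_i$: if $h(\bu_i)=x$ then by hypothesis $\ell(\bu_i)=1$, so $\bu_i=x$ and $\varphi(\bu_i)=3$; if $h(\bu_i)\neq x$ then $\varphi(h(\bu_i))=2$, so $\varphi(\bu_i)=2$ whether $\bu_i$ has length $1$ or $\ge 2$. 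Consequently no $\bu_i$ takes the value $1$ — an image $1$ would require both $\ell(\bu_i)\ge 2$ and $h(\bu_i)=x$, i.e. a term of $L_{\ge 2}(\bu)\cap H_{\bq}(\bu)$ — so $\varphi(\bu)\le 3<1=\varphi(\bq)$ and $\varphi(\bu+\bq)=1\neq\varphi(\bu)$, contradicting $S_{58}\models\bu\approx\bu+\bq$.

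The one delicate design choice, and the reason this case is harder than the others, is sending $h(\bq)$ to the middle element $3$: it is the unique value whose multiplicative action ($3y=1$) lifts a genuine product to the top while leaving a bare letter at the middle level $3$. That is exactly what separates the forbidden set $L_{\ge 2}(\bu)\cap H_{\bq}(\bu)$ from every other summand of $\bu$, and it is what makes the full three-element algebra, rather than either two-element reduct, indispensable here.
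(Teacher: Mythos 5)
Your proposal is correct and takes essentially the same route as the paper: parts $(1)$ and $(2)$ via the two-element subalgebras $T_2$ and $L_2$, and part $(3)$ via exactly the same substitution, sending $h(\bq)$ to the middle element of the additive chain and every other letter to the bottom, so that a summand can evaluate to the top only if it lies in $L_{\geq 2}(\bu)\cap H_{\bq}(\bu)$. The only discrepancy is labeling: your concrete copy of $S_{58}$ swaps the elements $1$ and $3$ relative to the paper's (whence the paper has $T_2\cong\{1,3\}$, $L_2\cong\{2,3\}$ and uses $\varphi(h(\bq))=1$ where you use $\varphi(h(\bq))=3$), which is a harmless isomorphism.
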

\begin{proof}
Suppose that $S_{58}$ satisfies $\bu\approx \bu+\bq$.
It is easy to see that $T_2$ is isomorphic to $\{1, 3\}$ and so $T_2$ satisfies $\bu \approx \bu+\bq$.
Thus $L_{\geq 2}(\bu)$ is nonempty.
Since $L_2$ is isomorphic to $\{2, 3\}$,
it follows that $L_2$ satisfies $\bu \approx \bu+\bq$ and so $H_{\bq}(\bu)$ is nonempty.
Let $\ell(\bq)\geq 2$.
Suppose by way of contradiction that $L_{\geq 2}(\bu)\cap H_{\bq}(\bu)$ is empty.
Then $h(\bq) \in L_{1}(\bu)$.
Consider the substitution $\varphi: P_f(X^+) \to S_{58}$ defined by
$\varphi(x)=1$ if $x=h(\bq)$, and $\varphi(x)=2$ otherwise.
We can obtain that $\varphi(\bu)=1$, $\varphi(\bq)=3$ and so
$\varphi(\bu)\neq \varphi(\bu+\bq)$,
a contradiction.
Thus $L_{\geq 2}(\bu)\cap H_{\bq}(\bu)$ is nonempty.
\end{proof}

\begin{pro}\label{pro45301}
$\mathsf{V}(S_{(4, 453)})$ is the ai-semiring variety defined by the identities
\begin{align}
x^2y &\approx xy; \label{45301}\\
xy^2 &\approx xy; \label{45302}\\
xyz &\approx xzy; \label{45303}\\
x^2 &\approx x^2+x; \label{45304}\\
xy & \approx xy+xyz; \label{45305}\\
x+yz & \approx x+yz+yx. \label{45306}
\end{align}
\end{pro}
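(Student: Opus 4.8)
The proof follows the template established in Propositions \ref{pro47101}, \ref{pro42401}, \ref{pro40001} and \ref{pro40101}: first verify the soundness direction (that $S_{(4,453)}$ satisfies \eqref{45301}--\eqref{45306}) by routine checking of the Cayley tables, then prove completeness, i.e. that every nontrivial identity $\bu\approx\bu+\bq$ of $S_{(4,453)}$ can be derived from \eqref{45301}--\eqref{45306} together with the defining identities of $\mathbf{AI}$. The key structural fact I would establish at the outset is that $S_{(4,453)}$ decomposes as a subdirect product of smaller ai-semirings whose identities are already understood; inspecting the multiplication table (rows $1,3$ agree and rows $2,4$ agree, with $\{1,4\}$ and $\{2\}$ behaving as blocks) strongly suggests that $S_{(4,453)}$ is a subdirect product of $S_{58}$ with some order-two or order-three factor, so that Lemma \ref{lem5801} applies directly to constrain $\bu$ and $\bq$. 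Writing $\bu=\bu_1+\cdots+\bu_n$ with $\bu_i,\bq\in X^+$, the strategy is to split into cases on $\ell(\bq)$.

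First I would record what the order-two subalgebras force. Identities \eqref{45301}--\eqref{45303} say that multiplication is commutative up to the first letter and that each variable in a word may be collapsed to a single occurrence, so every word is provably equal to a normal form $x\,y_1y_2\cdots y_k$ where $x=h(\bu_i)$ is the head and $\{y_1,\dots,y_k\}$ records the content set. Thus \eqref{45303} lets me treat the tail of each word as an unordered set, while \eqref{45301} and \eqref{45302} remove repeated letters; the head, however, is distinguished, which is exactly the information tracked by the functions $h(\cdot)$ and $H_{\bq}(\bu)$ in Lemma \ref{lem5801}. The plan for $\ell(\bq)=1$, say $\bq=x$: here Lemma \ref{lem5801}(1),(2) gives some $\bu_i$ with $\ell(\bu_i)\ge 2$ and some $\bu_j$ with $h(\bu_j)=h(\bq)=x$. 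Using \eqref{45304} to create $x^2$ from a square and \eqref{45306} to absorb the single letter $x$ against a product, I expect to derive $\bu\approx\bu+x$ after suitably combining these two terms.

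For $\ell(\bq)\ge 2$ the workhorse is \eqref{45305}, which lets me freely lengthen the tail of a word that already shares the correct head, together with \eqref{45303} to rearrange the tail and \eqref{45301}--\eqref{45302} to normalize multiplicities. Lemma \ref{lem5801}(3) supplies a term $\bu_i\in L_{\ge 2}(\bu)\cap H_{\bq}(\bu)$, i.e. a word of length $\ge 2$ whose head matches $h(\bq)$. The idea is: starting from $\bu_i$, use \eqref{45305} to adjoin to its tail whatever content letters of $\bq$ are missing, then use \eqref{45301}/\eqref{45302}/\eqref{45303} to reconcile the resulting word with $\bq$ in normal form, so that $\bu+\bu_i\approx\bu+\bu_i+\bq$. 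The content set of $\bq$ is controlled because the $M_2$-type (or $L_2$-type) factor forces $c(\bq)\subseteq c(\bu)$, and the shared head is precisely what \eqref{45305} needs to be applicable.

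\textbf{The main obstacle.} The delicate point, as in Proposition \ref{pro40101}, will be matching the \emph{head} letter exactly rather than merely the content set. Lemma \ref{lem5801}(3) guarantees a term sharing the head of $\bq$ only when $\ell(\bq)\ge 2$; in the length-one case and in any subcase where the candidate term $\bu_i$ has the wrong head, I must manufacture the correct head using \eqref{45304} (to generate a square $x^2$, whose head is $x$) and then splice via \eqref{45306}. I expect the bookkeeping to show that \eqref{45306} is exactly the identity needed to change the head of a product while preserving derivability, so the crux of the argument is verifying that these three moves — normalize by \eqref{45301}--\eqref{45303}, adjust the head by \eqref{45304}/\eqref{45306}, and fill in the tail by \eqref{45305} — suffice in every case, with no identity of $S_{(4,453)}$ escaping this scheme. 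Confirming that the subdirect decomposition and Lemma \ref{lem5801} between them leave no uncovered case is where the real work lies.
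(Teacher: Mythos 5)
The central gap is that your content condition points the wrong way. $S_{(4,453)}$ is not in a variety containing $M_2$, and its identities need not satisfy $c(\bq)\subseteq c(\bu)$: your own axiom list refutes this, since \eqref{45305}, $xy\approx xy+xyz$, has $z\in c(\bq)\setminus c(\bu)$. What the two-element factor actually supplies is the \emph{opposite} containment: $\{1,4\}$ is a subalgebra of $S_{(4,453)}$ isomorphic to $D_2$, and $D_2$ satisfying $\bu\approx\bu+\bq$ forces $c(\bu_r)\subseteq c(\bq)$ for some summand $\bu_r$ of $\bu$. This is the ingredient the paper's proof is built on, and without it your derivation collapses at exactly the point you dismiss as bookkeeping: after applying \eqref{45305} to $\bu_j\in L_{\geq2}(\bu)\cap H_{\bq}(\bu)$ you hold the summand $h(\bq)s(\bu_j)\bq$, whose content is $c(\bu_j)\cup c(\bq)$, and no identity among \eqref{45301}--\eqref{45303} can \emph{delete} the surplus letters of $s(\bu_j)$ (each of these identities preserves the content of a word; they only collapse repetitions and permute tails). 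So ``reconcile the resulting word with $\bq$ in normal form'' is not a legal move. The paper instead keeps $h(\bq)\bq s(\bu_j)$ (tail rearranged by \eqref{45303}) as a helper summand and applies \eqref{45306} with $x\mapsto\bu_r$, $y\mapsto h(\bq)\bq$, $z\mapsto s(\bu_j)$, producing the new summand $h(\bq)\bq\bu_r$, which normalizes to $\bq$ by \eqref{45301}--\eqref{45303} \emph{precisely because} $c(\bu_r)\subseteq c(\bq)$. Your reading of \eqref{45306} as a head-changing device misses its actual role: absorbing the $D_2$-witness $\bu_r$ into a word already headed by $h(\bq)$ and with content exactly $c(\bq)$.

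The same omission sinks your $\ell(\bq)=1$ case. Head-matching plus a long summand is genuinely insufficient there: $xy\approx xy+x$ \emph{fails} in $S_{(4,453)}$ (take $x=3$, $y=2$: then $xy=4$ but $xy+x=4+3=1$), so any scheme that derives $\bu\approx\bu+x$ from some summand $\bu_j=x\,s(\bu_j)$ via \eqref{45304}/\eqref{45306} would prove a false identity. The paper's argument again uses the $D_2$-witness: $c(\bu_r)\subseteq c(\bq)=\{x\}$ forces $\bu_r=x^k$, nontriviality gives $k\geq2$, and then \eqref{45301} and \eqref{45304} yield $\bu\approx\bu+\bq^k+\bq$. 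Your structural guess is sound as far as it goes --- gluing $1$ and $4$ does give a congruence with quotient $S_{58}$, so Lemma \ref{lem5801} applies, matching the paper's remark that $\mathsf V(S_{(4,453)})=\mathsf V(S_{58},D_2)$ --- and your use of Lemma \ref{lem5801}(3) in the $\ell(\bq)\geq2$ case is the right start; but as written the proposal cannot be completed without replacing the false $M_2$-type condition by the $D_2$-condition $c(\bu_r)\subseteq c(\bq)$ and rebuilding both cases around it.
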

\begin{proof}
It is easy to check that $S_{(4, 453)}$ satisfies the identities (\ref{45301})--(\ref{45306}).
In the remainder we need only show that every ai-semiring identity of $S_{(4, 453)}$
is derivable from (\ref{45301})--(\ref{45306}) and the identities defining $\mathbf{AI}$.
Let $\bu \approx \bu+\bq$ be such a nontrivial identity,
where $\bu=\bu_1+\bu_2+\cdots+\bu_n$ and $\bu_i, \bq\in X^+$, $1 \leq i \leq n$.
Since $D_2$ is isomorphic to $\{1, 4\}$, it follows that $D_2$ satisfies $\bu \approx \bu+\bq$
and so $c(\bu_r)\subseteq c(\bq)$ for some $\bu_r \in \bu$.
On the other hand, we have that there exists a congruence $\rho=\{(1,4),(4,1)\}\cup \vartriangle$
 such that $S_{(4, 453)}/\rho$ is isomorphic to $S_{58}$ and so $S_{58}$ satisfies $\bu \approx \bu+\bq$.
By Lemma \ref{lem5801} we consider the following two cases.

\textbf{Case 1.} $\ell(\bq)=1$. Then $\bu_r=\bq^k$ for some $k\geq2$. So we have
\[
\bu \approx \bu+\bu_r \approx \bu+\bq^k\stackrel{(\ref{45304})}\approx \bu+\bq^k+\bq.
\]
This implies the identity $\bu \approx \bu+\bq$.

\textbf{Case 2.} $\ell(\bq)\geq 2$. By Lemma ${\ref{lem5801}}$ we obtain that
$L_{\geq 2}(\bu)\cap H_{\bq}(\bu)$ is nonempty
and so there exists $\bu_j \in \bu$ such that $\ell(\bu_j)\geq 2$ and $h(\bu_j)=h(\bq)$.
Now we have
\[
\bu\approx \bu+\bu_j \stackrel{(\ref{45305})}\approx \bu+\bu_j+\bu_j\bq\approx\bu+\bu_j+h(\bu_j)s(\bu_j)\bq
\approx\bu+\bu_j+h(\bq)s(\bu_j)\bq.
\]
This derives the identity
\[
\bu\approx \bu+h(\bq)s(\bu_j)\bq.
\]
Furthermore, we can deduce

\begin{align*}
\bu
&\approx \bu+\bu_r+h(\bq)s(\bu_j)\bq \\
&\approx  \bu+\bu_r+h(\bq)\bq s(\bu_j) &&(\text{by}~(\ref{45303}))\\
&\approx  \bu+\bu_r+h(\bq)\bq s(\bu_j)+h(\bq)\bq\bu_r  &&(\text{by}~(\ref{45306}))\\
&\approx \bu+\bu_r+h(\bq)\bq s(\bu_j)+\bq. &&(\text{by}~(\ref{45301}), (\ref{45302}), (\ref{45303}))
\end{align*}
This proves the identity $\bu \approx \bu+\bq$.
\end{proof}

\begin{remark}
It is a routine matter to verify that $S_{(4, 453)}$ is isomorphic to a subdirect product of $S_{58}$ and $S_{25}$.
So $\mathsf V(S_{(4, 453)}) = \mathsf V(S_{58}, S_{25})$. On the other hand, it is easy
to see that both $L_2$ and $D_2$ can be embedded into $S_{25}$. Also,
$S_{25}$ satisfies an equational basis of $\mathsf V(L_2, D_2)$ that can be found in \cite{sr}.
It follows that $\mathsf V(S_{25})=\mathsf V(L_2, D_2)$, and so
$\mathsf V(S_{(4, 453)}) = \mathsf V(S_{58}, L_2, D_2)$. Since $L_2$ can be embedded into $S_{58}$, we therefore obtain
\[
\mathsf{V}(S_{(4, 453)})=\mathsf{V}(S_{58}, D_2).
\]
\end{remark}

Notice that $S_{(4, 464)}$ is isomomorphism to the dual of $S_{(4, 453)}$. By Proposition \ref{pro45301} we immediately deduce
\begin{cor}
The ai-semiring $S_{(4, 464)}$ is finitely based.
\end{cor}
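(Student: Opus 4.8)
The plan is to exploit the multiplicative duality that has already driven the two preceding corollaries of this section (for $S_{(4,461)}$ and for $S_{(4,405)}$). Since the additive reduct of every ai-semiring is commutative, the only asymmetric operation is multiplication, so reversing the order of products sends an ai-semiring $S$ to another ai-semiring $S^{d}$, its \emph{dual}; the sentence preceding the statement records that $S_{(4,464)}$ is isomorphic to the dual of $S_{(4,453)}$. The whole argument rests on the routine observation that duality is compatible with equational logic: if, for a set $\Sigma$ of ai-semiring identities, one writes $\Sigma^{d}$ for the set obtained by reversing every word occurring on either side of each identity, then an ai-semiring $S$ satisfies $\Sigma$ if and only if $S^{d}$ satisfies $\Sigma^{d}$, and moreover $\Sigma \vdash \bu\approx\bv$ holds precisely when $\Sigma^{d}\vdash \bu^{d}\approx\bv^{d}$. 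Hence a finite equational basis for $\mathsf{V}(S)$ transforms, under reversal, into a finite equational basis for $\mathsf{V}(S^{d})$.

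Concretely, I would first invoke Proposition~\ref{pro45301}, which gives the explicit finite basis (\ref{45301})--(\ref{45306}) for $\mathsf{V}(S_{(4,453)})$. Applying the word-reversal operation to these six identities produces a finite set $\Sigma^{d}$; for example $x^{2}y\approx xy$ becomes $yx^{2}\approx yx$, and $xyz\approx xzy$ becomes $zyx\approx yzx$, while an identity such as $x+yz\approx x+yz+yx$ passes to $x+zy\approx x+zy+xy$. By the duality principle stated above, $\Sigma^{d}$ together with the identities defining $\mathbf{AI}$ forms an equational basis for $\mathsf{V}(S_{(4,464)})$, so $S_{(4,464)}$ is finitely based.

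There is essentially no obstacle here, which is exactly why the deduction is flagged as immediate and matches the format of the earlier corollaries. The only point needing (entirely routine) verification is the claimed isomorphism between $S_{(4,464)}$ and the dual of $S_{(4,453)}$, obtained by reading the multiplication table of $S_{(4,453)}$ with the two arguments interchanged and comparing it, up to relabelling of the carrier set, with the table of $S_{(4,464)}$ in Table~\ref{tb1}; once this is confirmed, finite basability transfers directly from Proposition~\ref{pro45301}.
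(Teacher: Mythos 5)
Your proposal is correct and is exactly the paper's argument: the paper notes that $S_{(4,464)}$ is isomorphic to the dual of $S_{(4,453)}$ and deduces the result immediately from Proposition~\ref{pro45301}, with the word-reversal transfer of the finite basis (\ref{45301})--(\ref{45306}) left implicit. Your explicit spelling-out of the duality principle and the reversed identities is a faithful elaboration of what the paper treats as routine.
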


\begin{pro}\label{pro47501}
$\mathsf{V}(S_{(4, 475)})$ is the ai-semiring variety defined by the identity
\begin{align}
xy & \approx x^2. \label{47501}
\end{align}
\end{pro}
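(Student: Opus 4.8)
The plan is to establish that $\mathsf{V}(S_{(4,475)})$ coincides with the variety $\mathcal V$ defined over $\mathbf{AI}$ by (\ref{47501}). Soundness is immediate: every row of the multiplication table of $S_{(4,475)}$ is constant, so $ab$ depends only on its left factor, and indeed $ab=a^2$ for all $a,b$ (both equal $3$ unless $a=2$, in which case both equal $2$); hence $S_{(4,475)}$ satisfies (\ref{47501}). For the converse I would, following the paper's convention, treat an arbitrary nontrivial identity $\bu\approx\bu+\bq$ of $S_{(4,475)}$ with $\bu=\bu_1+\cdots+\bu_n$, and show it is derivable from (\ref{47501}) together with the defining identities of $\mathbf{AI}$. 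The crucial first move is a normalization: applying (\ref{47501}) once in the form $z\bw\approx z^2$ shows that every word of length $\geq 2$ is provably equal to the square of its head, so modulo (\ref{47501}) each $\bu_i$ and $\bq$ is either a single variable or a square $x^2$; in particular $\bq\approx h(\bq)^2$ whenever $\ell(\bq)\geq 2$.

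Next I would extract the necessary combinatorial conditions by passing to a quotient. One checks that $\rho=\{(1,3),(3,1)\}\cup\vartriangle$ is a congruence of $S_{(4,475)}$ and that $S_{(4,475)}/\rho\cong S_{58}$ (the same computation as in Proposition \ref{pro45301}, noting that collapsing $1$ with $3$ sends every non-zero product into the additive top). Consequently $S_{58}$ satisfies $\bu\approx\bu+\bq$, and Lemma \ref{lem5801} applies. I also need to dispose of the length-one case directly: if $\bq=x$, the substitution $\varphi(x)=1$ and $\varphi(y)=2$ for $y\neq x$ gives $\varphi(\bq)=1$, forcing $\varphi(\bu)=1$; since products take only the values $2,3$ under this $\varphi$ and the additive join of elements of $\{2,3\}$ never reaches $1$, this is possible only when $x$ is itself a length-one summand of $\bu$, making the identity trivial. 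Hence for a nontrivial identity $\ell(\bq)\geq 2$, and then Lemma \ref{lem5801}(3) yields $L_{\geq 2}(\bu)\cap H_{\bq}(\bu)\neq\emptyset$.

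With these facts the derivation is short. Writing $z=h(\bq)$, choose $\bu_i\in L_{\geq 2}(\bu)\cap H_{\bq}(\bu)$, so $\ell(\bu_i)\geq 2$ and $h(\bu_i)=z$. By the normalization step $\bu_i\approx z^2\approx\bq$, and therefore
\[
\bu\approx\bu+\bu_i\approx\bu+z^2\approx\bu+\bq,
\]
which is the desired identity.

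The step I expect to be the main obstacle is making the necessary conditions both correct and sufficient. Lemma \ref{lem5801} speaks only about heads and lengths, and by itself it neither separates length-one summands from longer ones nor guarantees that $\bq$ can be reproduced; the genuine work is twofold: (i) verifying that the correct class to collapse is $\{1,3\}$ (not $\{1,4\}$) so that $S_{(4,475)}/\rho\cong S_{58}$ and Lemma \ref{lem5801} is even available — here one must use that the two middle elements $3$ and $4$ are multiplicatively \emph{inequivalent} ($3$ lies in the image of multiplication while $4$ does not), so the two $3$-element quotients are distinct; and (ii) the length-one elimination, which genuinely uses the top element $1$ of the quasi-antichain, a feature invisible to the two-element subsemirings. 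Once $\ell(\bq)\geq 2$ is secured, identity (\ref{47501}) flattens every long word to the square of its head and the remaining derivation is automatic.
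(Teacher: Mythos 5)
Your proposal is correct and takes essentially the same route as the paper's proof: the soundness check, forcing $\ell(\bq)\geq 2$ for a nontrivial identity (the paper cites the subsemiring $N_2\cong\{1,3\}$ where you use an equivalent direct substitution), the congruence $\rho=\{(1,3),(3,1)\}\cup\vartriangle$ with quotient $S_{58}$ combined with Lemma \ref{lem5801}(3), and the final derivation flattening the chosen summand via (\ref{47501}) to $h(\bq)^2\approx \bq$. Your extra observations (the normalization of all long words to squares of their heads, and the distinction between collapsing $\{1,3\}$ versus $\{1,4\}$) are accurate but do not change the substance of the argument.
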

\begin{proof}
It is easy to check that $S_{(4, 475)}$ satisfies the identity (\ref{47501}).
In the remainder we shall show that every ai-semiring identity of $S_{(4, 475)}$
is derivable from $(\ref{47501})$ and the identities defining $\mathbf{AI}$.
Let $\bu \approx \bu+\bq$ be such a nontrivial identity, where
$\bu=\bu_1+\bu_2+\cdots+\bu_n$ and $\bu_i, \bq \in X^+$, $1 \leq i \leq n$.
Since $N_2$ is isomorphic to $\{1, 3\}$,
it follows that $N_2$ satisfies $\bu \approx \bu+\bq$ and so $\ell(\bq)\geq 2$.
On the other hand, we have that there exists a congruence $\rho=\{(1,3),(3,1)\}\cup \vartriangle$
 such that $S_{(4, 475)}/\rho$ is isomorphic to $S_{58}$ and so $S_{58}$ satisfies $\bu \approx \bu+\bq$.
By Lemma \ref{lem5801} it follows that there exists $\bu_j \in \bu$ such that
$\ell(\bu_j)\geq 2$ and $h(\bu_j)=h(\bq)$.
This implies that $\bu_j=h(\bq)s(\bu_j)$, where $s(\bu_j)$ is nonempty.
Now we have
\[
\bu \approx \bu+\bu_j \approx  \bu+h(\bq)s(\bu_j)\stackrel{(\ref{47501})}\approx \bu+{h(\bq)}^2
\stackrel{(\ref{47501})}\approx \bu+h(\bq)s(\bq)\approx \bu+\bq.
\]
This derives the identity $\bu \approx \bu+\bq$.
\end{proof}

\begin{remark}
It is a routine matter to verify that $S_{(4, 475)}$ is isomorphic to a subdirect product of $S_{58}$ and $S_{24}$. So $\mathsf V(S_{(4, 475)}) = \mathsf V(S_{58}, S_{24})$. On the other hand, it is easy
to see that both $L_2$ and $N_2$ can be embedded into $S_{24}$. Also, $S_{24}$ satisfies an equational basis of $\mathsf V(L_2, N_2)$ that can be found in \cite{sr}. It follows that $\mathsf V(S_{24})=\mathsf V(L_2, N_2)$, and so $\mathsf V(S_{(4, 475)}) = \mathsf V(S_{58}, L_2, N_2)$. Since $L_2$ can be embedded into $S_{58}$, we therefore have
\[
\mathsf V(S_{(4, 475)}) = \mathsf V(S_{58}, N_2).
\]
\end{remark}

\begin{cor}
The ai-semirings $S_{(4, 419)}$, $S_{(4, 440)}$ and $S_{(4, 477)}$ are all finitely based.
\end{cor}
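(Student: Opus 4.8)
The plan is to reduce all three semirings to Proposition \ref{pro47501} by means of duality together with a single variety computation. First I would read off the multiplication tables: in $S_{(4, 475)}$ every product $xy$ equals the value of the first factor under the map $f$ sending $1,3,4\mapsto 3$ and $2\mapsto 2$, while in $S_{(4, 440)}$ every product equals the first factor under the map $g$ sending $1,3\mapsto 1$ and $2,4\mapsto 2$. Inspecting the tables of $S_{(4, 477)}$ and $S_{(4, 419)}$, one sees that $S_{(4, 477)}$ multiplies by the \emph{second} factor via $f$, and $S_{(4, 419)}$ multiplies by the second factor via $g$; hence $S_{(4, 477)}$ is the dual of $S_{(4, 475)}$ and $S_{(4, 419)}$ is the dual of $S_{(4, 440)}$ (the common quasi-antichain addition is commutative, so it is unaffected by dualization). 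By Proposition \ref{pro47501} and the duality principle used repeatedly above, $S_{(4, 477)}$ is finitely based; and once $S_{(4, 440)}$ is shown to be finitely based, $S_{(4, 419)}$ will follow in the same way. Thus everything reduces to $S_{(4, 440)}$.

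For $S_{(4, 440)}$ I would argue that $\mathsf{V}(S_{(4, 440)}) = \mathsf{V}(S_{(4, 475)})$, after which Proposition \ref{pro47501} supplies the finite basis $xy\approx x^2$. One inclusion is immediate: from the table $xy$ and $x^2$ both equal $g(x)$, so $S_{(4, 440)}$ satisfies $xy\approx x^2$ and therefore lies in $\mathsf{V}(S_{(4, 475)})$ by Proposition \ref{pro47501}. For the reverse inclusion it is enough to place $S_{(4, 475)}$ in $\mathsf{V}(S_{(4, 440)})$, and the plan is to embed $S_{(4, 475)}$ into the square $S_{(4, 440)}\times S_{(4, 440)}$.

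Concretely, a map $\varphi$ from $S_{(4, 475)}$ to $S_{(4, 440)}$ is a homomorphism exactly when it preserves $+$ and satisfies $\varphi(f(a)) = g(\varphi(a))$ for every $a$, since in each semiring a product is determined solely by its first factor. I would exhibit two such homomorphisms, namely $\varphi_1$ given by $1,3\mapsto 1$, $2\mapsto 2$, $4\mapsto 3$, and $\varphi_2$ given by $1,4\mapsto 4$, $2,3\mapsto 2$, and check that the pair $(\varphi_1,\varphi_2)$ separates every two distinct elements of $S_{(4, 475)}$, so that it is an embedding into the square. This yields $S_{(4, 475)}\in\mathsf{V}(S_{(4, 440)})$, hence $\mathsf{V}(S_{(4, 440)}) = \mathsf{V}(S_{(4, 475)})$ and the finite basability of $S_{(4, 440)}$. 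The main obstacle is purely in discovering the two homomorphisms; once they are written down, verifying that each respects the addition and the condition $\varphi(f(a))=g(\varphi(a))$, and that together they are injective, is a finite and routine check.
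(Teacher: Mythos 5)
Your proposal is correct and follows essentially the same route as the paper: duality reduces $S_{(4,477)}$ to Proposition \ref{pro47501} and $S_{(4,419)}$ to $S_{(4,440)}$, and $\mathsf{V}(S_{(4,440)})=\mathsf{V}(S_{(4,475)})$ is obtained from the identity $xy\approx x^2$ together with an embedding of $S_{(4,475)}$ into $S_{(4,440)}\times S_{(4,440)}$, which the paper asserts as routine and you make explicit. Your two maps $\varphi_1,\varphi_2$ do check out (with the additive join $3+4=1$, $2+x=x$, $1+x=1$, both preserve addition, satisfy $\varphi(f(a))=g(\varphi(a))$, and jointly separate all four elements), so nothing is missing.
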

\begin{proof}
It is easy to see that $S_{(4, 477)}$ and $S_{(4, 475)}$  have dual multiplications.
By Proposition $\ref{pro47501}$ we immediately deduce that $S_{(4, 477)}$ is finitely based.
On the other hand,
it is easy to verify that $S_{(4, 440)}$ satisfies the identity $(\ref{47501})$
and that $S_{(4, 475)}$ can be embedded into the direct product of two copies of $S_{(4, 440)}$.
Thus $\mathsf{V}(S_{(4, 440)})=\mathsf{V}(S_{(4, 475)})$ and so by Proposition \ref{pro47501}
$S_{(4, 440)}$ is finitely based.
Since $S_{(4, 419)}$ and $S_{(4, 440)}$ have dual multiplications, it follows immediately that $S_{(4, 419)}$ is also finitely based.
\end{proof}

\section{Equational basis of some $4$-element ai-semirings that relate to $S_{59}$}
In this section we shall show that some $4$-element ai-semirings that relate to $S_{59}$ are finitely based.
We first provide some information about the identities of $S_{59}$.

\begin{lem}\label{lem5901}
Let $\bu\approx \bu+\bq$ be a nontrivial ai-semiring identity such that
$\bu=\bu_1+\bu_2+\cdots+\bu_n$ and $\bu_i, \bq \in X^+$, $1\leq i \leq n$.
If $\bu\approx \bu+\bq$ is satisfied by $S_{59}$, then $\bu$ and $\bq$ satisfy one of the following conditions:
\begin{itemize}
\item[$(1)$] $\ell(\bu_i)\geq 3$ for some $\bu_i \in \bu$;

\item[$(2)$] $\ell(\bu_i)\leq 2$ for all $\bu_i \in \bu$. Then $\ell(\bq)\leq 2$.
If $\ell(\bq)=1$, then $c(\bq)\subseteq c(\bu)$. If $\ell(\bq)=2$, then $c(\bq)\subseteq c(L_2(\bu))$.
\end{itemize}
\end{lem}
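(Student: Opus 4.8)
The plan is to extract the constraints on $\bu$ and $\bq$ imposed by $S_{59}$ by evaluating the identity $\bu\approx\bu+\bq$ under carefully chosen substitutions into $S_{59}$, mirroring the technique used in Lemmas~\ref{lem5701}, \ref{lem5301} and \ref{lem5801}. Since $S_{59}$ is a fixed $4$-element algebra whose multiplication table is listed in Table~\ref{tb1}, I would first read off which of the $2$-element ai-semirings $L_2,R_2,M_2,D_2,N_2,T_2$ embed into $S_{59}$ (or arise as quotients), because each such embedding forces a necessary condition on any identity satisfied by $S_{59}$. The dichotomy in the statement---either some $\bu_i$ has length $\ge 3$, or all summands have length $\le 2$ and then $\ell(\bq)\le 2$ with the stated containment---strongly suggests that $S_{59}$ collapses all products of length $\ge 3$ to a single value, so that long terms are ``free'' and the real content lies in the short terms.

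\emph{First} I would handle the trivial branch: if $\ell(\bu_i)\ge 3$ for some $i$, condition $(1)$ holds and there is nothing to prove, so I may assume $\ell(\bu_i)\le 2$ for all $i$. \emph{Next}, to bound $\ell(\bq)$, I would identify a $2$-element subalgebra (most likely $T_2\cong\{1,3\}$ or similar, read from the table for $S_{(4,462)}$/$S_{59}$) under which every term of length $\ge k_0$ evaluates to the top element while shorter terms do not; applying the homomorphism that sends all variables to the generating element forces $\ell(\bq)\le 2$ once all $\ell(\bu_i)\le 2$. \emph{Then} for the containment conditions I would argue by contradiction exactly as in the earlier lemmas: assuming $\ell(\bq)=1$ with $c(\bq)\not\subseteq c(\bu)$, I pick the substitution $\varphi$ sending the missing variable to one value and all others to a second value so that $\varphi(\bu)\neq\varphi(\bu+\bq)$, contradicting that $S_{59}$ satisfies the identity; the case $\ell(\bq)=2$ with $c(\bq)\not\subseteq c(L_2(\bu))$ is treated analogously, now exploiting that only the length-$2$ summands of $\bu$ can ``see'' a length-$2$ word $\bq$ under the chosen evaluation.

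\emph{The key step}, and the one I expect to require the most care, is pinning down the exact multiplicative behaviour of $S_{59}$ so that the right separating substitution $\varphi\colon P_f(X^+)\to S_{59}$ exists in each subcase. Concretely, for the $\ell(\bq)=2$ case I must find a two-valued assignment (say the missing variable $\mapsto a$, the rest $\mapsto b$) for which $\varphi(\bq)$ exceeds $\varphi(\bu_i)$ on every length-$\le 2$ summand $\bu_i$ whose content is not contained in $c(\bq)$, while not being dominated by the length-$1$ summands either; verifying $\varphi(\bu)\neq\varphi(\bu)+\varphi(\bq)$ then amounts to checking the additive order in Figure~\ref{figure01} against the product values. \emph{The main obstacle} is therefore bookkeeping: I must ensure the chosen $\varphi$ simultaneously controls all summands of a given length, and that the quasi-antichain additive structure (where $3$ and $4$ are incomparable above $1$ and below $2$) genuinely separates the two sides. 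Once the correct elements of $S_{59}$ are selected for each subcase---which I would do by direct inspection of the two small tables---each contradiction follows from a routine evaluation, and assembling the three subcases completes the proof.
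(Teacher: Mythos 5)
Your proposal correctly identifies the genre of argument (separating substitutions into $S_{59}$, by contradiction, as in Lemmas \ref{lem5701}, \ref{lem5301} and \ref{lem5801}), but it contains one factual error and one genuine gap at exactly the decisive step. The factual error: $S_{59}$ is not a $4$-element algebra from Table \ref{tb1}; it is one of the $61$ ai-semirings of order three classified in \cite{zrc}, and it appears in this paper only as a subalgebra or quotient (e.g.\ $\{1,2,3\}\subseteq S_{(4,411)}$). The facts the paper actually uses are: $S_{59}$ satisfies $x_1x_2x_3\approx y_1y_2y_3$ with $3$ a multiplicative zero (so every product of length at least $3$, and every product with a factor $1$ or $3$, equals $3$), while $2\cdot 2=1\neq 3$, $1+2=1$ and $1+3\neq 1$. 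None of this can be ``read off the table for $S_{(4,462)}$/$S_{59}$'' as you suggest.

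The gap: your mechanism for the key claim $\ell(\bq)\leq 2$ --- ``identify a $2$-element subalgebra under which every term of length $\geq k_0$ evaluates to the top element while shorter terms do not'' --- cannot exist. The multiplicative reducts of the six $2$-element ai-semirings are left-zero, right-zero, null, or idempotent; under an assignment sending all variables to one element, a null multiplication collapses all words of length $\geq 2$ to the same value and an idempotent one is length-blind, so no $2$-element ai-semiring separates length $2$ from length $\geq 3$. Two-element tools only yield $L_2(\bu)\neq\emptyset$ (via $T_2$); the length bound genuinely requires the three-element nilpotent structure of $S_{59}$. The paper's proof uses a \emph{single} three-valued substitution $\varphi$ with $\varphi(x)=2$ on $c(L_2(\bu))$, $\varphi(x)=1$ on $c(L_1(\bu))\setminus c(L_2(\bu))$, and $\varphi(x)=3$ elsewhere: then $\varphi(\bu)=1$ (length-$2$ summands give $2\cdot 2=1$, length-$1$ summands give $1$ or $2$, and $1+2=1$), whereas $\ell(\bq)\geq 3$ forces $\varphi(\bq)=3$, contradicting $1+3\neq 1$; the same $\varphi$ then yields both containments, since $c(\bq)\nsubseteq c(\bu)$ puts a factor $3$ into $\varphi(\bq)$, and $\ell(\bq)=2$ with $c(\bq)\nsubseteq c(L_2(\bu))$ puts a factor $1$ into the length-$2$ product $\varphi(\bq)$, giving $\varphi(\bq)=3$ again. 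Your per-case two-valued substitutions for the containment claims could be patched into special cases of this uniform $\varphi$, but as written you defer precisely the verifications (the multiplicative behaviour of $S_{59}$ and the choice of values) that constitute the proof, and your proposed route to $\ell(\bq)\leq 2$ would fail outright.
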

\begin{proof}
Assume that $S_{59}$ satisfies $\bu\approx \bu+\bq$, where $\ell(\bu_i)\leq 2$ for all $\bu_i \in \bu$.
Since $T_2$ can be embedded into $S_{59}$,
it follows that $T_2$ satisfies $\bu\approx \bu+\bq$ and so $L_2(\bu)\neq\emptyset$.
We first show that $\ell(\bq)\leq 2$.
Let $\varphi: P_f(X^+) \to S_{59}$ be a substitution such that
$\varphi(x)=2$ for every $x\in c(L_2(\bu))$, $\varphi(x)=1$ for every $x\in c(L_1(\bu))\setminus c(L_2(\bu))$,
and $\varphi(x)=3$ otherwise.
It is easy to see that $\varphi(\bu)=1$ and so $\varphi(\bq)=1$.
Suppose by way of contradiction that $\ell(\bq)\geq 3$.
Then $\varphi(\bq)=3$,
since the identity $x_1x_2x_3\approx y_1y_2y_3$ and $3$ is the multiplicative zero element of $S_{59}$.
This is a contradiction. So $\ell(\bq)\leq 2$.
If $c(\bq)\nsubseteq c(\bu)$, then $\varphi(x)=3$ for some $x\in c(\bq)$
and so $\varphi(\bq)=3$.
This also contradicts with the fact that $\varphi(\bq)=1$. Thus $c(\bq)\subseteq c(\bu)$.
Now let $\ell(\bq)=2$. If $c(\bq)\nsubseteq c(L_2(\bu))$, then
$\varphi(x)=1$ for some $x\in c(\bq)$ and so $\varphi(\bq)=3$.
Therefore, $c(\bq)\subseteq c(L_2(\bu))$.
\end{proof}

\begin{pro}\label{pro41101}
$\mathsf{V}(S_{(4, 411)})$ is the ai-semiring variety defined by the identities
\begin{align}
xy &\approx xy+x; \label{41102}\\
x+yx &\approx x+yx+xy; \label{41100}\\
x_1x_2x_3 & \approx x_1x_2x_3+x_4; \label{41104}\\
x_1x_2+x_3x_4 &\approx x_1x_2+x_3x_4+x_1x_3; \label{41105}\\
x_1x_2+x_3x_4 &\approx x_1x_2+x_3x_4+x_1x_4. \label{41106}
\end{align}
\end{pro}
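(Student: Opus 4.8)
The plan is to prove both inclusions. The easy direction---that $S_{(4, 411)}$ satisfies (\ref{41102})--(\ref{41106})---I would dispatch by a routine check of substitutions into the four-element table; the only points worth isolating first are that $1$ is the multiplicative zero and is the value of every word of length $\ge 3$ (so (\ref{41104}) is immediate), and that a product of length $2$ always takes the value $1$ or $3$. For the completeness direction I would start from a nontrivial identity $\bu\approx\bu+\bq$ of $S_{(4, 411)}$ with $\bu=\bu_1+\cdots+\bu_n$, and make the key structural observation that the subalgebra carried by $\{1,2,3\}$ is isomorphic to $S_{59}$; hence the identity holds in $S_{59}$ and Lemma \ref{lem5901} applies, giving the length/content dichotomy. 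Since Lemma \ref{lem5901} supplies only content information, I would supplement it with a few direct substitutions into $S_{(4, 411)}$ (sending the relevant variable to $4$ and all others to $2$) to extract the positional facts the derivation needs, chiefly that the first letter of $\bq$ lies in $c(p(\bu))$.

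The first reduction handles any $\bu$ possessing a term $\bu_i$ with $\ell(\bu_i)\ge 3$. Writing $\bu_i=w_1w_2w_3$ as a product of three nonempty factors and substituting $w_1,w_2,w_3$ for $x_1,x_2,x_3$ and $\bq$ for $x_4$ in (\ref{41104}) yields $\bu_i\approx\bu_i+\bq$ for an \emph{arbitrary} word $\bq$, whence $\bu\approx\bu+\bu_i\approx\bu+\bq$. By the contrapositive of the second clause of Lemma \ref{lem5901}, $\ell(\bq)\ge 3$ already forces such a long term, so this step disposes of every case with $\ell(\bq)\ge 3$. I may therefore assume from now on that every $\bu_i$ has length $\le 2$, in which case Lemma \ref{lem5901} also gives $\ell(\bq)\le 2$ together with $c(\bq)\subseteq c(\bu)$ (if $\ell(\bq)=1$) or $c(\bq)\subseteq c(L_2(\bu))$ (if $\ell(\bq)=2$).

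For $\ell(\bq)=1$, say $\bq=x$, a substitution argument shows $x\in c(p(\bu))$; nontriviality rules out $x$ being a single-letter summand, so $x$ begins a length-$2$ term $xw$, and (\ref{41102}) extracts it. For $\ell(\bq)=2$, write $\bq=xy$. A substitution forces $x\in c(p(\bu))$, and combining this with $x\in c(L_2(\bu))$ I can always produce, within the equational theory, a length-$2$ term $xw_0$ beginning with $x$: if $x$ does not already start a length-$2$ term then it occurs as a single letter and as the suffix of some $wx$, and (\ref{41100}) relocates it to the prefix position, yielding $xw$. When $x=y$ the degenerate instance $x_3x_4\mapsto x_1x_2$ of (\ref{41105}) gives $xw_0\approx xw_0+x^2$, so $\bu\approx\bu+x^2$. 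When $x\ne y$, I locate a length-$2$ term containing $y$ (available since $y\in c(L_2(\bu))$): if $y$ is a prefix I assemble $xy$ from $xw_0$ and that term by (\ref{41105}), and if $y$ is a suffix I assemble it by (\ref{41106}); in either case $\bu\approx\bu+xy$.

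The main obstacle I anticipate is precisely the length-$2$ case. Because $S_{(4, 411)}$ is noncommutative---$2\cdot 4=3$ while $4\cdot 2=1$---the product $\bq=xy$ cannot be symmetrized, and Lemma \ref{lem5901} gives no information about \emph{where} $x$ and $y$ sit inside the terms of $\bu$. All of the positional data must therefore be teased out by hand through substitutions exploiting the middle element $4$, and the genuinely delicate point is the interlocking use of (\ref{41100}), (\ref{41105}) and (\ref{41106}) to manufacture $xy$ from occurrences of $x$ and $y$ that may be split across different factors and positions; verifying that these three identities really do hold in $S_{(4, 411)}$ (where a factor evaluating to the top $1$ can silently absorb an entire summand) is the most error-prone part of the computational check.
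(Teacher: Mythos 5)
Your proposal follows the paper's proof essentially step for step: the subalgebra $\{1,2,3\}\cong S_{59}$ together with Lemma \ref{lem5901} gives the length dichotomy, (\ref{41104}) disposes of the case with a summand of length $\ge 3$ (hence of $\ell(\bq)\ge 3$), a substitution sending one variable to $4$ and the rest to $2$ supplies the positional fact, (\ref{41102}) handles $\ell(\bq)=1$, and for $\bq=xy$ you use (\ref{41100}) to flip a suffix occurrence of $x$ and then (\ref{41105}) or (\ref{41106}) according to whether $y$ heads or ends a length-$2$ summand, exactly as in the paper. One small correction: that substitution yields only that \emph{some summand of $\bu$ has head $h(\bq)$} (possibly a single-letter summand), not that $h(\bq)\in c(p(\bu))$ --- for instance $\bu=x+zx+yy_1$ satisfies $\bu\approx\bu+xy$ in $S_{(4,411)}$ although $x\notin c(p(\bu))$ --- but since your fallback case (single-letter summand $x$ plus a term $wx$, relocated by (\ref{41100})) is precisely the treatment of that situation, the derivation itself is unaffected.
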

\begin{proof}
It is easy to verify that $S_{(4, 411)}$ satisfies the identities (\ref{41102})--(\ref{41106}).
In the remainder it is enough to show that every ai-semiring identity of $S_{(4, 411)}$
is derivable from (\ref{41102})--(\ref{41106}) and the identities defining $\mathbf{AI}$.
Let $\bu \approx \bu+\bq$ be such a nontrivial identity,
where $\bu=\bu_1+\bu_2+\cdots+\bu_n$ and $\bu_i, \bq\in X^+$, $1 \leq i \leq n$.
Since $T_2$ is isomorphic to $\{1, 4\}$,
it follows that $T_2$ satisfies $\bu \approx \bu+\bq$ and so $\ell(\bu_j)\geq 2$ for some $\bu_j \in \bu$.
It is easy to see that $S_{59}$ is isomorphic to $\{1, 2, 3\}$ and so $S_{59}$ satisfies $\bu \approx \bu+\bq$.
By Lemma \ref{lem5901} we need to consider the following two cases.

\textbf{Case 1.} $\ell(\bu_i) \geq 3$ for some $\bu_i \in \bu$. Then
\[
\bu \approx \bu+\bu_i \stackrel{(\ref{41104})}\approx
\bu+\bu_i+\bq \approx \bu+\bq.
\]

\textbf{Case 2.} $\ell(\bu_i)\leq 2$ for all $\bu_i \in \bu$.
Then $\ell(\bq)\leq 2$.
We shall show that $h(\bu_k)=h(\bq)$ for some $\bu_k \in \bu$.
If this is not true,
let us consider the semiring homomorphism
$\varphi: P_f(X^+) \to S_{(4, 411)}$ defined by
$\varphi(x)=4$ if $x = h(\bq)$ and $\varphi(x)=2$ otherwise.
It is easy to see that $\varphi(\bu)=3$ and $\varphi(\bq)=1$ or $4$.
So $\varphi(\bu) \neq \varphi(\bu+\bq)$,
a contradiction.
Thus there exists $\bu_k \in \bu$ such that $h(\bu_k)=h(\bq)$ and so
$\bu_k=h(\bq)s(\bu_k)$.

\textbf{Subcase 2.1.} $\ell(\bq)=1$. Then $\bu_k=\bq s(\bu_k)$.
Since $\bu \approx \bu+\bq$ is nontrivial, it follows that $\ell(\bu_k)=2$ and so $s(\bu_k)$ is nonempty.
Now we have
\[
\bu \approx \bu+\bu_k\approx \bu+\bq s(\bu_k)\stackrel{(\ref{41102})}\approx \bu+\bq s(\bu_k)+\bq.
\]

\textbf{Subcase 2.2.} $\ell(\bq)=2$.
Then by Lemma \ref{lem5901} $c(\bq)\subseteq c(L_2(\bu))$.
We may write $\bq=xy$.
Then $xx_1\in L_2(\bu)$ or $x_2x\in L_2(\bu)$ for some $x_1, x_2 \in X$,
$yy_1\in L_2(\bu)$ or $y_2y\in L_2(\bu)$ for some $y_1, y_2 \in X$.
If $\ell(\bu_k)=2$, then $s(\bu_k)$ is nonempty and so
\[
\bu \approx \bu+\bu_k+yy_1\approx \bu+xs(\bu_k)+yy_1\stackrel{(\ref{41105})}\approx \bu+xs(\bu_k)+yy_1+xy \approx \bu+\bq
\]
or
\[
\bu \approx \bu+\bu_k+y_1y\approx \bu+xs(\bu_k)+y_1y\stackrel{(\ref{41106})}\approx \bu+xs(\bu_k)+y_1y+xy \approx \bu+\bq.
\]
Now let $\ell(\bu_k)=1$. Then $\bu_k=x$.
If $xx_1\in L_2(\bu)$, then the remaining steps are similar to the preceding case.
If $x_2x\in L_2(\bu)$, then
\[
\bu\approx \bu+x+x_2x\stackrel{(\ref{41100})}\approx \bu+x+x_2x+xx_2.
\]
The remaining steps are similar to the preceding case.
\end{proof}

\begin{cor}
The ai-semiring $S_{(4, 412)}$ is finitely based.
\end{cor}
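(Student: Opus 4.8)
The plan is to reduce the statement to Proposition \ref{pro41101} by means of the duality argument already used repeatedly in this paper. First I would read off the two multiplication tables in Table \ref{tb1} and check that the product $i\cdot j$ in $S_{(4, 412)}$ coincides with the product $j\cdot i$ in $S_{(4, 411)}$, while the two semirings share the common additive reduct. This shows that $S_{(4, 412)}$ is isomorphic to the dual of $S_{(4, 411)}$, i.e.\ the semiring obtained from $S_{(4, 411)}$ by reversing its multiplication.

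Next I would invoke the standard principle that the class of finitely based ai-semirings is closed under taking duals. The point is that reversing every word in a term induces a bijection on ai-semiring terms which is compatible with the laws defining $\mathbf{AI}$, and an identity $\bu\approx\bv$ holds in an ai-semiring $S$ if and only if the reversed identity holds in the dual of $S$. Hence any finite equational basis of $S_{(4, 411)}$ yields, after reversing every word in each identity, a finite equational basis of its dual $S_{(4, 412)}$. Since Proposition \ref{pro41101} supplies a finite basis for $S_{(4, 411)}$, the conclusion follows immediately.

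There is no real obstacle here: the only thing to verify is that the two Cayley tables are transposes of one another, which is a routine inspection. I would therefore record the argument in the same one-line form used for the corollaries following Propositions \ref{pro40101}, \ref{pro45301}, and \ref{pro47501}, simply observing that $S_{(4, 412)}$ and $S_{(4, 411)}$ have dual multiplications and citing Proposition \ref{pro41101}.
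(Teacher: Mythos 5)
Your proposal is correct and coincides with the paper's own proof: the paper likewise observes that $S_{(4, 412)}$ and $S_{(4, 411)}$ have dual multiplications (the table of $S_{(4, 412)}$, with nonunit entries $2\cdot 2=3$ and $4\cdot 2=3$, is indeed the transpose of that of $S_{(4, 411)}$, which has $2\cdot 2=3$ and $2\cdot 4=3$) and then cites Proposition \ref{pro41101}. The word-reversal justification you sketch for why finite basability transfers to the dual is exactly the standard principle the paper invokes implicitly throughout.
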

\begin{proof}
It is easy to see that $S_{(4, 412)}$ and $S_{(4, 411)}$  have dual multiplications.
By Proposition $\ref{pro41101}$ we immediately deduce that $S_{(4, 412)}$ is finitely based.
\end{proof}

\begin{pro}\label{pro41301}
$\mathsf{V}(S_{(4, 413)})$ is the ai-semiring variety defined by the identities
\begin{align}
xy &\approx yx; \label{41301}\\
x^2&\approx x^2+x; \label{41302}\\
x+xy&\approx x^2+xy; \label{41300}\\
x_1x_2x_3 & \approx x_1x_2x_3+x_4; \label{41303}\\
x^2+yz &\approx x^2+yz+xy. \label{41304}
\end{align}
\end{pro}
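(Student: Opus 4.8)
The plan is to mirror the proof of Proposition \ref{pro41101}, specialised to the commutative setting. First I would verify that $S_{(4,413)}$ satisfies (\ref{41301})--(\ref{41304}); this is routine once one notes that every entry of its multiplication table lies in $\{1,3\}$, that every product of three factors equals the additive top element $1$ (which makes (\ref{41303}) immediate), and that $a^2=3$ exactly when $a=2$ (so only $a\in\{2,4\}$ need be checked in (\ref{41302}), (\ref{41300}) and (\ref{41304})). For the converse I would take a nontrivial identity $\bu\approx\bu+\bq$ of $S_{(4,413)}$ with $\bu=\bu_1+\cdots+\bu_n$ and argue modulo (\ref{41301}), treating all words as commutative. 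The subsemiring on $\{1,2,3\}$ is isomorphic to $S_{59}$ and the subsemiring on $\{1,4\}$ is isomorphic to $T_2$, so both satisfy $\bu\approx\bu+\bq$ and Lemma \ref{lem5901} applies, giving the same dichotomy as in Proposition \ref{pro41101}.

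If $\ell(\bu_i)\ge3$ for some $i$, I would write $\bu_i$ as a product of three nonempty subwords and substitute $\bq$ for the variable $x_4$ in (\ref{41303}) to obtain $\bu_i\approx\bu_i+\bq$, whence $\bu\approx\bu+\bu_i\approx\bu+\bu_i+\bq\approx\bu+\bq$; this is the easy case. Otherwise all $\bu_i$ have length $\le2$ and Lemma \ref{lem5901} forces $\ell(\bq)\le2$, with $c(\bq)\subseteq c(\bu)$ when $\ell(\bq)=1$ and $c(\bq)\subseteq c(L_2(\bu))$ when $\ell(\bq)=2$. The three remaining identities interlock as follows: (\ref{41300}) turns a solitary letter $x$ standing beside a length-two word $xw$ into the square $x^2$, identity (\ref{41304}) then consumes a square $x^2$ together with any length-two word $yz$ to create the new product $xy$, and (\ref{41302}) extracts a single letter from a square. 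The one point \emph{not} furnished by Lemma \ref{lem5901} is the existence of such a square; I would supply it by substituting into $S_{(4,413)}$ itself, sending a chosen variable to $4$ and all others to $2$, which forces $\varphi(\bu)$ into a small up-set and thereby pins down the shape of $\bu$.

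Concretely, for $\ell(\bq)=1$ with $\bq=x$ the identity is trivial unless $x$ is not a summand of $\bu$, in which case the substitution $x\mapsto4$, all else $\mapsto2$ (needing $\varphi(\bu)\in\{1,4\}$) forces $x^2$ to be a summand, and (\ref{41302}) finishes. For $\ell(\bq)=2$, write $\bq=xy$ with $x=y$ allowed. The content condition places $x$ (and $y$) in a length-two summand of $\bu$; if $x^2$ or $y^2$ is already a summand, a single application of (\ref{41304}) produces $xy$, while if no such square occurs the same $\mapsto4$ substitution forces a solitary letter $x$ (or $y$) to be a summand, and then (\ref{41300}) manufactures the missing square from $x$ and the length-two word containing it before (\ref{41304}) is applied. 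I expect this last subcase, $\ell(\bq)=2$, to be the main obstacle: one must separate the configurations according to whether $x$ and $y$ share a summand, lie in different summands, or coincide, and in each verify that the $\mapsto4$ substitution really does force either a square or a usable singleton, so that exactly one application of (\ref{41304}) (possibly after creating a square via (\ref{41300})) closes the derivation. Once the correct square is in hand in every configuration the closing computations are routine, so the crux is the bookkeeping guaranteeing that a square is always available.
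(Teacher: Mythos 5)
Your plan reproduces the paper's proof of this proposition essentially step for step: pass to commutative words via (\ref{41301}), use $T_2\cong\{1,4\}$ and $S_{59}\cong\{1,2,3\}$ so that Lemma \ref{lem5901} yields the dichotomy, dispose of the case $\ell(\bu_i)\geq 3$ by substituting $\bq$ for $x_4$ in (\ref{41303}), and in the bounded case force a summand with content inside $c(\bq)$ by a substitution into $S_{(4,413)}$ itself, then finish with the same interplay of (\ref{41300}), (\ref{41302}) and (\ref{41304}).

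The one genuine defect is in the forcing substitution, in precisely the subcase you call the crux, namely $\bq=xy$ with $x\neq y$. As you describe it (``a chosen variable to $4$, all others to $2$'') it proves nothing: if $\varphi(x)=4$ and $\varphi(z)=2$ for every $z\neq x$ (so in particular $\varphi(y)=2$), then $\varphi(\bq)=4\cdot 2=3$, every length-two summand other than $x^2$ evaluates to $3$, and every solitary letter other than $x$ evaluates to $2$; since $L_2(\bu)\neq\emptyset$, a sum $\bu$ containing neither $x^2$ nor the solitary $x$ gives $\varphi(\bu)=3=3+3=\varphi(\bu)+\varphi(\bq)$, so no contradiction arises and neither a square nor a usable singleton is forced. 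What your argument needs --- and what the paper actually uses --- is $\varphi(z)=4$ for \emph{every} $z\in c(\bq)$ and $\varphi(z)=2$ otherwise: then $\varphi(\bq)=4\cdot 4=1$, and since the value $1$ is attainable only if some summand lies in $\{x^2,\,xy,\,y^2\}$ or a solitary $x$ or $y$ occurs alongside a length-two summand, nontriviality (which excludes $xy$) delivers exactly the square-or-singleton alternative you assert. The two substitutions coincide when $|c(\bq)|=1$, which is why your $\ell(\bq)=1$ case is correct and presumably why the slip went unnoticed. With the substitution corrected, your closing derivations go through exactly as in the paper: apply (\ref{41304}) to $x^2$ together with the summand $yy_1$ guaranteed by $c(\bq)\subseteq c(L_2(\bu))$, after first manufacturing $x^2$ from $x+xx_1$ via (\ref{41300}) when only the solitary letter is available.
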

\begin{proof}
It is easy to check that $S_{(4, 413)}$ satisfies the identities (\ref{41301})--(\ref{41304}).
In the remainder we need only prove that every ai-semiring identity of $S_{(4, 413)}$
is derivable from (\ref{41301})--(\ref{41304}) and the identities defining $\mathbf{AI}$.
Let $\bu \approx \bu+\bq$ be such a nontrivial identity,
where $\bu=\bu_1+\bu_2+\cdots+\bu_n$. Since the identity (\ref{41301}) is satisfied by $S_{(4, 413)}$,
we may assume that $\bu_i, \bq\in X_c^+$, $1 \leq i \leq n$.
It is easy to see that $T_2$ is isomorphic to $\{1, 4\}$ and so $T_2$ satisfies $\bu \approx \bu+\bq$.
This implies that $\ell(\bu_j)\geq 2$ for some $\bu_j \in \bu$.
Since $S_{59}$ is isomorphic to $\{1, 2, 3\}$, it follows that $S_{59}$ also satisfies $\bu \approx \bu+\bq$.
By Lemma \ref{lem5901} we consider the following two cases.

\textbf{Case 1.} $\ell(\bu_i) \geq 3$ for some $\bu_i \in \bu$. Then
\[
\bu \approx \bu+\bu_i \stackrel{(\ref{41303})}\approx
\bu+\bu_i+\bq \approx \bu+\bq.
\]

\textbf{Case 2.} $\ell(\bu_i)\leq 2$ for all $\bu_i \in \bu$. Then $\ell(\bq)\leq 2$.
Suppose by way of contradiction that $c(\bu_k)\not\subseteq c(\bq)$ for all $\bu_k \in \bu$.
Consider the semiring homomorphism $\varphi: P_f(X^+) \to S_{(4, 413)}$ defined by
$\varphi(x)=4$ for every $x\in c(\bq)$ and $\varphi(x)=2$ otherwise.
It is easy to see that $\varphi(\bu)=3$ and $\varphi(\bq)=4$ or $1$.
This implies that $\varphi(\bu) \neq \varphi(\bu+\bq)$, a contradiction.
Thus $c(\bu_k) \subseteq c(\bq)$ for some $\bu_k \in \bu$.

\textbf{Subcase 2.1.} $\ell(\bq)=1$.
Since $\bu \approx \bu+\bq$ is nontrivial, it follows that $\bu_k=\bq^2$ and so
\[
\bu \approx \bu+\bu_k\approx \bu+\bq^2\stackrel{(\ref{41302})}\approx \bu+\bq^2+\bq.
\]
This derives the identity $\bu \approx \bu+\bq$.

\textbf{Subcase 2.2.} $\ell(\bq)=2$. Then by Lemma \ref{lem5901} $c(\bq)\subseteq c(L_2(\bu))$.
Let $\bq=xy$. Then $xx_1, yy_1\in L_2(\bu)$ for some $x_1, y_1 \in X$.
If $\ell(\bu_k)=2$, then $\bu_k=x^2$ or $y^2$.
We only prove the case that $\bu_k=x^2$. Now we have
\[
\bu \approx \bu+x^2+yy_1\stackrel{(\ref{41304})}\approx \bu+x^2+yy_1+xy\approx \bu+x^2+yy_1+\bq.
\]
If $\ell(\bu_k)=1$, then $\bu_k=x$ or $y$.
We only prove the case that $\bu_k=x$.
Indeed, we have
\[
\bu \approx \bu+\bu_k+xx_1 \approx \bu+x+xx_1\stackrel{(\ref{41300})}\approx \bu+x^2+xx_1.
\]
The remaining steps are similar to the preceding case.
\end{proof}

\begin{pro}\label{pro41401}
$\mathsf{V}(S_{(4, 414)})$ is the ai-semiring variety defined by the identities
\begin{align}
xy &\approx yx; \label{41401}\\
x_1x_2x_3 & \approx x_1x_2x_3+x_4; \label{41403}\\
x_1x_2+x_3x_4 &\approx x_1x_2+x_3x_4+x_1x_4. \label{41404}
\end{align}
\end{pro}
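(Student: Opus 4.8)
The plan is to mirror the strategy used for $S_{(4, 413)}$ in Proposition~\ref{pro41301}, exploiting the fact that $S_{59}$ is isomorphic to the subsemiring $\{1,2,3\}$ of $S_{(4, 414)}$. First I would verify soundness. Commutativity (\ref{41401}) is immediate from the table; every product of length $\geq 3$ collapses to the additive top element $1$ (any length-$3$ prefix already evaluates to $1$, which is absorbing), so that $1+x_4=1$ gives (\ref{41403}); and for (\ref{41404}) one checks $x_1x_4\le x_1x_2+x_3x_4$ by a short case split, noting that a length-$2$ product takes the value $1$ whenever some factor lies in $\{1,3\}$ and the value $3$ exactly when both factors lie in $\{2,4\}$.

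For completeness, let $\bu\approx\bu+\bq$ be a nontrivial identity of $S_{(4, 414)}$ with $\bu=\bu_1+\cdots+\bu_n$. By (\ref{41401}) I may treat all monomials as commutative words. Since $\{1,2,3\}$ is a subsemiring isomorphic to $S_{59}$, the identity holds in $S_{59}$, so Lemma~\ref{lem5901} applies. If $\ell(\bu_i)\geq 3$ for some $i$, then substituting $\bu_i$ for $x_1x_2x_3$ and $\bq$ for $x_4$ in (\ref{41403}) yields $\bu_i\approx\bu_i+\bq$, hence $\bu\approx\bu+\bu_i\approx\bu+\bu_i+\bq\approx\bu+\bq$; this disposes of $\bq$ of arbitrary length.

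The interesting case is when $\ell(\bu_i)\leq 2$ for all $i$, so that Lemma~\ref{lem5901} gives $\ell(\bq)\leq 2$. Here the key point, and the main departure from $S_{(4, 413)}$, is that $S_{(4, 414)}$ satisfies \emph{no} identity of the form $x^2\approx x^2+x$ (indeed $4^2=3$ but $3+4=1$), so the length-$1$ case must be excluded outright rather than handled by a square identity. I would argue that if $\ell(\bq)=1$, say $\bq=x$, then nontriviality forces $x$ not to be a summand of $\bu$; since $x\in c(\bu)$ by Lemma~\ref{lem5901}, $\bu$ must then contain a monomial of length $2$. The substitution $\varphi(x)=4$ and $\varphi(z)=2$ for $z\neq x$ sends every length-$1$ monomial to $2$ and every length-$2$ monomial to $3$, so $\varphi(\bu)=3$ while $\varphi(\bu+\bq)=3+4=1$, a contradiction. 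Hence $\ell(\bq)=2$.

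Finally, write $\bq=xy$ with $x,y\in c(L_2(\bu))$ by Lemma~\ref{lem5901}. When $x\neq y$, choose length-$2$ monomials $\bu_s=xa$ containing $x$ and $\bu_t=by$ containing $y$ (nontriviality rules out $\bu_s=\bu_t=xy$); then (\ref{41404}) with $x_1=x,x_2=a,x_3=b,x_4=y$ adds $x_1x_4=xy$, giving $\bu\approx\bu+\bu_s+\bu_t\approx\bu+\bu_s+\bu_t+xy\approx\bu+\bq$. When $x=y$, a single monomial $\bu_s=xa$ suffices: applying (\ref{41404}) with $x_1=x,x_2=a,x_3=a,x_4=x$, then (\ref{41401}) and idempotency, collapses $xa+ax$ to $xa$ and produces $x^2$, so $\bu\approx\bu+xa\approx\bu+xa+x^2\approx\bu+\bq$. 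I expect the main obstacle to be the length-$1$ exclusion: one must get the substitution right and recognize that, in contrast to $S_{(4, 413)}$, a single variable can only be absorbed through a length-$3$ monomial, which is precisely why Case~1 is needed and why no square-to-letter axiom appears in the basis.
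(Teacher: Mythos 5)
Your proof is correct and follows essentially the same route as the paper: soundness, Lemma~\ref{lem5901} via the copy of $S_{59}$ on $\{1,2,3\}$, identity (\ref{41403}) for the long-monomial case, the same $4$-versus-$2$ substitution to rule out $\ell(\bq)=1$, and (\ref{41401}) with (\ref{41404}) to adjoin $xy$. The only (harmless) cosmetic differences are that you treat $x=y$ separately where the paper's uniform instantiation $xx_1+y_1y$ covers both cases, and you phrase the length-one exclusion as a direct contradiction rather than deriving $c(\bq)\subseteq c(L_1(\bu))$ and hence triviality.
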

\begin{proof}
It is easily verified that $S_{(4, 414)}$ satisfies the identities (\ref{41401})--(\ref{41404}).
In the remainder we need only prove that every ai-semiring identity of $S_{(4, 414)}$
can be derived by (\ref{41401})--(\ref{41404}) and the identities defining $\mathbf{AI}$.
Let $\bu \approx \bu+\bq$ be such a nontrivial identity, where $\bu=\bu_1+\bu_2+\cdots+\bu_n$.
Since the identity (\ref{41401}) is satisfied by $S_{(4, 414)}$,
we may assume that $\bu_i, \bq\in X_c^+$, $1 \leq i \leq n$.
Since $T_2$ is isomorphic to $\{1, 3\}$, it follows that $T_2$ satisfies $\bu \approx \bu+\bq$ and so $\ell(\bu_j)\geq 2$ for some $\bu_j \in \bu$.
Since $S_{59}$ is isomorphic to $\{1, 2, 3\}$, we have that $S_{59}$ also satisfies $\bu \approx \bu+\bq$.
By Lemma \ref{lem5901} it is enough to consider the following two cases.

\textbf{Case 1.} $\ell(\bu_i) \geq 3$ for some $\bu_i \in \bu$. Then
\[
\bu \approx \bu+\bu_i \stackrel{(\ref{41403})}\approx \bu+\bu_i+\bq \approx \bu+\bq.
\]

\textbf{Case 2.} $\ell(\bu_i)\leq 2$ for all $\bu_i \in \bu$. Then $\ell(\bq)\leq 2$.
We shall show that $\ell(\bq)=2$.
Indeed, if $\ell(\bq)=1$, then $c(\bq)\subseteq c(\bu)$.
Let $\varphi: P_f(X^+) \to S_{(4, 414)}$ be a semiring homomorphism defined by
$\varphi(x)=4$ for every $x\in c(\bq)$ and $\varphi(x)=2$ otherwise.
It is easy to see that $\varphi(\bq)=4$ and $\varphi(L_2(\bu))=3$.
This implies that $\varphi(\bu+\bq)=1$ and so $\varphi(\bu)=1$.
Thus $\varphi(L_1(\bu))=4$ and so $c(\bq)\subseteq c(L_1(\bu))$.
This shows that $\bu \approx \bu+\bq$ is trivial, a contradiction.
So we have proved that $\ell(\bq)=2$. By Lemma \ref{lem5901} it follows that $c(\bq)\subseteq c(L_2(\bu))$.
We may write that $\bq=xy$. Then $xx_1$ and $yy_1\in L_2(\bu)$ for some $x_1, y_1\in X$.
Furthermore, we have
\[
\bu \approx \bu+xx_1+yy_1\stackrel{(\ref{41401}), (\ref{41404})}\approx \bu+xx_1+yy_1+xy\approx \bu+xx_1+yy_1+\bq.
\]
This proves the identity $\bu \approx \bu+\bq$.
\end{proof}

\section{Equational basis of some $4$-element ai-semirings that relate to $S_{60}$}
In this section we show that some $4$-element ai-semirings that relate to $S_{60}$ are finitely based.
This requires that we be able to provide some information of identities of $S_{60}$.

\begin{lem}\label{lem6001}
Let $\bu\approx \bu+\bq$ be a nontrivial ai-semiring identity such that
$\bu=\bu_1+\bu_2+\cdots+\bu_n$
and $\bu_i, \bq \in X^+$, $1\leq i \leq n$.
Suppose that $\bu\approx \bu+\bq$ is satisfied by $S_{60}$.
Then $\bu$ and $\bq$ satisfy the following conditions:
\begin{itemize}
\item[$(1)$] $\ell(\bu_i)\geq 2$ for some $\bu_i \in \bu$;

\item[$(2)$] If $\ell(\bq)=1$, then $c(\bq)\subseteq c(\bu)$;

\item[$(3)$] If $\ell(\bq)\geq 2$, then $c(\bq)\subseteq c(L_{\geq2}(\bu))$.
\end{itemize}
\end{lem}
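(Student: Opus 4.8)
The plan is to follow the pattern of Lemmas~\ref{lem5701}, \ref{lem5801} and \ref{lem5901}: first record how a word is evaluated in $S_{60}$, and then read off the three conditions from a few well-chosen substitutions. Label the three elements of $S_{60}$ by their roles: let $\infty$ be the greatest element of the additive chain (which is also a multiplicative zero), let $e$ be the additive identity (the least element, which is multiplicatively idempotent), and let $a$ be the middle element, which satisfies $a^2=\infty$ and $ea=ae=\infty$. Since the additive reduct is the chain $e<a<\infty$, one has $\varphi(\bu)=\max_i\varphi(\bu_i)$; and for a single word $w$, $\varphi(w)=e$ exactly when every letter of $w$ is sent to $e$, $\varphi(w)=a$ exactly when $w$ is a single variable sent to $a$, and $\varphi(w)=\infty$ in every other case. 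This evaluation rule is the only structural input, and once it is available the rest is bookkeeping.

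For the content condition, note that $\{e,\infty\}$ is a subalgebra of $S_{60}$ isomorphic to $M_2$; since $S_{60}$ satisfies $\bu\approx\bu+\bq$, so does this subalgebra, and therefore $c(\bq)\subseteq c(\bu)$ (the same conclusion also drops out of the substitution used below). This is exactly condition~(2). For condition~(1), suppose for contradiction that $\ell(\bu_i)=1$ for every $i$ and send every variable to $a$; then each $\varphi(\bu_i)=a$, so $\varphi(\bu)=a$. If $\ell(\bq)\ge2$ then $\varphi(\bq)=\infty$, giving $\varphi(\bu+\bq)=\infty\neq a$, a contradiction, so $\ell(\bq)=1$. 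But now $\bq$ and every $\bu_i$ is a single variable, and $c(\bq)\subseteq c(\bu)$ forces $\bq=\bu_i$ for some $i$, so $\bu+\bq=\bu$ and the identity is trivial, contrary to hypothesis. Hence $\ell(\bu_i)\ge2$ for some $\bu_i$.

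The main computation is condition~(3), which I would settle with a single separating substitution. Assume $\ell(\bq)\ge2$ and, towards a contradiction, that $c(\bq)\nsubseteq c(L_{\ge 2}(\bu))$; choose a variable $x\in c(\bq)$ with $x\notin c(\bu_i)$ for every $\bu_i\in L_{\ge 2}(\bu)$. Put $\varphi(x)=a$ and $\varphi(y)=e$ for all $y\neq x$. Each long summand omits $x$, so all of its letters go to $e$ and $\varphi(\bu_i)=e$; each short summand is a single variable and goes to $e$ or to $a$. Thus no summand evaluates to $\infty$, whence $\varphi(\bu)\in\{e,a\}$. On the other hand $\bq$ has length at least $2$ and contains the variable $x$ sent to $a$, so $\varphi(\bq)=\infty$ and $\varphi(\bu+\bq)=\infty\neq\varphi(\bu)$. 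This contradiction yields $c(\bq)\subseteq c(L_{\ge 2}(\bu))$.

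I expect the only genuinely delicate step to be condition~(1): unlike the other two conditions, it is not forced by any single two-element reduct of $S_{60}$, because both $\{e,\infty\}$ and $\{a,\infty\}$ carry only conditional length information. The argument therefore has to combine the ``collapse everything to $a$'' substitution with the content inclusion $c(\bq)\subseteq c(\bu)$ and with the nontriviality of the identity. Conditions~(2) and (3) are then routine consequences of the evaluation rule recorded at the outset.
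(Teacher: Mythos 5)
Your proof is correct and takes essentially the same route as the paper's: condition (2) comes from the copy of $M_2$ on $\{2,3\}$, condition (3) from exactly the paper's separating substitution (the chosen variable $x$ to the middle element, all other variables to the bottom element), and condition (1) from the copy of $T_2$ on $\{1,3\}$, which you verify directly via the all-$a$ substitution together with nontriviality rather than citing the known solution of the equational problem for the two-element ai-semirings in \cite{sr}. One small inaccuracy in your closing commentary: condition (1) \emph{is} forced by a single two-element subalgebra, namely $T_2\cong\{a,\infty\}$, for any nontrivial identity (the content argument you borrow from $M_2$ can be run inside $T_2$ by sending the variable of a length-one $\bq$ to $\infty$), which is precisely how the paper argues; this does not affect the correctness of your proof.
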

\begin{proof}
Suppose that $S_{60}$ satisfies $\bu\approx \bu+\bq$.
It is easy to check that $T_2$ is isomorphic to $\{1, 3\}$
and so $T_2$ satisfies $\bu \approx \bu+\bq$.
This implies that $\ell(\bu_i)\geq 2$ for some $\bu_i \in \bu$.
Since $M_2$ is isomorphic to $\{2, 3\}$, it follows that
$M_2$ satisfies  $\bu \approx \bu+\bq$ and so $c(\bq)\subseteq c(\bu)$.
Let $\ell(\bq)\geq 2$. If $c(\bq) \nsubseteq c(L_{\geq2}(\bu))$,
then there exists $x\in c(\bq)$ such that $x\notin c(L_{\geq2}(\bu))$, but $x\in c(L_{\geq1}(\bu))$.
Consider the homomorphism $\varphi: P_f(X^+) \to S_{60}$ defined by
$\varphi(y)=1$ if $y=x$ and $\varphi(y)=2$ otherwise.
It is easy to see that $\varphi(\bu)=1$ and $\varphi(\bq)=3$.
So $\varphi(\bu)\neq \varphi(\bu+\bq)$, a contradiction.
Thus $c(\bq)\subseteq c(L_{\geq2}(\bu))$.
\end{proof}

\begin{pro}\label{pro45901}
$\mathsf{V}(S_{(4, 459)})$ is the ai-semiring variety defined by the identities
\begin{align}
x^2y&\approx xy; \label{45901}\\
xyz &\approx xzy; \label{45902}\\
xy &\approx xy+x; \label{45904}\\
x+yxz & \approx yxz+xzy; \label{45905}\\
x_1x_2+x_3x_4 & \approx x_1x_2+x_3x_4+x_1x_2x_3x_4, \label{45906}
\end{align}
where $y$ and $z$ may be empty in $(\ref{45905})$.
\end{pro}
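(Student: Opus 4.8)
The plan is to follow the pattern of Propositions~\ref{pro45301} and~\ref{pro47501}. One first checks directly from the multiplication table that $S_{(4,459)}$ satisfies (\ref{45901})--(\ref{45906}); this is routine. For the converse, let $\bu\approx\bu+\bq$ be a nontrivial identity of $S_{(4,459)}$ with $\bu=\bu_1+\cdots+\bu_n$ and $\bu_i,\bq\in X^+$, and the aim is to derive it from (\ref{45901})--(\ref{45906}) and the axioms of $\mathbf{AI}$.

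Next I collect structural information. The rows of $1$ and $4$ in the table of $S_{(4,459)}$ coincide and $4$ lies strictly below $1$ additively, so $\rho=\{(1,4),(4,1)\}\cup\vartriangle$ is a congruence and $S_{(4,459)}/\rho\cong S_{60}$. Hence $S_{60}$ satisfies $\bu\approx\bu+\bq$, and Lemma~\ref{lem6001} gives $L_{\geq2}(\bu)\neq\emptyset$, together with $c(\bq)\subseteq c(\bu)$ if $\ell(\bq)=1$ and $c(\bq)\subseteq c(L_{\geq2}(\bu))$ if $\ell(\bq)\geq2$. I then prove the extra condition
\[
(\ast)\qquad h(\bu_j)=h(\bq)\ \text{for some}\ j,
\]
which records the behaviour of the element $4$ invisible to $S_{60}$: under the substitution $\varphi$ with $\varphi(h(\bq))=1$ and $\varphi(x)=2$ otherwise one has $\varphi(\bq)=1$, so $\varphi(\bu)=\varphi(\bu)+1=1$; since every $\varphi(\bu_i)$ lies in $\{1,2,3\}$ and a word attains the value $1$ only when its head is sent to $1$, some $\bu_j$ must satisfy $h(\bu_j)=h(\bq)$.

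Write $a=h(\bq)$. If $\ell(\bq)=1$ then $\bq=a$, and by $(\ast)$ together with nontriviality there is a summand $\bu_j=a\,s(\bu_j)$ with $s(\bu_j)$ nonempty (a length-one term with head $a$ would equal $\bq$); now (\ref{45904}) gives $\bu\approx\bu+\bu_j\approx\bu+\bu_j+a=\bu+\bq$. The substantial case is $\ell(\bq)\geq2$. Here I first secure a term of length $\geq2$ with head $a$: by $(\ast)$ some $\bu_j$ has head $a$; if $\ell(\bu_j)\geq2$ it serves, and otherwise $\bu_j=a$ is a summand while, since $a\in c(\bq)$, Lemma~\ref{lem6001} places $a$ in some $\bu_k\in L_{\geq2}(\bu)$, necessarily in its tail, so after a rearrangement by (\ref{45902}) an application of (\ref{45905}) to the summands $a$ and $\bu_k$ produces a length-$\geq2$ term with head $a$. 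With such a term $\bv=a\bc$ at hand, each letter of $\bq$ lies in some term of $L_{\geq2}(\bu)$ by Lemma~\ref{lem6001}, and repeated use of (\ref{45906}) (splitting $\bv$ as $a\cdot\bc$ and each chosen term into two nonempty factors) builds one term with head $a$ whose tail contains $c(\bq)$; finally (\ref{45902}) brings $\bq$ to the front and (\ref{45904}) extracts it, yielding $\bu\approx\bu+\bq$.

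The main obstacle is this last production: the tail of $\bq$ may carry a letter several times while the combining terms supply it only once. This is resolved by reductions derivable from the basis---(\ref{45901}) collapses a repeated head, and $xy\approx xy^{2}$ is itself derivable (from $ab\approx ab+ab$ and (\ref{45906}) one gets $ab\approx ab+abab$; rearranging by (\ref{45902}) and applying (\ref{45901}) gives $abab\approx abb$, hence $ab\approx ab+abb$, while $abb\approx abb+ab$ is an instance of (\ref{45904}), so $ab\approx abb$). Thus $\bq$ may be replaced at the outset by the equivalent word with head $a$ and pairwise distinct tail letters, which the extraction step matches exactly, completing the derivation.
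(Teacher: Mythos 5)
Your proposal follows the paper's proof almost step for step: the paper likewise reduces to $S_{60}\models\bu\approx\bu+\bq$ via a quotient and applies Lemma~\ref{lem6001}; it finds a summand with head $h(\bq)$ (citing $L_2\cong\{1,3\}$, where you argue by a direct substitution --- same content); it settles $\ell(\bq)=1$ with (\ref{45904}), upgrades a singleton $h(\bq)$ to a term of length at least $2$ with head $h(\bq)$ via (\ref{45905}), merges the terms of $L_{\geq 2}(\bu)$ with (\ref{45906}), and extracts $\bq$ using (\ref{45901}), (\ref{45902}), (\ref{45904}). However, one concrete structural claim is wrong: $\rho=\{(1,4),(4,1)\}\cup\vartriangle$ is indeed a congruence of $S_{(4,459)}$ (rows $1$ and $4$ coincide, as do columns $1$ and $4$), but the quotient is \emph{not} $S_{60}$. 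Writing $b=\{1,4\}$ for the collapsed class, the table gives $b\cdot 2=b$ while $2\cdot b=3$, so $S_{(4,459)}/\rho$ is not even commutative, whereas $S_{60}$ is. The correct congruence, used in the paper, is $\{(1,3),(3,1)\}\cup\vartriangle$: columns $1$ and $3$ of the multiplication table literally coincide, and rows $1$ and $3$ are the constant rows with values $1$ and $3$, which this congruence identifies. Since $S_{60}$ does satisfy $\bu\approx\bu+\bq$ via this corrected quotient, everything downstream of your appeal to Lemma~\ref{lem6001} survives unchanged; the error is localized and repaired in one line, but as written the justification of the key hypothesis fails.

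Two further remarks. First, a point in your favour: your explicit derivation of $xy\approx xy^2$ (from $+$-idempotency, (\ref{45906}), then $abab\approx abb$ via (\ref{45902}) and (\ref{45901}), and antisymmetry using (\ref{45904})) correctly fills the multiplicity-matching step that the paper compresses into the single annotation ``by (\ref{45901}), (\ref{45902})'' when rewriting $h(\bq)s(\bu_i)\bu_1\cdots\bu_m$ as $\bq\bp$; your treatment of this point is more complete than the paper's, and your reduction of $\bq$ to a word with distinct tail letters is legitimate (note only that for $\bq=a^k$ the linearized word must keep length two, which your derived identities do handle). Second, a small slip: when securing a long term with head $h(\bq)$ you assert the letter lies ``necessarily in its tail'' of the chosen $\bu_k\in L_{\geq2}(\bu)$; it may instead be the head of $\bu_k$, but then $\bu_k$ itself already serves as the required term, so nothing breaks.
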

\begin{proof}
It is easy to check that $S_{(4, 459)}$ satisfies the identities (\ref{45901})--(\ref{45906}).
In the remainder we need only prove that every ai-semiring identity of $S_{(4, 459)}$
can be derived by $(\ref{45901})-(\ref{45906})$ and the identities defining $\mathbf{AI}$.
Let $\bu \approx \bu+\bq$ be such a nontrivial identity,
where $\bu=\bu_1+\bu_2+\cdots+\bu_n$ and $\bu_i, \bq\in X^+$, $1 \leq i \leq n$.
It is easy to see that $L_2$ is isomorphic to $\{1, 3\}$ and so $L_2$ satisfies  $\bu \approx \bu+\bq$.
This implies that there exists $\bu_i \in \bu$ such that $h(\bu_i)=h(\bq)$ and so $\bu_i=h(\bq)s(\bu_i)$.
On the other hand, we have that there exists a congruence $\rho=\{(1,3),(3,1)\}\cup\vartriangle$
 such that $S_{(4, 459)}/\rho$ is isomorphic to $S_{60}$ and so $S_{60}$ satisfies $\bu \approx \bu+\bq$.
By Lemma \ref{lem6001} we only need to consider the following two cases.

\textbf{Case 1.} $\ell(\bq)=1$. Then $\ell(\bu_i)\geq 2$ and $\bu_i=h(\bq)s(\bu_i)=\bq s(\bu_i)$,
where $s(\bu_i)$ is nonempty.
Furthermore, we have
\[
\bu \approx \bu+\bu_i \approx
\bu+h(\bq)s(\bu_i)\approx \bu+\bq s(\bu_i)
\stackrel{(\ref{45904})}\approx \bu+\bq s(\bu_i)+\bq.
\]
This derives the identity $\bu \approx \bu+\bq$.

\textbf{Case 2.} $\ell(\bq)\geq 2$. By Lemma ${\ref{lem6001}}$ it follows that $c(\bq)\subseteq c(L_{\geq2}(\bu))$.
Consider the following two subcases.

\textbf{Subcase 2.1.} $\ell(\bu_i)\geq 2$. Then $\bu_i\in L_{\geq2}(\bu)$.
Let $L_{\geq2}(\bu)=\{\bu_j \mid 1\leq j\leq m\}$.
Now we have
\begin{align*}
\bu
&\approx \bu+\bu_i+\bu_1+\cdots+\bu_m\\
&\approx \bu+\bu_i+\bu_1+\cdots+\bu_m+\bu_i\bu_1\cdots\bu_m. &&(\text{by}~(\ref{45906}))
\end{align*}
This implies the identity
\[
\bu\approx \bu+\bu_i\bu_1\cdots\bu_m.
\]
Furthermore, we can deduce
\begin{align*}
\bu
&\approx \bu+\bu_i\bu_1\cdots\bu_m\\
&\approx \bu+h(\bq) s(\bu_i)\bu_1\cdots\bu_m  \\
&\approx  \bu+\bq\bp &&(\text{by}~(\ref{45901}), (\ref{45902}))\\
&\approx  \bu+\bq\bp+\bq. &&(\text{by}~(\ref{45904}))
\end{align*}

\textbf{Subcase 2.2.} $\ell(\bu_i)=1$. Then $\bu_i=h(\bq)$ and so $c(\bu_i)\subseteq c(L_{\geq2}(\bu))$.
It follows that there exists $\bu_j \in \bu$ such that
$\bu_j=\bp_1h(\bq)\bp_2$, where $\bp_1$ and $\bp_2$ can not be empty simultaneously. Now we have
\begin{align*}
\bu
&\approx \bu+\bu_i+\bu_j\\
&\approx \bu+h(\bq)+\bp_1h(\bq)\bp_2  \\
&\approx  \bu+\bp_1h(\bq)\bp_2+h(\bq)\bp_2\bp_1. &&(\text{by}~(\ref{45905}))
\end{align*}
This implies the identity
\[
\bu\approx \bu+h(\bq)\bp_2\bp_1,
\]
where $\ell(h(\bq)\bp_2\bp_1)\geq 2$.
The remaining steps are similar to the preceding case.
\end{proof}

\begin{remark}
It is a routine matter to verify that $S_{(4, 459)}$ is isomorphic to a subdirect product of $S_{60}$ and $S_{23}$. So $\mathsf V(S_{(4, 459)}) = \mathsf V(S_{60}, S_{23})$. On the other hand, it is easy
to see that both $L_2$ and $M_2$ can be embedded into $S_{23}$.
Also, $S_{23}$ satisfies an equational basis of $\mathsf V(L_2, M_2)$ that can be found in \cite{sr}.
It follows that $\mathsf V(S_{23})=\mathsf V(L_2, M_2)$, and so $\mathsf V(S_{(4, 459)})= \mathsf V(S_{60}, L_2, M_2)$. Since $M_2$ can be embedded into $S_{60}$, we therefore have
\[
\mathsf{V}(S_{(4, 459)})=\mathsf{V}(S_{60}, L_2).
\]
\end{remark}

\begin{cor}
The ai-semirings $S_{(4, 394)}$, $S_{(4, 397)}$ and $S_{(4, 423)}$ are all finitely based.
\end{cor}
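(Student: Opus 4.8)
The plan is to reduce all three cases to Proposition~\ref{pro45901} by exploiting duality. First I would record two structural observations. The multiplication tables of $S_{(4,394)}$ and $S_{(4,397)}$ are transposes of each other, so these two semirings have dual multiplications; and transposing the multiplication table of $S_{(4,459)}$ produces exactly that of $S_{(4,423)}$, so $S_{(4,423)}$ is isomorphic to the dual of $S_{(4,459)}$ (the common additive reduct is a commutative semilattice, hence self-dual). Since the class of finitely based ai-semirings is closed under the formation of duals, Proposition~\ref{pro45901} immediately yields that $S_{(4,423)}$ is finitely based.

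The substantive case is $S_{(4,397)}$, for which I would prove $\mathsf{V}(S_{(4,397)})=\mathsf{V}(S_{(4,459)})$. For the inclusion $\mathsf{V}(S_{(4,397)})\subseteq\mathsf{V}(S_{(4,459)})$ it suffices to check that $S_{(4,397)}$ satisfies (\ref{45901})--(\ref{45906}); the only mildly delicate ones are (\ref{45905}) and (\ref{45906}), and both follow from the observation that in $S_{(4,397)}$ the value of a word of length at least two is its first letter when all of its letters evaluate into $\{2,4\}$ and is $1$ otherwise, together with the fact that $1$ is the additive top element. For the reverse inclusion I would embed $S_{(4,459)}$ into a power of $S_{(4,397)}$. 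Concretely, the two assignments $(1,2,3,4)\mapsto(4,2,2,4)$ and $(1,2,3,4)\mapsto(1,2,1,3)$ each define a homomorphism $S_{(4,459)}\to S_{(4,397)}$, and the induced map into $S_{(4,397)}\times S_{(4,397)}$ sends $1,2,3,4$ to the four distinct pairs $(4,1),(2,2),(2,1),(4,3)$, hence is an embedding. This gives $S_{(4,459)}\in\mathsf{V}(S_{(4,397)})$ and therefore the desired equality of varieties, so $S_{(4,397)}$ is finitely based by Proposition~\ref{pro45901}. Finally $S_{(4,394)}$, being the dual of $S_{(4,397)}$, is finitely based as well.

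The main obstacle is the embedding step: one must find homomorphisms $S_{(4,459)}\to S_{(4,397)}$ that jointly separate points, and the real content is verifying that the two candidate maps above respect both $+$ and $\cdot$. I expect this to be a short finite verification once the maps are written down, and joint injectivity is immediate from the four image pairs. The identity checks for $S_{(4,397)}$ and the two duality arguments are then routine, exactly in the spirit of the corollaries following Propositions~\ref{pro42401} and \ref{pro47501}.
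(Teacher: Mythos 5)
Your proposal is correct and follows essentially the same route as the paper: $S_{(4,423)}$ and $S_{(4,394)}$ are handled by duality, and $S_{(4,397)}$ by checking that it satisfies the identities (\ref{45901})--(\ref{45906}) and that $S_{(4,459)}$ embeds into $S_{(4,397)}\times S_{(4,397)}$, giving $\mathsf{V}(S_{(4,397)})=\mathsf{V}(S_{(4,459)})$ and finite basedness via Proposition~\ref{pro45901}. The only difference is that you make explicit the two separating homomorphisms $(1,2,3,4)\mapsto(4,2,2,4)$ and $(1,2,3,4)\mapsto(1,2,1,3)$, which the paper leaves as a routine verification, and both maps do check out against the Cayley tables.
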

\begin{proof}
Since $S_{(4, 423)}$ and $S_{(4, 459)}$ have dual multiplications, it follows from
Proposition {\ref{pro45901}} that $S_{(4, 423)}$ is finitely based.
It is easy to verify that $S_{(4, 397)}$ satisfies the identities (\ref{45901})--(\ref{45906})
and that $S_{(4, 459)}$ can be embedded into the direct product of two copies of $S_{(4, 397)}$.
So $\mathsf{V}(S_{(4, 459)}) = \mathsf{V}(S_{(4, 397)})$.
By Proposition {\ref{pro45901}} it follows that $S_{(4, 397)}$ is finitely based.
Since $S_{(4, 394)}$ and $S_{(4, 397)}$ have dual multiplications,
we immediately deduce that $S_{(4, 394)}$ is also finitely based.
\end{proof}

\begin{pro}\label{pro46701}
$\mathsf{V}(S_{(4, 467)})$ is the ai-semiring variety defined by the identities
\begin{align}
xy &\approx yx; \label{46701}\\
x^2y &\approx xy; \label{46702}\\
x^2 &\approx x^2+x; \label{46703}\\
x+xyz & \approx x+xyz+xy;\label{46704}\\
x_1x_2+x_3x_4 & \approx x_1x_2+x_3x_4+x_1x_2x_3x_4. \label{46705}
\end{align}
\end{pro}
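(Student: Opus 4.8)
The plan is to follow the template used for the semirings related to $S_{60}$, in particular Proposition \ref{pro45901}, while making the adjustments forced by the facts that $S_{(4, 467)}$ is commutative and contains the element $4$ with $4^2=1$. First I would verify directly that $S_{(4, 467)}$ satisfies (\ref{46701})--(\ref{46705}); this is a routine evaluation. Using (\ref{46701}) I reduce to a nontrivial identity $\bu\approx\bu+\bq$ with $\bu=\bu_1+\cdots+\bu_n$ and all $\bu_i,\bq\in X_c^+$. The key structural observation is that $\rho=\{(1,4),(4,1)\}\cup\vartriangle$ is a congruence on $S_{(4, 467)}$ with $S_{(4, 467)}/\rho\cong S_{60}$ (the rows for $1$ and $4$ in the multiplication table coincide, and $1,4$ are glued together additively against $2,3$). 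Hence $S_{60}$ satisfies $\bu\approx\bu+\bq$, and Lemma \ref{lem6001} supplies: some $\bu_i$ has $\ell(\bu_i)\ge 2$; if $\ell(\bq)=1$ then $c(\bq)\subseteq c(\bu)$; and if $\ell(\bq)\ge 2$ then $c(\bq)\subseteq c(L_{\ge 2}(\bu))$. The content inclusions can alternatively be read off from the two-element subsemiring carried by $\{2,3\}$.

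The quotient $\rho$ collapses $1$ and $4$, so $S_{60}$ is blind to the parity information carried by $4$ (indeed $x\approx x+x^2$ already fails in $S_{(4, 467)}$ at $x=4$), and I would recover what is missing by substituting into $S_{(4, 467)}$ itself. For a single-variable content, say $c(\bq)=\{x\}$, sending $x\mapsto 4$ and every other variable to $2$ makes each pure power $x^k$ evaluate to $4$ or $1$ according to the parity of $k$, each term containing $x$ together with some other variable evaluate to $3$, and each $x$-free term evaluate to $2$; matching this against $\varphi(\bq)\in\{4,1\}$ forces $\bu$ to contain a pure power of $x$, and nontriviality makes its exponent at least $2$ when $\ell(\bq)=1$. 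Analogous substitutions with $4$ on the variables of $\bq$ yield the parity and multiplicity data needed when $\ell(\bq)\ge 2$.

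With these two ingredients the derivation splits on $\ell(\bq)$. When $\ell(\bq)=1$, say $\bq=x$, I take the pure power $x^k\in\bu$ with $k\ge 2$, rewrite $x^k\approx x^2$ by (\ref{46702}), and close with $\bu\approx\bu+x^2\stackrel{(\ref{46703})}{\approx}\bu+x^2+x\approx\bu+x$. When $\ell(\bq)\ge 2$, I first use (\ref{46702}) to put every term in a square-free (or pure-square) normal form, so that $\bq$ becomes either $x^2$ or a product of $m\ge 2$ distinct variables; the pure-square case is treated as in the length-one case. For a genuine square-free $\bq=x_1\cdots x_m$ the mechanism is to manufacture the needed length-$2$ summands—from squares via (\ref{46703}) and from a length-one term sitting inside a length-three term via (\ref{46704})—and then to fuse them: instantiating (\ref{46705}) with a repeated variable and applying (\ref{46702}) yields $x_1x_2+x_2x_3\approx x_1x_2+x_2x_3+x_1x_2x_3$, and iterating this shared-variable fusion builds $\bq$ (up to square-free normalization) as a summand of $\bu$.

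The main obstacle is the case $\ell(\bq)\ge 2$: the combining identities (\ref{46704}) and (\ref{46705}) produce only terms of length two and four, so they cannot be applied blindly, and one must route through the shared-variable instances above while ensuring that the required length-$2$ seeds are actually present. Making this precise means coordinating the content datum $c(\bq)\subseteq c(L_{\ge 2}(\bu))$ coming from $S_{60}$ with the parity and pure-power data coming from the element-$4$ substitutions—most delicately in the borderline pure-square case $\bq=x^2$, where an odd exponent forces the simultaneous presence of a mixed term. I expect this bookkeeping, rather than any single hard step, to be the crux of the argument.
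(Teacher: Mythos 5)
Your structural claim at the outset is wrong: the congruence you propose does not have quotient $S_{60}$. It is true that $\rho'=\{(1,4),(4,1)\}\cup\vartriangle$ is a congruence of $S_{(4,467)}$ (rows $1$ and $4$ of the multiplication table coincide), but its quotient is the three-element chain $2<3<\{1,4\}$ in which every element is multiplicatively idempotent and the middle element $3$ absorbs every mixed product; this semiring is not $S_{60}$. For instance, it satisfies $y^2+x\approx y^2+x+xy$ (check all nine assignments), whereas $S_{60}$ refutes this identity by Lemma \ref{lem6001}(3), since $x\notin c(L_{\geq 2}(\bu))$; equivalently, $S_{60}$ contains a non-idempotent element ($1^2=3$ there), while your quotient is multiplicatively idempotent. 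The congruence that actually works is the one the paper uses, $\rho=\{(1,3),(3,1)\}\cup\vartriangle$. Your fallback remark that the content inclusions "can be read off from $\{2,3\}$" does not repair this: $\{2,3\}\cong M_2$ only yields the weak inclusion $c(\bq)\subseteq c(\bu)$, not the essential condition $c(\bq)\subseteq c(L_{\geq 2}(\bu))$, whose proof in Lemma \ref{lem6001} genuinely needs three values.

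The second and larger gap is in your derivation for $\ell(\bq)\geq 2$, which you yourself flag as unresolved. You never secure a summand $\bu_r\in\bu$ with $c(\bu_r)\subseteq c(\bq)$; the paper gets it from $D_2\cong\{1,3\}\leq S_{(4,467)}$ (your substitution sending $c(\bq)$ to $4$ and everything else to $2$ would in fact also yield it, since otherwise $\varphi(\bu)\in\{2,3\}$ while $\varphi(\bq)\in\{1,4\}$, but you aim that substitution at parity data instead). This term $\bu_r$ is the engine of the whole case: one multiplies all of $L_{\geq 2}(\bu)$ together via (\ref{46705}), absorbs $\bu_r$ into the product using (\ref{46701}), (\ref{46702}) (legitimate because $c(\bu_r)\subseteq c(\bq)\subseteq c(L_{\geq2}(\bu))$), rewrites the product as $\bu_r\bq\bp$, and then applies (\ref{46704}) \emph{with whole words substituted for the variables} ($x\mapsto\bu_r$, $y\mapsto\bq$, $z\mapsto\bp$) to peel off $\bu_r\bq$, which collapses to $\bq$ by (\ref{46701}), (\ref{46702}) since $c(\bu_r)\subseteq c(\bq)$. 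Your worry that (\ref{46704}) and (\ref{46705}) "produce only terms of length two and four" is therefore a misreading -- identities are applied under arbitrary substitutions -- and your alternative shared-variable fusion scheme founders exactly where you predict: fused terms accumulate all variables of the long summands used, which may lie outside $c(\bq)$, and without $\bu_r$ and the word-level use of (\ref{46704}) you have no mechanism to discard them. So the crux of the proof is missing, not merely bookkeeping.
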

\begin{proof}
It is easily verified that $S_{(4, 467)}$ satisfies the identities (\ref{46701})--(\ref{46705}).
In the remainder we need only prove that every ai-semiring identity of $S_{(4, 467)}$
can be derived by (\ref{46701})--(\ref{46705}) and the identities defining $\mathbf{AI}$.
Let $\bu \approx \bu+\bq$ be such a nontrivial identity,
where $\bu=\bu_1+\bu_2+\cdots+\bu_n$ and $\bu_i, \bq\in X^+$, $1 \leq i \leq n$.
It is easy to see that $D_2$ is isomorphic to $\{1, 3\}$ and so $D_2$ satisfies  $\bu \approx \bu+\bq$.
This implies that there exists $\bu_r \in \bu$ such that $c(\bu_r)\subseteq c(\bq)$. 
On the other hand, we have that there exists a congruence $\rho=\{(1,3),(3,1)\}\cup \vartriangle$
 such that $S_{(4, 467)}/\rho$ is isomorphic to $S_{60}$ and so $S_{60}$ satisfies $\bu \approx \bu+\bq$.
By Lemma \ref{lem6001} it is enough to consider the following two cases.

\textbf{Case 1.} $\ell(\bq)=1$.
Since $\bu \approx \bu+\bq$ is nontrivial,
it follows that $\ell(\bu_r)\geq 2$ and so
$\bu_r=\bq^k$ for some $k\geq 2$. Now we have
\[
\bu \approx \bu+\bu_r \approx
\bu+\bq^k\stackrel{(\ref{46703})}\approx \bu+\bq^k+\bq.
\]
This implies the identity $\bu \approx \bu+\bq$.

\textbf{Case 2.} $\ell(\bq)\geq 2$.
Then $c(\bq)\subseteq c(L_{\geq2}(\bu))$ and so $c(\bu_r)\subseteq c(L_{\geq2}(\bu))$.
Assume that $L_{\geq2}(\bu)=\{\bu_1, \bu_2, \ldots, \bu_m\}$.
We have
\begin{align*}
\bu
&\approx \bu+\bu_1+\cdots+\bu_m\\
&\approx \bu+\bu_1+\cdots+\bu_m+\bu_1\cdots\bu_m&&(\text{by}~(\ref{46705}))\\
&\approx \bu+\bu_1+\cdots+\bu_m+\bu_r\bu_1\cdots\bu_m.  &&(\text{by}~(\ref{46701}), (\ref{46702}))
\end{align*}
This derives the identity
\begin{equation}\label{id24082025}
\bu\approx \bu+\bu_r\bu_1\cdots\bu_m.
\end{equation}
Furthermore, we can deduce
\begin{align*}
\bu
&\approx \bu+\bu_r+\bu_r\bu_1\cdots\bu_m &&(\text{by}~(\ref{id24082025}))\\
&\approx \bu+\bu_r+\bu_r\bq\bp &&(\text{by}~(\ref{46701}), (\ref{46702})) \\
&\approx  \bu+\bu_r+\bu_r\bq\bp+\bu_r\bq &&(\text{by}~(\ref{46704}))\\
&\approx \bu+\bu_r+\bu_r\bq\bp+\bq.&&(\text{by}~(\ref{46701}), (\ref{46702}))
\end{align*}
This proves the identity $\bu\approx \bu+\bq$.
\end{proof}
\begin{remark}
It is a routine matter to verify that $S_{(4, 467)}$ is isomorphic to a subdirect product of $S_{60}$ and $S_{22}$.
So $\mathsf V(S_{(4, 467)}) = \mathsf V(S_{60}, S_{22})$.
On the other hand, it is easy
to see that both $M_2$ and $D_2$ can be embedded into $S_{22}$.
Also, $S_{22}$ satisfies an equational basis of $\mathsf V(D_2, M_2)$ that can be found in \cite{sr}.
It follows that $\mathsf V(S_{22})=\mathsf V(D_2, M_2)$, and so
$\mathsf V(S_{(4, 467)}) = \mathsf V(S_{60}, D_2, M_2)$. Since $M_2$ can be embedded into $S_{60}$, we therefore obtain
\[
\mathsf{V}(S_{(4, 467)})=\mathsf{V}(S_{60}, D_2).
\]
\end{remark}

\begin{cor}
The ai-semiring $S_{(4, 393)}$ is finitely based.
\end{cor}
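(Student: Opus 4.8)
The plan is to follow the template of the preceding corollaries: I will show that $S_{(4, 393)}$ generates exactly the variety $\mathsf{V}(S_{(4, 467)})$ and then quote Proposition \ref{pro46701}. The equality of varieties will be obtained by proving the two inclusions separately, since (unlike in the genuinely ``dual'' corollaries) $S_{(4, 393)}$ is commutative and hence self-dual, so a duality argument alone cannot relate it to $S_{(4, 467)}$.

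For the inclusion $\mathsf{V}(S_{(4, 393)})\subseteq\mathsf{V}(S_{(4, 467)})$ I would simply verify that $S_{(4, 393)}$ satisfies the identities (\ref{46701})--(\ref{46705}). Commutativity (\ref{46701}) is read off the multiplication table, and (\ref{46702}), (\ref{46703}) follow from the squares $1^2=1$, $2^2=2$, $3^2=1$, $4^2=4$ together with the fact that both $1$ and $3$ are multiplicative zeros. The two ``long'' identities (\ref{46704}) and (\ref{46705}) need a brief case analysis relative to the additive order of Figure \ref{figure01}: as soon as a factor lies in $\{1,3\}$ the corresponding monomial evaluates to the additive top $1$, and when all factors lie in $\{2,4\}$ one is reduced to the two-element chain $2<4$, on which the required inequalities are transparent. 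This gives $S_{(4, 393)}\in\mathsf{V}(S_{(4, 467)})$.

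For the reverse inclusion I would use the Remark following Proposition \ref{pro46701}, which states $\mathsf{V}(S_{(4, 467)})=\mathsf{V}(S_{60}, D_2)$. It then suffices to realize both $S_{60}$ and $D_2$ as subalgebras of $S_{(4, 393)}$. I claim that $\{1,2,3\}$ is a subsemiring isomorphic to $S_{60}$ (the restriction gives $2^2=2$, $2\cdot 3=1$, $3^2=1$ with additive chain $2<3<1$, matching the quotient $S_{(4, 467)}/\rho\cong S_{60}$ computed in Proposition \ref{pro46701}), and that $\{2,4\}$ is a subsemiring isomorphic to $D_2$ (with $2$ the multiplicative zero and additive bottom, $4$ the multiplicative identity and additive top). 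Checking closure under $+$ and $\cdot$ and comparing with the known Cayley tables is routine. Hence $S_{60}, D_2\in\mathsf{V}(S_{(4, 393)})$, so $\mathsf{V}(S_{(4, 467)})=\mathsf{V}(S_{60}, D_2)\subseteq\mathsf{V}(S_{(4, 393)})$.

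Combining the two inclusions yields $\mathsf{V}(S_{(4, 393)})=\mathsf{V}(S_{(4, 467)})$, and since the latter is finitely based by Proposition \ref{pro46701}, so is $S_{(4, 393)}$. I expect the only mildly delicate point to be the verification of (\ref{46704}) and (\ref{46705}); the rest is bookkeeping. One structural caveat worth recording is that $S_{(4, 393)}$ is in fact subdirectly irreducible (every nontrivial congruence identifies $1$ and $3$), so the argument genuinely must pass through the variety equality and cannot be replaced by a subdirect decomposition of $S_{(4, 393)}$ into $S_{60}$ and $D_2$.
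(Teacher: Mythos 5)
Your proposal is correct, and it diverges from the paper's proof in the reverse-inclusion step. The forward inclusion is identical: the paper likewise just checks that $S_{(4, 393)}$ satisfies the basis (\ref{46701})--(\ref{46705}) of $\mathsf{V}(S_{(4, 467)})$, and your case analysis (any factor in $\{1,3\}$ forces the monomial to the additive top $1$; otherwise everything lives in the chain $\{2,4\}$, where $\cdot$ is meet and $+$ is join) is exactly the right way to see it --- though, as a terminological nitpick, $3$ is not a multiplicative zero of $S_{(4, 393)}$ in the strict sense, since $3a=1\neq 3$; the correct statement is that $1$ and $3$ both annihilate every product down to $1$. For the converse, the paper embeds $S_{(4, 467)}$ into the direct product of \emph{two copies} of $S_{(4, 393)}$, the same template used for $S_{(4, 392)}$, $S_{(4, 397)}$ and $S_{(4, 441)}$, whereas you instead locate $S_{60}\cong\{1,2,3\}$ and $D_2\cong\{2,4\}$ as subalgebras of $S_{(4, 393)}$ and invoke the Remark $\mathsf{V}(S_{(4, 467)})=\mathsf{V}(S_{60}, D_2)$. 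Both embeddings check out against the tables (the additive chain $2<3<1$ with $2^2=2$, $2\cdot 3=3^2=1$ does match the quotient $S_{(4,467)}/\rho$, and $\{2,4\}$ is $D_2$ with zero at the additive bottom), so your argument is complete. What the comparison buys: your route avoids constructing an explicit four-element embedding into a sixteen-element product and is conceptually cleaner, but it is not self-contained --- it leans on the Remark, whose own proof runs through the subdirect decomposition of $S_{(4, 467)}$ into $S_{60}$ and $S_{22}$ and the identification $\mathsf{V}(S_{22})=\mathsf{V}(L_2,D_2)$ via \cite{sr} --- while the paper's two-copies embedding is uniform across the corollaries and independent of the Remark. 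Your closing observation that $S_{(4, 393)}$ is subdirectly irreducible (every nontrivial congruence collapses $1$ and $3$; one checks that each of the six pairs generates a congruence containing $(1,3)$) is accurate and is a genuine addition: it explains why neither you nor the paper could argue by subdirectly decomposing $S_{(4, 393)}$ itself, and why the variety equality must be established in the roundabout way both proofs adopt.
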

\begin{proof}
It is easy to check that $S_{(4, 393)}$ satisfies the identities (\ref{46701})--(\ref{46705})
and that $S_{(4, 467)}$ can be embedded into the direct product of two copies of $S_{(4, 393)}$.
Thus $\mathsf{V}(S_{(4, 467)})=\mathsf{V}(S_{(4, 393)})$
and so by Proposition $\ref{pro46701}$ $S_{(4, 393)}$ is finitely based.
\end{proof}

\begin{pro}\label{pro47901}
$\mathsf{V}(S_{(4, 479)})$ is the ai-semiring variety defined by the identities
\begin{align}
x^3&\approx x^2; \label{47901}\\
xy &\approx yx; \label{47902}\\
xy&\approx x^2y; \label{id240818100}\\
xyz&\approx xyz+xy; \label{id24081915}\\
x_1x_2+x_3x_4& \approx x_1x_2x_3x_4. \label{id24081901}
\end{align}
\end{pro}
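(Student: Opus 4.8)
The plan is to follow the template of Propositions~\ref{pro45901} and~\ref{pro46701}. The forward inclusion is routine: one checks directly that $S_{(4,479)}$ satisfies $(\ref{47901})$--$(\ref{id24081901})$, the point being that the only product not equal to $3$ is $2\cdot2=2$, so every word of length $\ge 2$ evaluates into $\{2,3\}$ and takes the value $2$ precisely when all its variables are sent to $2$; from this all five identities are immediate (e.g. $x_1x_2+x_3x_4$ and $x_1x_2x_3x_4$ both equal $2$ iff all four variables are $2$, and equal $3$ otherwise).

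For the converse, let $\bu\approx\bu+\bq$ be a nontrivial identity of $S_{(4,479)}$ with $\bu=\bu_1+\cdots+\bu_n$. Since $(\ref{47902})$ holds we may assume $\bu_i,\bq\in X_c^+$. First I would extract the two structural conditions. As $\{1,3\}$ is a subalgebra isomorphic to $N_2$ (all its products being $3$), $N_2$ satisfies $\bu\approx\bu+\bq$ and hence $\ell(\bq)\ge 2$. Next, $\rho=\{(1,3),(3,1)\}\cup\vartriangle$ is a congruence with $S_{(4,479)}/\rho\cong S_{60}$, so $S_{60}$ satisfies $\bu\approx\bu+\bq$; Lemma~\ref{lem6001} then yields $L_{\geq 2}(\bu)\neq\emptyset$ and, because $\ell(\bq)\ge 2$, the decisive condition $c(\bq)\subseteq c(L_{\geq 2}(\bu))$.

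The derivation then proceeds through a normal form. Combining $(\ref{47902})$ with $x^3\approx x^2$ in $(\ref{47901})$ and $xy\approx x^2y$ in $(\ref{id240818100})$, I would show that every word $\bw$ with $\ell(\bw)\ge 2$ satisfies $\bw\approx\prod_{z\in c(\bw)}z^2$; that is, a long word is determined up to these identities by its content. In particular each $\bu_i\in L_{\geq 2}(\bu)$ reduces to $\prod_{z\in c(\bu_i)}z^2$ and $\bq\approx\prod_{z\in c(\bq)}z^2$. Read after substitution and renormalization, $(\ref{id24081901})$ becomes $\prod_{z\in A}z^2+\prod_{z\in B}z^2\approx\prod_{z\in A\cup B}z^2$, which lets me merge all the long summands of $\bu$ into a single word $\bw^\ast:=\prod_{z\in C^\ast}z^2$, where $C^\ast=c(L_{\geq 2}(\bu))$. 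Since $c(\bq)\subseteq C^\ast$, the absorption instance of $(\ref{id24081915})$ (equivalently, one more application of the merge) gives $\bw^\ast+\bq\approx\bw^\ast$, so that
\[
\bu+\bq\approx \bw^\ast+\sum_{\bu_i\in L_1(\bu)}\bu_i+\bq\approx \bw^\ast+\sum_{\bu_i\in L_1(\bu)}\bu_i\approx\bu,
\]
which is the desired conclusion.

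The main obstacle is the completeness half, and within it two points need care. First, one must check that $\rho$ really is a congruence and that the quotient is isomorphic to $S_{60}$, so that Lemma~\ref{lem6001} applies. Second, and more delicate, is the passage to content: proving $\bw\approx\prod_{z\in c(\bw)}z^2$ uses $(\ref{id240818100})$ to promote each letter to a square, which is available only because $\ell(\bw)\ge 2$, and the content-union reading of $(\ref{id24081901})$ must be justified by an explicit substitution followed by renormalization. Once these are established the final absorption is immediate, and the borderline case $\lvert c(\bq)\rvert=1$ (where $\bq\approx x^2$ with $x\in C^\ast$) is subsumed by the same argument.
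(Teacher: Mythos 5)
Your proposal is correct and follows essentially the same route as the paper's proof: the subalgebra $\{1,3\}\cong N_2$ gives $\ell(\bq)\geq 2$, the quotient by $\rho=\{(1,3),(3,1)\}\cup\vartriangle$ is $S_{60}$ so Lemma \ref{lem6001} gives $c(\bq)\subseteq c(L_{\geq 2}(\bu))$, the long summands are merged via (\ref{id24081901}), normalized to squares via (\ref{47901})--(\ref{id240818100}), and $\bq$ is absorbed via (\ref{id24081915}). The only (cosmetic) deviation is that you normalize $\bq$ fully to $\prod_{z\in c(\bq)}z^2$ using (\ref{id240818100}), whereas the paper keeps $\bq\approx x_1^{r_1}\cdots x_m^{r_m}$ with $0\leq r_i\leq 2$ and pads with $\bp=x_1^{2-r_1}\cdots x_m^{2-r_m}$ before absorbing; your variant even handles the degenerate case where that $\bp$ would be empty more cleanly.
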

\begin{proof}
It is easy to verify that $S_{(4, 479)}$ satisfies the identities (\ref{47901})--(\ref{id24081901}).
In the remainder we need only show that every ai-semiring identity of $S_{(4, 479)}$
can be derived by (\ref{47901})--(\ref{id24081901}) and the identities defining $\mathbf{AI}$.
Let $\bu \approx \bu+\bq$ be such a nontrivial identity, where $\bu=\bu_1+\bu_2+\cdots+\bu_n$
and $\bu_i, \bq \in X^+$, $1 \leq i \leq n$.
It is easy to see that $N_2$ is isomorphic to $\{1, 3\}$ and so $N_2$ satisfies  $\bu \approx \bu+\bq$.
This implies that $\ell(\bq)\geq 2$.
On the other hand, we have that there exists a congruence $\rho=\{(1,3),(3,1)\}\cup \vartriangle$
 such that $S_{(4, 479)}/\rho$ is isomorphic to $S_{60}$ and so $S_{60}$ satisfies $\bu \approx \bu+\bq$.
By Lemma \ref{lem6001} we have that $c(\bq)\subseteq c(L_{\geq2}(\bu))$.
Assume that $L_{\geq2}(\bu)=\{\bu_1, \bu_2, \ldots, \bu_k\}$
and that $c(L_{\geq2}(\bu))=\{x_1, \ldots, x_m\}$.
By (\ref{47901}) and (\ref{47902}) we can deduce the identity
\begin{equation}\label{id24081905}
\bq \approx x_1^{r_1}x_2^{r_2}\cdots x_m^{r_m}
\end{equation}
for some $r_1, r_2, \ldots, r_m$,
where $0\leq r_i\leq 2$, $1\leq i \leq m$.
Now we have
\begin{align*}
\bu
&\approx \bu+\bu_1\bu_2\cdots\bu_k &&(\text{by}~(\ref{id24081901}))\\
&\approx \bu+x_1^2x_2^2\cdots x_m^2 &&(\text{by}~(\ref{47901}), (\ref{47902}), (\ref{id240818100}))\\
&\approx \bu+x_1^{r_1}x_2^{r_2}\cdots x_m^{r_m}\bp &&(\text{by}~(\ref{47902}))\\
&\approx \bu+x_1^{r_1}x_2^{r_2}\cdots x_m^{r_m}\bp+x_1^{r_1}x_2^{r_2}\cdots x_m^{r_m}&&(\text{by}~(\ref{id24081915}))\\
&\approx \bu+x_1^{r_1}x_2^{r_2}\cdots x_m^{r_m}\bp+\bq, &&(\text{by}~(\ref{id24081905}))
\end{align*}
where $\bp=x_1^{2-r_1}x_2^{2-r_2}\cdots x_m^{2-r_m}$.
This implies the identity $\bu \approx \bu+\bq$.
\end{proof}
\begin{remark}
It is a routine matter to verify that $S_{(4, 479)}$ is isomorphic to a subdirect product of $S_{60}$ and $S_{21}$.
So $\mathsf V(S_{(4, 479)}) = \mathsf V(S_{60}, S_{21})$. On the other hand, it is easy
to see that both $M_2$ and $N_2$ can be embedded into $S_{21}$. Also, $S_{21}$ satisfies an equational basis of $\mathsf V(N_2, M_2)$ that can be found in \cite{sr}. It follows that $\mathsf V(S_{21})=\mathsf V(N_2, M_2)$, and so $\mathsf V(S_{(4, 479)}) = \mathsf V(S_{60}, N_2, M_2)$. Since $M_2$ can be embedded into $S_{60}$, we therefore obtain
\[
\mathsf V(S_{(4, 479)}) = \mathsf V(S_{60}, N_2).
\]
\end{remark}

\begin{cor}
The ai-semiring $S_{(4, 392)}$ is finitely based.
\end{cor}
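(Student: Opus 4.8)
The plan is to show $\mathsf{V}(S_{(4,392)})=\mathsf{V}(S_{(4,479)})$ and then quote Proposition~\ref{pro47901}, which already provides a finite basis for the latter variety. This follows the pattern used for the corollaries after Propositions~\ref{pro47501} and \ref{pro46701}.

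First I would verify the inclusion $\mathsf{V}(S_{(4,392)})\subseteq\mathsf{V}(S_{(4,479)})$ by checking that $S_{(4,392)}$ satisfies the defining identities (\ref{47901})--(\ref{id24081901}). The crucial observation from Table~\ref{tb1} is that every product of two elements of $S_{(4,392)}$ lies in $\{1,2\}$, with $xy=2$ exactly when $x,y\in\{2,4\}$ and $xy=1$ otherwise; on $\{1,2\}$ the element $2$ acts as a multiplicative identity and $1$ as a zero. Using this together with the addition determined by Figure~\ref{figure01} (so $1$ is the top, $2$ the bottom, and $3,4$ the incomparable middle elements), each of the five identities collapses to a finite check on $\{1,2\}$ and is routine.

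For the reverse inclusion I would embed $S_{(4,479)}$ into $S_{(4,392)}\times S_{(4,392)}$, so that $S_{(4,479)}$ lies in the variety generated by $S_{(4,392)}$. The guiding idea is that $S_{(4,479)}$ admits the automorphism interchanging its two middle elements $3$ and $4$ (they behave identically under multiplication and satisfy $3+4=1$), which lets me build two homomorphisms $\varphi_A,\varphi_B\colon S_{(4,479)}\to S_{(4,392)}$ from a single one. Explicitly I would take $\varphi_A\colon 1\mapsto 1,\,2\mapsto 2,\,3\mapsto 1,\,4\mapsto 3$ and $\varphi_B\colon 1\mapsto 1,\,2\mapsto 2,\,3\mapsto 3,\,4\mapsto 1$; one checks that each preserves both operations, and the combined map carries $1,2,3,4$ to the distinct pairs $(1,1),(2,2),(1,3),(3,1)$, so $(\varphi_A,\varphi_B)$ is injective.

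Putting the two inclusions together yields $\mathsf{V}(S_{(4,392)})=\mathsf{V}(S_{(4,479)})$, and Proposition~\ref{pro47901} then shows that $S_{(4,392)}$ is finitely based. I expect the embedding to be the only genuinely delicate point: one must produce the correct pair of homomorphisms and confirm that they respect multiplication and that their product separates all four elements, whereas the verification of the five identities is purely mechanical once the structure of products in $S_{(4,392)}$ is noted.
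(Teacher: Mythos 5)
Your overall strategy coincides exactly with the paper's: verify that $S_{(4,392)}$ satisfies the identities (\ref{47901})--(\ref{id24081901}), embed $S_{(4,479)}$ into the direct product of two copies of $S_{(4,392)}$, conclude $\mathsf{V}(S_{(4,392)})=\mathsf{V}(S_{(4,479)})$, and invoke Proposition \ref{pro47901}. Your verification of the five identities is correct as described. However, there is a genuine error in the embedding: your map $\varphi_B$ is not a semiring homomorphism. In $S_{(4,479)}$ the element $3$ is the multiplicative zero, so under any homomorphism into $S_{(4,392)}$ its image must be a multiplicative idempotent of $S_{(4,392)}$; but there $3\cdot 3=1$ and $4\cdot 4=2$, so the only multiplicative idempotents are $1$ and $2$, and no homomorphism whatsoever can send $3\mapsto 3$. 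Concretely, $\varphi_B(3\cdot 3)=\varphi_B(3)=3$ while $\varphi_B(3)\varphi_B(3)=3\cdot 3=1$ in $S_{(4,392)}$. Your guiding idea is also false: the transposition $\sigma$ of $3$ and $4$ is not an automorphism of $S_{(4,479)}$, since an automorphism must fix the multiplicative zero $3$; indeed $\sigma(3\cdot 3)=4\neq 3=\sigma(3)\sigma(3)$. The agreement of rows and columns $3$ and $4$ in the multiplication table does not yield an automorphism because the common \emph{value} of those products is $3$, which the swap moves.

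The gap is repairable within your own framework. Your $\varphi_A\colon 1\mapsto 1$, $2\mapsto 2$, $3\mapsto 1$, $4\mapsto 3$ is indeed a homomorphism: in $S_{(4,392)}$ one has $\varphi_A(x)\varphi_A(y)=2$ exactly when $x=y=2$, matching the fact that in $S_{(4,479)}$ every product equals $3$ except $2\cdot 2=2$, and addition is checked directly on the diamond. Replace $\varphi_B$ by $\psi\colon 1\mapsto 4$, $2\mapsto 2$, $3\mapsto 2$, $4\mapsto 4$. Then $\psi$ sends the zero $3$ to the idempotent $2$; since every product of elements of $\{2,4\}$ in $S_{(4,392)}$ equals $2$, all products of $S_{(4,479)}$ are preserved, and addition is preserved as well (for instance $\psi(3+4)=\psi(1)=4=2+4$). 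The pair $(\varphi_A,\psi)$ carries $1,2,3,4$ to $(1,4)$, $(2,2)$, $(1,2)$, $(3,4)$, which are pairwise distinct, so it is an embedding of $S_{(4,479)}$ into $S_{(4,392)}\times S_{(4,392)}$, and the rest of your argument then goes through exactly as in the paper.
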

\begin{proof}
It is easy to verify that $S_{(4, 392)}$ satisfies the identities (\ref{47901})--(\ref{id24081901})
and that $S_{(4, 479)}$ can be embedded into the direct product of two copies of $S_{(4, 392)}$.
Thus $\mathsf{V}(S_{(4, 479)})=\mathsf{V}(S_{(4, 392)})$ and so by
Proposition $\ref{pro47901}$ $S_{(4, 392)}$ is finitely based.
\end{proof}

\begin{pro}\label{pro39001}
$\mathsf{V}(S_{(4, 390)})$ is the ai-semiring variety defined by the identities
\begin{align}
x^2y& \approx xy; \label{39001}\\
xyz& \approx yxz; \label{39002}\\
x^2y^2& \approx {(xy)}^2; \label{39003}\\
x^2y^2& \approx y^2x^2; \label{39000}\\
xyz& \approx xyz+y; \label{39005}\\
x^2+yz& \approx x^2yz; \label{39007}\\
x_1x_2^2+x_3x_4^2& \approx x_1x_2x_3x_4^2; \label{39009}\\
x_1x_2+x_3x_4& \approx x_1x_2+x_3x_4+x_1x_3^2, \label{39010}
\end{align}
where $x$ and $z$ may be empty in $(\ref{39005})$.
\end{pro}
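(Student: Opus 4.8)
The forward implication is a routine verification that $S_{(4,390)}$ satisfies (\ref{39001})--(\ref{39010}). For the converse, let $\bu\approx\bu+\bq$ be a nontrivial identity of $S_{(4,390)}$ with $\bu=\bu_1+\cdots+\bu_n$ and $\bu_i,\bq\in X^+$. First I would record the coarse structural constraints. Collapsing $\{1,4\}$ gives a congruence $\rho=\{(1,4),(4,1)\}\cup\vartriangle$ of $S_{(4,390)}$: one checks that rows $1$ and $4$ of the multiplication table agree, that the only right-difference $2\cdot1=1$, $2\cdot4=4$ is absorbed by the identification, and that the diamond addition respects it. Killing in this way the single nontrivial product $2\cdot4=4$ leaves the semiring ``$2^2=2$, all other products equal to the absorbing top'', which is exactly $S_{60}$; thus $S_{(4,390)}/\rho\cong S_{60}$ and $S_{60}$ satisfies $\bu\approx\bu+\bq$. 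Lemma \ref{lem6001} then yields: $\ell(\bu_i)\ge2$ for some $i$; if $\ell(\bq)=1$ then $c(\bq)\subseteq c(\bu)$; and if $\ell(\bq)\ge2$ then $c(\bq)\subseteq c(L_{\ge2}(\bu))$.

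Next I would extract the information that $S_{60}$ cannot see, namely the behaviour of the letter sent to the non-idempotent value $4$. A direct value-analysis of $S_{(4,390)}$ shows that, since only row $2$ is nontrivial, a word evaluates to $4$ under $\varphi$ precisely when its last letter goes to $4$ and every earlier letter goes to $2$, while $3$ is never a proper product. Substituting $t(\bq)\mapsto4$ and every other variable $\mapsto2$ therefore forces $t(\bq)\in c(\bu)$, and when $t(\bq)\notin c(p(\bq))$ it pins down the \emph{exposed tail} that the summand we must build has to carry. This tail condition is the extra ingredient beyond Lemma \ref{lem6001}.

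The derivation then splits on $\ell(\bq)$. For $\ell(\bq)=1$, the inclusion $c(\bq)\subseteq c(\bu)$ places $\bq$ inside some $\bu_j$ with $\ell(\bu_j)\ge2$, and the interior-absorption identity (\ref{39005}) (using its empty-slot convention) gives $\bu_j\approx\bu_j+\bq$, hence $\bu\approx\bu+\bq$. For $\ell(\bq)\ge2$ I would first normalise $\bq$ by its content and its exposed tail: (\ref{39002}) reorders the prefix, (\ref{39001}) deletes prefix square-excess, and (\ref{39003}) together with (\ref{39000}) normalise and commute squares. Using $c(\bq)\subseteq c(L_{\ge2}(\bu))$, I would then fuse the long summands of $\bu$ — via (\ref{39007}) to absorb a square into a product and (\ref{39009}) to merge two square-tailed products — into a single long term whose content contains $c(\bq)$; (\ref{39010}) creates the cross term supplying the correct first letter and squared tail, and a final application of (\ref{39005}) splits off $\bq$.

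The main obstacle is this last case. Because $S_{(4,390)}$ is only \emph{prefix}-commutative by (\ref{39002}) and fully commutative only on squares by (\ref{39000}), the exposed tail $t(\bq)$ is rigid: the synthesised term must present $t(\bq)$ at its exposed end, yet the only identities that touch the tail are the square-tailed fusion (\ref{39009}) and the cross-term creation (\ref{39010}), whose shapes $x_3x_4^2$ and $x_1x_3^2$ dictate exactly which squares may be introduced — and note that a square-tailed term has no exposed tail at all. Orchestrating these moves so as to accumulate the right content by squaring while depositing precisely $t(\bq)$ at the end, and reconciling the two sub-cases according to whether or not $t(\bq)$ already occurs in $p(\bq)$, is the delicate bookkeeping on which the proof turns.
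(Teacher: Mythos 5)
Your reduction to $S_{60}$ via the congruence collapsing $\{1,4\}$ is correct and is equivalent to the paper's route (the paper instead observes that $S_{(4,390)}$ is a subdirect product of $S_{57}$ and $S_{60}$), and your Case 1 and your normalisation of $\bq$ match the paper's. But there is a genuine gap in Case 2: you never establish the prefix-content condition $c(p(\bq))\subseteq c(p(\bu))$, which the paper extracts from the \emph{second} quotient $S_{57}$ (collapse $\{1,3\}$; Lemma \ref{lem5701}), and this condition — not only the tail condition you isolate as ``the extra ingredient beyond Lemma \ref{lem6001}'' — is indispensable. Under (\ref{39001})--(\ref{39000}) the content and the last letter of a word are invariants, and the fusion you describe, namely (\ref{39010}) creating cross terms $p(\bu_i)p(\bu_m)^2$ followed by (\ref{39009}), yields a word whose content is only $c(p(L_{\geq 2}(\bu)))$: the tails of the long summands are destroyed by (\ref{39010}). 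So your assertion that the fused term has ``content containing $c(\bq)$'' is exactly the unproven point; it holds precisely because $c(p(\bq))\subseteq c(p(\bu))$ (together with $t(\bq)\in c(p(\bu))$ in one subcase), and without it the rewriting of the fused square word into $\bq\bp$ or $\bp\bq$ is blocked by the content invariant. The fact is provable in your own style — send one $x\in c(p(\bq))$ to $4$ and all else to $2$, and note that a $4$ in non-final position forces the value $1$ — but it must be stated and used.

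Second, your case split by whether $t(\bq)\in c(p(\bq))$ is not the operative dichotomy; the paper splits by whether $t(\bq)\in c(p(\bu))$, i.e.\ by the structure of $\bu$, which is what determines the available synthesis route. If $t(\bq)\in c(p(\bu))$, then all of $c(\bq)$ lies in the fused word $x_1^2x_2^2\cdots x_s^2$, which rewrites by (\ref{39001}), (\ref{39002}) to $\bq\bp$, and (\ref{39005}) with $x$ empty splits off $\bq$ — no tail placement is needed at all. If $t(\bq)\notin c(p(\bu))$, then $m(t(\bq),\bq)=1$ (again via $c(p(\bq))\subseteq c(p(\bu))$) and $t(\bq)=t(\bu_\ell)$ for some $\bu_\ell\in L_{\geq 2}(\bu)$; the decisive move your sketch lacks is then the use of (\ref{39007}) to append this tail-carrying summand to the fused square word, $\bu\approx \bu+x_1^2x_2^2\cdots x_s^2+p(\bu_\ell)t(\bq)\approx \bu+x_1^2x_2^2\cdots x_s^2\,p(\bu_\ell)t(\bq)$, which deposits $t(\bq)$ in the exposed final position; after that (\ref{39001}), (\ref{39002}) give $\bp\bq$ and (\ref{39005}) finishes. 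Your closing paragraph names this orchestration as the crux but does not carry it out, so the hard half of Case 2 — the only part where this proposition goes beyond routine — is missing.
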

\begin{proof}
It is easy to check that $S_{(4, 390)}$ satisfies the identities (\ref{39001})--(\ref{39010}).
In the remainder it is enough to show that every ai-semiring identity of $S_{(4, 390)}$
is derivable from (\ref{39001})--(\ref{39010}) and the identities defining $\mathbf{AI}$.
Let $\bu \approx \bu+\bq$ be such a nontrivial identity,
where $\bu=\bu_1+\bu_2+\cdots+\bu_n$ and $\bu_i, \bq\in X^+$, $1 \leq i \leq n$.
It is a routine matter to verify that $S_{(4, 390)}$ is isomorphic to
a subdirect product of $S_{57}$ and $S_{60}$.
So both $S_{57}$ and $S_{60}$ satisfy $\bu \approx \bu+\bq$.
By Lemma \ref{lem5701}
we obtain that $\ell(\bu_i)\geq 2$ for some $\bu_i \in \bu$,
$c(p(\bq))\subseteq c(p(\bu))$ and $t(\bq)\in c(\bu)$.

\textbf{Case 1.} $\ell(\bq)=1$. Then $c(\bq)\subseteq c(\bu_k)$ for some $\bu_k \in \bu$
and so $\bu_k=\bp_1\bq\bp_2$ for some $\bp_1, \bp_2 \in X^*$.
Furthermore, we have
\[
\bu \approx \bu+\bu_k \approx
\bu+\bp_1\bq\bp_2 \stackrel{(\ref{39005})}\approx \bu+\bp_1\bq\bp_2+\bq.
\]
This implies the identity $\bu \approx \bu+\bq$.

\textbf{Case 2.} $\ell(\bq)\geq 2$.
By Lemma \ref{lem6001} it follows that $c(\bq)\subseteq c(L_{\geq2}(\bu))$.
Suppose that $L_{\geq2}(\bu)=\{\bu_1, \bu_2, \ldots, \bu_m\}$ and that
$c(p(L_{\geq2}(\bu)))=\{x_1, x_2, \ldots, x_s\}$. Then
\begin{align*}
\bu
&\approx \bu+\bu_1+\bu_2+\cdots+\bu_m \\
&\approx \bu+p(\bu_1)t(\bu_1)+p(\bu_2)t(\bu_2)+\cdots+p(\bu_m)t(\bu_m) \\
&\approx  \bu+p(\bu_1){p(\bu_m)}^2+p(\bu_2){p(\bu_m)}^2+\cdots+p(\bu_{m-1}){p(\bu_m)}^2 &&(\text{by}~(\ref{39010}))\\
&\approx  \bu+{p(\bu_1)}^2{p(\bu_2)}^2{p(\bu_3)}^2{p(\bu_4)}^2\cdots {p(\bu_{m-1})}^2{p(\bu_m)}^2
&&(\text{by}~(\ref{39001}), (\ref{39002}), (\ref{39009}))\\
&\approx \bu+x_1^2x_2^2\cdots x_s^2. &&(\text{by}~(\ref{39001}), (\ref{39003}), (\ref{39000}))
\end{align*}
This derives the identity
\begin{equation}\label{240818001}
\bu \approx \bu+x_1^2x_2^2\cdots x_s^2.
\end{equation}
If $t(\bq)\in c(p(\bu))$, then
\[
\bu \stackrel{(\ref{240818001})}\approx \bu+x_1^2x_2^2\cdots x_s^2
\stackrel{(\ref{39001}), (\ref{39002})}\approx \bu+\bq\bp \stackrel{(\ref{39005})}\approx \bu+\bq\bp+\bq.
\]
So we obtain the identity $\bu \approx \bu+\bq$.
If $t(\bq)\notin c(p(\bu))$, then
$m(t(\bq), \bq)=1$ and
$t(\bq)=t(\bu_\ell)$ for some $\bu_\ell \in L_{\geq2}(\bu)$.
Furthermore, we have
\begin{align*}
\bu
&\approx \bu+x_1^2x_2^2\cdots x_s^2+\bu_\ell&&(\text{by}~(\ref{240818001}))\\
&\approx \bu+x_1^2x_2^2\cdots x_s^2+p(\bu_\ell)t(\bu_\ell)\\
&\approx \bu+x_1^2x_2^2\cdots x_s^2+p(\bu_\ell)t(\bq)\\
&\approx \bu+x_1^2x_2^2\cdots x_s^2p(\bu_\ell)t(\bq) &&(\text{by}~(\ref{39007}))\\
&\approx \bu+\bp\bq &&(\text{by}~(\ref{39001}), (\ref{39002}))\\
&\approx \bu+\bp\bq+\bq. &&(\text{by}~(\ref{39005}))
\end{align*}
This derives the identity $\bu\approx \bu+\bq$.
\end{proof}

\begin{cor}
The ai-semiring $S_{(4, 396)}$ is finitely based.
\end{cor}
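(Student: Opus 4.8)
The plan is to reduce the statement to Proposition \ref{pro39001} by a duality argument, exactly as in the other finite-basis corollaries of this paper. First I would check directly from Table \ref{tb1} that $S_{(4, 396)}$ and $S_{(4, 390)}$ have dual multiplications: transposing the multiplication table of $S_{(4, 390)}$, i.e.\ replacing each product $a\cdot b$ by $b\cdot a$, produces precisely the multiplication table of $S_{(4, 396)}$, while the two algebras share the same additive reduct, namely the quasi-antichain of Figure \ref{figure01}. This is the only computation needed, and it is routine.

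Granting the dual relationship, the finite basis property transfers at once. Proposition \ref{pro39001} gives the finite equational basis (\ref{39001})--(\ref{39010}) of $\mathsf{V}(S_{(4, 390)})$. Reversing the order of the factors in every product occurring in these identities yields a finite set of ai-semiring identities, and a standard argument shows that this dualized set is an equational basis for $\mathsf{V}(S_{(4, 396)})$. Hence $S_{(4, 396)}$ is finitely based.

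There is no real obstacle here; the substantive work was already carried out in Proposition \ref{pro39001}, and all that remains is the bookkeeping of confirming that the two multiplication tables are transposes of one another.
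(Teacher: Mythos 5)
Your proposal is correct and matches the paper's own proof: the paper likewise observes that $S_{(4, 396)}$ and $S_{(4, 390)}$ have dual multiplications and then invokes Proposition \ref{pro39001}. The transpose computation you describe checks out against Table \ref{tb1}, so nothing further is needed.
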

\begin{proof}
It is easy to see that $S_{(4, 396)}$ and $S_{(4, 390)}$ have dual multiplications.
By Proposition $\ref{pro39001}$ we immediately deduce that $S_{(4, 396)}$ is finitely based.
\end{proof}

\begin{pro}\label{pro39801}
$\mathsf{V}(S_{(4, 398)})$ is the ai-semiring variety defined by the identities
\begin{align}
xy& \approx yx; \label{39801}\\
xy& \approx xy+x; \label{39802}\\
x^2+yz& \approx x^2yz; \label{39803}\\
xyz& \approx xy+yz+xz. \label{39804}
\end{align}
\end{pro}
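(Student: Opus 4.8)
The plan is to follow the template of Propositions \ref{pro40001} and \ref{pro39001}: first check (routinely) that $S_{(4, 398)}$ satisfies (\ref{39801})--(\ref{39804}), and then show that every nontrivial identity $\bu\approx\bu+\bq$ holding in $S_{(4, 398)}$, with $\bu=\bu_1+\cdots+\bu_n$ and $\bu_i,\bq\in X^+$, is derivable from these four identities together with the laws of $\mathbf{AI}$. Using the commutativity law (\ref{39801}) I would immediately pass to commutative words, assuming $\bu_i,\bq\in X_c^+$, and then split the argument according to $\ell(\bq)$.

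Second, I would record the structural constraints on $\bu$ and $\bq$. Note that $S_{(4, 398)}$ is isomorphic to a subdirect product of two three-element quotients: the congruence collapsing $\{1,3\}$ yields a copy of $S_{60}$, while the congruence collapsing $\{1,4\}$ yields a second commutative three-element ai-semiring, and the two congruences meet in the diagonal. Evaluating in $\{1,3\}\cong T_2$ forces $L_{\geq 2}(\bu)\neq\emptyset$, evaluating in $\{1,2\}\cong M_2$ forces $c(\bq)\subseteq c(\bu)$, and applying Lemma \ref{lem6001} to the $S_{60}$-factor forces $c(\bq)\subseteq c(\bu)$ when $\ell(\bq)=1$ and $c(\bq)\subseteq c(L_{\geq 2}(\bu))$ when $\ell(\bq)\geq 2$. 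The one extra fact I expect to need — which does \emph{not} come from $S_{60}$ — is obtained from the direct substitution $\varphi(x)=4$ for $x\in c(\bq)$ and $\varphi(x)=2$ otherwise: since $2$ acts as a two-sided identity on every product (the value $3$ never occurs as a product) while $4^2=1$, the requirement $\varphi(\bu)\geq\varphi(\bq)$ shows that some summand $\bu_k$ contains the letters of $\bq$ with total multiplicity at least $2$.

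Third, the derivation itself. For $\ell(\bq)=1$, write $\bq=x$; then $x\in c(\bu)$, and nontriviality places $x$ inside a summand $\bu_k=x\bw$ of length $\geq 2$, so (\ref{39802}) gives $\bu\approx\bu+\bu_k\approx\bu+\bu_k+x\approx\bu+\bq$. For $\ell(\bq)\geq 3$, writing $\bq=x_1\cdots x_n$, I would first derive $\bq\approx\sum_{i<j}x_ix_j$ by iterating (\ref{39804}) (exactly as $xyz\approx xy+yz+xz$ is unfolded in Propositions \ref{pro40001} and \ref{pro40101}); since $\bu\approx\bu+\bq$ then forces $\bu\approx\bu+x_ix_j$ in $S_{(4, 398)}$ for every pair, this reduces everything to the length-two case. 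For $\ell(\bq)=2$ write $\bq=xy$ (with $x,y$ not necessarily distinct). The multiplicity fact supplies a summand $\bu_k$ in which the letters $x,y$ have combined multiplicity $\geq 2$, i.e. $\bu_k$ has $xy$, or $x^2$, or $y^2$ as a factor. If $\bu_k=xy\bw$ with $\bw$ nonempty, then (\ref{39804}) applied to the factorization $x\cdot y\cdot\bw$ exposes $xy$ as a summand. If instead $x^2$ divides $\bu_k$, I would first peel off $x^2$ (using (\ref{39803}) or (\ref{39804}) on $x^2\bw$) to obtain $\bu\approx\bu+x^2$, then use $y\in c(L_{\geq 2}(\bu))$ to choose $\bu_j=y\bw'$ of length $\geq 2$, fuse by (\ref{39803}) into $x^2\bu_j=x^2y\bw'$, and finally split $x\cdot y\cdot(x\bw')$ by (\ref{39804}) to recover the summand $xy$; the case $y^2\mid\bu_k$ is symmetric.

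Fourth, the main obstacle. The crux is precisely this square subcase of $\ell(\bq)=2$. Because $S_{(4, 398)}$, unlike $S_{(4, 390)}$, does \emph{not} satisfy $x^2y\approx xy$, squares cannot simply be absorbed, so I cannot imitate the ``product of squares'' normal form of Proposition \ref{pro39001}; every step that introduces a square must be matched by a later application of (\ref{39804}) that splits it back off, and the real work is the bookkeeping guaranteeing that the \emph{exact} word $xy$ (rather than $x^2y$, $xy^2$, or a longer relative) emerges as a genuine summand. Checking that the three subcases of the multiplicity condition are exhaustive, and that (\ref{39803}) is always legitimately applicable (its product factor having length $\geq 2$), is where I expect the argument to hinge.
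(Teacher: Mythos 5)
Your proposal is correct and follows essentially the same route as the paper's proof: the subdirect decomposition onto $S_{53}$ and $S_{60}$, the reduction of $\bq$ to length-two pieces via (\ref{39804}), the multiplicity trichotomy for $\bq=xy$ (your direct substitution sending $c(\bq)$ to $4$ and everything else to $2$ is precisely the content of the paper's Lemma \ref{lem5301}, read through the $S_{53}$-factor), and the same fuse-then-split derivations using (\ref{39803}) together with (\ref{39802}) or your equivalent (\ref{39804})-splitting. The only slip is that your two congruences are labelled backwards --- collapsing $\{1,4\}$ yields $S_{60}$ while collapsing $\{1,3\}$ yields $S_{53}$ --- which is harmless since both quotients exist, and the ``main obstacle'' you flag in the square subcase is already fully resolved by the derivation you give.
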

\begin{proof}
It is easy to verify that $S_{(4, 398)}$ satisfies the identities (\ref{39801})--(\ref{39804}).
In the remainder it is enough to show that every ai-semiring identity of $S_{(4, 398)}$
is derivable from (\ref{39801})--(\ref{39804}) and the identities defining $\mathbf{AI}$.
Let $\bu \approx \bu+\bq$ be such a nontrivial identity,
where $\bu=\bu_1+\bu_2+\cdots+\bu_n$ and $\bu_i, \bq\in X^+$, $1 \leq i \leq n$.
It is a routine matter to verify that $S_{(4, 398)}$ is isomorphic to
a subdirect product of $S_{53}$ and $S_{60}$.
So both $S_{53}$ and $S_{60}$ satisfy $\bu \approx \bu+\bq$.

\textbf{Case 1.} $\ell(\bq)=1$. By Lemma \ref{lem6001} we obtain that $c(\bq)\subseteq c(\bu)$ and
so $c(\bq)\subseteq c(\bu_j)$ for some $\bu_j \in \bu$.
This implies that $\ell(\bu_j)\geq 2$.
By the identity $(\ref{39801})$ we deduce $\bu_j\approx \bq\bp$ for some nonempty word $\bp$ and so
\[
\bu \approx \bu+\bu_j \approx
\bu+\bq\bp \stackrel{(\ref{39802})}\approx \bu+\bq\bp+\bq.
\]
This implies $\bu \approx \bu+\bq$.

\textbf{Case 2.} $\ell(\bq)\geq 2$. Let $\bq=x_1x_2\cdots x_n$.
Then (\ref{39804}) implies the identity
\[
\bq\approx \sum\limits_{1\leq i\textless j\leq n}x_ix_j.
\]
So we only consider the case that $\bq=xy$.
By Lemma \ref{lem5301} we need to consider the following subcases.

\textbf{Subcase 2.1.} $x=y$. Then $m(x, \bu_k)\geq 2$ for some $\bu_k\in \bu$. So we have
\[
\bu
\approx \bu+\bu_k \stackrel{(\ref{39801})}\approx \bu+x^2\bu_k'
\stackrel{(\ref{39802})}\approx  \bu+x^2\bu_k'+x^2
\approx \bu+x^2\bu_k'+xy.
\]

\textbf{Subcase 2.2.} $x \neq y$.
Then there exists $\bu_k\in \bu$ such that either
$x, y\in c(\bu_k)$ or $m(x, \bu_k)\geq 2$ or $m(y, \bu_k)\geq 2$.
If $x, y \in c(\bu_k)$ for some $\bu_k\in \bu$, then
\[
\bu
\approx \bu+\bu_k\stackrel{(\ref{39801})}\approx\bu+xy\bu_k' \stackrel{(\ref{39802})}\approx \bu+xy\bu_k'+xy.
\]
If $m(x, \bu_k)\geq 2$, then by the similar argument in Subcase 2.1 one can deduce
\[
\bu\approx \bu+x^2.
\]
Since $y\in c(\bq)$, it follows from Lemma \ref{lem6001} that
$y\in c(\bu_\ell)$ for some $\bu_\ell\in L_{\geq 2}(\bu)$.
Now we have
\begin{align*}
\bu
&\approx \bu+x^2+\bu_\ell\\
&\approx \bu+x^2+y\bp&&(\text{by}~(\ref{39801}))\\
&\approx \bu+x^2y\bp &&(\text{by}~(\ref{39803}))\\
&\approx \bu+xyx\bp &&(\text{by}~(\ref{39801}))\\
&\approx \bu+xyx\bp+xy. &&(\text{by}~(\ref{39802}))
\end{align*}
If $m(y, \bu_k)\geq 2$, then the remaining steps are similar to the preceding case.
\end{proof}

\section{Equational basis of some $4$-element ai-semirings that relate to $S^0$}
In this section we provide equational basis for some $4$-element ai-semirings that relate to $S^0$.

\begin{pro}\label{pro47401}
$\mathsf{V}(S_{(4, 474)})$ is the ai-semiring variety defined by the identities
\begin{align}
xy & \approx yx; \label{47401}\\
x^2y & \approx xy; \label{47402}\\
xy &\approx xy+xyz. \label{47403}
\end{align}
\end{pro}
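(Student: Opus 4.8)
The plan is to mimic the argument for Proposition~\ref{pro47101}. First I would verify by a direct check that $S_{(4,474)}$ satisfies (\ref{47401})--(\ref{47403}); this is routine, and it is worth noting that the element $2$ is simultaneously the additive identity and a multiplicative zero, while multiplication restricted to $\{1,3,4\}$ is the constant map onto $3$. Hence $S_{(4,474)}\cong S^0$ for the three-element ai-semiring $S$ carried by $\{1,3,4\}$, which both explains the placement of this result in the present section and makes Lemma~\ref{lem001} available; I will nonetheless argue directly with substitutions. So let $\bu\approx\bu+\bq$ be a nontrivial identity of $S_{(4,474)}$, with $\bu=\bu_1+\cdots+\bu_n$ and $\bu_i,\bq\in X^+$. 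Since $N_2$ is isomorphic to the subalgebra $\{1,3\}$ (constant multiplication onto the additive identity $3$), it satisfies $\bu\approx\bu+\bq$, and therefore $\ell(\bq)\ge 2$.

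The crux is to show that $D_\bq(\bu)$ contains a word of length at least $2$. Suppose not, so that every $\bu_i$ with $c(\bu_i)\subseteq c(\bq)$ is a single variable. Consider the substitution $\varphi$ with $\varphi(x)=4$ for $x\in c(\bq)$ and $\varphi(x)=2$ otherwise. Because $\ell(\bq)\ge 2$ and $4\cdot 4=3$, we get $\varphi(\bq)=3$. Each $\bu_i$ having a variable outside $c(\bq)$ is sent to $2$ (as $2$ is absorbing), while each $\bu_i$ with $c(\bu_i)\subseteq c(\bq)$ is, by assumption, a single variable of $c(\bq)$ and so is sent to $4$. Thus $\varphi(\bu)\in\{2,4\}$, whereas $\varphi(\bu+\bq)=\varphi(\bu)+3\in\{3,1\}$, a contradiction. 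Consequently there is a summand $\bu_i$ with $c(\bu_i)\subseteq c(\bq)$ and $\ell(\bu_i)\ge 2$.

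It remains to derive the identity. Using commutativity (\ref{47401}) together with (\ref{47402}), any word of length at least $2$ whose content is $c(\bq)$ collapses to the product of the distinct variables of $c(\bq)$ (or to $x^2$ when $c(\bq)=\{x\}$); since $c(\bu_i\bq)=c(\bq)$ and $\ell(\bu_i\bq)\ge 2$, this yields $\bu_i\bq\approx\bq$. Next, because $\bu_i$ is a summand of $\bu$ with $\ell(\bu_i)\ge 2$, a straightforward induction on $\ell(\bw)$ using (\ref{47403}) gives $\bu\approx\bu+\bu_i\bw$ for every word $\bw$: the base case is $\bu\approx\bu+\bu_i$, and in the inductive step one appends a single letter by (\ref{47403}) and reabsorbs the intermediate term by the inductive hypothesis. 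Taking $\bw=\bq$ we obtain $\bu\approx\bu+\bu_i\bq\approx\bu+\bq$, as required.

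I expect the only nonroutine step to be the crux in the second paragraph. Its point is precisely the arithmetic of the quasi-antichain: a product of two copies of one middle element ($4\cdot 4=3$) lands on the other middle element, which lies below the top but is incomparable with $4$, so the substitution separates the long divisors of $\bq$ from the short ones. Once a long word in $D_\bq(\bu)$ is secured, identity (\ref{47403}) does all the remaining work and the derivation is mechanical.
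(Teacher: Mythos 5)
Your proposal is correct and takes essentially the same route as the paper: $N_2\cong\{1,3\}$ forces $\ell(\bq)\ge 2$, a summand $\bu_i\in D_\bq(\bu)$ with $\ell(\bu_i)\ge 2$ is secured, and then (\ref{47403}) followed by (\ref{47401}) and (\ref{47402}) absorbs $\bq$. The only deviations are cosmetic: where the paper passes to the quotient by $\rho=\{(1,3),(3,1)\}\cup\vartriangle$, identifies $S_{(4,474)}/\rho\cong T_2^0$ and invokes Lemma \ref{lem001}, you prove the same key fact by a direct separating substitution into $S_{(4,474)}$ itself, and your letter-by-letter induction is unnecessary, since writing $\bu_i=\bp_1\bp_2$ with $\bp_1,\bp_2$ nonempty and substituting $x\mapsto\bp_1$, $y\mapsto\bp_2$, $z\mapsto\bq$ in (\ref{47403}) yields $\bu_i\approx\bu_i+\bu_i\bq$ in one step.
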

\begin{proof}
It is easy to verify that $S_{(4, 474)}$ satisfies the identities (\ref{47401})--(\ref{47403}).
In the remainder we need only prove that every ai-semiring identity of $S_{(4, 474)}$
can be derived by (\ref{47401})--(\ref{47403}) and the identities defining $\mathbf{AI}$.
Let $\bu \approx \bu+\bq$ be such a nontrivial identity, where $\bu=\bu_1+\bu_2+\cdots+\bu_n$
and $\bu_i, \bq\in X^+$, $1 \leq i \leq n$.
It is easy to see that $N_2$ is isomorphic to $\{1, 3\}$ and so $N_2$ satisfies  $\bu \approx \bu+\bq$.
This implies that $\ell(\bq)\geq 2$.
On the other hand, we have that there exists a congruence $\rho=\{(1,3),(3,1)\}\cup \vartriangle$
 such that $S_{(4, 474)}/\rho$ is isomorphic to $T_2^0$ and so $T_2^0$ satisfies $\bu \approx \bu+\bq$.
By Lemma \ref{lem001} there exists $\bu_i \in \bu$ such that
$\ell(\bu_i)\geq 2$ and $c(\bu_i) \subseteq c(\bq)$. So we have
\[
\bu \approx \bu+\bu_i \stackrel{(\ref{47403})}\approx \bu+\bu_i+\bu_i\bq \stackrel{(\ref{47401}), (\ref{47402})}\approx \bu+\bu_i+\bq.
\]
This derives the identity $\bu\approx\bu+\bq$.
\end{proof}

\begin{remark}
It is a routine matter to verify that $S_{(4, 474)}$ is isomorphic to a subdirect product of $T_2^0$ and $S_{34}$.
So $\mathsf{V}(S_{(4, 474)}) = \mathsf{V}(T_2^0, S_{34})$.
On the other hand, it is easy
to see that both $D_2$ and $N_2$ can be embedded into $S_{34}$.
Also, $S_{34}$ satisfies an equational basis of $\mathsf{V}(D_2, N_2)$ that can be found in \cite{sr}.
It follows that $\mathsf{V}(S_{34})=\mathsf{V}(D_2, N_2)$ and so $\mathsf{V}(S_{(4, 474)})=\mathsf{V}(T_2^0, D_2, N_2)$.
Since $D_2$ can be embedded into $T_2^0$, we therefore have
\[
\mathsf{V}(S_{(4, 474)})=\mathsf{V}(T_2^0, N_2).
\]
\end{remark}
\begin{cor}
The ai-semiring $S_{(4, 441)}$ is finitely based.
\end{cor}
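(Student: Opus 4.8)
The plan is to reduce the statement to Proposition~\ref{pro47401} by establishing $\mathsf V(S_{(4, 441)})=\mathsf V(S_{(4, 474)})$; since the latter variety is finitely based, so is $S_{(4, 441)}$. This follows the template used for the other corollaries in this section.

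First I would verify the containment $\mathsf V(S_{(4, 441)})\subseteq\mathsf V(S_{(4, 474)})$ by checking directly from the Cayley tables that $S_{(4, 441)}$ satisfies the defining identities (\ref{47401})--(\ref{47403}) of $\mathsf V(S_{(4, 474)})$. Commutativity (\ref{47401}) is clear, and $x^2y\approx xy$ (\ref{47402}) holds because the rows of $3$ and $4$ duplicate those of $1$ and $2$, so $x^2$ and $x$ induce the same left multiplication; for (\ref{47403}) one notes that every product lies in $\{1,2\}$, that $xyz=2$ whenever $xy=2$, and that $1$ is the additive top, so $xy+xyz=xy$ in all cases.

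For the reverse containment I would exhibit an embedding $\varphi\colon S_{(4, 474)}\hookrightarrow S_{(4, 441)}^2$, which gives $S_{(4, 474)}\in\mathsf V(S_{(4, 441)})$. The candidate is
\[
\varphi\colon\ 1\mapsto(1,4),\quad 2\mapsto(2,2),\quad 3\mapsto(1,2),\quad 4\mapsto(3,4),
\]
whose two coordinate maps are the homomorphisms $S_{(4, 474)}\to S_{(4, 441)}$ sending $1,2,3,4$ to $1,2,1,3$ and to $4,2,2,4$ respectively. Since the four image pairs are distinct, $\varphi$ is injective, and being a product of homomorphisms it respects both operations.

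The main obstacle is producing these coordinate homomorphisms. In $S_{(4, 474)}$ the elements $1,3,4$ are multiplicatively indistinguishable---any product of two of them equals $3$ and $2$ is absorbing---so no single homomorphism into $S_{(4, 441)}$ can separate them, and one must blend two maps that resolve the additive quasi-antichain in complementary ways. The constraint that disciplines the search is that $h(3)=h(1)^2$ must be a multiplicative square in $S_{(4, 441)}$, hence $h(3)\in\{1,2\}$, which together with the requirement that $h$ preserve the top, the bottom, and the relation $3+4=1$ pins down essentially the two maps above. Once one confirms that both respect the quasi-antichain addition and the multiplication, the equality of varieties and the finite basis property follow exactly as in the preceding corollaries.
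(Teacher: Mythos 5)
Your proposal is correct and takes essentially the same approach as the paper: the paper's proof likewise checks that $S_{(4, 441)}$ satisfies the identities (\ref{47401})--(\ref{47403}) and that $S_{(4, 474)}$ embeds into the direct product of two copies of $S_{(4, 441)}$, concluding $\mathsf{V}(S_{(4, 441)})=\mathsf{V}(S_{(4, 474)})$ and then invoking Proposition \ref{pro47401}. Your only addition is to make the embedding explicit via the coordinate homomorphisms $1,2,3,4\mapsto 1,2,1,3$ and $1,2,3,4\mapsto 4,2,2,4$, both of which check out against the Cayley tables.
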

\begin{proof}
It is easy to verify that $S_{(4, 441)}$ satisfies the identities (\ref{47401})--(\ref{47403})
and that $S_{(4, 474)}$ can be embedded into the direct product of two copies of $S_{(4, 441)}$.
So $\mathsf{V}(S_{(4, 441)})=\mathsf{V}(S_{(4, 474)})$.
By Proposition $\ref{pro47401}$ we immediately deduce that $S_{(4, 441)}$ is finitely based.
\end{proof}

\begin{pro}
$\mathsf{V}(S_{(4, 431)})$ is the ai-semiring variety defined by the identities
\begin{align}
x^2y & \approx xy; \label{43101}\\
xy & \approx yx; \label{43103}\\
x^2 &\approx x^2+x; \label{43104}\\
x+yz & \approx x+yz+xyz. \label{43105}
\end{align}
\end{pro}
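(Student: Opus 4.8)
The plan is first to check, by direct computation on the Cayley tables, that $S_{(4,431)}$ satisfies (\ref{43101})--(\ref{43105}); the only delicate point is (\ref{43105}), but since $3$ never occurs as a product in $S_{(4,431)}$, every value of $bc$ lies in $\{1,2,4\}$, and one checks $abc\le a+bc$ in each of these three cases. For the converse I would take a nontrivial identity $\bu\approx\bu+\bq$ of $S_{(4,431)}$ with $\bu=\bu_1+\cdots+\bu_n$ and, using (\ref{43103}), assume every $\bu_i$ and $\bq$ is a commutative word. The structural input is that $S_{(4,431)}$ is isomorphic to a subdirect product of $M_2^0$ and $T_2^0$: the relations $\{(1,3),(3,1)\}\cup\vartriangle$ and $\{(1,4),(4,1)\}\cup\vartriangle$ are congruences whose quotients are $M_2^0$ and $T_2^0$ respectively, and their intersection is the diagonal. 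Hence both $M_2^0$ and $T_2^0$ satisfy $\bu\approx\bu+\bq$, so Lemma \ref{lem001} applies to each factor. Writing $D=D_\bq(\bu)$, the $M_2^0$ factor gives $D\neq\emptyset$ and $M_2\models D\approx D+\bq$, which by \cite[Lemma 1.1]{sr} means $c(\bq)\subseteq c(D)$ (since $M_2$ identifies a term with the join of its variables); as $c(\bu_i)\subseteq c(\bq)$ for every $\bu_i\in D$, this forces $c(D)=c(\bq)$. The $T_2^0$ factor, via Lemma \ref{lem001} and \cite[Lemma 1.1]{sr}, shows that when $\ell(\bq)\ge2$ the word $\bq$ always evaluates to the top of $T_2$, so some $\bu_i\in D$ must have $\ell(\bu_i)\ge2$.

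Then I would split on $\ell(\bq)$. If $\ell(\bq)=1$, say $\bq=x$, then $c(D)=\{x\}$ forces some $\bu_i\in D$ to be a power $x^k$; nontriviality rules out $\bu_i=x$, so $k\ge2$ and $\bu_i\approx x^2$ by (\ref{43101}), whence $\bu\approx\bu+\bu_i\approx\bu+x^2\stackrel{(\ref{43104})}{\approx}\bu+x^2+x\approx\bu+\bq$. If $\ell(\bq)\ge2$, fix $\bu_i\in D$ with $\ell(\bu_i)\ge2$ and enumerate $D=\{\bu_{j_1}=\bu_i,\bu_{j_2},\dots,\bu_{j_t}\}$. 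Starting from the summand $\bu_i$ and repeatedly applying (\ref{43105}), each time with the length-$\ge2$ word accumulated so far in the role of $yz$ and the next $\bu_{j_k}$ in the role of $x$, I would derive $\bu\approx\bu+P$ where $P=\bu_{j_t}\cdots\bu_{j_2}\bu_i$ is the product of all members of $D$. Since $c(P)=c(D)=c(\bq)$ and $\ell(P)\ge2$, identities (\ref{43101}) and (\ref{43103}) reduce $P$ to the canonical form determined by $c(\bq)$ (the square-free product of $c(\bq)$ if $|c(\bq)|\ge2$, and $z^2$ if $c(\bq)=\{z\}$), which is exactly the canonical form of $\bq$; hence $P\approx\bq$ and $\bu\approx\bu+\bq$.

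The routine parts are the identity check and the subdirect decomposition. The crux, and the step I expect to require the most care, is the production argument when $\ell(\bq)\ge2$: one must verify that (\ref{43105}) can be iterated (which works because seeding with $\bu_i$ keeps the accumulated factor of length $\ge2$ at every stage) and that the content bookkeeping yields $c(P)=c(\bq)$ \emph{exactly}, so that the normal-form reduction via (\ref{43101}) and (\ref{43103}) lands on $\bq$ itself rather than on a proper subword. It is precisely the equality $c(D)=c(\bq)$ extracted from the $M_2^0$ factor that guarantees this.
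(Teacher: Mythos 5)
Your proposal is correct and takes essentially the same route as the paper's proof: the same subdirect decomposition of $S_{(4,431)}$ into $M_2^0$ and $T_2^0$, the same appeal to Lemma \ref{lem001} to extract $c(D_\bq(\bu))=c(\bq)$ together with a summand of length at least $2$ in $D_\bq(\bu)$, and the same two-case derivation ending with (\ref{43104}) when $\ell(\bq)=1$ and with adjoining the product of all members of $D_\bq(\bu)$ when $\ell(\bq)\ge 2$. The only difference is expository: where the paper cites (\ref{43105}) once to pass to $\bu_1\bu_2\cdots\bu_k$, you spell out the required iteration (seeded by the length-$\ge 2$ summand so the $yz$-slot stays legitimate) and make explicit the normal-form reduction via (\ref{43101}) and (\ref{43103}) that turns this product into $\bq$.
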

\begin{proof}
It is easy to check that $S_{(4, 431)}$ satisfies the identities (\ref{43101})--(\ref{43105}).
In the remainder we need only prove that every ai-semiring identity of $S_{(4, 431)}$
can be derived by (\ref{43101})--(\ref{43105}) and the identities defining $\mathbf{AI}$.
Let $\bu \approx \bu+\bq$ be such a nontrivial identity,
where $\bu=\bu_1+\bu_2+\cdots+\bu_n$ and $\bu_i, \bq\in X^+$, $1 \leq i \leq n$.
It is a routine matter to verify that $S_{(4, 431)}$ is isomorphic to
a subdirect product of $M_2^0$ and $T_2^0$.
So both $M_2^0$ and $T_2^0$ satisfy $\bu \approx \bu+\bq$.
By Lemma \ref{lem001} we obtain that $c(\bq)= c(D_\bq(\bu))$ and that $L_{\geq 2}(\bu)\cap D_{\bq}(\bu)$ is nonempty.
Let $D_\bq(\bu)=\bu_1+\bu_2+\cdots+\bu_k$. Then $c(\bq)=c(\bu_1\bu_2\cdots\bu_k)$.

\textbf{Case 1.} $\ell(\bq)=1$. It follows that there exists $\bu_1\in \bu$ such that $\bu_1=\bq^k$ for some $k\geq 2$.
Now we have
\[
\bu \approx \bu+\bu_1 \approx
\bu+\bq^k\stackrel{(\ref{43104})}\approx \bu+\bq^k+\bq.
\]

\textbf{Case 2.} $\ell(\bq)\geq 2$. Since $L_{\geq 2}(\bu)\cap D_{\bq}(\bu)$ is nonempty, we deduce
\begin{align*}
\bu
&\approx \bu+\bu_1+\bu_2+\cdots+\bu_k \\
&\approx  \bu+\bu_1+\bu_2+\cdots+\bu_k+\bu_1\bu_2\cdots\bu_k &&(\text{by}~(\ref{43105}))\\
&\approx  \bu+\bu_1+\bu_2+\cdots+\bu_k+\bq. &&(\text{by}~(\ref{43101}), (\ref{43103}))
\end{align*}
This derives the identity $\bu \approx \bu+\bq$.
\end{proof}

\begin{pro}\label{pro44501}
$\mathsf{V}(S_{(4, 445)})$ is the ai-semiring variety defined by the identities
\begin{align}
x^2y & \approx xy; \label{44501}\\
xyz & \approx xzy; \label{44502}\\
x^2 &\approx x^2+x; \label{44503}\\
x_1+x_2x_3 & \approx x_1+x_2x_3+x_1x_2x_3x_4. \label{44505}
\end{align}
\end{pro}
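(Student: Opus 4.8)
The plan is to follow the template of the preceding propositions. First I would check by direct computation that $S_{(4, 445)}$ satisfies (\ref{44501})--(\ref{44505}), and then show that these identities, together with those defining $\mathbf{AI}$, derive every nontrivial identity $\bu\approx\bu+\bq$ of $S_{(4, 445)}$, where $\bu=\bu_1+\cdots+\bu_n$ with $\bu_i,\bq\in X^+$. The structural input is a subdirect decomposition: one verifies that $\{(1,3),(3,1)\}\cup\vartriangle$ and $\{(1,4),(4,1)\}\cup\vartriangle$ are congruences whose quotients are $L_2^0$ and $T_2^0$ respectively, and whose intersection is trivial, so that $S_{(4, 445)}$ is a subdirect product of $L_2^0$ and $T_2^0$. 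Hence $\bu\approx\bu+\bq$ holds in both $L_2^0$ and $T_2^0$, and Lemma \ref{lem001} is the tool for turning this into combinatorial data. Applying it to $L_2^0$ yields $D_\bq(\bu)\neq\emptyset$ together with some $\bu_a\in D_\bq(\bu)$ with $h(\bu_a)=h(\bq)$ (the $L_2$-condition on heads), and applying it to $T_2^0$ yields some $\bu_b\in D_\bq(\bu)$ with $\ell(\bu_b)\ge 2$ (the $T_2$-condition detecting length, valid when $\ell(\bq)\ge 2$, and also when $\ell(\bq)=1$ by nontriviality).

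With this data I would split on $\ell(\bq)$. If $\ell(\bq)=1$, say $\bq=x$, then the length-$\ge2$ member of $D_\bq(\bu)$ is $x^k$ with $k\ge 2$; using (\ref{44501}) to collapse $x^k\approx x^2$ and then (\ref{44503}) one gets $\bu\approx\bu+x^2\approx\bu+x^2+x\approx\bu+\bq$. The substance is the case $\ell(\bq)\ge 2$. Here I would use (\ref{44505}) to manufacture a long word carrying the correct head and content: taking $x_1=\bu_a$, $x_2x_3=\bu_b$ and $x_4=\prod_{z\in c(\bq)}z$ gives $\bu\approx\bu+\bu_a\bu_b\bigl(\prod_{z\in c(\bq)}z\bigr)$, and the new summand $W$ has $h(W)=h(\bq)$, $c(W)=c(\bq)$ and $\ell(W)\ge 2$. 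Since in $S_{(4, 445)}$ a word of length $\ge 2$ is determined up to equivalence by its head and content, $W\approx\bq$ holds in the variety; the task is to \emph{derive} $\bu\approx\bu+W\approx\bu+\bq$.

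The hard part is exactly this last reduction, and it is where non-commutativity bites: (\ref{44502}) only permutes the letters after the head and (\ref{44501}) only collapses repetitions of the head, so together they normalise the head multiplicity and the order of the tail but \emph{cannot} by themselves lower the multiplicity of a tail letter (for instance $xy^2\approx xy$ is valid but is not a consequence of (\ref{44501})--(\ref{44502}) alone). The device that repairs this is (\ref{44503}) applied to squares. Concretely I would first record the enlargement $V\approx V+Vz$ for $z\in c(V)$, obtained from $V\approx V+V$ and (\ref{44505}) followed by $V^2z\approx Vz$ via (\ref{44501}); iterating it raises tail multiplicities freely. Dually, to \emph{remove} one copy of a repeated tail letter $z$ from a summand $V$, passing to $V'$, I would build $(V')^2$ from $V$ by enlargement (legitimate since the multiset of $(V')^2$ dominates that of $V$), apply (\ref{44503}) in the form $(V')^2\approx(V')^2+V'$ to produce $V'$, and then reabsorb $(V')^2$ by another enlargement from $V'$. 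This ``shrink step'' is the crux; granting it, one enlarges $W$ until every letter occurs at least as often as in $\bq$ and then shrinks each letter down to its multiplicity in $\bq$, arriving at a summand with the same head and multiset as $\bq$, hence equal to $\bq$ by (\ref{44502}). This gives $\bu\approx\bu+\bq$ and completes the derivation.
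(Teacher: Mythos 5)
Your proposal follows the paper's own skeleton step for step: the same subdirect decomposition of $S_{(4,445)}$ into $L_2^0$ and $T_2^0$ via the two congruences you name, the same use of Lemma \ref{lem001} to extract $\bu_a,\bu_b\in D_\bq(\bu)$ with $h(\bu_a)=h(\bq)$ and $\ell(\bu_b)\ge 2$, the same disposal of the case $\ell(\bq)=1$ through $\bu_a=\bq^k$, and the same application of (\ref{44505}) with $\bu_b$ split as $x_2x_3$. The divergence is only in the endgame, and the reason you give for diverging is mistaken: $xy^2\approx xy$ \emph{is} a consequence of (\ref{44501}) and (\ref{44502}) alone. Indeed
\[
xy\stackrel{(\ref{44501})}{\approx}x^2y\stackrel{(\ref{44502})}{\approx}xyx\stackrel{(\ref{44501})}{\approx}(xy)^2x=xyxyx\stackrel{(\ref{44502})}{\approx}x^3y^2\stackrel{(\ref{44501})}{\approx}xy^2,
\]
where the second step is $xyz\approx xzy$ with $y\mapsto x$, $z\mapsto y$, and the third is (\ref{44501}) instantiated with $x\mapsto xy$, $y\mapsto x$. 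Because (\ref{44501}) may be applied to word substitutions, such prefix-squaring moves \emph{can} lower tail multiplicities, and iterating them shows that modulo (\ref{44501})--(\ref{44502}) a word of length at least $2$ is determined by its head and its content alone. That is exactly the one-stroke step in the paper's proof: it substitutes $x_4\mapsto\bq^2$ (rather than your $\prod_{z\in c(\bq)}z$) into (\ref{44505}) to get $\bu\approx\bu+\bu_a\bu_b\bq^2$, collapses $h(\bq)s(\bu_a)\bu_b\bq^2\approx\bq^2$ by (\ref{44501})--(\ref{44502}), and finishes with $\bq^2\approx\bq^2+\bq$ from (\ref{44503}). So the entire multiset-surgery apparatus of your last paragraph is unnecessary.

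That apparatus is nonetheless essentially sound, with one repairable slip: your shrink step is licensed only for a letter repeated \emph{in the tail}, but the final pass must sometimes delete the last tail occurrence of the head letter (already for $\bq=xy$ your $W$ carries an extra tail copy of $x$, while the target tail has none). Your own justification --- that the multiset of $(V')^2$ dominates that of $V$ --- does cover this case, since the head supplies the second occurrence of that letter; alternatively the leftover doubled head is erased by a single application of (\ref{44501}). With that reading, your enlargement and shrink derivations check out (the enlargement correctly exploits that $x_4$ in (\ref{44505}) is a fresh variable, and splitting $V$ as $x_2x_3$ is legitimate since $\ell(V)\ge 2$), and your argument reaches the same conclusion as the paper --- just by a much longer road, paid for by repeated uses of (\ref{44503}) and (\ref{44505}) where the paper needs only (\ref{44501}) and (\ref{44502}).
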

\begin{proof}
It is easy to verify that $S_{(4, 445)}$ satisfies the identities (\ref{44501})--(\ref{44505}).
In the remainder it is enough to show that every ai-semiring identity of $S_{(4, 445)}$
can be derived by (\ref{44501})--(\ref{44505}) and the identities defining $\mathbf{AI}$.
Let $\bu \approx \bu+\bq$ be such a nontrivial identity,
where $\bu=\bu_1+\bu_2+\cdots+\bu_n$, where $\bu_i, \bq\in X^+$, $1 \leq i \leq n$.
It is a routine matter to verify that $S_{(4, 445)}$ is isomorphic to
a subdirect product of $L^0_2$ and $T^0_2$.
So both $L^0_2$ and $T^0_2$ satisfy $\bu \approx \bu+\bq$.
By Lemma \ref{lem001} there exist $\bu_i, \bu_j\in \bu$ such that
$c(\bu_i),  c(\bu_j)\subseteq c(\bq)$,
$h(\bu_i)=h(\bq)$ and $\ell(\bu_j)\geq 2$.

\textbf{Case 1.} $\ell(\bq)=1$. Then $\bu_i=\bq^k$ for some $k\geq 2$.
We can deduce
\[
\bu \approx \bu+\bu_i \approx
\bu+\bq^k\stackrel{(\ref{44503})}\approx \bu+\bq^k+\bq.
\]

\textbf{Case 2.} $\ell(\bq)\geq 2$. Then
\begin{align*}
\bu
&\approx \bu+\bu_i+\bu_j \\
&\approx  \bu+\bu_i+\bu_j+\bu_i\bu_j\bq^2&&(\text{by}~(\ref{44505}))\\
&\approx  \bu+\bu_i+\bu_j+h(\bq)s(\bu_i)\bu_j\bq^2\\
&\approx  \bu+\bu_i+\bu_j+\bq^2&&(\text{by}~(\ref{44501}), (\ref{44502}))\\
&\approx  \bu+\bu_i+\bu_j+\bq^2+\bq.&&(\text{by}~(\ref{44503}))
\end{align*}
This derives the identity $\bu \approx \bu+\bq$.
\end{proof}
\begin{cor}
The ai-semiring $S_{(4, 433)}$ is finitely based.
\begin{proof}
 It is easy to see that $S_{(4, 433)}$ and $S_{(4, 445)}$ have dual multiplications.
 By Proposition $\ref{pro44501}$ we immediately deduce that $S_{(4, 433)}$ is finitely based.
\end{proof}
\end{cor}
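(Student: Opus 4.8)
The plan is to obtain the result from Proposition \ref{pro44501} by a duality argument, so that no fresh equational deduction is required. Recall that the \emph{dual} of an ai-semiring $S$ is the algebra $S^\partial$ with the same underlying set and the same addition as $S$, but with multiplication reversed, i.e.\ $a\ast b:=b\cdot a$ in $S^\partial$. First I would confirm, by inspecting the multiplicative Cayley tables in Table \ref{tb1} (the additive reducts being the common quasi-antichain of Figure \ref{figure01}), that reversing the multiplication of $S_{(4, 445)}$ yields exactly the multiplication of $S_{(4, 433)}$; equivalently, $S_{(4, 433)}$ is isomorphic to $S_{(4, 445)}^\partial$. This is a short finite check: for each pair $(a,b)$ one verifies that the $(a,b)$-entry of $S_{(4, 433)}$ coincides with the $(b,a)$-entry of $S_{(4, 445)}$.

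Next I would invoke the standard fact that the finite basis property is preserved under duality. For a word $\bw=y_1y_2\cdots y_k\in X^+$ write $\bw^\partial:=y_k\cdots y_2y_1$ for its reversal, and extend this to ai-semiring terms by reversing each summand word and leaving the sum unchanged. For every assignment of the variables, the value of $\bu^\partial$ computed in $S^\partial$ equals the value of $\bu$ computed in $S$, since $a\ast b=b\cdot a$ reverses each product while addition is untouched. Hence an identity $\bu\approx\bv$ holds in $S$ if and only if $\bu^\partial\approx\bv^\partial$ holds in $S^\partial$. Consequently, if $\Sigma$ is an equational basis for $\mathsf V(S_{(4, 445)})$, then $\Sigma^\partial:=\{\bu^\partial\approx\bv^\partial \mid (\bu\approx\bv)\in\Sigma\}$ is an equational basis for $\mathsf V(S_{(4, 445)}^\partial)=\mathsf V(S_{(4, 433)})$, and $\Sigma^\partial$ is finite whenever $\Sigma$ is.

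Finally, Proposition \ref{pro44501} supplies the finite basis consisting of the four identities (\ref{44501})--(\ref{44505}) for $\mathsf V(S_{(4, 445)})$. Reversing every word in these identities produces a finite set $\Sigma^\partial$ of four ai-semiring identities, which by the previous paragraph axiomatizes $\mathsf V(S_{(4, 433)})$; therefore $S_{(4, 433)}$ is finitely based. There is essentially no genuine obstacle here beyond the bookkeeping of the reversal map: the only point needing care is the opening verification that the two semirings are truly duals, for which the additive structures must agree (they do, as both belong to the family $S_{(4,k)}$ carrying the single quasi-antichain addition) and only the multiplications are matched under reversal.
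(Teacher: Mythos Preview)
Your proposal is correct and follows exactly the same route as the paper's proof: observe that $S_{(4,433)}$ and $S_{(4,445)}$ have dual multiplications, then invoke Proposition~\ref{pro44501}. The paper states this in two sentences, while you spell out the standard duality argument in more detail, but the content is identical.
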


The following result, which is due to \cite[Lemma 4.1]{rlzc}, provides a solution of equational problem for $S_2$.
\begin{lem}\label{lem201}
Let $\bu\approx \bu+\bq$ be an ai-semiring identity such that
$\bu=\bu_1+\bu_2+\cdots+\bu_n$ and $\bu_i, \bq \in X^+$, $1\leq i \leq n$.
Then $\bu\approx \bu+\bq$ is satisfied by $S_2$ if and only if $\bu$ and $\bq$ satisfy one of the following conditions:
\begin{itemize}
\item[$(1)$] $\ell(\bu_i)\geq 3$ for some $\bu_i \in \bu$;

\item[$(2)$] $c(L_1(\bu))\cap c(L_2(\bu)) \neq\emptyset$;

\item[$(3)$] $\ell(\bu_i)\leq 2$ for all $\bu_i \in \bu$, $c(L_1(\bu))\cap c(L_2(\bu))=\emptyset$ and $\ell(\bq)\leq 2$.
             If $\ell(\bq)=1$, then $\bu\approx \bu+\bq$ is trivial.
             If $\ell(\bq)=2$, then $c(\bq)\subseteq c(L_2(\bu))$.
\end{itemize}
\end{lem}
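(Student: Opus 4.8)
The plan is to reduce the identity to a pointwise domination statement and then settle both implications by direct evaluation in $S_2$, using a short list of facts read off from its addition and multiplication tables. Recall that in any ai-semiring the relation $s\le t\iff s+t=t$ is the semilattice order of the additive reduct, and that a homomorphism $\varphi\colon P_f(X^+)\to S_2$ is determined by its values on the variables. Since addition is idempotent and commutative, $\bu\approx\bu+\bq$ holds in $S_2$ if and only if $\varphi(\bq)\le\varphi(\bu)=\sum_i\varphi(\bu_i)$ for every such $\varphi$; in particular it suffices for one summand $\varphi(\bu_i)$, or the whole sum, to dominate $\varphi(\bq)$. The first step is therefore to record the multiplicative behaviour of $S_2$: most importantly, that every word of length $\ge 3$ takes the greatest element of $(S_2,+)$ as its value under \emph{every} substitution (the additive top being multiplicatively absorbing), together with the handful of two-letter products and the values of one- and two-letter words. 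These are the only inputs from the tables, and they drive every case.

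For sufficiency I would treat the three conditions in turn. If $(1)$ holds, then some $\varphi(\bu_i)$ equals the top for all $\varphi$, hence $\varphi(\bu)$ is the top and dominates $\varphi(\bq)$ automatically. If $(2)$ holds, fix $x\in c(L_1(\bu))\cap c(L_2(\bu))$, so that $\bu$ has a summand $x$ and a length-two summand $w$ with $x\in c(w)$; a table computation shows $\varphi(x)+\varphi(w)$ is always the top, again forcing $\varphi(\bu)$ to be the top. In case $(3)$ there is nothing to collapse and one argues directly: when $\ell(\bq)=1$ the identity is by hypothesis trivial, so $\bq$ is literally one of the $\bu_i$ and domination is immediate; when $\ell(\bq)=2$, the hypothesis $c(\bq)\subseteq c(L_2(\bu))$ says each letter of $\bq$ occurs in some length-two summand of $\bu$, and one checks on the tables that these summands already dominate $\varphi(\bq)$.

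For necessity I would argue by contraposition. Assume none of $(1)$--$(3)$ holds; then every $\bu_i$ has length $\le 2$, $c(L_1(\bu))\cap c(L_2(\bu))=\emptyset$, and one of the following fails: $\ell(\bq)\le 2$, triviality when $\ell(\bq)=1$, or $c(\bq)\subseteq c(L_2(\bu))$ when $\ell(\bq)=2$. In each subcase I would define a separating $\varphi$ by sorting the variables according to whether they lie in $c(L_2(\bu))$, in $c(L_1(\bu))$, or only in $c(\bq)$, and assigning to each class a value of $S_2$ so that every summand of $\bu$ stays strictly below the top while $\varphi(\bq)$ is pushed to a value that is not $\le\varphi(\bu)$. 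The disjointness $c(L_1(\bu))\cap c(L_2(\bu))=\emptyset$ is precisely what makes these three assignments mutually consistent.

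The main obstacle is the necessity direction, and within it the case $\ell(\bq)=2$ with $c(\bq)\not\subseteq c(L_2(\bu))$: there I must give the ``missing'' letter of $\bq$ a value that raises $\varphi(\bq)$ above $\varphi(\bu)$ while keeping every length-one and every length-two summand of $\bu$ below the top, and verifying that such a choice genuinely exists in the three-element algebra is exactly where the explicit multiplication table of $S_2$ is indispensable. The remaining failing subcases ($\ell(\bq)\ge 3$, and $\ell(\bq)=1$ nontrivial) are dispatched by the same bookkeeping with simpler assignments.
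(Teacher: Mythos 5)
There is no in-paper proof to compare against here: the paper imports this lemma wholesale from Part I (\cite[Lemma 4.1]{rlzc}). Judged on its own, your plan is correct and is in fact the same method the paper uses for the sibling lemmas it does prove (Lemmas \ref{lem5701}, \ref{lem5301}, \ref{lem5801}, \ref{lem5901}, \ref{lem6001}, \ref{lem4401}, \ref{lem4601}): reduce $\bu\approx\bu+\bq$ to the pointwise order condition $\varphi(\bq)\le\varphi(\bu)$, read off a few evaluation facts from the tables, and build explicit separating substitutions for necessity. Your table facts are right once $S_2$ is pinned down; from the proof of Proposition \ref{pro43001} one has $S_2^0\cong S_{(4,430)}$, so $S_2$ lives on $\{1,3,4\}$ with additive top $1$, incomparable middles $3,4$ (so $3+4=1$), and multiplication in which $4\cdot 4=3$ and every other product is $1$. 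In particular every word of length $\ge 3$ evaluates to the top under every substitution --- though note this needs slightly more than your parenthetical ``the top is absorbing'': one also needs $3\cdot z=1$ for all $z$, since a length-two prefix may evaluate to $3$ rather than $1$. Your sufficiency checks for (1)--(3) then go through exactly as sketched; e.g.\ for (3) with $\ell(\bq)=2$, if both letters of $\bq$ go to $4$ then $\varphi(\bq)=3$ and a length-two summand of $\bu$ containing a letter of $\bq$ evaluates to $3$ or $1$, while otherwise $\varphi(\bq)=1$ and a length-two summand containing a letter not sent to $4$ evaluates to $1$; either way a single summand dominates.

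One point in your necessity direction needs tightening when executed, and it is exactly the quasi-antichain trap: it is \emph{not} enough that ``every summand of $\bu$ stays strictly below the top,'' because $3+4=1$, so two summands taking distinct middle values already push $\varphi(\bu)$ to the top. Your separating substitutions must force all summands onto one and the same middle value. Your class-sorting does achieve this, but only with the specific choice $c(L_2(\bu))\mapsto 4$ and $c(L_1(\bu))\mapsto 3$ (consistent precisely because $c(L_1(\bu))\cap c(L_2(\bu))=\emptyset$, as you observe): then every length-two summand evaluates to $4\cdot4=3$ and every length-one summand to $3$, so $\varphi(\bu)=3$. Sending any remaining letter of $\bq$ to $3$ gives $\varphi(\bq)=1\not\le 3$ when $\ell(\bq)\ge 3$, and likewise when $\ell(\bq)=2$ with a letter outside $c(L_2(\bu))$ (your ``missing letter'' case, where the right value is $3$, making $\varphi(\bq)$ the top rather than pushing it to a fresh middle value); when $\ell(\bq)=1$ and the identity is nontrivial, $\bq=x$ cannot lie in $c(L_1(\bu))$ (length-one summands \emph{are} single variables), so $x\mapsto 4$ is available and gives $\varphi(\bq)=4\not\le 3$. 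With that clarification the case analysis is complete, and your proposal constitutes a correct self-contained proof of the cited lemma.
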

\begin{pro}\label{pro43001}
$\mathsf{V}(S_{(4, 430)})$ is the ai-semiring variety defined by the identities
\begin{align}
xy& \approx yx; \label{43001}\\
x^4& \approx x^3; \label{43002}\\
x^3 &\approx x^2+x; \label{43003}\\
x+xy& \approx x+x^2y; \label{43004}\\
x^2+y^2&\approx x^2+y^2+xy; \label{43005} \\
x_1x_2x_3 & \approx x_1x_2x_3+x_1x_2x_3x_4. \label{43006}
\end{align}
\end{pro}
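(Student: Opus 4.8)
The plan is to follow the template of the preceding propositions: first verify by direct computation that $S_{(4,430)}$ satisfies (\ref{43001})--(\ref{43006}), and then show that every nontrivial identity $\bu\approx\bu+\bq$ of $S_{(4,430)}$ (with $\bu=\bu_1+\cdots+\bu_n$ and $\bu_i,\bq\in X^+$) is derivable from these together with the laws of $\mathbf{AI}$. The structural input comes from recognizing that $S_{(4,430)}\cong S_2^0$: the element $2$ is an additive identity and a multiplicative zero, while $\{1,3,4\}$ is a subsemiring on which every product of length $\ge 3$ collapses to the additively top, multiplicatively absorbing element $1$, so that $\{1,3,4\}\cong S_2$ and its equational theory is governed by Lemma \ref{lem201}. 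Using commutativity (\ref{43001}) we may treat all words as commutative throughout.

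The reduction then proceeds in two stages. First, Lemma \ref{lem001} applied to $S_2^0$ yields $\bv:=D_\bq(\bu)\neq\emptyset$ and $S_2\models\bv\approx\bv+\bq$; second, Lemma \ref{lem201} forces one of its three conditions on $(\bv,\bq)$. The technical heart I would isolate as a single derivable fact: \emph{if $\bw$ is a monomial with $\ell(\bw)\ge 3$ and $c(\bw)\subseteq c(\bq)$, then $\bw\approx\bw+\bq$ is derivable.} To prove this I would first use (\ref{43006}) repeatedly to append letters to $\bw$, producing a summand $\bw'$ with $c(\bw')=c(\bq)$, $\ell(\bw')\ge 3$ and $\bq\mid\bw'$, together with $\bw\approx\bw+\bw'$; then, peeling one letter at a time, I would lower the exponent of each variable $z$ down to its exponent in $\bq$ by appending one more $z$ to reach exponent $\ge 3$ and splitting the resulting cube via $z^3\approx z^2+z$ (\ref{43003}) and distributivity. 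The bound $x^4\approx x^3$ (\ref{43002}) keeps all exponents in $\{1,2,3\}$, and the hypothesis $c(\bw)\subseteq c(\bq)$ guarantees that no variable ever has to be deleted entirely (which would not be a valid identity of $S_2^0$).

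With this fact in hand the case analysis from Lemma \ref{lem201} is short. Under condition $(1)$ there is $\bw\in\bv$ with $\ell(\bw)\ge 3$ and $c(\bw)\subseteq c(\bq)$, so the fact gives $\bu\approx\bu+\bw\approx\bu+\bw+\bq$ immediately. Under condition $(2)$ some variable $x$ lies in both $c(L_1(\bv))$ and $c(L_2(\bv))$, so $\bv$ contains $x$ together with $x^2$ or with some $xz$; applying $x^2+x\approx x^3$ (\ref{43003}) in the first case or $x+xz\approx x+x^2z$ (\ref{43004}) in the second manufactures a length-$3$ summand whose content lies in $c(\bv)\subseteq c(\bq)$, reducing to condition $(1)$. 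Under condition $(3)$ nontriviality forces $\bq=xy$ with $x\neq y$ and $x^2,y^2\in\bv$, whence (\ref{43005}) delivers $xy$ directly; the $\ell(\bq)=1$ subcase of $(3)$ is trivial and is excluded.

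I expect the main obstacle to be the isolated derivable fact, that is, the peeling induction: the axioms (\ref{43002})--(\ref{43006}) only ever increase or split exponents, so one must check carefully that every target $\bq$ with $c(\bw)\subseteq c(\bq)$ can be reached from $\bw$ without ever removing a variable, and that the auxiliary summands generated by the splits are reabsorbed using the semilattice order of $+$. Once that lemma is in place, matching it against the three mutually exhaustive conditions of Lemma \ref{lem201} is routine, and the verification that $S_{(4,430)}$ satisfies the listed identities is a finite check on the multiplication table.
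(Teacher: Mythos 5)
Your proposal is correct and takes essentially the same route as the paper: the isomorphism $S_{(4,430)}\cong S_2^0$, Lemma \ref{lem001}, and the three-case analysis of Lemma \ref{lem201} applied to $D_\bq(\bu)$, with (\ref{43004}) (or (\ref{43003})) manufacturing a length-$3$ summand in the second case and (\ref{43005}) finishing the third. The one step you flag as the main obstacle --- the variable-by-variable peeling induction --- is avoided in the paper by substituting the \emph{whole word} $\bq$ into (\ref{43002}) and (\ref{43003}): from $\ell(\bu_j)\ge 3$ and $c(\bu_j)\subseteq c(\bq)$ one derives $\bu\approx\bu+\bu_j\approx\bu+\bu_j+\bu_j\bq^3\approx\bu+\bu_j+\bq^3\approx\bu+\bu_j+\bq^2+\bq$ in one line, since $\bu_j\bq^3$ and $\bq^3$ have the same commutative normal form modulo $x^4\approx x^3$.
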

\begin{proof}
It is easy to check that $S_{(4, 430)}$ satisfies the identities (\ref{43001})--(\ref{43006}).
In the remainder it is enough to prove that every ai-semiring identity of $S_{(4, 430)}$
can be derived by (\ref{43001})--(\ref{43006}) and the identities defining $\mathbf{AI}$.
Let $\bu \approx \bu+\bq$ be such a nontrivial identity,
where $\bu=\bu_1+\bu_2+\cdots+\bu_n$ and $\bu_i, \bq \in X_c^+$, $1 \leq i \leq n$.
Since $S_2^0$ is isomorphic to $S_{(4, 430)}$, it follows that $S_2^0$ satisfies $\bu \approx \bu+\bq$
and so by Lemma \ref{lem001} $S_2$ satisfies $D_\bq(\bu) \approx D_\bq(\bu)+\bq$.
By Lemma \ref{lem201} we only need to consider the following three cases.

\textbf{Case 1.} $\ell(\bu_j)\geq 3$ for some $\bu_j\in D_\bq(\bu)$. Then
\[
\bu \approx \bu+\bu_j \stackrel{(\ref{43006})}\approx \bu+\bu_j+\bu_j\bq^3 \stackrel{(\ref{43001}), (\ref{43002})}{\approx} \bu+\bu_j+\bq^3\stackrel{(\ref{43003})}\approx \bu+\bq^2+\bq.
\]

\textbf{Case 2.} $c(L_1(D_\bq(\bu)))\cap c(L_2(D_\bq(\bu)))\neq\emptyset$.
Take $x$ in $c(L_1(D_\bq(\bu)))\cap c(L_2(D_\bq(\bu)))$.
Then $xy\in L_2(D_\bq(\bu))$ for some $y\in X$.
Now we have
\[
\bu \stackrel{(\ref{43001})}\approx \bu+x+xy \stackrel{(\ref{43004})}\approx \bu+x+x^2y.
\]
The remaining steps are similar to Case 1.

\textbf{Case 3.} $\ell(\bu_i)\leq 2$ for all $\bu_i \in D_\bq(\bu)$,
$c(L_1(D_\bq(\bu)))\cap c(L_2(D_\bq(\bu)))=\emptyset$ and $\ell(\bq)=2$.
Then
$c(\bq)\subseteq c(L_2(D_\bq(\bu)))$.
This implies that $L_1(D_\bq(\bu))$ is empty and so $D_\bq(\bu)=L_2(D_\bq(\bu))$.
Let $\bq=xy$. Then $D_\bq(\bu)=x^2+y^2$
and so
\[
\bu\approx \bu+x^2+y^2 \stackrel{(\ref{43005})}{\approx} \bu+x^2+y^2+xy \approx \bu+x^2+y^2+\bq.
\]
This derives the identity $\bu\approx \bu+\bq$.
\end{proof}

The following result can be found in \cite[Lemma 5.1]{rlzc}.
\begin{lem}\label{lem401}
Let $\bu\approx \bu+\bq$ be a nontrivial ai-semiring identity such that
$\bu=\bu_1+\bu_2+\cdots+\bu_n$ and $\bu_i, \bq \in X^+$, $1\leq i \leq n$.
Then $\bu\approx \bu+\bq$ is satisfied by $S_4$ if and only if $\bu$ and $\bq$ satisfy the following conditions:
\begin{itemize}
\item[$(1)$] $c(\bq)\subseteq  c(\bu)$;

\item[$(2)$] $\ell(\bu_i)\geq 2$ for some $\bu_i \in \bu$;

\item[$(3)$] If $\bu$ satisfies the property {\rm (T)}:
\[
(\forall \bu_i, \bu_j \in \bu) ~ m(t(\bu_i), \bu_j)\leq 1; m(t(\bu_i), \bu_j)= 1\Rightarrow t(\bu_i)=t(\bu_j),
\]
then $\bu+\bq$ also satisfies the property {\rm (T)} and so
\[
(\forall \bu_i \in \bu) ~ m(t(\bu_i), \bq)\leq 1; m(t(\bu_i), \bq)= 1\Rightarrow t(\bu_i)=t(\bq);
\]
\[
m(t(\bq), \bq)= 1; m(t(\bq), \bu_i)\leq 1; m(t(\bq), \bu_i)= 1\Rightarrow t(\bq)=t(\bu_i).
\]
\end{itemize}
\end{lem}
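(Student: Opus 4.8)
The plan is to reduce the identity to an order condition in the additive semilattice of $S_4$ and then argue the two implications separately. Writing $a \le b$ for $a+b=b$ in $S_4$, the identity $\bu \approx \bu+\bq$ holds in $S_4$ if and only if $\varphi(\bq) \le \varphi(\bu)=\sum_i \varphi(\bu_i)$ for every substitution $\varphi\colon P_f(X^+)\to S_4$. Each of the three conditions will be extracted, or verified, by evaluating both sides under carefully chosen $\varphi$, exactly in the style used for $S_{57}$, $S_{58}$, $S_{59}$ and $S_{60}$ in the preceding sections (see Lemma \ref{lem5701} and Lemma \ref{lem5301}); the genuinely new ingredient here is the tail-sensitive property {\rm (T)}.

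For necessity I would first recover the two coarse conditions from small subsemirings of $S_4$. Condition (2) (some $\bu_i$ with $\ell(\bu_i)\ge 2$) comes from the $T_2$-type behaviour inside $S_4$, precisely as the length clause is obtained in Lemma \ref{lem5701}: a purely linear $\bu$ cannot dominate $\bq$ under the associated substitution, forcing a word of length at least two. Condition (1) ($c(\bq)\subseteq c(\bu)$) comes from the content-detecting subsemiring of $M_2$ type, via the substitution that separates a putative $x\in c(\bq)\comp c(\bu)$ from the remaining letters. For condition (3) I would assume that $\bu$ satisfies {\rm (T)} and then design substitutions into $S_4$ that send the tail $t(\bq)$ and the tails $t(\bu_i)$ to a distinguished element while collapsing the other letters; because {\rm (T)} controls exactly how often each tail letter recurs, $\varphi(\bu)$ lands on a specific non-top value, and any failure of the displayed multiplicity or tail-equality clauses would push $\varphi(\bq)$ strictly above $\varphi(\bu)$, contradicting the identity.

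For sufficiency I fix an arbitrary $\varphi\colon P_f(X^+)\to S_4$ and prove $\varphi(\bq)\le\varphi(\bu)$, splitting on whether $\bu$ satisfies {\rm (T)}. If {\rm (T)} fails, then some tail letter either recurs within a single word of $\bu$ or occurs off the tail of a word; reading off the multiplication of $S_4$, such a word is rich enough that, together with the content bound (1) and the length bound (2), its value dominates $\varphi(\bq)$ for every $\varphi$. If {\rm (T)} holds, then (1) and (2) supply $c(\bq)\subseteq c(\bu)$ and a word $\bu_i$ with $\ell(\bu_i)\ge 2$, while the tail clauses of (3) align $t(\bq)$ with an admissible tail of $\bu$; evaluating the two sides letter by letter then yields the inequality in each residual subcase.

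The main obstacle is condition (3) together with the bookkeeping around property {\rm (T)}. The real work is to pin down exactly which substitutions drive $\varphi(\bu)$ to its critical non-top value and to show that this algebraic event is captured, with no slack, by the combinatorial statement that {\rm (T)} is inherited by $\bu+\bq$ along with the two displayed multiplicity conditions on $t(\bq)$. Establishing this equivalence in both directions, rather than the comparatively routine content and length conditions, is where the argument must be most delicate.
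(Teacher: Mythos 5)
You should know at the outset that the paper contains no proof of Lemma~\ref{lem401} at all: it is imported verbatim from \cite[Lemma 5.1]{rlzc}, so your proposal can only be judged on its own merits, and on those merits it is a programme rather than a proof. The equivalence in item (3) --- which you yourself single out as ``the real work'' --- is nowhere established, and you never even fix the algebra $S_4$. From the paper's own use of the lemma (Proposition~\ref{pro43401} identifies $S_{(4,434)}$ with $S_4^0$) one reads off $S_4=\{1,3,4\}$: the element $1$ is the additive top with $3+4=1$ and $3,4$ incomparable, $4$ is a left identity, and every product whose left factor is $1$ or $3$ equals $1$. Everything rests on the resulting evaluation rule: $\varphi(\bw)=\varphi(t(\bw))$ if every non-tail letter of $\bw$ is sent to $4$, and $\varphi(\bw)=1$ otherwise. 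Consequently $\varphi(\bu)=3$ is achievable precisely when all tails $t(\bu_i)$ can simultaneously be sent to $3$ while all non-tail letters go to $4$, i.e.\ precisely when $\bu$ satisfies (T); and condition (3) is exactly the combinatorial statement that every such $\varphi$ also gives $\varphi(\bq)=3$. Without this rule stated and exploited, neither direction of (3) is proved by your text; your necessity sketch gestures at the right substitutions (tails to a ``distinguished element'', the rest collapsed) but never verifies that the displayed multiplicity clauses are equivalent to, rather than merely suggested by, the failure of some evaluation.

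There is also a concrete error in your sufficiency argument for the case that (T) fails. You claim a word witnessing the failure of (T) ``is rich enough that \dots its value dominates $\varphi(\bq)$ for every $\varphi$.'' This is false: under the substitution sending every letter to $4$ the witnessing word evaluates to $4$, and $4$ does not dominate $3$, since $3$ and $4$ are incomparable minimal elements with $3+4=1$. The correct mechanism is not domination by a single summand but unreachability of the critical value: failure of (T) makes $\varphi(\bu)=3$ impossible for every $\varphi$, so either $\varphi(\bu)=1$ (the top, nothing to check) or $\varphi(\bu)=4$, in which case every letter of $c(\bu)$ maps to $4$ and condition (1) forces $\varphi(\bq)=4$. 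Note that (2) then plays no role in sufficiency; it is needed only for necessity, and there its derivation requires the nontriviality hypothesis (if every $\bu_i$ has length one, (T) holds automatically and conditions (1) and (3) force $\bq$ to coincide with some $\bu_i$), a point your appeal to ``$T_2$-type behaviour'' passes over. In sum, your architecture --- separating substitutions for necessity, case analysis over evaluations for sufficiency --- is the standard and correct one, matching how the analogous Lemmas~\ref{lem5701}, \ref{lem5301}, \ref{lem5801} and \ref{lem6001} are proved in the paper, but the heart of the statement is left unproven and the one mechanism you do commit to would fail.
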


\begin{pro}\label{pro43401}
$\mathsf{V}(S_{(4, 434)})$ is the ai-semiring variety defined by the identities
\begin{align}
x^2y& \approx xy; \label{43401}\\
xyz & \approx yxz;  \label{43402}\\
(xy)^2&\approx x^2y^2; \label{43400} \\
x^2y^2&\approx y^2x^2; \label{43403} \\
x^2 &\approx x^2+x; \label{43404}\\
x+y^2 &\approx x+y^2+{(xy)}^2; \label{43405}\\
xz+yz &\approx xz+yz+xyz; \label{43406}\\
x_1x_2+x_2x_3+x_4 &\approx x_1x_2+x_2x_3+x_4+x_1x_2x_3x_4, \label{43408}
\end{align}
where $x_1$ may be empty in $(\ref{43408})$.
\end{pro}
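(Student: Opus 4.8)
The plan is to recognize $S_{(4,434)}$ as an algebra of the form $S^0$ and to reduce the whole problem to the known word problem for $S_4$. The decisive observation is that in $S_{(4,434)}$ the element $2$ is simultaneously the additive identity (it is the least element of the quasi-antichain, so $2+a=a$ for all $a$) and a two-sided multiplicative zero (its row and column in the Cayley table are constantly $2$); deleting $2$ leaves a three-element ai-semiring on $\{1,3,4\}$, and one checks that $S_{(4,434)}\cong S_4^0$. First I would carry out the routine verification that $S_{(4,434)}$ satisfies (\ref{43401})--(\ref{43408}), a finite computation in the two tables. For the converse, by Lemma \ref{lem001} an identity $\bu\approx\bu+\bq$ holds in $S_{(4,434)}$ exactly when $D_\bq(\bu)\neq\emptyset$ and $D_\bq(\bu)\approx D_\bq(\bu)+\bq$ holds in $S_4$, and the latter is controlled by Lemma \ref{lem401}.

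So I would take a nontrivial identity $\bu\approx\bu+\bq$ of $S_{(4,434)}$ and set $D=D_\bq(\bu)$, noting that every summand of $D$ has content contained in $c(\bq)$. Lemma \ref{lem001} gives $D\approx D+\bq$ in $S_4$, and Lemma \ref{lem401} then supplies $c(\bq)\subseteq c(D)$, a summand of $D$ of length at least $2$, and---whenever $D$ enjoys property~{\rm (T)}---the tail constraints relating $t(\bq)$ and the multiplicities $m(t(\bu_i),\bq)$ to the tails occurring in $D$. Throughout, (\ref{43401})--(\ref{43403}) act as normalization tools: (\ref{43402}) permutes every letter except the tail, (\ref{43401}) collapses a repeated non-tail letter, and (\ref{43400}), (\ref{43403}) turn a product of squares into an unordered object, so that each word reduces to a canonical ``prefix-set times tail'' shape.

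The derivation is then organized by $\ell(\bq)$. If $\ell(\bq)=1$, every summand of $D$ is a power of the single variable $\bq$; the length-$\geq 2$ summand $\bq^k$ reduces to $\bq^2$ by (\ref{43401}), and (\ref{43404}) gives $\bu\approx\bu+\bq^2\approx\bu+\bq^2+\bq$. If $\bq=x^2$ the same reduction already produces $\bq$. The essential case is $\bq=xy$ with $x\neq y$, where $\bq$ cannot be obtained by merely shortening one existing summand and must be built: using the summands guaranteed by Lemma \ref{lem401}, I would assemble a longer word that collapses to $xy$---for instance $x^3y$ via (\ref{43408}) with a shared middle letter, or a word produced by (\ref{43406}) from two summands sharing the tail $y$ (after first creating the required square with (\ref{43405}))---and then reduce it to $xy$ by (\ref{43401})--(\ref{43403}).

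The step I expect to be the main obstacle is exactly this case $\bq=xy$ under property~{\rm (T)}. When $D$ satisfies {\rm (T)}, the tail conditions of Lemma \ref{lem401} tightly prescribe how $x$ and $t(\bq)=y$ may sit among the summands, and the difficulty is to realize, inside $D$, blocks whose tails and prefixes overlap in precisely the pattern demanded by the left-hand side of (\ref{43408}) (a shared middle letter) or of (\ref{43406}) (a common tail), so that a legal production step becomes available and the word it creates still collapses to $\bq$. Matching this combinatorial overlap to the tail-multiplicity bookkeeping is the delicate heart of the argument; the complementary situation, where $D$ fails {\rm (T)} so that some tail has multiplicity $\geq 2$ or is shared by two summands, feeds (\ref{43406}) or a squaring step directly and should be comparatively routine.
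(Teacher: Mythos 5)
Your setup is exactly the paper's: identify $S_{(4,434)}\cong S_4^0$, pass through Lemma \ref{lem001} to the condition $D_\bq(\bu)\approx D_\bq(\bu)+\bq$ in $S_4$, and control the latter with Lemma \ref{lem401}. The gap is in the derivation stage. You organize the argument by $\ell(\bq)$ and declare $\bq=xy$ to be ``the essential case,'' but nothing justifies this restriction: none of (\ref{43401})--(\ref{43408}) rewrites a long word $\bq$ as a sum of shorter ones, and no such identity can be added, since $S_{(4,434)}$ fails, for instance, $xyz\approx xy+yz+xz$ (take $x=y=4$, $z=3$: the left side is $3$, while the right side is $4+3+3=1$). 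This is unlike Propositions \ref{pro40001} and \ref{pro40101}, where exactly that identity performs the length reduction and legitimizes treating only $\bq=xy$. So your plan simply does not cover $\ell(\bq)\geq 3$, which is where the actual work of the proposition lies; the identities (\ref{43405}), (\ref{43406}), (\ref{43408}) must be applied to summands of arbitrary length, not to two-letter words.

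Moreover, even in the fragment you do treat, the property-(T) sub-case is left explicitly unresolved (``the delicate heart''), and your expectation of where the difficulty sits is inverted relative to the actual proof. When $D_\bq(\bu)$ satisfies (T), Lemma \ref{lem401} forces strong structure: $\ell(\bq)\geq 2$, $m(t(\bq),\bq)=1$, every summand of $D_\bq(\bu)$ has tail $t(\bq)$, and $c(L_{\geq 2}(D_\bq(\bu)))=c(\bq)$. One then glues all the prefixes by iterating the common-tail identity (\ref{43406}), obtaining $\bu\approx\bu+p(\bu_1)p(\bu_2)\cdots p(\bu_k)t(\bq)$, and normalizes to $\bq$ with (\ref{43401}) and (\ref{43402}); this is short, needs neither (\ref{43408}) nor (\ref{43405}), and works for $\bq$ of any length. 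The heavy machinery --- (\ref{43405}), (\ref{43408}), and the squares calculus (\ref{43400}), (\ref{43403}) feeding into $\bu\approx\bu+\bq^2$ and then (\ref{43404}) --- is needed precisely when (T) \emph{fails}, the case you dismiss as ``comparatively routine.'' As it stands, the proposal is a correct reduction plus an incomplete and misdirected case analysis, not a proof.
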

\begin{proof}
It is easy to verify that $S_{(4, 434)}$ satisfies the identities (\ref{43401})--(\ref{43408}).
In the remainder it is enough to prove that every ai-semiring identity of $S_{(4, 434)}$
can be derived by (\ref{43401})--(\ref{43408}) and the identities defining $\mathbf{AI}$.
Let $\bu \approx \bu+\bq$ be such a nontrivial identity,
where $\bu=\bu_1+\bu_2+\cdots+\bu_n$ and $\bu_i, \bq\in X^+$, $1 \leq i \leq n$.
Since $S_4^0$ is isomorphic to $S_{(4, 434)}$, it follows that
$S_4$ satisfies $D_\bq(\bu) \approx D_\bq(\bu)+\bq$.
By Lemma \ref{lem401} we have that $c(\bq)=c(D_\bq(\bu))$ and $\ell(\bu_r)\geq 2$ for some $\bu_r\in D_\bq(\bu)$.
Also, if $D_\bq(\bu)$ satisfies the property {\rm (T)},
then $D_\bq(\bu)+\bq$ satisfies the property {\rm (T)}.
Consider the following two cases.

\textbf{Case 1.}
$D_\bq(\bu)$ satisfies the property {\rm (T)}.
Then $D_\bq(\bu)+\bq$ also satisfies the property {\rm (T)}.
This implies that $\ell(\bq)\geq 2$, $m(t(\bq), \bq)=1$, $t(D_\bq(\bu))=t(\bq)$ and so
$c(L_{\geq 2}(D_\bq(\bu)))=c(\bq)$.
Let $L_{\geq 2}(D_\bq(\bu))=\bu_1+\bu_2+\cdots+\bu_k$. We have
\begin{align*}
\bu
&\approx \bu+L_{\geq 2}(D_\bq(\bu))\\
&\approx \bu+\bu_1+\bu_2+\cdots+\bu_k \\
&\approx \bu+p(\bu_1)t(\bq)+p(\bu_2)t(\bq)+\cdots+p(\bu_k)t(\bq) \\
&\approx \bu+\bu_1+\bu_2+\cdots+\bu_k+p(\bu_1)p(\bu_2)\cdots p(\bu_k)t(\bq) &&(\text{by}~(\ref{43406})) \\
&\approx  \bu+\bu_1+\bu_2+\cdots+\bu_k+\bq.&&(\text{by}~(\ref{43401}), (\ref{43402}))
\end{align*}

\textbf{Case 2.} $D_\bq(\bu)$ does not satisfy the property {\rm (T)}.
Then either $m(t(\bu_i), \bu_j)\geq 2$ for some $\bu_i, \bu_j \in D_\bq(\bu)$
or $m(t(\bu_i), \bu_j)=1$ and $t(\bu_i) \neq t(\bu_j)$ for some $\bu_i, \bu_j \in D_\bq(\bu)$.
If $m(t(\bu_i), \bu_j)\geq 2$ for some $\bu_i, \bu_j \in D_{\bq}(\bu)$,
then $t(\bu_i) \in c(p(\bu_j))$.
By (\ref{43401}) and (\ref{43402}) we can deduce
$\bu_j\approx \bp{t(\bu_i)}^2$ or $\bu_j\approx {t(\bu_i)}^2\bp$ for some word $\bp$,
where $c(\bp)\subseteq c(\bu_j)$.
We write $D_\bq(\bu)=\bu_1+\bu_2+\cdots+\bu_s$.

If $\bu_j\approx \bp{t(\bu_i)}^2$ is obtained, then
\begin{align*}
\bu
&\approx \bu+D_\bq(\bu)\\
&\approx \bu+\bu_1+\bu_2+\cdots+\bu_s+\bu_j \\
&\approx \bu+\bu_1+\bu_2+\cdots+\bu_s+\bp{t(\bu_i)}^2 \\
&\approx \bu+\bu_1+\bu_2+\cdots+\bu_s+(\bu_1\bu_2\cdots \bu_s\bp t(\bu_i))^2 &&(\text{by}~(\ref{43401}), (\ref{43400}),(\ref{43405})) \\
&\approx  \bu+\bu_1+\bu_2+\cdots+\bu_s+\bq^2&&(\text{by}~(\ref{43401}), (\ref{43400}),(\ref{43403}))\\
&\approx  \bu+\bu_1+\bu_2+\cdots+\bu_s+\bq^2+\bq.&&(\text{by}~ (\ref{43404}))
\end{align*}

If $\bu_j\approx{t(\bu_i)}^2\bp$, then
\begin{align*}
\bu
&\approx \bu+\bu_i+\bu_j+D_\bq(\bu) \\
&\approx \bu+p(\bu_i)t(\bu_i)+{t(\bu_i)}^2\bp+D_\bq(\bu)\\
&\approx \bu+p(\bu_i)t(\bu_i)+{t(\bu_i)}^2\bp+D_\bq(\bu)+\bw &&(\text{by}~ (\ref{43401}), (\ref{43408})) \\
&\approx  \bu+p(\bu_i)t(\bu_i)+{t(\bu_i)}^2\bp+D_\bq(\bu)+\bq^2 &&(\text{by}~(\ref{43402}), (\ref{43400}),(\ref{43403}))\\
&\approx  \bu+p(\bu_i)t(\bu_i)+{t(\bu_i)}^2\bp+D_\bq(\bu)+\bq^2+\bq, &&(\text{by}~ (\ref{43404}))
\end{align*}
where $\bw=p(\bu_i)^2t(\bu_i)^2\bp^2\bu_1^2\bu_2^2\cdots \bu_s^2$.

Finally, if $m(t(\bu_i), \bu_j)=1$ and $t(\bu_i) \neq t(\bu_j)$ for some $\bu_i, \bu_j \in D_{\bq}(\bu)$.
Then $m(t(\bu_i), p(\bu_j))=1$ and so $\bu_j \approx{t(\bu_i)}\bp$ for some nonempty word $\bp$ is derived.
The remaining steps are similar to the preceding case.
\end{proof}

It is easy to see that $S_{(4, 432)}$ and $S_{(4, 434)}$  have dual multiplications. By Proposition $\ref{pro43401}$ we immediately deduce
\begin{cor}
The ai-semiring $S_{(4, 432)}$ is finitely based.
\end{cor}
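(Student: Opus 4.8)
The plan is to prove the corollary by the same duality principle that justifies the earlier dual pairs in this paper. For an ai-semiring $S=(S,+,\cdot)$ I would write $S^{*}$ for the algebra with the same additive reduct but with the opposite multiplication $a\ast b=b\cdot a$; since associativity and distributivity are preserved under reversal and $+$ is untouched, $S^{*}$ is again an ai-semiring. For an ai-semiring term $\bt=\bt_1+\cdots+\bt_k$ let $\bt^{*}$ be the term obtained by reversing each word summand $\bt_i$ (the addition is left alone, being commutative). A substitution into $S^{*}$ evaluates a reversed word exactly as the original word evaluates in $S$, so $S$ satisfies $\bu\approx\bv$ if and only if $S^{*}$ satisfies $\bu^{*}\approx\bv^{*}$.

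First I would record, directly from Table~\ref{tb1} and Figure~\ref{figure01}, that $S_{(4,432)}$ and $S_{(4,434)}$ have dual multiplications, i.e.\ $S_{(4,432)}\cong S_{(4,434)}^{*}$; the additive reducts coincide because the quasi-antichain order is symmetric. Next, since $\bt\mapsto\bt^{*}$ is an involution that commutes with substitution, it carries the fully invariant congruence generated by any identity set $\Sigma$ onto the one generated by $\Sigma^{*}=\{\bu^{*}\approx\bv^{*}\mid(\bu\approx\bv)\in\Sigma\}$, and in particular sends a finite basis to a finite basis. Proposition~\ref{pro43401} supplies the finite basis (\ref{43401})--(\ref{43408}) for $\mathsf{V}(S_{(4,434)})$, so applying the starred map gives a finite basis for $\mathsf{V}(S_{(4,432)})$; hence $S_{(4,432)}$ is finitely based.

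I expect no genuine obstacle here, precisely as in the corollaries for $S_{(4,412)}$, $S_{(4,461)}$, and $S_{(4,464)}$: the only points needing a line of justification are that the additive reduct is self-dual and that reversal is compatible with the ai-semiring axioms, both of which are immediate. The one place where some attention is warranted is the condition (T) appearing in (\ref{43408}) and in Lemma~\ref{lem401}: it is phrased in terms of terminal letters $t(\cdot)$, and under reversal it becomes the mirror condition on initial letters. But since we only dualize the \emph{already-proved} basis rather than re-run the argument, this causes no difficulty; the reversed identities encode the mirror condition automatically.
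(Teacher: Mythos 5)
Your proposal is correct and is exactly the paper's argument: the paper proves this corollary in one line by observing that $S_{(4, 432)}$ and $S_{(4, 434)}$ have dual multiplications and invoking Proposition~\ref{pro43401}. Your additional justification of the duality principle (word reversal as an involution commuting with substitution, hence carrying a finite basis to a finite basis) is a sound elaboration of what the paper leaves implicit, not a different route.
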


\begin{lem}\label{lem4401}
Let $\bu\approx \bu+\bq$ be a nontrivial ai-semiring identity such that
$\bu=\bu_1+\bu_2+\cdots+\bu_n$ and $\bu_i, \bq \in X^+$, $1\leq i \leq n$.
Suppose that $\bu\approx \bu+\bq$ is satisfied by $S_{44}$.
Then $\ell(\bq)\geq 2$ and $D_\bq(\bu)\neq\emptyset$.
If $M_{1}(\bq)\neq\emptyset$,
then any $x \in M_{1}(\bq)$, there exists $\bu_i \in D_\bq(\bu)$ such that $m(x, \bu_i)\leq 1$.
\end{lem}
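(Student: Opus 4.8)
The plan is to establish the three assertions separately, in each case either passing to a two-element subsemiring of $S_{44}$ and invoking the solution of its word problem in \cite[Lemma 1.1]{sr}, or exhibiting an explicit substitution $\varphi\colon P_f(X^+)\to S_{44}$ that refutes $\bu\approx\bu+\bq$. Throughout, the standing hypothesis is that $S_{44}$ satisfies $\bu\approx\bu+\bq$.

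For $\ell(\bq)\ge 2$: since $N_2$ is isomorphic to a subsemiring of $S_{44}$ and identities are inherited by subalgebras, $N_2$ satisfies $\bu\approx\bu+\bq$, and by \cite[Lemma 1.1]{sr} this already forces $\ell(\bq)\ge 2$. For $D_\bq(\bu)\ne\emptyset$: the subsemiring of $S_{44}$ isomorphic to $D_2$ likewise satisfies the identity, and the solution of its word problem yields $c(\bu_r)\subseteq c(\bq)$ for some summand $\bu_r$, that is, $\bu_r\in D_\bq(\bu)$. These two steps run entirely parallel to the openings of Lemmas \ref{lem5801} and \ref{lem6001}.

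For the conditional statement, suppose $M_1(\bq)\ne\emptyset$, fix $x\in M_1(\bq)$, and argue by contradiction: assume $m(x,\bu_i)\ge 2$ for every $\bu_i\in D_\bq(\bu)$, a nonempty set by the previous paragraph. I would then construct $\varphi\colon P_f(X^+)\to S_{44}$ in three zones. Every variable lying outside $c(\bq)$ is sent to the multiplicative zero of $S_{44}$; then every summand $\bu_j\notin D_\bq(\bu)$, which by definition contains such a variable, is annihilated and contributes a fixed value, so that the value of $\bu$ is governed solely by the summands in $D_\bq(\bu)$. The variable $x$ is sent to a distinguished element $a$ for which the value of a word genuinely depends on whether $x$ occurs once or at least twice, and the remaining variables of $c(\bq)$ are sent to an element chosen so that $\varphi(\bq)$ is computed from the single occurrence of $x$ guaranteed by $x\in M_1(\bq)$. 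Because $x$ occurs exactly once in $\bq$ but at least twice in each surviving summand of $\bu$, the value $\varphi(\bq)$ will not be absorbed, in the additive order of $S_{44}$, by $\varphi(\bu)$; hence $\varphi(\bu)\ne\varphi(\bu+\bq)$, contradicting the hypothesis.

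The main obstacle is precisely this last construction: one must read off from the multiplication table of $S_{44}$ an element $a$ (the image of $x$) together with a compatible image for the other variables of $c(\bq)$, so that a single factor $a$ evaluates to an element that is not below, in the additive order, the join of the values of all words whose content is contained in $c(\bq)$ and in which $x$ occurs at least twice. Isolating such an element is the whole content of the argument; once it is in hand, the verification that $\varphi$ separates $\bu$ from $\bu+\bq$ is the routine table-checking already carried out repeatedly in the lemmas above.
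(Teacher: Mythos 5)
Your proofs of the first two assertions are complete and coincide with the paper's: $N_2\cong\{1,2\}$ and $D_2\cong\{2,3\}$ are subsemirings of $S_{44}$, and the standard facts about $N_2$ and $D_2$ from \cite[Lemma 1.1]{sr} give $\ell(\bq)\geq 2$ and $D_\bq(\bu)\neq\emptyset$, exactly as in the openings of Lemmas \ref{lem5801} and \ref{lem6001}. For the conditional assertion your skeleton is also the paper's: negate the conclusion for a fixed $x\in M_1(\bq)$, then separate $\bu$ from $\bu+\bq$ by a substitution constant on three zones. But the proposal stops precisely where the content lies. You postulate an element $a$ whose products distinguish ``$x$ occurs once'' from ``$x$ occurs at least twice,'' compatible images for $c(\bq)\setminus\{x\}$, and a multiplicative zero whose value is additively absorbed, and you concede yourself that isolating these data ``is the whole content of the argument.'' Since nothing in the proposal verifies that $S_{44}$ actually contains such a configuration (a generic three-element ai-semiring need not), the conditional claim is only \emph{reduced} to an unproved property of the multiplication table: a genuine, though entirely fillable, gap. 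A further small imprecision: your assertion that sending $X\setminus c(\bq)$ to the multiplicative zero makes $\varphi(\bu)$ ``governed solely by the summands in $D_\bq(\bu)$'' tacitly requires that zero to be the additive least element; this is true in $S_{44}$ but is again left unchecked, and in fact under your contradiction hypothesis every summand of $\bu$, inside or outside $D_\bq(\bu)$, collapses to the zero.

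The missing data are supplied in the paper's proof as follows. Writing $S_{44}=\{1,2,3\}$ as it sits inside $S_{(4,447)}$ (the copy $\{1,2,4\}$ invoked in Proposition \ref{pro24082701}), the additive order is the chain $2<1<3$, the element $2$ is the multiplicative zero, $3$ is multiplicatively idempotent with $3\cdot 1=1\cdot 3=1$, and $1\cdot 1=2$. Define $\varphi\colon P_f(X^+)\to S_{44}$ by $\varphi(x)=1$, $\varphi(y)=3$ for $y\in c(\bq)\setminus\{x\}$, and $\varphi(y)=2$ otherwise. Since $m(x,\bq)=1$, the word $\bq$ evaluates to a product of a single factor $1$ with factors $3$, so $\varphi(\bq)=1$; each $\bu_i\in D_\bq(\bu)$ has all letters mapped into $\{1,3\}$ with at least two letters mapped to $1$ (the hypothesis $m(x,\bu_i)\geq 2$), and any such product collapses to $1\cdot 1=2$ because $3$ is idempotent and fixes $1$; each $\bu_j\notin D_\bq(\bu)$ contains a factor $2$. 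Hence $\varphi(\bu)=2$ while $\varphi(\bu+\bq)=2+1=1\neq 2$, the desired contradiction. This is exactly your element $a$ (namely $1$) together with its companion value (namely $3$); once supplied, your argument and the paper's proof are the same.
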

\begin{proof}
Suppose that $S_{44}$ satisfies $\bu\approx \bu+\bq$.
It is easy to see that $N_2$ is isomorphic to $\{1, 2\}$ and so $N_2$ satisfies $\bu \approx \bu+\bq$.
This implies that $\ell(\bq)\geq 2$.
Since $D_2$ is isomorphic to $\{2, 3\}$, it follows that $D_2$ satisfies $\bu \approx \bu+\bq$
and so $c(\bu_j) \subseteq c(\bq)$ for some $\bu_j\in \bu$. So $D_\bq(\bu)$ is nonempty.
Suppose that $M_{1}(\bq)$ is nonempty.
If there exists $x\in M_{1}(\bq)$ such that for any $\bu_i \in D_\bq(\bu)$, $ m(x, \bu_i)\geq 2$,
then we consider the semiring homomorphism $\varphi: P_f(X^+) \to S_{44}$ defined by
$\varphi(x)=1$, $\varphi(y)=3$ if $y\in c(\bq)\setminus\{x\}$, and $\varphi(y)=2$ otherwise.
It is easy to see that $\varphi(\bu)=2$ and $\varphi(\bq)=1$, and so
$\varphi(\bu)\neq \varphi(\bu+\bq)$,
a contradiction.
Thus any $x \in M_{1}(\bq)$, there exists $\bu_i \in D_\bq(\bu)$ such that $m(x, \bu_i)\leq 1$.
\end{proof}

\begin{pro}\label{pro24082701}
$\mathsf{V}(S_{(4, 447)})$ is the ai-semiring variety defined by the identities
\begin{align}
x^3 &\approx x^2; \label{44702}\\
xy & \approx yx; \label{44701}\\
xy &\approx xy+xyz; \label{44703}\\
xy &\approx x^2y+xy^2; \label{44704}\\
y+y^2z &\approx y+y^2z+xyz, \label{44706}
\end{align}
where $z$ may be empty in $(\ref{44706})$.
\end{pro}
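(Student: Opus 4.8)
\textbf{Soundness} is a routine finite verification: one checks each of (\ref{44702})--(\ref{44706}) directly on $S_{(4, 447)}$, using that $2$ is simultaneously the multiplicative zero and the additive bottom, that $3^2=1$ and $4^2=2$, and that every product of length $\ge 2$ takes a value in $\{1,2,4\}$. For \textbf{completeness} I would take an arbitrary nontrivial identity $\bu\approx\bu+\bq$ of $S_{(4, 447)}$, with $\bu=\bu_1+\cdots+\bu_n$, and normalise it: by (\ref{44701}) every $\bu_i$ and $\bq$ may be regarded as a commutative word, and by (\ref{44702}) every variable may be assumed to occur in each word with multiplicity $1$ or $2$. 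It then suffices to derive $\bu\approx\bu+\bq$ from (\ref{44702})--(\ref{44706}) and the identities defining $\mathbf{AI}$.

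To read off the structure of $\bu$ and $\bq$ I would pass to a quotient: $\rho=\{(1,3),(3,1)\}\cup\vartriangle$ is a congruence of $S_{(4, 447)}$ with $S_{(4, 447)}/\rho\cong S_{44}$, so $S_{44}$ satisfies $\bu\approx\bu+\bq$. Lemma \ref{lem4401} then supplies the three facts on which the whole argument rests: $\ell(\bq)\ge 2$, $D_\bq(\bu)\neq\emptyset$, and, whenever $M_1(\bq)\neq\emptyset$, a summand $\bu_i\in D_\bq(\bu)$ with $m(x,\bu_i)\le 1$ for each $x\in M_1(\bq)$. Unlike the other algebras treated in this section, $S_{(4, 447)}$ is subdirectly irreducible, so it is handled through this single quotient rather than through a subdirect decomposition and Lemma \ref{lem001}.

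The heart of the proof is to assemble, out of the summands lying in $D_\bq(\bu)$, enough material to recover $\bq$ on the right-hand side. The three working tools are (\ref{44704}), which manufactures a squared variable from a product; (\ref{44706}), which---seeded by a length-one summand $y$ together with a summand divisible by $y^2$---permits an arbitrary further variable to be attached; and (\ref{44703}), which absorbs any surplus once a word of length $\ge 2$ dividing $\bq$ has been produced. I would organise the derivation according to $\ell(\bq)\ge 3$ versus $\ell(\bq)=2$, splitting the latter into the subcases $\bq=x^2$ and $\bq=xy$ with $x\neq y$.

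\textbf{The main difficulty} is the faithful treatment of the multiplicity-one variables $M_1(\bq)$. A summand of $D_\bq(\bu)$ may already carry such a variable squared, and since (\ref{44704}) only ever creates squares, a naive synthesis would over-count that variable and produce $x^2$ where only $x$ is wanted. This is exactly the point at which the third clause of Lemma \ref{lem4401} is indispensable: it guarantees a summand $\bu_i\in D_\bq(\bu)$ in which $x$ occurs at most once, and this $\bu_i$ is what seeds the application of (\ref{44706}) introducing $x$ with the correct multiplicity. A smaller but genuine subtlety is the subcase $\bq=x^2$, where $M_1(\bq)=\emptyset$ and Lemma \ref{lem4401} yields only some $\bu_r=x^k$ with $k\ge 1$; here I expect to fall back on the direct substitution $x\mapsto 3$ with all other variables $\mapsto 2$ (under which $\bq\mapsto 1$, forcing $\bu\mapsto 1$) to conclude $k\ge 2$, whereupon (\ref{44702}) closes the case.
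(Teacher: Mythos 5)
There is a genuine gap, and it lies exactly where you claim economy: you discard the one ingredient the derivation cannot do without. Your plan rests solely on the three facts of Lemma \ref{lem4401}, and you explicitly decline ``a subdirect decomposition and Lemma \ref{lem001}''; but the paper's proof also extracts $L_{\geq 2}(\bu)\cap D_{\bq}(\bu)\neq\emptyset$ from the subalgebra $\{1,2,3\}\cong T_2^0$ of $S_{(4,447)}$ via Lemma \ref{lem001}, and this fact is load-bearing. Lemma \ref{lem4401} gives no length information about the members of $D_\bq(\bu)$: when $M_1(\bq)=\emptyset$, say $\bq=x^2y^2$, it guarantees only some $\bu_i$ with $c(\bu_i)\subseteq c(\bq)$, and if that $\bu_i$ is a bare variable none of your tools starts up --- (\ref{44703}) needs a genuine product on its left-hand side, and (\ref{44706}) needs a partner summand divisible by a square. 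Your fallback substitution ($x\mapsto 3$, everything else $\mapsto 2$) repairs only the subcase $\bq=x^2$, whereas the need recurs in the paper's Case 1 and Subcase 2.2. Likewise, clause $(3)$ of Lemma \ref{lem4401} is weaker than what your seeding of (\ref{44706}) requires: it yields some $\bu_i\in D_\bq(\bu)$ with $m(x,\bu_i)\leq 1$, but that $\bu_i$ may be a bare variable different from $x$ and hence useless; the paper needs, and proves by a separate substitution directly into $S_{(4,447)}$, the sharper claim that some $\bu_j\in D_\bq(\bu)$ satisfies either $m(y,\bu_j)\leq 1$ with $\ell(\bu_j)\geq 2$, or $\bu_j=y$. (These missing facts are recoverable by further hand substitutions --- sending $c(\bq)$ to $3$ and the rest to $2$ produces a summand of length at least $2$ in $D_\bq(\bu)$ --- but your proposal as written does not supply them. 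Incidentally, your $\rho=\{(1,3),(3,1)\}\cup\vartriangle$ is indeed a congruence with quotient $S_{44}$, though the paper reaches $S_{44}$ more simply as the subalgebra $\{1,2,4\}$; that difference is harmless.)

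The second gap is structural: your case split by $\ell(\bq)\geq 3$ versus $\ell(\bq)=2$ leaves the hard case unaddressed, namely $\bq$ containing two or more multiplicity-one variables (e.g.\ $\bq=xyz$). A single seeded application of (\ref{44706}) introduces exactly one first-power variable, and clause $(3)$ of Lemma \ref{lem4401} provides possibly different witnesses $\bu_i$ for different $x\in M_1(\bq)$, with no single summand controlling all of them simultaneously; so the tools as you describe them cannot assemble such a $\bq$. The paper's pivotal move, absent from your proposal, is to rewrite $\bq$ itself, using (\ref{44702}), (\ref{44701}) and (\ref{44704}), as a sum
\[
\bq \approx \sum_{1\leq j\leq s} x_1^2\cdots x_r^2\,y_1^2\cdots y_{j-1}^2\,y_{j+1}^2\cdots y_s^2\,y_j,
\]
each summand carrying exactly one multiplicity-one variable, thereby reducing everything to $\bq=x_1^2\cdots x_k^2y$, which one application of (\ref{44703}) or (\ref{44706}) then finishes. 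Accordingly the correct top-level dichotomy is $M_1(\bq)=\emptyset$ versus $M_1(\bq)\neq\emptyset$, not a split by $\ell(\bq)$; your identification of the over-counting problem for multiplicity-one variables is the right instinct, but without the (\ref{44704})-splitting of $\bq$ the proposal has no route through it.
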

\begin{proof}
It is easy to check that $S_{(4, 447)}$ satisfies the identities (\ref{44702})--(\ref{44706}).
In the remainder it is enough to prove that every ai-semiring identity of $S_{(4, 447)}$
is derivable from (\ref{44702})--(\ref{44706}) and the identities defining $\mathbf{AI}$.
Let $\bu \approx \bu+\bq$ be such a nontrivial identity,
where $\bu=\bu_1+\bu_2+\cdots+\bu_n$ and $\bu_i, \bq\in X^+$, $1 \leq i \leq n$.
Since $S_{44}$ is isomorphic to $\{1, 2, 4\}$, it follows that $S_{44}$ satisfies $\bu \approx \bu+\bq$.
By Lemma \ref{lem4401} we have that $\ell(\bq)\geq 2$.
It is easy to see that $T_2^0$ is isomorphic to $\{1, 2, 3\}$
and so $T_2^0$ satisfies $\bu \approx \bu+\bq$.
By Lemma \ref{lem001} we have that $L_{\geq 2}(\bu)\cap D_{\bq}(\bu)$ is nonempty.

\textbf{Case 1.}
$M_1(\bq)$ is empty. Then $m(x, \bq)\geq 2$ for all $x\in c(\bq)$.
Take $\bu_i$ in $L_{\geq 2}(\bu)\cap D_{\bq}(\bu)$.
Then
\[
\bu \approx \bu+\bu_i\stackrel{(\ref{44703})}\approx \bu+\bu_i+\bu_i\bq \stackrel{(\ref{44701}), (\ref{44702})}\approx \bu+\bu_i+\bq.
\]
This derives $\bu \approx \bu+\bq$.

\textbf{Case 2.}
$M_1(\bq)$ is nonempty. Let
$c(\bq)=\{x_1, \ldots, x_r, y_1, \ldots, y_s\}$,
where $m(x_i, \bq )\geq 2$, $m(y_j, \bq )=1$, $0 \leq i \leq r$, $1 \leq j \leq s$.
Then
\begin{align*}
\bq
&\approx x_1^2x_2^2\cdots x_r^2y_1y_2\cdots y_s &&(\text{by}~ (\ref{44702}), (\ref{44701}))\\
&\approx\sum\limits_{1\leq j\leq s}x_1^2x_2^2\cdots x_r^2y_1^2\cdots y_{j-1}^2y_{j+1}^2\cdots y_s^2y_j.
&&(\text{by}~ (\ref{44702}), (\ref{44701}), (\ref{44704}))
\end{align*}
So we only need to consider the case that $\bq=x_1^2x_2^2\cdots x_k^2y$.
We shall show that there exists $\bu_j \in D_\bq(\bu)$ such that
either $ m(y, \bu_j)\leq 1$, $\ell(\bu_j)\geq 2$ or $\bu_j=y$.
Suppose that this is not true.
Let us consider the semiring homomorphism $\varphi: P_f(X^+) \to S_{(4, 447)}$ defined by
$\varphi(y)=4$, $\varphi(x)=3$ if $x\in c(\bq)\setminus{\{y\}}$, and $\varphi(x)=2$ otherwise.
It is easy to see that $\varphi(\bu)=3$ or $2$ and $\varphi(\bq)=4$, a contradiction.
So there exists $\bu_j \in D_\bq(\bu)$ such that
either $ m(y, \bu_j)\leq 1$, $\ell(\bu_j)\geq 2$ or $\bu_j=y$.

\textbf{Subcase 2.1.} There exists $\bu_j \in D_\bq(\bu)$ such that
$ m(y, \bu_j)\leq 1$, $\ell(\bu_j)\geq 2$. Now we have
\[
\bu \approx \bu+\bu_j\stackrel{(\ref{44703})}\approx \bu+\bu_j+\bu_j\bp\stackrel{(\ref{44701}), (\ref{44702})}\approx \bu+\bu_j+\bq,
\]
where $\bp=p(\bq)$ if $y\in c(\bu_j)$, and $\bp=\bq$ otherwise.

\textbf{Subcase 2.2.} There exists $\bu_j \in D_\bq(\bu)$ such that $\bu_j=y$.
Choose $\bu_i$ in $L_{\geq 2}(\bu)\cap D_{\bq}(\bu)$.
If $ m(y, \bu_i)\leq 1$, then
\[
\bu \approx \bu+\bu_i\stackrel{(\ref{44703})}\approx \bu+\bu_i+\bu_i\bp\stackrel{(\ref{44701}), (\ref{44702})}\approx \bu+\bu_i+\bq,
\]
where $\bp=p(\bq)$ if $y\in c(\bu_i)$, and $\bp=\bq$ otherwise.
If $m(y, \bu_i)\geq 2$, then (\ref{44701}) and (\ref{44702}) imply $\bu_i\approx y^2\bp$ for some word $\bp$,
where $c(\bp)\subseteq c(\bu_i)\setminus\{y\}$.
Furthermore, we have
\begin{align*}
\bu
&\approx \bu+\bu_j+\bu_i  \\
&\approx \bu+y+y^2\bp \\
&\approx \bu+y+y^2\bp+p(\bq)y\bp&&(\text{by}~ (\ref{44706}))\\
&\approx \bu+y+y^2\bp+\bq.&&(\text{by}~ (\ref{44701}), (\ref{44702}))
\end{align*}
This completes the proof.
\end{proof}

\begin{pro}\label{pro45001}
$\mathsf{V}(S_{(4, 450)})$ is the ai-semiring variety defined by the identities
\begin{align}
x^3 &\approx x^2; \label{45002}\\
xy & \approx yx; \label{45001}\\
xy &\approx x^2y+xy^2; \label{45003}\\
x+x^2y &\approx x+xy; \label{45005}\\
x+y &\approx x+y+xy. \label{45004}
\end{align}
\end{pro}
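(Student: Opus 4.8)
The plan is to follow the template of Proposition \ref{pro24082701}. First I would check directly, on the multiplication table, that $S_{(4,450)}$ satisfies (\ref{45002})--(\ref{45004}); this is routine. It then remains to show that every nontrivial ai-semiring identity $\bu\approx\bu+\bq$ of $S_{(4,450)}$, with $\bu=\bu_1+\cdots+\bu_n$ and $\bu_i,\bq\in X^+$, is derivable from (\ref{45002})--(\ref{45004}) and the identities defining $\mathbf{AI}$. By (\ref{45001}) I treat all words as commutative, and by (\ref{45002}) I take every exponent in $\{1,2\}$, so a monomial is determined by its content and by the set of variables occurring with multiplicity one. In $S_{(4,450)}$ the element $2$ is a multiplicative zero and $3$ a multiplicative identity; the relevant small pieces are the subalgebras $\{2,4\}\cong N_2$, $\{1,2,3\}\cong M_2^0$, and $\{1,2,4\}\cong S_{44}$ (the last is also isomorphic to the quotient of $S_{(4,450)}$ by the congruence collapsing $1$ and $3$). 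As $S_{(4,450)}$ is subdirectly irreducible (its congruences form a chain) it is not a subdirect product of smaller semirings; I use these pieces, together with a few substitutions into $S_{(4,450)}$ itself, only to extract necessary conditions on $\bu$ and $\bq$.

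I would record three such conditions. From $N_2\cong\{2,4\}$ and nontriviality, $\ell(\bq)\ge2$. From $M_2^0\cong\{1,2,3\}$ and Lemma \ref{lem001} — or, equivalently, by sending a variable $w\in c(\bq)$ to $1$, the other variables of $c(\bq)$ to $3$, and all remaining variables to $2$, for each $w$ — one obtains $D_\bq(\bu)\neq\emptyset$ and the covering statement $c(\bq)\subseteq c(D_\bq(\bu))$. For the multiplicities: given $y\in M_1(\bq)$, the substitution $\varphi$ with $\varphi(y)=4$, $\varphi(x)=3$ for $x\in c(\bq)\setminus\{y\}$, and $\varphi(x)=2$ otherwise satisfies $\varphi(\bq)=4$, so $\varphi(\bu)\in\{1,4\}$, which forces a summand evaluating to $4$ and hence some $\bu_j\in D_\bq(\bu)$ with $m(y,\bu_j)=1$. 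This is sharper than the conclusion $m(y,\bu_j)\le1$ supplied by Lemma \ref{lem4401} for $S_{44}$, and the sharp form $m(y,\bu_j)=1$ is exactly what the derivation needs.

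For the derivation I first use (\ref{45001})--(\ref{45003}) to split off the multiplicity-one variables of $\bq$, exactly as in Proposition \ref{pro24082701}: writing $\bq\approx\sum_j\bq_j$ where each $\bq_j$ has content $c(\bq)$ and a single multiplicity-one variable, it suffices to treat $\bq=\big(\prod_{x\in c(\bq)\setminus\{y\}}x^2\big)y$, the case $M_1(\bq)=\emptyset$ being $\bq=\prod_{x\in c(\bq)}x^2$. The remaining argument is uniform. Iterating (\ref{45004}) lets me adjoin to $\bu$ the product of any family of its summands, so $\bu\approx\bu+P$ for $P=\prod_{\bu_i\in D_\bq(\bu)}\bu_i$; since $c(P)=c(\bq)$, applying (\ref{45003}) to raise each exponent to $2$ yields $\bu\approx\bu+Q$ with $Q=\prod_{x\in c(\bq)}x^2$. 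Now I factor $Q=\bu_j^2\,w$ with $w=\prod_{x\in c(\bq)\setminus c(\bu_j)}x^2$, which is $y$-free because $y\in c(\bu_j)$ and $m(y,\bu_j)=1$; then (\ref{45005}), applied with the compound term $\bu_j$ in the role of $x$, gives $\bu\approx\bu+Q\approx\bu+\bu_j w$, a word in which $y$ occurs exactly once. A last use of (\ref{45003}) to raise the remaining non-$y$ variables of $\bu_j w$ to squares produces $\bq$ as a summand, so $\bu\approx\bu+\bq$ by idempotency of $+$.

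The step I expect to be the crux is the handling of the multiplicity-one variables. Every product of summands in $D_\bq(\bu)$ tends to over-count such a variable $y$, pushing it to exponent $2$, and no axiom lowers the exponent of a free variable directly; nor can one route through the square-free word $\prod_{x\in c(\bq)}x$, since the identity $\bu\approx\bu+\prod_{x\in c(\bq)}x$ need not hold even when $\bu\approx\bu+\bq$ does. The resolution is the interplay between (\ref{45003}), which raises exponents, and (\ref{45005}) applied to the whole block $\bu_j$, which lowers the multiplicities of all variables of $\bu_j$ at once and so trims $y$ back to one; securing the sharp condition $m(y,\bu_j)=1$ through the substitution above is what makes this trimming land precisely on $\bq$.
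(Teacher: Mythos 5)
Your proposal is correct and follows essentially the same route as the paper's proof: the same necessary conditions ($\ell(\bq)\ge 2$ via $N_2$/$S_{44}$, $c(\bq)=c(D_\bq(\bu))$ via $M_2^0$ and Lemma \ref{lem001}, and the sharp condition $m(y,\bu_j)=1$ obtained from the very substitution $y\mapsto 4$, $c(\bq)\setminus\{y\}\mapsto 3$, else $2$ that the paper uses), the same reduction of $\bq$ to $x_1^2\cdots x_r^2y$ by (\ref{45003}), and the same derivation chain (adjoin products of summands of $D_\bq(\bu)$, squared, by (\ref{45004}); trim the whole block $\bu_j$ by (\ref{45005}); normalize by (\ref{45001}) and (\ref{45002})). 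The only divergence is cosmetic: the paper factors $Q\approx \bu_j^2x_1^2\cdots x_r^2$ with \emph{all} non-$y$ variables of $\bq$ squared, so its final step needs only the lowering identity $x^3\approx x^2$, whereas your choice $w=\prod_{x\in c(\bq)\setminus c(\bu_j)}x^2$ leaves the multiplicity-one non-$y$ variables of $\bu_j$ deficient and forces your extra (\ref{45003})-splitting step with junk summands absorbed by idempotency of $+$ --- a valid but avoidable detour.
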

\begin{proof}
It is easily verified that $S_{(4, 450)}$ satisfies the identities (\ref{45002})--(\ref{45004}).
In the remainder we need only prove that every ai-semiring identity of $S_{(4, 450)}$
can be derived by (\ref{45002})--(\ref{45004}) and the identities defining $\mathbf{AI}$.
Let $\bu \approx \bu+\bq$ be such a nontrivial identity,
where $\bu=\bu_1+\bu_2+\cdots+\bu_n$ and $\bu_i, \bq\in X^+$, $1 \leq i \leq n$.
Since $M_2^0$ is isomorphic to $\{1, 2, 3\}$,
it follows that $M_2^0$ satisfies $\bu \approx \bu+\bq$ and so $c(\bq)=c(D_\bq(\bu))$.
Let $D_\bq(\bu)=\bu_1+\bu_2+\cdots+\bu_k$. Then $c(\bq)=c(\bu_1\bu_2\cdots\bu_k)$.
Since $S_{44}$ is isomorphic to $\{1, 2, 4\}$, we have that $S_{44}$ satisfies $\bu \approx \bu+\bq$
and so by Lemma \ref{lem4401} $\ell(\bq)\geq 2$.

\textbf{Case 1.} $M_{1}(\bq)$ is empty.
Then $m(x, \bq)\geq 2$ for all $x\in c(\bq)$ and so
\[
\bu \approx \bu+D_\bq(\bu)\stackrel{(\ref{45004})}\approx \bu+\bu_1^2\bu_2^2\cdots\bu_k^2
\stackrel{(\ref{45002}), (\ref{45001})}\approx \bu+\bq.
\]

\textbf{Case 2.} $M_{1}(\bq)$ is nonempty.
By a similar argument in Case 2 of the proof of Proposition \ref{pro24082701},
it is enough to consider the case that $\bq=x_1^2x_2^2\cdots x_r^2y$.
We shall show that there exists $\bu_j \in D_\bq(\bu)$ such that $m(y, \bu_j)=1$.
Suppose that this is not true.
Consider the semiring homomorphism $\varphi: P_f(X^+) \to S_{(4, 450)}$ defined by
$\varphi(y)=4$, $\varphi(x)=3$ if $x\in c(\bq)\setminus \{y\}$,
and $\varphi(x)=2$ otherwise.
It is easy to see that $\varphi(\bu)=3$ and $\varphi(\bq)=4$, a contradiction.
Thus there exists $\bu_j \in D_\bq(\bu)$ such that $m(y, \bu_j)=1$.
Furthermore, we have
\begin{align*}
\bu
&\approx \bu+\bu_1^2\bu_2^2\cdots\bu_k^2 && (\text{by}~ (\ref{45004}))  \\
&\approx \bu+\bu_j^2x_1^2x_2^2\cdots x_r^2 && (\text{by}~ (\ref{45002}), (\ref{45001})) \\
&\approx \bu+\bu_j+\bu_j^2x_1^2x_2^2\cdots x_r^2\\
&\approx \bu+\bu_j+\bu_jx_1^2x_2^2\cdots x_r^2 && (\text{by}~ (\ref{45005})) \\
&\approx \bu+\bu_j+\bq. && (\text{by}~ (\ref{45002}), (\ref{45001}))
\end{align*}
This derives the identity $\bu\approx \bu+\bq$.
\end{proof}

\begin{lem}\label{lem4601}
Let $\bu\approx \bu+\bq$ be a nontrivial ai-semiring identity such that
$\bu=\bu_1+\bu_2+\cdots+\bu_n$ and $\bu_i, \bq \in X^+$, $1\leq i \leq n$.
Suppose that $\bu\approx \bu+\bq$ is satisfied by $S_{46}$.
Then $\ell(\bq)\geq 2$.
If $ m(t(\bq), \bq)= 1$,
then there exists $\bu_i \in D_\bq(\bu)$ such that
$t(\bq)\notin c(p(\bu_i))$.
\end{lem}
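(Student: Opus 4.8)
The plan is to follow the same two-stage pattern used in Lemmas \ref{lem4401} and \ref{lem5301}: first read off the unconditional constraint by restricting $S_{46}$ to suitable two-element subsemirings, and then establish the conditional statement by a substitution argument that contradicts the assumption that $S_{46}$ satisfies $\bu\approx\bu+\bq$ whenever the desired $\bu_i$ fails to exist.

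First I would embed the appropriate two-element semirings. A copy of $N_2$ inside $S_{46}$ must satisfy $\bu\approx\bu+\bq$, and the solution of the word problem for $N_2$ in \cite[Lemma 1.1]{sr} then forces $\ell(\bq)\ge 2$, which is the first assertion. Embedding $D_2$ similarly yields $c(\bu_j)\subseteq c(\bq)$ for some $\bu_j\in\bu$, i.e. $D_\bq(\bu)\neq\emptyset$, so that the existential conclusion is not vacuous. Identifying which elements of $S_{46}$ carry these two subsemirings is the only place the concrete multiplication table of $S_{46}$ enters at this stage.

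For the conditional, assume $m(t(\bq),\bq)=1$ and suppose, for contradiction, that $t(\bq)\in c(p(\bu_i))$ for every $\bu_i\in D_\bq(\bu)$. Writing $x_0=t(\bq)$, the hypothesis says that $x_0$ occurs in $\bq$ exactly once, as its last letter, so $p(\bq)$ is a nonempty word not involving $x_0$. I would define a homomorphism $\varphi\colon P_f(X^+)\to S_{46}$ sending $x_0$ to a designated element $a$, every other variable of $c(\bq)$ to an element $b$, and every variable outside $c(\bq)$ to an element $c$, the three values chosen from the table of $S_{46}$ so that $\varphi(\bq)$ equals a distinguished element $d$ created precisely by the unique trailing $a$, while every summand $\bu_i$ of $\bu$ lands in a fixed proper order ideal $T$ of the additive semilattice of $S_{46}$ with $d\notin T$: for $\bu_i\in D_\bq(\bu)$ this uses the contradiction hypothesis, which forces an $a$ into a non-final slot of $\bu_i$, and for $\bu_i\notin D_\bq(\bu)$ it uses the variable mapped to $c$. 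Since $T$ is an order ideal, $\varphi(\bu)\in T$ gives $d\not\le\varphi(\bu)$, so $\varphi(\bu+\bq)=\varphi(\bu)+d\neq\varphi(\bu)$, contradicting that $S_{46}$ satisfies $\bu\approx\bu+\bq$. Hence some $\bu_i\in D_\bq(\bu)$ has $x_0\notin c(p(\bu_i))$.

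The crux, and the main obstacle, is the calibration of $a,b,c$ against the multiplication of $S_{46}$: one needs an element $a$ whose behaviour as the rightmost factor of a product is genuinely different from its behaviour as an internal factor, so that the distinguished value $d=\varphi(\bq)$ is produced exactly when $x_0$ occurs once at the end and is destroyed the moment $x_0$ occurs earlier. This tail-versus-prefix asymmetry is precisely what the statement records, and verifying that a concrete choice of $a,b,c$ realizes all of these requirements simultaneously is the routine computation, entirely analogous to the homomorphism checks in Lemmas \ref{lem4401} and \ref{lem5301}.
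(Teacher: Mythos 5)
Your proposal is correct and takes essentially the same route as the paper: the paper likewise extracts $\ell(\bq)\geq 2$ from the copy of $N_2$ (on $\{1,2\}$) and $D_\bq(\bu)\neq\emptyset$ from the copy of $D_2$ (on $\{2,3\}$), and then refutes the contradiction hypothesis with precisely the three-value substitution you describe, concretely $\varphi(t(\bq))=1$, $\varphi(x)=3$ for $x\in c(\bq)\setminus\{t(\bq)\}$, and $\varphi(x)=2$ otherwise, which gives $\varphi(\bq)=1$ while every summand satisfies $\varphi(\bu_i)=2$. The calibration you deferred is exactly the choice $(a,b,c,d)=(1,3,2,1)$ with order ideal $T=\{2\}$, so your argument matches the paper's once these values are checked against the table of $S_{46}$.
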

\begin{proof}
Suppose that $S_{46}$ satisfies $\bu\approx \bu+\bq$.
Since $N_2$ is isomorphic to $\{1, 2\}$, it follows that $N_2$ satisfies $\bu \approx \bu+\bq$
and so $\ell(\bq)\geq 2$.
It is easy to see that $D_2$ is isomorphic to $\{2, 3\}$ and so $D_2$ satisfies $\bu \approx \bu+\bq$.
This implies that $D_\bq(\bu)$ is nonempty.
Let $m(t(\bq), \bq)=1$.
Suppose by way of contradiction that for any $\bu_i \in D_\bq(\bu)$,
$t(\bq)\in c(p(\bu_i))$.
Consider the semiring homomorphism $\varphi: P_f(X^+) \to S_{46}$ defined by
$\varphi(t(\bq))=1$, $\varphi(x)=3$ if $x\in c(\bq)\setminus\{t(\bq)\}$,
and $\varphi(x)=2$ otherwise.
Then $\varphi(\bu)=2$ and $\varphi(\bq)=1$, and so
$\varphi(\bu)\neq \varphi(\bu+\bq)$,
a contradiction.
So there exists $\bu_i \in D_\bq(\bu)$ such that $t(\bq)\notin c(p(\bu_i))$.
\end{proof}

\begin{pro}\label{pro44201}
$\mathsf{V}(S_{(4, 442)})$ is the ai-semiring variety defined by the identities
\begin{align}
xyz & \approx yxz; \label{44201}\\
x^2y &\approx xy; \label{44202}\\
x_1x_2 &\approx x_1x_2+x_3x_1x_2x_4; \label{44203}\\
x_1+x_2x_1^2x_3&\approx x_1+x_2x_1^2x_3+x_4x_3x_2x_1, \label{id24082601}
\end{align}
where
$x_3$ and $x_4$ may be empty in $(\ref{44203})$, $x_2$ and $x_3$ may be empty in $(\ref{id24082601})$.
\end{pro}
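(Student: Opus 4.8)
The plan is to prove both inclusions. First I would verify directly that $S_{(4, 442)}$ satisfies (\ref{44201})--(\ref{id24082601}); this is the routine direction. For the converse I would show that every nontrivial identity $\bu \approx \bu + \bq$ holding in $S_{(4, 442)}$, with $\bu = \bu_1 + \cdots + \bu_n$ and $\bu_i, \bq \in X^+$, is derivable from (\ref{44201})--(\ref{id24082601}) together with the identities defining $\mathbf{AI}$. The starting observation is that $\{1,2,4\}$ is a subalgebra isomorphic to $S_{46}$ and that $\{1,2,3\}$ is a subalgebra isomorphic to $T_2^0$, so both satisfy $\bu\approx\bu+\bq$. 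Lemma \ref{lem4601} then gives $\ell(\bq)\geq 2$, $D_\bq(\bu)\neq\emptyset$, and, when $m(t(\bq),\bq)=1$, a summand $\bu_i\in D_\bq(\bu)$ with $t(\bq)\notin c(p(\bu_i))$; while Lemma \ref{lem001} applied to $T_2^0$ yields $L_{\geq 2}(\bu)\cap D_\bq(\bu)\neq\emptyset$. I would also record the normal form imposed by (\ref{44201}) and (\ref{44202}): every word of length $\geq 2$ is provably equal to $\big(\prod_{x\in c(p(\bw))}x\big)\,t(\bw)$, the product of the distinct prefix letters followed by the tail; in particular the tail is rigid while the prefix is an unordered square-free set of letters.

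These conditions are not yet sufficient, since $S_{46}$ and $T_2^0$ cannot detect the element $3$: the identity $x + tx \approx x + tx + xt$ holds in both yet fails in $S_{(4,442)}$ (break it with $x\mapsto 3$, $t\mapsto 4$), so Lemma \ref{lem4601} alone is too weak. The crucial preliminary step is therefore to strengthen the tail condition by a direct substitution into $S_{(4,442)}$ using $3$ and $4$. Assuming $m(t(\bq),\bq)=1$ and writing $t=t(\bq)$, I would take $\varphi(t)=4$, $\varphi(x)=3$ for $x\in c(\bq)\setminus\{t\}$, and $\varphi(x)=2$ otherwise. A short computation gives $\varphi(\bq)=4$ and shows that $\varphi(\bu_i)\in\{1,4\}$ exactly when $\bu_i\in D_\bq(\bu)$ satisfies either (ii) $t\notin c(\bu_i)$ and $\ell(\bu_i)\geq 2$, or (iii) $t=t(\bu_i)$ with $m(t,\bu_i)=1$, while every other summand gives $\varphi(\bu_i)\in\{2,3\}$. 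Since $4\not\leq 2,3$ in the additive order, $\bu\approx\bu+\bq$ forces some $\bu_i$ of type (ii) or (iii).

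With these conditions in hand I would split on the multiplicity of the tail. In \textbf{Case A}, $m(t(\bq),\bq)\geq 2$, so $t(\bq)\in c(p(\bq))$; choosing $\bu_i\in L_{\geq 2}(\bu)\cap D_\bq(\bu)$ I apply (\ref{44203}) to add $\big(\prod_{x\in c(\bq)}x\big)\bu_i\,t(\bq)$ and reduce it by (\ref{44201}),(\ref{44202}) to the normal form of $\bq$, giving $\bu\approx\bu+\bq$. In \textbf{Case B}, $m(t(\bq),\bq)=1$, so $t\notin c(p(\bq))$ and I use type (ii)/(iii). If the witness has length $\geq 2$, then adding $\big(\prod_{x\in c(p(\bq))}x\big)\bu_i$ or $\big(\prod_{x\in c(p(\bq))}x\big)\bu_i\,t$ via (\ref{44203}) and reducing again produces $\bq$, because the prefix letters avoid $t$ and the tail is $t$.

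The remaining sub-case is the heart of the argument and the main obstacle: $m(t(\bq),\bq)=1$ and the only type-(ii)/(iii) witness is the single letter $\bu_i=t$. Here (\ref{44203}) cannot create a multiplicity-one tail out of a short summand, and the noncommutativity blocks the route used for $S_{(4,447)}$. My plan is to exploit the summand $\bu_j\in L_{\geq 2}(\bu)\cap D_\bq(\bu)$ furnished by Lemma \ref{lem001}: since (ii) fails $t\in c(\bu_j)$, and since the length-$\geq 2$ form of (iii) is excluded $t\in c(p(\bu_j))$. Appending $t$ by (\ref{44203}) and collecting two copies of $t$ in the prefix by (\ref{44201}) manufactures a summand provably equal to $x_2\,t^2$ with $t\notin c(x_2)$. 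The two summands $t$ and $x_2 t^2$ are then exactly the left-hand side of (\ref{id24082601}) with $x_1=t$ and $x_3$ empty, and one application adds a term $x_4\,x_2\,t$ whose tail is $t$ and in which $t$ occurs only once; choosing $x_4=\prod_{x\in c(p(\bq))}x$ and reducing by (\ref{44201}),(\ref{44202}) yields $\bq$. This closes the derivation and exhibits (\ref{id24082601}) as the device tailored precisely to relocate a multiplicity-one tail to the end in the absence of commutativity.
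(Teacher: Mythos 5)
Your proposal is correct and follows essentially the same route as the paper's proof: the reduction to $S_{46}$ and $T_2^0$ via Lemmas \ref{lem4601} and \ref{lem001}, the normal form enforced by (\ref{44201}) and (\ref{44202}), the key substitution $t(\bq)\mapsto 4$, $c(p(\bq))\mapsto 3$, else $2$, with (\ref{44203}) disposing of every case except the one where the only usable summand is the bare letter $t(\bq)$, which is then resolved by (\ref{id24082601}). The differences are purely organizational: you prove the strengthened witness condition upfront, whereas the paper invokes the same substitution only inside its Subcase 2.2, and you manufacture the summand $x_2t(\bq)^2$ by appending $t(\bq)$ with (\ref{44203}), where the paper instead derives $\bu_i\approx\bp_1 t(\bq)^2\bp_2$ directly from (\ref{44201}) and (\ref{44202}).
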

\begin{proof}
It is easy to check that $S_{(4, 442)}$ satisfies the identities (\ref{44201})--(\ref{id24082601}).
In the remainder we need only prove that every ai-semiring identity of $S_{(4, 442)}$
can be derived by (\ref{44201})--(\ref{id24082601}) and the identities defining $\mathbf{AI}$.
Let $\bu \approx \bu+\bq$ be such a nontrivial identity,
where $\bu=\bu_1+\bu_2+\cdots+\bu_n$ and $\bu_i, \bq\in X^+$, $1 \leq i \leq n$.
Since $S_{46}$ is isomorphic to $\{1, 2, 4\}$, it follows that
$S_{46}$ satisfies $\bu \approx \bu+\bq$. By Lemma \ref{lem4601} we obtain that $\ell(\bq)\geq 2$.
It is easy to see that $T_2^0$ is isomorphic to $\{1, 2, 3\}$ and so $T_2^0$ satisfies $\bu \approx \bu+\bq$.
By Lemma \ref{lem001} we have that $L_{\geq 2}(\bu)\cap D_{\bq}(\bu)$ is nonempty.
So there exists $\bu_i \in \bu$ such that $\ell(\bu_i)\geq 2$ and $c(\bu_i)\subseteq c(\bq)$.

\textbf{Case 1.} $ m(t(\bq), \bq)\geq 2$. Then
\[
\bu \approx \bu+\bu_i\stackrel{(\ref{44203})}\approx \bu+\bu_i+\bu_i\bq\stackrel{(\ref{44201}),(\ref{44202})}\approx \bu+\bu_i+\bq.
\]

\textbf{Case 2.} $m(t(\bq), \bq)=1$.
If $m(t(\bq), \bu_i)=0$, then
 \[
\bu \approx \bu+\bu_i\stackrel{(\ref{44203})}\approx \bu+\bu_i+\bu_i\bq\stackrel{(\ref{44201}),(\ref{44202})}\approx \bu+\bu_i+\bq.
\]
If $m(t(\bq), \bu_i)=1$ and $t(\bq)=t(\bu_i)$, we have
\[
\bu \approx \bu+\bu_i\stackrel{(\ref{44203})}\approx \bu+\bu_i+p(\bq)\bu_i\stackrel{(\ref{44201}),(\ref{44202})}\approx \bu+\bu_i+\bq.
\]
If $m(t(\bq), \bu_i)\geq 2$ or $m(t(\bq), \bu_i)=1$, $t(\bq)\neq t(\bu_i)$,
then (\ref{44201}) and (\ref{44202}) imply the identity
\begin{equation}\label{id24082610}
\bu_i \approx \bp_1t(\bq)^2\bp_2
\end{equation}
for some $\bp_1, \bp_2 \in X^*$,
where $c(\bp_1), c(\bp_2)\subseteq c(\bu_i)$ and $t(\bq)\notin c(\bp_1)\cup c(\bp_2)$.

\textbf{Subcase 2.1.} $m(t(\bq), \bu_j)=1$ and $t(\bq)=t(\bu_j)$ for some $\bu_j \in D_\bq(\bu)$.
If $\ell(\bu_j)\geq 2$, then
\[
\bu \approx \bu+\bu_j\stackrel{(\ref{44203})}\approx \bu+\bu_j+p(\bq)\bu_j\stackrel{(\ref{44201}),(\ref{44202})}\approx \bu+\bu_j+\bq.
\]
If $\ell(\bu_j)=1$, then $\bu_j=t(\bq)$ and so
\begin{align*}
\bu
&\approx \bu+\bu_j+\bu_i\\
&\approx \bu+t(\bq)+\bp_1t(\bq)^2\bp_2 &&(\text{by}~ (\ref{id24082610})) \\
&\approx \bu+t(\bq)+\bp_1t(\bq)^2\bp_2+p(\bq)\bp_1\bp_2t(\bq) &&(\text{by}~ (\ref{id24082601}))\\
&\approx \bu+t(\bq)+\bp_1t(\bq)^2\bp_2+\bq. &&(\text{by}~ (\ref{44201}),(\ref{44202}))
\end{align*}

\textbf{Subcase 2.2.} $m(t(\bq), \bu_j)\neq1$ or $t(\bq)\neq t(\bu_j)$ for all $\bu_j \in D_\bq(\bu)$.
We shall prove that $t(\bq)\notin c(\bu_k)$ and $\ell(\bu_k)\geq 2$ for some $\bu_k \in D_\bq(\bu)$.
Indeed, suppose that this is not true. Then for any $\bu_k \in D_\bq(\bu)$, $t(\bq)\in c(\bu_k)$ or $\ell(\bu_k)=1$.
Let $\varphi: P_f(X^+) \to S_{(4, 442)}$ be a homomorphism such that
$\varphi(t(\bq))=4$, $\varphi(x)=3$ if $x\in c(p(\bq))$ and $\varphi(x)=2$ otherwise.
It is easy to see that $\varphi(\bu)=3$ and $\varphi(\bq)=4$,
and so $\varphi(\bu) \neq \varphi(\bu+\bq)$, a contradiction.
So there exists $\bu_k \in D_\bq(\bu)$ such that $t(\bq)\notin c(\bu_k)$ and $\ell(\bu_k)\geq 2$.
We deduce
\[
\bu \approx \bu+\bu_k\stackrel{(\ref{44203})}\approx \bu+\bu_k+\bu_k\bq\stackrel{(\ref{44201}),(\ref{44202})}\approx \bu+\bu_k+\bq.
\]
This derives $\bu \approx \bu+\bq$.
\end{proof}

It is easy to see that $S_{(4, 446)}$ and $S_{(4, 442)}$ have dual multiplications.
By Proposition $\ref{pro44201}$ we immediately deduce
\begin{cor}
The ai-semiring $S_{(4, 446)}$ is finitely based.
\end{cor}

\begin{pro}\label{pro44301}
$\mathsf{V}(S_{(4, 443)})$ is the ai-semiring variety defined by the identities
\begin{align}
xyz & \approx yxz; \label{44301}\\
x^2y &\approx xy; \label{44302}\\
x &\approx x+yx. \label{44303}
\end{align}
\end{pro}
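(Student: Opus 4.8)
The plan is to follow the template of the neighbouring propositions: first check that $S_{(4,443)}$ satisfies (\ref{44301})--(\ref{44303}), and then show that every nontrivial identity $\bu\approx\bu+\bq$ of $S_{(4,443)}$ (with $\bu=\bu_1+\cdots+\bu_n$ and $\bu_i,\bq\in X^+$) is derivable from (\ref{44301})--(\ref{44303}) together with the axioms of $\mathbf{AI}$. The computational backbone is the fact that in $S_{(4,443)}$ a product $x_1\cdots x_m$ equals $x_m$ when $x_1,\dots,x_{m-1}\in\{1,3\}$ and equals $2$ otherwise; hence the value of a word $\bw$ under any substitution depends only on the pair $(c(p(\bw)),t(\bw))$. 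I would first record, as a consequence of (\ref{44301}) and (\ref{44302}) alone, that $t(\bw)$ is an invariant while the letters preceding it may be permuted and deduplicated, so that any $\bw$ with $\ell(\bw)\ge2$ is provably equal to a normal form $z_1z_2\cdots z_k\,t(\bw)$ whose prefix letters are exactly the distinct elements of $c(p(\bw))$. In particular $\bw_1\approx\bw_2$ is derivable from (\ref{44301}),(\ref{44302}) whenever $t(\bw_1)=t(\bw_2)$ and $c(p(\bw_1))=c(p(\bw_2))$.

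Next I extract the structural constraints on $\bu$ and $\bq$. Since the null semiring $N_2$ embeds as $\{2,4\}$, and $S_{46}\cong\{1,2,4\}$ (exactly as in the companion result for $S_{(4,442)}$, whose multiplicative block on $\{1,2,4\}$ coincides with the present one), nontriviality forces $\ell(\bq)\ge2$; one may also invoke Lemma \ref{lem4601} for this. The crucial step, and the one I expect to be the main obstacle, is to produce a single summand $\bu^{*}\in D_{\bq}(\bu)$ with $t(\bu^{*})=t(\bq)$ and $c(p(\bu^{*}))\subseteq c(p(\bq))$ (equivalently, $t(\bq)$ occurs in $\bu^{*}$ only as its last letter whenever $m(t(\bq),\bq)=1$). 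The difficulty is that the ``correct tail'' feature and the ``tail of multiplicity one'' feature are what the subalgebras $\{1,2,3\}$ and $\{1,2,4\}$ supply, via Lemma \ref{lem001} and Lemma \ref{lem4601}, but a priori on two different summands. I would instead obtain both features on one summand by a single direct evaluation: define $\varphi\colon P_f(X^+)\to S_{(4,443)}$ by $\varphi(t(\bq))=1$, $\varphi(x)=3$ for $x\in c(p(\bq))\setminus\{t(\bq)\}$, and $\varphi(x)=2$ otherwise. Then $\varphi(\bq)=1$, while a summand $\bu_i$ can take the value $1$ only if $t(\bu_i)=t(\bq)$ and $c(p(\bu_i))\subseteq c(p(\bq))$; since no variable is sent to $4$, no word evaluates to $4$, so the join $3+4$ cannot manufacture a $1$. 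Hence, were no such $\bu^{*}$ to exist, all summands would lie in $\{2,3\}$, giving $\varphi(\bu)\ne1=\varphi(\bu+\bq)$ and contradicting $S_{(4,443)}\models\bu\approx\bu+\bq$.

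Finally I would assemble the derivation. Having $\bu^{*}$ with $t(\bu^{*})=t(\bq)$ and $c(p(\bu^{*}))\subseteq c(p(\bq))$, choose a word $\bp$ whose content is $c(p(\bq))\setminus c(p(\bu^{*}))$. Then $\bp\bu^{*}$ and $\bq$ share the same last letter and the same prefix content, so $\bp\bu^{*}\approx\bq$ by the normal-form remark; applying (\ref{44303}) with $x\mapsto\bu^{*}$ and $y\mapsto\bp$ gives $\bu^{*}\approx\bu^{*}+\bp\bu^{*}$, whence
\[
\bu\approx\bu+\bu^{*}\approx\bu+\bu^{*}+\bp\bu^{*}\approx\bu+\bu^{*}+\bq\approx\bu+\bq .
\]
If $c(p(\bq))\setminus c(p(\bu^{*}))$ is empty the word $\bp$ disappears and $\bu^{*}\approx\bq$ already, so the same chain applies. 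This proves the identity, and the only genuinely delicate point is the existence of the single summand $\bu^{*}$, which the substitution above secures.
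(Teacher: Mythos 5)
Your plan reproduces the paper's skeleton (verify the identities, force $\ell(\bq)\ge 2$ via the $\{2,4\}$ or $\{1,2,4\}$ subalgebras, find one good summand, then finish with the normal form given by (\ref{44301}), (\ref{44302}) and one application of (\ref{44303})), but the step you yourself identify as the crux contains a genuine error. The substitution $\varphi(t(\bq))=1$, $\varphi(x)=3$ for $x\in c(p(\bq))\setminus\{t(\bq)\}$, $\varphi(x)=2$ otherwise, does not prove your claim that a summand can take the value $1$ only if $t(\bu_i)=t(\bq)$ \emph{and} $c(p(\bu_i))\subseteq c(p(\bq))$. In $S_{(4,443)}$ the element $1$, like $3$, is a left identity, so occurrences of $t(\bq)$ in the \emph{prefix} of a summand are invisible to your $\varphi$: a summand evaluates to $1$ exactly when its tail is $t(\bq)$ and its prefix content lies in $c(p(\bq))\cup\{t(\bq)\}=c(\bq)$, not in $c(p(\bq))$. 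For instance $\bu_i=t(\bq)^2$ evaluates to $1$. Hence in the delicate case $m(t(\bq),\bq)=1$ you only obtain a summand $\bu^{*}$ with $t(\bu^{*})=t(\bq)$ and $c(p(\bu^{*}))\subseteq c(\bq)$, and your closing chain then breaks: if $t(\bq)\in c(p(\bu^{*}))$, then $c(p(\bp\bu^{*}))$ contains $t(\bq)\notin c(p(\bq))$, so $\bp\bu^{*}\approx\bq$ is not an instance of your normal-form remark, and indeed it is false in $S_{(4,443)}$ (send $t(\bq)\mapsto 4$ and all other variables to $3$: then $\bq\mapsto 4$ while $\bp\bu^{*}\mapsto 2$). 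Concretely, with $\bq=xy$ your test is passed by $\bu=y^2$ since $\varphi(y^2)=1$, yet $y^2\approx y^2+xy$ fails in $S_{(4,443)}$ and $xy^2\not\approx xy$; since (\ref{44303}) only creates summands having an existing summand as suffix, no derivation could succeed from such a $\bu^{*}$ alone.

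The repair is precisely the paper's choice of substitution in this case: when $m(t(\bq),\bq)=1$, send $t(\bq)\mapsto 4$, $x\mapsto 3$ for $x\in c(p(\bq))$, and $x\mapsto 2$ otherwise. Because $4$ left-annihilates (every product beginning with $4$ or $2$ equals $2$), any summand with $t(\bq)$ in its prefix is killed, while $\varphi(\bq)=4$; since no letter maps to $1$, the summand values lie in $\{2,3,4\}$, and if none equals $4$ then $\varphi(\bu)\in\{2,3\}$ whereas $\varphi(\bu)+4\in\{4,1\}$, a contradiction. A value-$4$ summand $\bu_j$ automatically satisfies $\bu_j\in D_\bq(\bu)$, $t(\bu_j)=t(\bq)$ and $m(t(\bq),\bu_j)=1$, after which your computation goes through verbatim: $p(\bq)\bu_j\approx x_1^2x_2^2\cdots x_m^2t(\bq)\approx\bq$ by (\ref{44301}) and (\ref{44302}), and (\ref{44303}) yields $\bu\approx\bu+\bu_j\approx\bu+\bu_j+p(\bq)\bu_j\approx\bu+\bq$. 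Your treatment of the other case $m(t(\bq),\bq)\ge 2$ is correct as written, since there $c(p(\bq))=c(\bq)$ and your $\varphi$ does suffice; it corresponds to the paper's Case 1, which instead extracts its summand (with $c(\bu_i)\subseteq c(\bq)$ and $t(\bu_i)=t(\bq)$) from the subalgebra $R_2^0\cong\{1,2,3\}$ via Lemma \ref{lem001} and then multiplies by $\bq$ on the left rather than by a padding word $\bp$.
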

\begin{proof}
It is easy to verify that $S_{(4, 443)}$ satisfies the identities (\ref{44301})--(\ref{44303}).
In the remainder we need only prove that every ai-semiring identity of $S_{(4, 443)}$
is derivable from (\ref{44301})--(\ref{44303}) and the identities defining $\mathbf{AI}$.
Let $\bu \approx \bu+\bq$ be such a nontrivial identity,
where $\bu=\bu_1+\bu_2+\cdots+\bu_n$ and $\bu_i, \bq \in X^+$, $1 \leq i \leq n$.
Since $R_2^0$ is isomorphic to $\{1, 2, 3\}$, it follows that $R_2^0$ satisfies $\bu \approx \bu+\bq$
and so there exists $\bu_i \in \bu$ such that $c(\bu_i) \subseteq c(\bq)$ and $t(\bu_i)=t(\bq)$.
It is easy to check that $S_{46}$ is isomorphic to $\{1, 2, 4\}$ and so $S_{46}$ also satisfies $\bu \approx \bu+\bq$.
By Lemma \ref{lem4601} we have that $\ell(\bq)\geq 2$. Consider the following two cases.

\textbf{Case 1.} $ m(t(\bq), \bq)\geq 2$. Then
\[
\bu \approx \bu+\bu_i\stackrel{(\ref{44303})}\approx \bu+\bu_i+\bq\bu_i\stackrel{(\ref{44301}),(\ref{44302})}\approx \bu+\bu_i+\bq.
\]
So we obtain $\bu \approx \bu+\bq$.

\textbf{Case 2.} $ m(t(\bq), \bq)=1$.
Let $c(p(\bq))=\{x_1, x_2, \ldots, x_m\}$.
By the identities (\ref{44301}) and (\ref{44302}) we can deduce the identity
\begin{equation}\label{id24090301}
\bq \approx x_1^2x_2^2\cdots x_m^2t(\bq).
\end{equation}
We shall show that there exists $\bu_j \in D_\bq(\bu)$ such that $m(t(\bq), \bu_j)=1$ and $t(\bq)=t(\bu_j)$.
Indeed, suppose that this is not true.
Consider the semiring homomorphism $\varphi: P_f(X^+) \to S_{(4, 443)}$ defined by
$\varphi(t(\bq))=4$, $\varphi(x)=3$ for every $x\in c(p(\bq))$ and $\varphi(x)=2$ otherwise.
Then $\varphi(\bu)=3$, $\varphi(\bq)=4$ and so $\varphi(\bu)\neq\varphi(\bu+\bq)$, a contradiction.
So there exists $\bu_j \in D_\bq(\bu)$ such that $m(t(\bq), \bu_j)=1$ and $t(\bq)=t(\bu_j)$.
Furthermore, we have
\[
\bu \approx \bu+\bu_j\stackrel{(\ref{44303})}\approx \bu+\bu_j+p(\bq)\bu_j
\stackrel{(\ref{44301}),(\ref{44302})}\approx \bu+\bu_j+x_1^2x_2^2\cdots x_m^2t(\bq)\stackrel{(\ref{id24090301})}\approx \bu+\bu_j+\bq.
\]
This derives $\bu \approx \bu+\bq$.
\end{proof}

\begin{cor}
The ai-semiring $S_{(4, 456)}$ is finitely based.
\end{cor}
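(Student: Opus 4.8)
The plan is to invoke the duality principle used repeatedly throughout this section. First I would place the two multiplication tables side by side and observe that the Cayley table of $S_{(4, 456)}$ is exactly the transpose of the Cayley table of $S_{(4, 443)}$: reading off the columns of $S_{(4, 443)}$ as rows gives precisely the rows $1\,2\,1\,2$, $2\,2\,2\,2$, $3\,2\,3\,2$, $4\,2\,4\,2$ of $S_{(4, 456)}$. In other words, $S_{(4, 456)}$ is obtained from $S_{(4, 443)}$ by reversing the order of multiplication, while both semirings carry the same quasi-antichain additive reduct determined by Figure \ref{figure01}. Hence $S_{(4, 456)}$ and $S_{(4, 443)}$ have dual multiplications, exactly as in the earlier corollaries (e.g.\ the pair $S_{(4, 405)}$, $S_{(4, 401)}$).

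The second step is to appeal to the invariance of the finite basis property under this duality. A semiring $S$ satisfies an ai-semiring identity $\bu \approx \bu + \bq$ if and only if its opposite satisfies the identity obtained by reversing every word occurring in $\bu$ and $\bq$; applying this word-reversal to the finite basis $(\ref{44301})$--$(\ref{44303})$ produced in Proposition \ref{pro44301} yields a finite basis for $\mathsf{V}(S_{(4, 456)})$. Consequently Proposition \ref{pro44301} immediately gives that $S_{(4, 456)}$ is finitely based. I anticipate no genuine obstacle here: the sole verification required is the transpose comparison of the two tables, which is purely mechanical, and the duality of the finite basis property is a standard fact already exploited for every other dual pair in the paper.
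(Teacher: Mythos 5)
Your proposal is correct and is exactly the paper's argument: the paper likewise observes that $S_{(4, 456)}$ and $S_{(4, 443)}$ have dual multiplications (your transpose check of the Cayley tables is accurate) and then invokes Proposition \ref{pro44301}. Your additional remark that word-reversal transports the finite basis $(\ref{44301})$--$(\ref{44303})$ to a finite basis for $\mathsf{V}(S_{(4, 456)})$ just makes explicit the standard duality fact the paper leaves implicit.
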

\begin{proof}
It is easy to see that $S_{(4, 456)}$ and $S_{(4, 443)}$ have dual multiplications.
By Proposition $\ref{pro44301}$ we obtain that $S_{(4, 456)}$ is finitely based.
\end{proof}

\section{Equational basis of $S_{(4, 427)}$ and $S_{(4, 428)}$}
In this section we shall show that $S_{(4, 427)}$ and $S_{(4, 428)}$
are both finitely based. It is easy to see that these two semirings contain copies of $S_{41}$.
Let $\bp$ be a word in $X^+$ and $Y$ a subset of $X$.
Then $i(\bp)$ denotes the the word that is obtained from $\bp$ by
retaining the first occurrence of each variable, and
$h_Y(\bp)$ denotes the first letter in the word
that is obtained from $\bp$ by deleting all variables in $Y$.
The following result, which is due to \cite[Lemma 2.4]{gpz}, provides a solution of the equational problem for $S_{41}$.

\begin{lem}\label{lem4101}
Let $\bu\approx \bu+\bq$ be an ai-semiring identity such that
$\bu=\bu_1+\bu_2+\cdots+\bu_n$ and $\bu_i, \bq \in X^+$, $1\leq i \leq n$.
Then $\bu\approx \bu+\bq$ is satisfied by $S_{41}$ if and only if
for any $Y \subseteq c(\bq)$, there exists $\bu_j\in \bu$ such that $h_Y(\bu_j)=h_Y(\bq)$.
\end{lem}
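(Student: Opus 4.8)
The plan is to prove both implications, with the engine being an explicit description of how $S_{41}$ evaluates a word. First I would record the key \emph{evaluation lemma}: $S_{41}$ has a (two-sided) identity element $e$, and its remaining two elements form a left-zero band, so that for any substitution $\varphi\colon P_f(X^+)\to S_{41}$ and any word $\bp$, the value $\varphi(\bp)$ depends only on the first letter of $\bp$ that is not sent to $e$; equivalently $\varphi(\bp)=\varphi(h_Y(\bp))$, where $Y=\{x\in c(\bp)\mid \varphi(x)=e\}$, with the convention $\varphi(\bp)=e$ in the degenerate case $c(\bp)\subseteq Y$ (so that $h_Y(\bp)$ is the empty word). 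This reduces every product to its $h_Y$-head and is the fact that makes the operator $h_Y$ the right invariant. I would verify it by a short induction on $\ell(\bp)$ directly from the multiplication table: a leading $e$ is absorbed by the left-identity law, and once a non-$e$ letter is reached the left-zero law freezes the value.

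For sufficiency, assume the combinatorial condition and fix an arbitrary $\varphi\colon P_f(X^+)\to S_{41}$. Put $Y_0=\{x\in c(\bq)\mid \varphi(x)=e\}\subseteq c(\bq)$ and apply the hypothesis to $Y_0$ to obtain $\bu_j\in\bu$ with $h_{Y_0}(\bu_j)=h_{Y_0}(\bq)$. The point is that this head equality survives enlarging $Y_0$ to the full preimage $\varphi^{-1}(e)$: the common head $z=h_{Y_0}(\bq)$ lies in $c(\bq)\setminus Y_0$ and hence is not sent to $e$, every letter of $\bu_j$ preceding $z$ already lies in $Y_0$, and variables outside $c(\bq)$ do not occur in $\bq$; so $\varphi(\bu_j)=\varphi(z)=\varphi(\bq)$ by the evaluation lemma (the boundary case $Y_0=c(\bq)$ giving $\varphi(\bq)=\varphi(\bu_j)=e$ via $c(\bu_j)\subseteq c(\bq)$). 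Since addition is the semilattice join, $\varphi(\bq)\le\varphi(\bu)$, whence $\varphi(\bu+\bq)=\varphi(\bu)$; as $\varphi$ was arbitrary, $S_{41}$ satisfies $\bu\approx\bu+\bq$.

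For necessity I would argue contrapositively: if some $Y\subseteq c(\bq)$ admits no $\bu_j$ with $h_Y(\bu_j)=h_Y(\bq)$, I construct a substitution separating $\bu$ from $\bu+\bq$. Send every variable of $Y$ to $e$, send $z=h_Y(\bq)$ to the left-zero element sitting highest in the additive order, and send all remaining variables to elements chosen to remain strictly below that top in the join. Then the evaluation lemma gives $\varphi(\bq)=\varphi(z)$, while for each $\bu_j$ the failure $h_Y(\bu_j)\ne z$ forces $\varphi(\bu_j)$ into the set of values lying below $\varphi(z)$; hence $\varphi(\bq)\not\le\varphi(\bu)$ and $\varphi(\bu+\bq)\ne\varphi(\bu)$, contradicting that the identity holds.

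The main obstacle is precisely this last construction: it is where the concrete multiplication and additive order of $S_{41}$ must be invoked, since one must be able to realise $\varphi(\bq)$ as a join-irreducible value that the join of the $\varphi(\bu_j)$ cannot reach. The boundary case $Y=c(\bq)$ (empty head) likewise needs the convention that empty heads match only empty heads, recorded through $c(\bu_j)\subseteq c(\bq)$. Once the evaluation lemma and this separating substitution are in hand, both directions close routinely.
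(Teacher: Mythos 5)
Your proposal is correct, but there is nothing in the paper to compare it against: the paper does not prove this lemma, it imports it verbatim from \cite[Lemma 2.4]{gpz} (Ghosh--Pastijn--Zhao), so your argument is a self-contained reconstruction rather than a variant of an internal proof. Judged on its own merits it is sound, and the structural facts you posit are exactly right for $S_{41}$: realised as $\{1,2,4\}$ inside $S_{(4,427)}$, the element $4$ is a two-sided identity, $1$ and $2$ are left zeros, and the additive order restricts to the chain $2<4<1$, so the additive top is a join-irreducible left-zero element --- precisely what your separating substitution requires (send $Y$ to $4$, send $h_Y(\bq)$ to $1$, send everything else to $2$; then $\varphi(\bq)=1$ while $\varphi(\bu)\le 4$, so $\varphi(\bu+\bq)\neq\varphi(\bu)$). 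The two delicate points are also handled correctly. In sufficiency, the passage from $Y_0=\{x\in c(\bq)\mid \varphi(x)=e\}$ to the full preimage $\varphi^{-1}(e)$ is legitimate: any letter of $\bu_j$ preceding the surviving head that is not in $Y_0$ (in particular any letter outside $c(\bq)$, whatever its image) would itself survive the deletion and displace the head, contradicting $h_{Y_0}(\bu_j)=h_{Y_0}(\bq)$; hence $\varphi(\bu_j)=\varphi(h_{Y_0}(\bq))=\varphi(\bq)$ and $\varphi(\bq)\le\varphi(\bu)$ in the semilattice order. And the boundary case $Y=c(\bq)$, where $h_Y(\bq)$ is empty, works under the convention you state (empty heads match only empty heads, i.e.\ $c(\bu_j)\subseteq c(\bq)$): in necessity the construction degenerates to sending $c(\bq)$ to $e$ and all remaining variables to the additive bottom, giving $\varphi(\bq)=e=4$ against $\varphi(\bu)=2$. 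So both implications close as you describe; the only caveat worth recording is the bibliographic one above.
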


\begin{pro}\label{pro42701}
$\mathsf{V}(S_{(4, 427)})$ is the ai-semiring variety defined by the identities
\begin{align}
x^3 &\approx x^2; \label{42701}\\
xyx &\approx xy; \label{42702}\\
x^2y &\approx xy;  \label{id24082725}\\
xy^2 &\approx xy; \label{42703}\\
xy & \approx (xy)^2; \label{42704}\\
x^2 &\approx x^2+x; \label{42705}\\
x+xy &\approx x^2+xy; \label{42706}\\
x^2+yz & \approx x^2+yz+yx; \label{42707}\\
x+yxz &\approx x+yxz+yx; \label{42708}\\
x_1+x_2+x_3x_2x_4 &\approx x_1+x_2+x_3x_2x_4+x_3x_2x_1x_4. \label{42709}
\end{align}
\end{pro}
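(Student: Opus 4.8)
The plan is to follow the template of the earlier propositions: first check (routinely) that $S_{(4,427)}$ satisfies (\ref{42701})--(\ref{42709}), then show that every nontrivial identity $\bu\approx\bu+\bq$ of $S_{(4,427)}$, with $\bu=\bu_1+\cdots+\bu_n$, is derivable. \textbf{Normalisation.} I would first exploit the ``structural'' identities (\ref{42701})--(\ref{42704}) to put each word into a canonical form. Since identities are closed under substitution of arbitrary terms, (\ref{42702}) gives $\bs\bt\bs\approx\bs\bt$, (\ref{id24082725}) gives $\bs^2\bt\approx\bs\bt$ and (\ref{42703}) gives $\bs\bt^2\approx\bs\bt$ for all words $\bs,\bt$; together with $x^3\approx x^2$ these delete every repeated occurrence of a variable. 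Hence, modulo (\ref{42701})--(\ref{42704}), each word $\bw$ equals its first-occurrence word $i(\bw)$ when $|c(\bw)|\geq2$, and equals $x^2$ when $c(\bw)=\{x\}$ and $\ell(\bw)\geq2$. I may therefore assume every $\bu_i$ and $\bq$ is repetition-free or a single square, and it suffices to derive $\bu\approx\bu+\bq$ for such $\bq$.

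\textbf{Necessary conditions.} The semiring $S_{(4,427)}$ contains $S_{41}$ as the subalgebra on $\{1,2,4\}$, so by Lemma \ref{lem4101}, for every $Y\subseteq c(\bq)$ there is a summand $\bu_j$ with $h_Y(\bu_j)=h_Y(\bq)$; reading this over the subsets $Y=\{x_1,\dots,x_{i-1}\}$ recovers the entire left-to-right first-occurrence order of $\bq$ among the summands of $\bu$. Moreover the two-element subalgebras $\{1,2\}\cong L_2$, $\{1,3\}\cong T_2$, $\{1,4\}\cong M_2$ and $\{2,4\}\cong D_2$, together with \cite[Lemma 1.1]{sr}, give respectively $H_{\bq}(\bu)\neq\emptyset$, $L_{\geq2}(\bu)\neq\emptyset$, $c(\bq)\subseteq c(\bu)$ and $D_{\bq}(\bu)\neq\emptyset$. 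Where these do not suffice I would argue directly with substitutions into $S_{(4,427)}$, in the spirit of Lemma \ref{lem4401} and Proposition \ref{pro24082701}: for a chosen variable send it to $4$, the remaining variables of $\bq$ to $3$, and all others to $2$, forcing a summand whose structure matches $\bq$ at that variable.

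\textbf{Derivation.} I would split on the form of $\bq$. If $\bq$ is a single variable $x$, then $D_{\bq}(\bu)\neq\emptyset$ forces a summand of content $\{x\}$, which by nontriviality must be $x^2$, and $x^2\approx x^2+x$ (\ref{42705}) finishes; the square case $\bq=x^2$ is handled similarly. If $\bq=x_1x_2\cdots x_m$ is repetition-free with $m\geq2$, then every variable of $\bq$ has multiplicity one, so the task is purely one of order. Using $D_{\bq}(\bu)$ and $L_{\geq2}(\bu)$ I would first produce a summand of content inside $c(\bq)$ and length $\geq2$, and then thread $x_1,\dots,x_m$ into a single product in the correct order: the splice identity (\ref{42709}) inserts a singly-occurring variable immediately after a prescribed neighbour in an already-built product, (\ref{42708}) truncates a product just after a chosen variable, and (\ref{42707}) inserts a variable next to an available square. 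Iterating these, guided at each step by the summand furnished by the $h_Y$-condition, I would assemble a word with the same first-occurrence form as $\bq$, which by (\ref{42701})--(\ref{42704}) equals $\bq$, yielding $\bu\approx\bu+\bq$.

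\textbf{Main obstacle.} The delicate point is the case $\ell(\bq)\geq2$. Unlike the commutative propositions earlier in the paper, here neither the multiplication of $S_{(4,427)}$ nor the identity list is commutative, so content and multiplicity data alone do not pin down $\bq$ -- its left-to-right order must be reconstructed. I expect the crux to be proving that the summands guaranteed by the $h_Y$-condition of Lemma \ref{lem4101} can always be fed to the splicing identities (\ref{42708}) and (\ref{42709}) so as to place each singly-occurring variable between exactly the right neighbours, and verifying that the optionally-empty $x_1$ in (\ref{42709}) covers the head of $\bq$ as well as the interior positions. By contrast, the square-variable manipulations through (\ref{42705})--(\ref{42707}) should be routine.
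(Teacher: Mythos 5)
Your overall architecture matches the paper's proof: normalisation to first-occurrence form via (\ref{42701})--(\ref{42703}), the subalgebra $\{1,2,4\}\cong S_{41}$ with Lemma \ref{lem4101} applied to the prefix sets $Y_j=\{x_1,\ldots,x_{j-1}\}$, and iterative assembly of $x_1x_2\cdots x_m$ using (\ref{42707})--(\ref{42709}). But your stock of necessary conditions has a concrete gap. You extract only the four separate conditions from two-element subalgebras ($H_\bq(\bu)\neq\emptyset$, $L_{\geq2}(\bu)\neq\emptyset$, $c(\bq)\subseteq c(\bu)$, $D_\bq(\bu)\neq\emptyset$), whereas the derivation needs, already at its very first step (the base case $j=1$, and likewise the subcase $\bq=x^2$ with only the bare summand $x$ available), the \emph{joint} condition: a single summand $\bu_k$ with $\ell(\bu_k)\geq2$ \emph{and} $h(\bu_k)=h(\bq)$. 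The paper obtains this from the three-element subalgebra $\{1,2,3\}\cong S_{58}$ via Lemma \ref{lem5801}(3), and it is strictly stronger than the conjunction of your conditions: for $\bu=x+yz$ and $\bq=xy$ all four of your conditions hold, yet $x+yz\approx x+yz+xy$ fails in $S_{(4,427)}$ (send $x\mapsto 3$ and $y,z\mapsto 2$; the left side evaluates to $3$, the right side to $1$). So your hedge ``where these do not suffice I would argue directly with substitutions'' is carrying real, unacknowledged weight precisely where the proof has to begin.

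Relatedly, your claimed opening move in the main case --- ``Using $D_\bq(\bu)$ and $L_{\geq2}(\bu)$ I would first produce a summand of content inside $c(\bq)$ and length $\geq2$'' --- is not a valid inference: $D_\bq(\bu)\cap L_{\geq2}(\bu)$ can be empty. For instance $\bu=x+xy$, $\bq=x^2$ gives a genuine identity of $S_{(4,427)}$ (derivable via (\ref{42706}) and (\ref{42705})), but the only summand with content in $c(\bq)$ is the singleton $x$. This emptiness is exactly the paper's Case 2, which requires its own mechanism: choose the minimal $x_i\in D_\bq(\bu)$, prove by a tailored homomorphism into the four-element algebra that some summand either has prefix $x_jx_i$ or has head $x_i$ and length $\geq2$, and then manufacture a usable length-$\geq2$ summand ($x_jx_i$ via (\ref{42708}), or $x_i^2$ via (\ref{42706})) before re-entering the inductive threading; a similar ad hoc substitution is needed inside the induction itself (the paper's Subcase 1.2, where the $h_{Y_j}$-witness is the bare variable $x_j$ and one must find a summand with content in $Y_j$ to feed (\ref{42709})). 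You correctly identified the splice identities and the crux (reconstructing the left-to-right order of $\bq$), but without the joint $S_{58}$-condition and the Case-2 analysis the induction cannot be started in general, so the plan as written has a genuine gap rather than being merely terse.
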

\begin{proof}
It is easy to check that $S_{(4, 427)}$ satisfies the identities (\ref{42701})--(\ref{42709}).
In the remainder it is enough to show that every ai-semiring identity of $S_{(4, 427)}$
is derivable from (\ref{42701})--(\ref{42709}) and the identities defining $\mathbf{AI}$.
Let $\bu \approx \bu+\bq$ be such a nontrivial identity,
where $\bu=\bu_1+\bu_2+\cdots+\bu_n$ and $\bu_i, \bq\in X^+$, $1 \leq i \leq n$.
Since $D_2$ is isomorphic to $\{2, 4\}$, it follows that $D_2$ satisfies $\bu \approx \bu+\bq$
and so $D_{\bq}(\bu)$ is nonempty. So there exists $\bu_{i_1} \in \bu$ such that $c(\bu_{i_1})\subseteq c(\bq)$.
It is easy to see that $S_{58}$ is isomorphic to $\{1, 2, 3\}$
and so $S_{58}$ satisfies $\bu \approx \bu+\bq$.
If $\ell(\bq)\geq 2$, then by Lemma \ref{lem5801} $L_{\geq 2}(\bu)\cap H_{\bq}(\bu)$ is nonempty
and so $\ell(\bu_k)\geq 2$ and $h(\bu_k)=h(\bq)$ for some $\bu_k \in \bu$.

Let $|c(\bq)|=1$.
If $\ell(\bq)=1$, then $\bu_{i_1}=\bq^\ell$ for some $\ell\geq 2$. So we have
\[
\bu\approx \bu+\bu_{i_1} \approx \bu+\bq^\ell \stackrel{(\ref{42705})}\approx \bu+\bq^\ell+\bq.
\]
Now assume that $\ell(\bq)\geq 2$. Then $\bq=x^r$ for some $x \in X$ and some $r \geq 2$.
This implies that $\bu_{i_1}=x^s$ for some $s \geq 1$ and $\bu_k=xs(\bu_k)$, where $s(\bu_k)$ is nonempty.
If $s=1$, then $\bu_{i_1}=x$ and so
\[
\bu\approx \bu+x+xs(\bu_k)\stackrel{(\ref{42706})}\approx \bu+x^2+xs(\bu_k)\stackrel{(\ref{42701})}
\approx \bu+x^r+xs(\bu_k)\approx \bu+\bq+xs(\bu_k).
\]
If $s\geq 2$, then
\[
\bu \approx \bu+\bu_{i_1}\approx\bu+x^s\stackrel{(\ref{42701})}\approx\bu+x^r\approx\bu+\bq.
\]
This implies the identity $\bu\approx \bu+\bq$.

Now let $|c(\bq)|\geq 2$.
By (\ref{42701})--(\ref{42703}) we deduce the identity $\bq \approx i(\bq)$
and so $\bu \approx \bu+i(\bq)$ is satisfied by $S_{(4, 427)}$.
Let $i(\bq)=x_1x_2\cdots x_m$, $m\geq 2$.
Since $S_{41}$ is isomorphic to $\{1, 2, 4\}$, we have that $S_{41}$ satisfies $\bu \approx \bu+i(\bq)$.
By Lemma \ref{lem4101} it follows that for any $2\leq j\leq m$ and $Y_j=\{x_1, x_2, \ldots ,x_{j-1}\}$,
there exists $\bu_j \in \bu$
such that $\bu_j=\bu_{j_1}x_j\bu_{j_2}$ for some $\bu_{j_1}, \bu_{j_2} \in X^*$, where $c(\bu_{j_1})\subseteq Y_j$.

\textbf{Case 1.}
$D_\bq(\bu)\cap L_{\geq 2}(\bu)$ is nonempty.
Choose a word $\bp$ in $D_\bq(\bu)\cap L_{\geq 2}(\bu)$.
We shall show by induction on $j$ that $\bu \approx \bu+x_1x_2\cdots x_j\bp$
is derivable from (\ref{42701})--(\ref{42709}), $1 \leq j\leq m$.
If $j=1$, then
\[
\bu \approx \bu+\bp+\bu_k \stackrel{(\ref{42704})}\approx \bu+\bp^2+x_1s(\bu_k) \stackrel{(\ref{42707})}\approx \bu+\bp^2+x_1s(\bu_k)+x_1\bp.
\]
This implies $\bu \approx \bu+x_1\bp$.
Let $2\leq j\leq m$.
Suppose that $\bu\approx \bu+x_1x_2\cdots x_{j-1}\bp$ is derivable from (\ref{42701})--(\ref{42709}).
If $h(\bp)=x_j$, then
\[
\bu\approx \bu+x_1x_2\cdots x_{j-1}\bp \stackrel{(\ref{id24082725})}\approx \bu+x_1x_2\cdots x_{j-1}x_j\bp.
\]
Now assume that $h(\bp)\neq x_j$.

\textbf{Subcase 1.1.} $\ell(\bu_j)\geq 2 $ and $h_{Y_j}(\bq)=h_{Y_j}(\bu_j)$ for some $\bu_j \in \bu $.
Then $\bu_{j}=\bu_{j_1}x_j\bu_{j_2}$,
where $c(\bu_{j_1})\subseteq Y_j$,
$\bu_{j_1}$ and $\bu_{j_2}$ can not be empty words simultaneously.
Now we have
\begin{align*}
\bu
&\approx \bu+\bu_j+x_1x_2\cdots x_{j-1}\bp  \\
&\approx \bu+\bu_j^2+x_1x_2\cdots x_{j-1}\bp &&(\text{by}~(\ref{42704}))\\
&\approx \bu+(\bu_{j_1}x_j\bu_{j_2})^2+x_1x_2\cdots x_{j-1}\bp\\
&\approx \bu+(\bu_{j_1}x_j\bu_{j_2})^2+x_1x_2\cdots x_{j-1}\bp+x_1x_2\cdots x_{j-1}\bu_{j_1}x_j\bu_{j_2} &&(\text{by}~(\ref{42707}))\\
&\approx \bu+(\bu_{j_1}x_j\bu_{j_2})^2+x_1x_2\cdots x_{j-1}\bp+x_1x_2\cdots x_{j-1}x_j\bu_{j_2}.
&&(\text{by}~(\ref{42702}), (\ref{id24082725}), (\ref{42703}))
\end{align*}
This implies the identity
\[
\bu\approx \bu+x_1x_2\cdots x_{j-1}x_j\bu_{j_2}.
\]
Furthermore, we can deduce
\begin{align*}
\bu
&\approx \bu+x_1x_2\cdots x_{j-1}\bp+x_1x_2\cdots x_{j-1}x_j\bu_{j_2}\\
&\approx \bu+{(x_1x_2\cdots x_{j-1}\bp)}^2+x_1x_2\cdots x_{j-1}x_j\bu_{j_2}&&(\text{by}~(\ref{42704}))\\
&\approx \bu+{(x_1x_2\cdots x_{j-1}\bp)}^2+x_1x_2\cdots x_{j-1}x_j\bu_{j_2}+x_1x_2\cdots x_{j-1}x_j\bp.
&&(\text{by}~(\ref{42707}), (\ref{42702}))
\end{align*}
So we can obtain $\bu \approx \bu+x_1x_2\cdots x_{j-1}x_j\bp$.

\textbf{Subcase 1.2.} $\ell(\bu_j)=1$ if $\bu_j \in \bu $ and $h_{Y_j}(\bq)=h_{Y_j}(\bu_j)$.
Then $ \bu_{j}=x_j.$
We shall show that there exists $\bu_{j_3}\in \bu$ such that $c(\bu_{j_3})\subseteq Y_j$.
If this is not true,
then we consider the semiring homomorphism $\varphi: P_f(X^+) \to S_{(4, 427)}$ defined by
$\varphi(x)=4$ if $x\in Y_j$, $\varphi(x)=3$ if $x=x_j$, and $\varphi(x)=2$ otherwise.
It is easy to see that $\varphi(\bu)=3$ and $\varphi(\bq)=1$, a contradiction.
So $c(\bu_{j_3})\subseteq Y_j$ for some $\bu_{j_3}\in \bu$.
We deduce
\begin{align*}
\bu
&\approx \bu+\bu_j+\bu_{j_3}+x_1x_2\cdots x_{j-1}\bp  \\
&\approx \bu+x_j+\bu_{j_3}+x_1x_2\cdots x_{j-1}\bu_{j_3}\bp &&(\text{by}~(\ref{42702}), (\ref{id24082725}), (\ref{42703}))\\
&\approx \bu+x_j+\bu_{j_3}+x_1x_2\cdots x_{j-1}\bu_{j_3}\bp+x_1x_2\cdots x_{j-1}\bu_{j_3}x_j\bp &&(\text{by}~(\ref{42709}))\\
&\approx \bu+x_j+\bu_{j_3}+x_1x_2\cdots x_{j-1}\bu_{j_3}\bp+x_1x_2\cdots x_{j-1}x_j\bp.
&&(\text{by}~(\ref{42702}), (\ref{id24082725}), (\ref{42703}))
\end{align*}
This implies $\bu\approx \bu+x_1x_2\cdots x_{j-1}x_j\bp$.

Take $j=m$. We obtain the identity $\bu\approx \bu+x_1x_2\cdots x_m\bp$. So we deduce
\[
\bu\approx \bu+x_1x_2\cdots x_m\bp\approx \bu+i(\bq)\bp
\stackrel{(\ref{42702}), (\ref{id24082725}), (\ref{42703})}\approx \bu+i(\bq)\approx \bu+\bq.
\]
This derives $\bu\approx \bu+\bq$.

\textbf{Case 2.} $D_\bq(\bu)\cap L_{\geq 2}(\bu)$ is empty. Then $D_\bq(\bu)\subseteq L_{1}(\bu)$.
If $\bu_i=x_1$ for some $\bu_i \in D_\bq(\bu)$, then
\[
\bu\approx \bu+\bu_i+\bu_k \approx \bu+x_1+x_1s(\bu_k) \stackrel{(\ref{42706})}\approx \bu+x_1^2+x_1s(\bu_k).
\]
The remaining steps are similar to Case 1.
If $\bu_i\neq x_1$ for all $\bu_i \in D_\bq(\bu)$,
then there exists $2\leq i\leq n$ such that $x_i \in D_\bq(\bu)$, but $x_1, \ldots, x_{i-1} \notin D_\bq(\bu)$.
We shall prove that there exists $\bu_s \in \bu$ such that either $x_jx_i$ is a prefix of $\bu_s$ for some $1\leq j\leq i$
or $\bu_s=x_is(\bu_s)$ and $ \ell(\bu_s)\geq 2$.
Consider the semiring homomorphism $\varphi: P_f(X^+) \to S_{(4, 427)}$ defined by
$\varphi(x_i)=3$, $\varphi(x)=4$ if $x\in Y_j$, and $\varphi(x)=2$ otherwise.
It is easy to see that $\varphi(\bu)=3$ and $\varphi(\bq)=1$, a contradiction.
If there exists $\bu_s \in \bu$ such that $x_jx_i$ is a prefix of $\bu_s$ for some $1\leq j\leq i$,
then $\bu_s=x_jx_i\bu_s'$ for some word $\bu_s'$ and so
\[
\bu\approx \bu+x_i+\bu_s \approx \bu+x_i+x_jx_i\bu_s' \stackrel{(\ref{42708})}\approx \bu+x_i+x_jx_i{\bu_s}'+x_jx_i.
\]
The remaining steps are similar to Case 1.
If there exists $\bu_s \in \bu$ such that $\bu_s=x_is(\bu_s)$ and $ \ell(\bu_s)\geq 2$,
then we have
\[
\bu\approx \bu+x_i+\bu_s \approx \bu+x_i+x_is(\bu_s) \stackrel{(\ref{42706})}\approx \bu+{x_i}^2+x_is(\bu_s).
\]
The remaining steps are similar to Case 1.
\end{proof}

 It is easy to see that $S_{(4, 417)}$ and $S_{(4, 427)}$  have dual multiplications. By Proposition $\ref{pro42701}$ we immediately deduce
\begin{cor}
The ai-semiring $S_{(4, 417)}$ is finitely based.
\end{cor}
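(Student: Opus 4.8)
The plan is to exploit the duality already flagged in the sentence preceding the statement: $S_{(4,417)}$ is the opposite semiring of $S_{(4,427)}$. First I would confirm this by inspecting the two Cayley tables in Table \ref{tb1}. Both semirings carry the same additive reduct, namely the quasi-antichain of Figure \ref{figure01}, so it suffices to check that the multiplication of $S_{(4,417)}$ is the transpose of that of $S_{(4,427)}$. A direct entrywise comparison shows that replacing each product $a\cdot b$ in $S_{(4,427)}$ by $b\cdot a$ reproduces exactly the multiplication table of $S_{(4,417)}$, so $S_{(4,417)}\cong S_{(4,427)}^{\mathrm{op}}$.

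Next I would invoke the standard fact that being finitely based is preserved under passage to the dual ai-semiring. Concretely, let $\overline{\phantom{x}}\colon X^+\to X^+$ be the anti-automorphism that reverses each word, extended to ai-semiring terms by reversing every monomial of a sum while leaving the additive structure intact. This map is an involution, and it induces a bijection between the identities satisfied by an ai-semiring $S$ and those satisfied by $S^{\mathrm{op}}$: an identity $\bu\approx\bv$ holds in $S^{\mathrm{op}}$ if and only if $\overline{\bu}\approx\overline{\bv}$ holds in $S$. Moreover the correspondence respects deducibility, since each defining axiom of $\mathbf{AI}$ is sent to a defining axiom of $\mathbf{AI}$: commutativity and idempotency of $+$ are untouched, multiplicative associativity is self-dual, and the two distributive laws are interchanged. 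Consequently, if $\Sigma$ is a finite equational basis for $\mathsf V(S_{(4,427)})$, then its image $\overline{\Sigma}$ is a finite equational basis for $\mathsf V(S_{(4,417)})$.

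Finally I would apply Proposition \ref{pro42701}, which supplies the explicit finite basis $\{(\ref{42701}),\dots,(\ref{42709})\}$ for $\mathsf V(S_{(4,427)})$. Reversing every word in these identities produces a finite basis for $\mathsf V(S_{(4,417)})$, whence $S_{(4,417)}$ is finitely based. There is essentially no obstacle here: the only point genuinely requiring verification is the entrywise table comparison establishing the duality, and even that is routine once one recalls that the additive reducts of the two semirings coincide. This is exactly the pattern used in the preceding corollaries of the paper, so the argument can be compressed to a one-line appeal to Proposition \ref{pro42701}.
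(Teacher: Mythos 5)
Your proposal is correct and follows exactly the paper's route: the paper likewise observes that $S_{(4,417)}$ and $S_{(4,427)}$ have dual multiplications and then cites Proposition \ref{pro42701}, with your entrywise transposition check and the word-reversal correspondence between the equational theories of a semiring and its dual being precisely the routine verifications the paper leaves implicit. Nothing is missing; your version merely spells out the standard duality argument in full.
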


\begin{pro}\label{pro42801}
$\mathsf{V}(S_{(4, 428)})$ is the ai-semiring variety defined by the identities
\begin{align}
x^3 &\approx x^2; \label{42801}\\
xy &\approx x^2y; \label{42802}\\
x^2y^2 &\approx (xy)^2; \label{42803}\\
x^2y^2 &\approx x^2y^2x^2; \label{42804}\\
x^2 &\approx x^2+x; \label{42805}\\
xyx &\approx xyx+xy; \label{42806}\\
xy^2 &\approx xy^2+xy; \label{42807}\\
x+xy &\approx x^2+xy; \label{42808}\\
xy+z & \approx xy+z+xz; \label{42809}\\
x+yxz &\approx y+yxz+yx; \label{42810}\\
x_1+x_2+x_3x_2x_4 &\approx x_1+x_2+x_3x_2x_4+x_3x_2x_1x_4.  \label{42811}
\end{align}
\end{pro}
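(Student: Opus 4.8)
The plan is to follow the template of Proposition~\ref{pro42701}, since $S_{(4, 428)}$ differs from $S_{(4, 427)}$ only in the single product $4\cdot 3$. First I would check by a routine finite computation that $S_{(4, 428)}$ satisfies each of (\ref{42801})--(\ref{42811}). For the converse I take a nontrivial identity $\bu\approx\bu+\bq$ of $S_{(4, 428)}$ with $\bu=\bu_1+\cdots+\bu_n$ and extract the combinatorial data from the same three subalgebras used for $S_{(4, 427)}$; crucially, none of them involves both $3$ and $4$ simultaneously, so they are unchanged by the altered product and yield identical necessary conditions. Namely, $D_2\cong\{2,4\}$ forces $D_\bq(\bu)\neq\emptyset$, so some $\bu_{i_1}$ has $c(\bu_{i_1})\subseteq c(\bq)$; $S_{58}\cong\{1,2,3\}$ with Lemma~\ref{lem5801} gives, when $\ell(\bq)\ge 2$, a word $\bu_k$ with $\ell(\bu_k)\ge 2$ and $h(\bu_k)=h(\bq)$; and $S_{41}\cong\{1,2,4\}$ with Lemma~\ref{lem4101} gives, for every $Y\subseteq c(\bq)$, a summand $\bu_j$ with $h_Y(\bu_j)=h_Y(\bq)$. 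As before I split on $|c(\bq)|$.

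The case $|c(\bq)|=1$ is handled exactly as for $S_{(4, 427)}$: writing $\bq=x^r$ one uses (\ref{42801}) and (\ref{42805}), together with (\ref{42808}) in the subcase $\bu_{i_1}=x$, to append $\bq$ to $\bu$. The substantial work, and the point where the argument must genuinely diverge from Proposition~\ref{pro42701}, is $|c(\bq)|\ge 2$. The decisive feature is that $S_{(4, 428)}$ does \emph{not} satisfy $xyx\approx xy$ (indeed $4\cdot 3\cdot 4=1\neq 3=4\cdot 3$), so one cannot collapse $\bq$ to its first-occurrence word $i(\bq)$; the only available substitutes are the one-sided inclusions (\ref{42806}) and (\ref{42807}). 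Consequently $\bq$ must be reconstructed in its true order. I would again dichotomize on whether $D_\bq(\bu)\cap L_{\ge 2}(\bu)$ is nonempty, fix an auxiliary word $\bp$ in that set (or manufacture a length-$\ge 2$ word from a length-one element of $D_\bq(\bu)$ and the head word $\bu_k$ via (\ref{42808})), and prove by induction on the position $j$ that $\bu\approx\bu+w_j\bp$ is derivable, where $w_j$ is now a genuine length-$j$ prefix of $\bq$ rather than of $i(\bq)$. Each inductive step inserts the next letter of $\bq$ into the correct slot using the positional identities (\ref{42809}), (\ref{42810}), (\ref{42811}), while the square identities (\ref{42802})--(\ref{42804}) normalize exponents and duplicate prefixes so that (\ref{42806}) and (\ref{42807}) apply; the $h_Y$ data from Lemma~\ref{lem4101} guarantees at each step a summand of $\bu$ supplying the correct next letter relative to those already placed, and the length-one witness subcase is closed, as in Subcase~1.2 of Proposition~\ref{pro42701}, by producing a further summand of content inside $Y_j$ through a separating homomorphism into $S_{(4, 428)}$ followed by (\ref{42811}).

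The main obstacle I anticipate is precisely this faithful reconstruction of $\bq$, and in particular the removal of the auxiliary tail $\bp$ at the end. In $S_{(4, 427)}$ one passes from $\bu+i(\bq)\bp$ to $\bu+\bq$ for free using the equalities $xyx\approx xy$ and $xy^2\approx xy$; here those equalities are unavailable, and in fact $\bq\approx\bq\bp$ can fail — exactly when $t(\bq)$ is an exposed variable of multiplicity one (so that an assignment sending every earlier variable to $4$ and $t(\bq)$ to $3$ separates $\bq$ from $\bq\bp$). I therefore expect the final stage to require a sub-analysis on $m(t(\bq),\bq)$ and on $M_1(\bq)$, discharging the exposed-last-letter configuration with the inclusions (\ref{42806}), (\ref{42807}) and the insertion identity (\ref{42810}), and arranging the induction so that it terminates with $\bq$ itself rather than $\bq\bp$ whenever the tail cannot be absorbed. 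Confirming that these one-sided inclusion identities suffice at every place where $S_{(4, 427)}$ could invoke the stronger equality $xyx\approx xy$ is the delicate heart of the proof.
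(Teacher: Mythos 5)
Your setup agrees with the paper's: the same routine verification, the same three subalgebras $\{2,4\}\cong D_2$, $\{1,2,3\}\cong S_{58}$ and $\{1,2,4\}\cong S_{41}$ feeding Lemma \ref{lem5801} and Lemma \ref{lem4101}, the same disposal of $|c(\bq)|=1$, and you correctly locate the divergence from Proposition \ref{pro42701} in the failure of $xyx\approx xy$ (your computation $4\cdot 3\cdot 4=1\neq 3$ is right). But your central structural decision --- that since $\bq$ cannot be collapsed to $i(\bq)$, it ``must be reconstructed in its true order'' by an induction on genuine prefixes $w_j$ of $\bq$ --- is a misdiagnosis, and it is exactly where your plan breaks. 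Lemma \ref{lem4101} supplies witnesses $h_{Y_j}(\bu_{i_j})=h_{Y_j}(\bq)$ only for first occurrences of variables, i.e.\ for the letters of $i(\bq)$; at an inductive step where the next letter of $\bq$ is a repeated occurrence, there is no summand of $\bu$ ``supplying the correct next letter relative to those already placed,'' so the step as you state it has no witness. Moreover the reconstruction is unnecessary: modulo (\ref{42801})--(\ref{42804}) the order of $\bq$ is immaterial, since every word is equivalent to a product of squares of its content variables, followed by an exposed last letter precisely when $m(t(\bq),\bq)=1$ (e.g.\ $xyx\approx x^2y^2$ is derivable). What the one-sided inclusions (\ref{42806}), (\ref{42807}) do give is the summand version $\bq\approx \bq+i(\bq)$, and that is what the paper uses.

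Concretely, the paper runs the induction along $i(\bq)=x_1\cdots x_m$ essentially as in Proposition \ref{pro42701} (with case split on whether the Lemma \ref{lem4101} witnesses $\bu_{i_j}=\bu_{i_j}'x_j\bu_{i_j}''$ have nonempty tails $\bu_{i_j}''$ --- not on $D_\bq(\bu)\cap L_{\geq 2}(\bu)$, which you import from \ref{pro42701}), and then recovers $\bq$ at the very end by its ``technique (C)'': given a summand $i(\bq)\bp_1$ with $\bp_1$ nonempty and a witness $\bp_2\in D_\bq(\bu)$, one derives $\bp_2+i(\bq)\bp_1\approx \bp_2+\bq^2\bp_2^2\bp_1\approx\bp_2+\bq\bp_2\bp_1$ via (\ref{42802}), (\ref{42801}), (\ref{42804}), (\ref{42803}), and then extracts the summand $\bq$ using (\ref{42810}) followed by (\ref{42806}), (\ref{42807}). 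This squaring manipulation is precisely the ``delicate heart'' you flag in your last paragraph but leave unverified; without it (or an equivalent) neither the removal of your auxiliary tail $\bp$ nor the exposed-last-letter configuration goes through. Your instinct about that configuration is sound --- the paper indeed splits on whether $t(\bq)=x_m$ and $m(t(\bq),\bq)=1$, in which case (\ref{42801})--(\ref{42804}) already yield $i(\bq)\approx\bq$, and otherwise manufactures the nonempty tail by doubling $x_m$ --- but your appeal to $M_1(\bq)$ is off target: only the multiplicity of the single letter $t(\bq)$ matters. As it stands the proposal identifies the right obstacle but does not contain the idea that resolves it.
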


\begin{proof}
It is easy to check that $S_{(4, 428)}$ satisfies the identities (\ref{42801})--(\ref{42811}).
In the remainder it is enough to show that every ai-semiring identity of $S_{(4, 428)}$
is derivable from (\ref{42801})--(\ref{42811}) and the identities defining $\mathbf{AI}$.
Let $\bu \approx \bu+\bq$ be such a nontrivial identity,
where $\bu=\bu_1+\bu_2+\cdots+\bu_n$ and $\bu_i, \bq\in X^+$, $1 \leq i \leq n$.
It is easy to see that $D_2$ is isomorphic to $\{2, 4\}$ and so $D_2$ satisfies  $\bu \approx \bu+\bq$.
This implies that there exists $\bu_{i_1} \in \bu$ such that $c(\bu_{i_1})\subseteq c(\bq)$.
Since $S_{58}$ is isomorphic to $\{1, 2, 3\}$,
it follows that $S_{58}$ satisfies $\bu \approx \bu+\bq$.
If $\ell(\bq)\geq 2$, then by Lemma \ref{lem5801} $L_{\geq 2}(\bu)\cap H_{\bq}(\bu)$ is nonempty
and so $\ell(\bu_k)\geq 2$ and $h(\bu_k)=h(\bq)$ for some $\bu_k \in \bu$.

If $|c(\bq)|=1$, then we can use the same approach in the proof of Proposition \ref{pro42701}
to deduce $\bu \approx \bu+\bq$.
Now let $|c(\bq)|\geq 2$.
By (\ref{42806}) and (\ref{42807}) we can deduce $\bq \approx \bq+i(\bq)$
and so $\bu \approx \bu+i(\bq)$ is satisfied by $S_{(4, 428)}$.
It is easy to see that $S_{41}$ is isomorphic to $\{1, 2, 4\}$
and so $S_{41}$ satisfies $\bu \approx \bu+\bq$.
Let $i(\bq)=x_1x_2\cdots x_m$, $m\geq 2$.
By Lemma \ref{lem4101} it follows that
for any $2\leq j \leq m$ and $Y_j=\{x_1, x_2, \ldots, x_{j-1}\}$,
there exists $\bu_{i_j} \in \bu$ such that $h_{Y_j}(\bu_{i_j})=h_{Y_j}(\bq)=x_j$.

The following technique (C) will be repeatedly used in the sequel.
Let $\bp$ be a word such that $|c(\bp)|\geq 2$, and let $\bp_1$ and $\bp_2$ be nonempty words
such that $c(\bp_2)\subseteq c(\bp)$.
We shall show that
\[
i(\bp)\bp_1\approx i(\bp)\bp_2\bp_1, ~\bp_2+i(\bp)\bp_1\approx \bp_2+i(\bp)\bp_1+\bp
\]
can be derived by (\ref{42801})--(\ref{42811}).
Indeed, let $i(\bp)=y_1y_2\cdots y_r$, $r\geq 2$. Then
\begin{align*}
\bp_2+i(\bp)\bp_1
&\approx \bp_2+y_1y_2\cdots y_r\bp_1\\
&\approx  \bp_2+y_1^2y_2^2\cdots y_r^2\bp_1 &&(\text{by}~(\ref{42802}))\\
&\approx \bp_2+{\bp}^2\bp^2_2\bp_1  &&(\text{by}~(\ref{42801}), (\ref{42804}), (\ref{42803}))\\
&\approx \bp_2+{\bp}\bp_2\bp_1&&(\text{by}~(\ref{42802}))\\
&\approx \bp_2+{\bp}\bp_2\bp_1+{\bp}\bp_2 &&(\text{by}~(\ref{42810}))\\
&\approx \bp_2+{\bp}\bp_2\bp_1+{\bp}\bp_2+{\bp}. &&(\text{by}~(\ref{42806}), (\ref{42807}))
\end{align*}
This implies $\bp_2+i(\bp)\bp_1\approx \bp_2+i(\bp)\bp_1+\bp$.

\textbf{Case 1.}
For any $2\leq j \leq m$, there exists $\bu_{i_j} \in \bu$ such that
$\bu_{i_j}=\bu_{i_j}'x_j\bu_{i_j}'' $
for some $\bu_{i_j}', \bu_{i_j}'' \in X^*$,
where $c(\bu_{i_j}') \subseteq Y_j$ and $\bu_{i_j}''$ is nonempty.
Then
\begin{align*}
\bu
&\approx \bu+\bu_k+\bu_{i_2}\\
&\approx \bu+x_1s(\bu_k)+\bu_{i_2}'x_2\bu_{i_2}''\\
&\approx\bu+x_1s(\bu_k)+\bu_{i_2}'x_2\bu_{i_2}''+x_1\bu_{i_2}'x_2\bu_{i_2}''&&(\text{by}~(\ref{42809}))\\
&\approx\bu+x_1s(\bu_k)+\bu_{i_2}'x_2\bu_{i_2}''+x_1x_2\bu_{i_2}''. &&(\text{by}~(\ref{42802}))
\end{align*}
This implies the identity
\[
\bu\approx \bu+x_1x_2\bu_{i_2}''.
\]
Furthermore, we have
\begin{align*}
\bu
&\approx \bu+x_1x_2\bu_{i_2}''+\bu_{i_3}\\
&\approx \bu+x_1x_2\bu_{i_2}''+\bu_{i_3}'x_3\bu_{i_3}''\\
&\approx \bu+x_1x_2\bu_{i_2}''+\bu_{i_3}'x_3\bu_{i_3}''+x_1x_2\bu_{i_3}'x_3\bu_{i_3}''&&(\text{by}~(\ref{42809}))\\
&\approx \bu+x_1x_2\bu_{i_2}''+\bu_{i_3}'x_3\bu_{i_3}''+x_1x_2\bu_{i_3}'x_3\bu_{i_3}''+x_1x_2x_3\bu_{i_3}''.
&&(\text{by}~(\ref{42806}), (\ref{42807}))
\end{align*}
This proves the identity
\[
\bu\approx \bu+x_1x_2x_3\bu_{i_3}''.
\]
Repeat this process and one can obtain the identity
\[
\bu\approx \bu+x_1x_2\cdots x_m\bu_{i_m}''.
\]
Take $\bp=\bq$, $\bp_1=\bu_{i_m}''$ and $\bp_2=\bu_{i_1}$.
By the technique (C)
we have that $\bu\approx \bu+{\bq}$ can be derived by (\ref{42801})--(\ref{42811}).

\textbf{Case 2.}
There exists $2\leq j \leq m$ such that $\bu_{i_j}=\bu_{i_j}'x_j$ for some $\bu_{i_j}' \in X^*$,
where $c(\bu_{i_j}')\subseteq Y_j$,
if $\bu_{i_j} \in \bu$ and $h_{Y_j}(\bu_{i_j})=h_{Y_j}(\bq)$. Choose a minimum $j$.

\textbf{Subcase 2.1.} $j<m$.
We shall show that
$c(\bu_{\ell_j}) \subseteq Y_j$ for some $\bu_{\ell_j} \in \bu$.
Indeed, suppose that this is not true.
Consider the semiring homomorphism $\varphi: P_f(X^+) \to S_{(4, 428)}$ defined by
$\varphi(x_j)=3$, $\varphi(x)=4$ if $x\in Y_{j}$, and $\varphi(x)=2$ otherwise.
It is easy to see that $\varphi(\bu)=3$ and $\varphi(i(\bq))=1$, a contradiction.
So $c(\bu_{\ell_j}) \subseteq Y_j$ for some $\bu_{\ell_j} \in \bu$.

Repeat the process in Case 1 and one can derive
\[
\bu\approx \bu+x_1x_2\cdots x_{j-1}\bu_{i_{j-1}}''.
\]
Choose $\bp=x_1x_2\cdots x_{j-1}$, $\bp_1=\bu_{i_{j-1}}''$ and $\bp_2=\bu_{\ell_j}$.
By the technique (C) we obtain that
\[
x_1x_2\cdots x_{j-1}\bu_{i_{j-1}}''\approx x_1x_2\cdots x_{j-1}\bu_{\ell_j}\bu_{i_{j-1}}''
\]
can be derived by (\ref{42801})--(\ref{42811}).
We now have
\begin{align*}
\bu
&\approx \bu+\bu_{i_j}+\bu_{\ell_j}+x_1x_2\cdots x_{j-1}\bu_{\ell_j}\bu_{i_{j-1}}''\\
&\approx \bu+\bu_{i_j}'x_j+\bu_{\ell_j}+x_1x_2\cdots x_{j-1}\bu_{\ell_j}\bu_{i_{j-1}}''\\
&\approx \bu+\bu_{i_j}'x_j+\bu_{\ell_j}+x_1x_2\cdots x_{j-1}\bu_{\ell_j}\bu_{i_{j-1}}''+
x_1x_2\cdots x_{j-1}\bu_{\ell_j}\bu_{i_j}'x_j\bu_{i_{j-1}}''. &&(\text{by}~(\ref{42811}))
\end{align*}
This proves the identity
\[
\bu\approx \bu+x_1x_2\cdots x_{j-1}\bu_{\ell_j}\bu_{i_j}'x_j\bu_{i_{j-1}}''.
\]
By the identities (\ref{42806}) and (\ref{42807}) we deduce
\[
x_1x_2\cdots x_{j-1}\bu_{\ell_j}\bu_{i_j}'x_j\bu_{i_{j-1}}''\approx x_1x_2\cdots x_{j-1}\bu_{\ell_j}\bu_{i_j}'x_j\bu_{i_{j-1}}''+
x_1x_2\cdots x_{j-1}x_j\bu_{i_{j-1}}''
\]
and so $\bu\approx \bu+x_1x_2\cdots x_{j-1}x_j\bu_{i_{j-1}}''$ is obtained.
We repeat this process and finally derive the identity
$\bu \approx  \bu+x_1x_2\cdots x_m\bu_{i_m}''$.

If $\bu_{i_m}''$ is nonempty, then the remaining steps are similar to Case 1.
If $\bu_{i_m}''$ is empty, then $\bu_{i_m}=\bu_{i_m}'x_m$ and $\bu \approx \bu+i(\bq)$ is obtained.
Suppose that $t(\bq)=x_m$ and that $m(t(\bq), \bq)=1$.
It is easy to see that the identities (\ref{42801})--(\ref{42804}) imply $i(\bq) \approx \bq$
and so $\bu \approx  \bu+\bq$ is derived.
If $t(\bq)\not=x_m$ or $m(t(\bq), \bq)\geq 2$,
then one can use the above substitution $\varphi$ to show that $c(\bu_{\ell_m})\subseteq Y_m$ for some $\bu_{\ell_m} \in \bu$.
Take $\bp=x_1x_2\cdots x_{m-1}$, $\bp_1=x_m$ and $\bp_2=\bu_{\ell_m}$.
By the technique (C) we have that
\[
x_1x_2x_3\cdots x_{m-1}x_m\approx x_1x_2\cdots x_{m-1}\bu_{\ell_m}x_m
\]
can be derived by (\ref{42801})--(\ref{42811}).
Furthermore, we deduce
\begin{align*}
\bu
&\approx \bu+\bu_{i_m}+\bu_{\ell_m}+x_1x_2\cdots x_{m-1}\bu_{\ell_m}x_m\\
&\approx \bu+\bu_{i_m}'x_m+\bu_{\ell_m}+x_1x_2\cdots x_{m-1}\bu_{\ell_m}x_m\\
&\approx \bu+\bu_{i_m}'x_m+\bu_{\ell_m}+x_1x_2\cdots x_{m-1}\bu_{\ell_m}x_m+x_1x_2\cdots x_{m-1}\bu_{\ell_m}\bu_{i_m}'x_mx_m. &&(\text{by}~(\ref{42811}))
\end{align*}
This derives the identity
\[
\bu \approx \bu+x_1x_2\cdots x_{m-1}\bu_{\ell_m}\bu_{i_m}'x_mx_m.
\]
By the identities (\ref{42806}) and (\ref{42807}) we deduce
\[
\bu \approx \bu+x_1x_2\cdots x_{m-1}x_mx_m.
\]
Choose $\bp=\bq$, $\bp_1=x_m$ and $\bp_2=\bu_{i_1}$.
By the technique (C) we have that
\[
\bu_{i_1}+x_1x_2\cdots x_{m-1}x_mx_m\approx \bu_{i_1}+x_1x_2\cdots x_{m-1}x_mx_m+\bq
\]
can be derived by (\ref{42801})--(\ref{42811}).

\textbf{Subcase 2.2.} $j=m$.
Then $\bu_{i_j}''$ is nonempty for all $2\leq j \leq m-1$ and $\bu_{i_m}''$ is empty.
Repeat the process in Case 1 and we obtain
\[
\bu \approx \bu+x_1x_2\cdots x_m\approx \bu+i(\bq).
\]
The remaining steps are similar to Subcase 2.1.
\end{proof}

\begin{cor}
The ai-semiring $S_{(4, 418)}$ is finitely based.
\end{cor}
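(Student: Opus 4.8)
The plan is to exploit the evident duality between $S_{(4, 418)}$ and $S_{(4, 428)}$, exactly as in the corollaries following Propositions \ref{pro42401}, \ref{pro45901} and \ref{pro44201}. First I would compare the two multiplication tables in Table \ref{tb1}: a direct inspection shows that the product of $a$ and $b$ in $S_{(4, 418)}$ equals the product of $b$ and $a$ in $S_{(4, 428)}$, i.e.\ one table is the transpose of the other. Since both semirings share the same additive reduct (the quasi-antichain of Figure \ref{figure01}) and addition is commutative, reversing the multiplication furnishes an anti-isomorphism of ai-semirings; thus $S_{(4, 418)}$ is the dual of $S_{(4, 428)}$.

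The key structural fact I would invoke is the standard duality principle for ai-semirings: if $\bu \approx \bv$ is any ai-semiring identity and $\bu', \bv'$ denote the terms obtained by reversing the order of the letters within each monomial (leaving the commutative sum intact), then $\bu \approx \bv$ holds in an ai-semiring $S$ if and only if $\bu' \approx \bv'$ holds in its dual $S^{d}$. Applying this to a finite equational basis $\Sigma$ of $\mathsf{V}(S_{(4, 428)})$, which exists by Proposition \ref{pro42801}, the set $\Sigma'$ of reversed identities is then a finite equational basis for $\mathsf{V}(S_{(4, 418)})$, so $S_{(4, 418)}$ is finitely based.

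I do not expect a genuine obstacle here: the argument is purely formal once the duality is in place, and the only point requiring care is the verification that the full semiring structure---not merely the multiplication---is dual. This reduces to observing that the additive reduct is self-dual, which is automatic because the quasi-antichain order is symmetric and the idempotent addition is commutative. Consequently the entire proof amounts to the transpose check of the two tables followed by a one-line appeal to Proposition \ref{pro42801}.
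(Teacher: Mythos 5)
Your proposal is correct and takes essentially the same approach as the paper: the paper's proof likewise observes that $S_{(4, 418)}$ and $S_{(4, 428)}$ have dual multiplications and concludes immediately from Proposition \ref{pro42801}. Your explicit verification that the tables are transposes and your spelling-out of the word-reversal duality principle are simply a more detailed rendering of the paper's one-line argument.
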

\begin{proof}
It is easy to see that $S_{(4, 418)}$ and $S_{(4, 428)}$  have dual multiplications.
By Proposition $\ref{pro42801}$ we immediately deduce that $S_{(4, 418)}$ is finitely based.
\end{proof}

\section{Conclusion}
Combining the previous paper \cite{rlzc} we have answered the finite basis problem for $151$ ai-semirings of order four.
One can find that it is particularly important in this process to obtain
enough information about the identities of some ai-semirings of order three.
The remaining $715$ ai-semirings of order four will be divided into three classes according to their additive reducts.
The finite basis problem for most of them can be answered.
At present, it is very necessary to introduce some new techniques to complete this programme.

\qquad

\noindent
\textbf{Acknowledgements}
The authors thank their team members Junyang Liu, Zexi Liu, Qizheng Sun and Mengyu Yuan for discussions contributed to this paper.

\noindent
\textbf{Data availability}
Data sharing not applicable to this article as datasets were neither generated nor analysed.

\noindent
\textbf{Declarations}

\noindent
\textbf{Ethical Approval} It is not applicable to this article.

\noindent
\textbf{Competing interests} It is not applicable to this article.

\noindent
\textbf{Authors' contributions} It is not applicable to this article.

\noindent
\textbf{Funding} It is not applicable to this article.

\noindent
\textbf{Availability of data and materials} It is not applicable to this article.

\bibliographystyle{amsplain}

\begin{thebibliography}{99}
\bibitem{gpz} S. Ghosh, F. Pastijn, X.Z. Zhao,
Varieties generated by ordered bands I, \emph{Order} \textbf{22} (2005), no.\,2, 109--128.

\bibitem{jrz} M. Jackson, M.M. Ren, X.Z. Zhao, Nonfinitely based
ai-semirings with finitely based semigroup reducts, \emph{J. Algebra} \textbf{611} (2022), 211--245.

\bibitem{pas05} F. Pastijn, Varieties generated by ordered bands II, \emph{Order} \textbf{22} (2005), no.\,2, 129--143.

\bibitem{rlzc} M.M. Ren, J.Y. Liu, L.L. Zeng, M.L. Chen,
The finite basis problem for additively idempotent semirings of order four, I,
https://doi.org/10.48550/arXiv.2407.15342

\bibitem{rzw} M.M. Ren, X.Z. Zhao, A.F. Wang, On the varieties of ai-semirings satisfying $x^3 \approx x$,
\emph{Algebra Universalis} {\bf 77} (2017), no.\,4, 395--408.

\bibitem{sr} Y. Shao, M.M. Ren, On the varieties generated by ai-semirings of order two,
\emph{Semigroup Forum} {\bf 91} (2015), no.\,1, 171--184.

\bibitem{wrz}Y.N. Wu, M.M. Ren, X.Z. Zhao,
The additively idempotent semiring  $S^0_7$  is nonfinitely based,
\emph{Semigroup Forum} {\bf 108} (2024), no.\,2, 479--487.

\bibitem{zrc}X.Z. Zhao, M.M. Ren, S. Crvenkovi{\' c}, Y. Shao, P. \DH api{\' c},
The variety generated by an ai-semiring of order three,
\emph{Ural Math. J.} {\bf 6} (2020), no.\,2, 117--132.
\end{thebibliography}

\end{document}